\numberwithin{figure}{section}
\numberwithin{table}{section}
\theoremstyle{plain}
\newtheorem{thm}{Theorem}[section]
\crefname{thm}{Theorem}{Theorems}
\newtheorem*{prop*}{Proposition}
\newtheorem*{thm*}{Theorem}
\newtheorem{prop}[thm]{Proposition}
\crefname{prop}{Proposition}{Propositions}
\newtheorem{lem}[thm]{Lemma}
\crefname{lem}{Lemma}{Lemmata}
\newtheorem{cor}[thm]{Corollary}
\crefname{cor}{Corollary}{Corollaries}
\newtheorem{conj}[thm]{Conjecture}
\crefname{conj}{Conjecture}{Conjectures}
\crefname{equation}{Equation}{Equations}
\theoremstyle{definition}
\newtheorem{ex}[thm]{Example}
\newtheorem{dfn}[thm]{Definition}
\newtheorem*{dfn*}{Definition}
\theoremstyle{remark}
\newtheorem{rmk}[thm]{Remark}
\newtheoremstyle{maintheorem}{}{}{\itshape}{}{\bfseries}{}{.5em}{#1 \!\thmnote{#3}.}
\theoremstyle{maintheorem}
\let\c@figure\c@thm
\let\c@table\c@thm
\crefname{figure}{Figure}{Figures}
\crefname{table}{Table}{Tables}
\newcommand{\step}[1]{\smallskip \noindent \textbf{Step #1:}}
\newcommand{\Aut}{\operatorname{Aut}}
\newcommand{\GL}{\operatorname{GL}}
\newcommand{\id}{\operatorname{id}}
\newcommand{\im}{\operatorname{im}}
\newcommand{\supp}{\operatorname{supp}}
\newcommand{\I}{\mathrm{I}}
\newcommand{\fab}[1]{#1^\mathrm{fab}}
\renewcommand{\det}{\operatorname{det}\nolimits}
\newcommand{\typeF}[1]{\mathtt{F}_{#1}}
\newcommand{\typeFP}[1]{\mathtt{FP}_{#1}}
\def\C{\mathbb{C}}
\def\R{\mathbb{R}}
\def\N{\mathbb{N}}
\def\Z{\mathbb{Z}}
\def\Q{\mathbb{Q}}
\def\1{\mathbbm{1}}
\def\K{\mathbb{K}}
\def\LL{\mathbb{L}}
\renewcommand{\S}{\mathcal{S}}
\renewcommand{\P}{\mathcal P}
\newcommand{\D}{\mathcal{D}}
\def\s-{\smallsetminus}
\def\into{\hookrightarrow}
\def\iff{if and only if }
\newcounter{dawidcomments}
\author{Dawid Kielak}
\title[The BNS invariants via Newton polytopes]{The Bieri--Neumann--Strebel invariants via Newton polytopes}
\date{\today}
\begin{document}

\begin{abstract}
We study the Newton polytopes of determinants of square matrices defined over rings of twisted Laurent polynomials. We prove that such Newton polytopes are single polytopes (rather than formal differences of two polytopes); this result can be seen as analogous to the fact that determinants of matrices over commutative Laurent polynomial rings are themselves polynomials, rather than rational functions. We also exhibit a relationship between the Newton polytopes and invertibility of the matrices over Novikov rings, thus establishing a connection with the invariants of Bieri--Neumann--Strebel (BNS) via a theorem of Sikorav.

We offer several applications: we reprove Thurston's theorem on the existence of a polytope controlling the BNS invariants of a $3$-manifold group; we extend this result to free-by-cyclic groups, and the more general descending HNN extensions of free groups. We also show that the BNS invariants of Poincar\'e duality groups of type $\typeF{}$ in dimension $3$ and groups of deficiency one  are determined by a polytope, when the groups are assumed to be agrarian, that is their integral group rings embed in skew-fields. The latter result partially confirms a conjecture of Friedl.

We also deduce the vanishing of the Newton polytopes associated to elements of the Whitehead groups of many groups satisfying the Atiyah conjecture. We use this to show that the $L^2$-torsion polytope of Friedl--L\"uck is invariant under homotopy. We prove the vanishing of this polytope in the presence of amenability, thus proving a conjecture of Friedl--L\"uck--Tillmann.
\end{abstract}

\maketitle
\section{Introduction}

In 1986, W. Thurston \cite{Thurston1986} proved a remarkable theorem about the ways in which a compact oriented $3$-manifold $M$ can fibre over the circle. He introduced a semi-norm $x \colon H^1(M;\R) \to [0, \infty)$ (now called the Thurston norm and often denoted by $\| \cdot \|_T$), whose unit ball is a polytope $B_x$, such that if $\phi \colon \pi_1(M) \to \Z$ is a homomorphism induced by a fibration of $M$, then $\phi$ lies in the cone of an open maximal face of $B_x$, and every other integral character $\pi_1(M) \to \Z$ lying in the same cone also comes from a fibration (hence one can talk about fibred faces of $B_x$).

A year later, Bieri--Neumann--Strebel \cite{Bierietal1987} introduced the geometric (or $\Sigma$, or BNS) invariant of finitely generated groups. They observed that if $G$ is the fundamental group of the manifold $M$ above, then the $\Sigma$-invariant $\Sigma(G) \subseteq H^1(G;\R) \s- \{0\}$ coincides with the union of the cones of the fibred faces (with the origin removed). Thus, Thurston's theorem can be reinterpreted as a result about the structure of $\Sigma(G)$ -- for $3$-manifold groups, the BNS invariant is determined by the integral polytope $B_{x^\ast} \subset H_1(G;\R)$ (which is the dual polytope of $B_x$), where `integral' means that the vertices lie on the integer lattice, and `determined' means that if a character lies in $\Sigma(G)$, then any other character which attains its minimum on $B_{x^\ast}$ at the same face as $\phi$ also lies in $\Sigma(G)$. We can thus mark the vertices of $B_{x^\ast}$ dual to the fibred faces, and say that $\Sigma(G)$ is determined by a marked polytope.

The BNS invariants were subsequently computed for many finitely generated groups: finitely generated nilpotent or  metabelian groups \cite[Theorem E]{BieriGroves1984}, $2$-generator $1$-relator groups and Houghton groups \cite[Theorem 4.2 and Proposition 8.3]{Brown1987}, right-angled Artin groups (RAAGs) \cite[Theorem 5.1]{MeierVanWyk1995},
groups with staggered presentations and graph products (with underlying graphs without a central vertex)
\cite{GarilleMeier1998},
pure symmetric automorphism groups of finitely generated free groups \cite{Orlandi-Korner2000}, limit groups \cite[Corollary 30]{Kochloukova2010}, fundamental groups of compact K\"ahler manifolds \cite{Delzant2010}, pure symmetric automorphism of RAAGs \cite[Theorem A]{KobanPiggott2014},
braided Thompson group $F$
\cite[Theorem 3.4]{Zaremsky2018},
some Artin groups \cite{AlmeidaKochloukova2015a,AlmeidaKochloukova2015,Almeida2017}, pure braid groups \cite{Kobanetal2015}, \{finitely generated free\}-by-$\Z$ groups with polynomially growing monodromy \cite{CashenLevitt2016}, and Lodha--Moore groups \cite{Zaremsky2016}.
In all of these examples, $\Sigma(G)$ is determined by an integral polytope.

The invariant has also been computed in \cite[Theorem 8.1]{Bierietal1987} for finitely generated groups of piecewise linear automorphisms of the interval $[0,1]$, which are \emph{irreducible} in the sense that there are no points in the interior of the interval fixed by the entire group, and which have \emph{independent left and right derivatives} in the following sense: let $\lambda$ and $\rho$ be the left and right derivatives taken at the endpoints of the interval $[0,1]$; `independence' means that $\lambda(\ker \rho) = \lambda(G)$ and vice versa. In this case, $\Sigma(G) = H^1(G;\R) \s- \{\kappa \log \lambda, \kappa \log \rho \mid \kappa \geqslant 0 \}$. Thus, $\Sigma(G)$ is determined by a polytope, but not necessarily integral -- there is a specific example giving a non-integral polytope. The example is shown by Stein \cite{Stein1992} to be finitely presented and of type $\typeFP{\infty}$, but it is not of type $\typeF{}$ -- this is relatively easy to see, since the example contains Thompson's group $F$.

It is worth noting that the vast majority of the papers above computes $\Sigma(G)$, and only then deduces that it is determined by a polytope. It is of course very valuable to have a complete description of $\Sigma(G)$, but such detailed knowledge cannot be obtained even for all finitely presented groups -- it was shown by Cavallo--Delgado--Kahrobaei--Ventura~\cite[Theorem 6.4]{Cavalloetal2017} that computing $\Sigma(G)$ is undecidable. Thus, it is worthwhile to study the structure of $\Sigma(G)$ from a more abstract viewpoint.

The original motivation behind this article was to study the BNS invariants of free-by-cyclic groups. In particular, the author set out to prove that such BNS invariants have only finitely many connected components, which was not known prior to this work, but expected due to the analogies between free-by-cyclic groups and fundamental groups of $3$-manifolds. It turned out that the analogy goes much further, and the methods developed found also other applications. Let us summarise the results in this direction by the following statement.

\begin{thm*}
 Let $G$ belong to one of the following classes:
 \begin{enumerate}
  \item descending HNN extensions of finitely generated free groups; or
  \item fundamental groups of compact connected oriented $3$-manifolds; or
   \item agrarian Poincar\'e duality groups of dimension $3$ and type $\typeF{}$; or
  \item agrarian groups of deficiency one.
 \end{enumerate}
Then $\Sigma(G)$ is determined by an integral polytope.
\end{thm*}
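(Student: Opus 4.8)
The plan is to develop the Newton polytope technology for twisted Laurent polynomial rings first, and only then harvest all four items as consequences of a single mechanism. So the main steps are as follows.

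First, I would set up the \emph{Newton polytope} $P(p)$ of an element $p$ of a twisted Laurent polynomial ring $D[t^{\pm 1}]$ (and more generally $D[H]$ for $H$ a finitely generated free abelian group acting on a skew-field $D$) as the convex hull in $H_1(G;\R)$ of the exponents occurring in $p$, and prove the \emph{polytope-valued determinant} theorem announced in the abstract: the Dieudonn\'e determinant of a square matrix over such a ring has a well-defined Newton polytope that is a single polytope, not a formal difference. The key algebraic input here is that the Dieudonn\'e determinant is multiplicative on $K_1$ and that passing to Newton polytopes under multiplication is additive (Minkowski sum), so one needs to show the polytope homomorphism $K_1(D[H]) \to \{\text{polytopes}\}$ is well-defined modulo the image of units of $D$ and of $\pm H$; the heart is ruling out cancellation, i.e.\ showing $P(pq)=P(p)+P(q)$, which comes down to the fact that the leading terms in a given linear direction on $H$ cannot cancel because $D$ is a skew-field (no zero divisors).

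Second, I would connect invertibility over the \emph{Novikov ring} $\widehat{\Z G}_\phi$ to the Newton polytope via Sikorav's theorem, which identifies $\phi \in \Sigma(G)$ with the condition that $H_*(G;\widehat{\Z G}_\phi)$ vanishes appropriately; the point is that a matrix presenting the relevant chain complex is invertible over the Novikov completion in the direction $\phi$ precisely when $\phi$ is positive on (a translate of) the Newton polytope of its determinant. This is where the polytope $B_G$ -- the Newton polytope of the ``order'' of $H_1$ of the universal cover, computed via a suitable square presentation matrix -- gets its meaning: $\Sigma(G)$ is read off as the set of $\phi$ taking their minimum on $B_G$ at a single vertex (equivalently, $\phi$ does not vanish along any positive-dimensional face containing the minimizing face).

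Third comes the case analysis, which is really four instances of ``find a square presentation matrix over an agrarian ring.'' For (1), a descending HNN extension of a finitely generated free group $F_n$ with stable letter $t$ has a presentation with $n$ generators and $n$ relators (deficiency zero, after adding $t$), so its Fox Jacobian over $\Z G$ is square; one needs $\Z G$ to embed in a skew-field, which holds because such $G$ is locally indicable and bi-orderable-ish — more precisely one invokes that these groups are agrarian (their group rings embed in skew-fields), using the Farrell--Jones / Linnell-type results or the explicit Malcev--Neumann construction. For (2), a compact oriented aspherical $3$-manifold group has a deficiency-one presentation (or one uses that $\Z G$ embeds in a skew-field by the work on the Atiyah conjecture for $3$-manifold groups), reducing to the deficiency-one case; the non-aspherical and reducible cases are handled by the prime/sphere decomposition and are essentially classical. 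Items (3) and (4) are definitional once the machinery is in place: Poincar\'e duality of dimension $3$ and type $\typeF{}$ gives a length-$3$ finite free resolution of $\Z$ over $\Z G$ whose middle Fox-type matrix can be made square using PD$_3$, and deficiency one gives a square (or near-square) presentation matrix directly; agrarianness then feeds that matrix into the determinant-polytope theorem. In all four cases one must also check \emph{integrality} of the resulting polytope, which follows because the matrix entries lie in $\Z G$ (not just in the skew-field), so exponents of monomials are honest lattice points.

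The hard part, and the real content, will be the first step: proving that the Dieudonn\'e determinant over a twisted Laurent polynomial ring has a \emph{single} Newton polytope. The subtleties are that the Dieudonn\'e determinant lives in $K_1$, i.e.\ is only defined up to conjugation and up to multiplication by commutators, so one must show the Newton polytope is a genuine invariant of the $K_1$-class (this uses that conjugation in $D[H]$ by a monomial $d h$ shifts the polytope by the abelianization of $h$, i.e.\ is harmless up to translation, and by a unit of $D$ not at all); and that one must control what happens under elementary row operations, where the potential for cancellation of leading terms is exactly the thing the skew-field hypothesis defeats. A secondary obstacle is matching conventions so that Sikorav's theorem plugs in cleanly — in particular being careful about left versus right modules and the direction of the Novikov completion, since twisted coefficients make signs and sides matter. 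Once those are pinned down, the four applications are comparatively formal.
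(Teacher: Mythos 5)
Your architecture matches the paper's almost exactly: a single-polytope theorem for Dieudonn\'e determinants over twisted Laurent rings $\K H$, a criterion relating Novikov-ring invertibility to the Newton polytope, Sikorav's theorem, and then four instances of ``produce a square matrix over $\Z G$ from a resolution and feed it to the machine.'' However, there are two concrete gaps. First, your claim that $\Sigma(G)$ \emph{equals} the set of characters attaining their minimum on the polytope at a single vertex is too strong and would be false as stated (for a $3$-manifold, not every top-dimensional face of the Thurston ball is fibred). Minimizing at a unique vertex is equivalent to right-invertibility of $A$ over the Novikov ring $\widehat{\K H}^\phi$ with \emph{skew-field} coefficients; this is only necessary, not sufficient, for right-invertibility over $\widehat{\Z G}^\phi$, which is what Sikorav requires. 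One must therefore introduce a \emph{marking} of the vertices and prove that each vertex-dual is either contained in or disjoint from $\Sigma(A)$ --- in the paper this rests on the uniqueness of right inverses over $\widehat{\K H}^\phi$ (which forces a $\widehat{\Z G}^\phi$-inverse at one point of a dual to serve at every point of it). ``Determined by a polytope'' means determined by a \emph{marked} polytope; without the marking the statement does not close.

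Second, in cases (1), (2) and (4) the square matrix is obtained by deleting from the (non-square) Fox Jacobian the row corresponding to a generator $s_i$ with $\phi(s_i)\neq 0$, so you get a different square matrix $A_i$, hence a different marked polytope $P(A_i)$, on each chart $U_i=\{\phi \mid \phi(s_i)\neq 0\}$; your proposal never says how these are combined into one polytope valid on all of $H^1(G;\R)\s-\{0\}$. The paper does this with an ``atlas of markings'' and the uniqueness of the face decomposition of a Minkowski sum (taking $P=\sum_i P(A_i)$, or $P=P_{L^2}(G)$ with $P(A_i)=P+k_iP(1-s_i)$); some such gluing lemma is indispensable. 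A smaller caveat: for the single-polytope theorem itself, multiplicativity $P(pq)=P(p)+P(q)$ plus ``leading terms cannot cancel'' is not enough, because the determinant is genuinely a fraction $pq^{-1}$ and a priori only a formal difference of polytopes. The paper's proof reduces, for each primitive integral $\phi$, to Euclid's algorithm over a one-variable twisted Laurent ring with coefficients in the Ore localisation of $\K[\ker\phi]$, obtaining a representative whose denominator is $\phi$-flat, and then applies Funke's face-by-face criterion for being a single polytope; you correctly identify this as the hard step but the mechanism you sketch does not yet supply it.
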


Here, `agrarian' means that the integral group ring of the group in question embeds into a skew-field (division algebra).

\smallskip
We can also draw a number of conclusions related to the $L^2$-torsion polytope $P_{L^2}$ of Friedl--L\"uck~\cite{FriedlLueck2017}. This polytope is associated to every finite $L^2$-acyclic (i.e. with vanishing $L^2$-homology) classifying space of a  group $G$ satisfying the Atiyah conjecture -- the Atiyah conjecture is a statement about integrality of the $L^2$-Betti numbers of CW-complexes on which $G$ acts freely, properly and cocompactly. (In fact, the polytope $P_{L^2}$ can also be defined for other CW-complexes, not only classifying spaces.)

The original definition of $P_{L^2}$ takes as input a classifying space rather then its fundamental group, and if two different classifying spaces are taken, then the resulting polytopes differ by the Newton polytope of some element of the Whitehead group of $G$. The Whitehead group is a ($K$-theoretic) group consisting of obstructions for $G$-homotopy equivalences of $G$-CW complexes being $G$-simple homotopy equivalences.

\begin{thm*}
Let $G$ be a finitely-generated torsion-free group satisfying the Atiyah conjecture. The Newton polytope of any element of the Whitehead group $\mathrm{Wh}(G)$ vanishes. Hence, if $G$ is additionally $L^2$-acyclic and of type $\typeF{}$, then the $L^2$-torsion polytope $P_{L^2}(G)$ of $G$ is well-defined.

Also, if $G$ is amenable and $G \not \cong \Z$, then $P_{L^2}(G)$ is a singleton.
\end{thm*}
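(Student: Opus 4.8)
The plan is to reduce all three assertions to the behaviour of the Friedl--L\"uck polytope homomorphism $\mathbb{P}$ along one-dimensional slices. Recall that, as $G$ is torsion-free and satisfies the Atiyah conjecture, $\D(G)$ is a skew-field, and that for every primitive $\phi\colon G\to\Z$ with kernel $K$ one has $\Z G=\Z K[t^{\pm1};\tau]\subseteq\D(K)[t^{\pm1};\tau]=:R_\phi$, where $\D(G)$ is the Ore field of fractions of the twisted Laurent polynomial ring $R_\phi$; the $\phi$-component $N_\phi$ of $\mathbb{P}$ is, by construction, the Newton polytope with respect to the $t$-grading of the Dieudonn\'e determinant taken over $\D(G)=\operatorname{Frac}(R_\phi)$, and $N_\phi$ is a homomorphism to the one-dimensional polytope group (the leading and trailing coefficients of a product multiply without cancelling, since $\D(K)$ is a domain). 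For the first assertion I would represent a class $\alpha\in\mathrm{Wh}(G)$ by $A\in\GL_n(\Z G)$; the point is that $A^{-1}$ also lies in $\GL_n(\Z G)\subseteq\GL_n(R_\phi)$, so both $A$ and $A^{-1}$ are genuine matrices over $R_\phi$. By the main theorem of this paper their Newton polytopes $N_\phi(\det A)$ and $N_\phi(\det A^{-1})$ are then honest polytopes, not formal differences; but $N_\phi(\det A^{-1})=-N_\phi(\det A)$ (as $N_\phi$ is a homomorphism and $\det(AA^{-1})=1$), and an honest polytope whose negative is also honest must be a single point. (Alternatively, and without appeal to the main theorem: every class of $K_1(R_\phi)$ is represented by a unit of $R_\phi$, the units of $R_\phi$ are monomials $dt^m$, and the Newton polytope of $dt^m$ is the point $\{m\}$.) Thus the Newton polytope of $\alpha$ has width zero in the $\phi$-direction for every primitive $\phi$, hence zero width in every direction, hence is a single point; that is, it vanishes.

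The second assertion follows at once. If $G$ is additionally $L^2$-acyclic and of type $\typeF{}$, choose a finite classifying space $X$; then $C_*(\widetilde X)$ is $\D(G)$-acyclic and $P_{L^2}(X)=\mathbb{P}\bigl(\tau(C_*(\widetilde X)\otimes_{\Z G}\D(G))\bigr)$. Any other finite classifying space of $G$ is homotopy equivalent to $X$ through a map whose Whitehead torsion is some $\vartheta\in\mathrm{Wh}(G)$, and the two torsion polytopes differ exactly by the Newton polytope of $\vartheta$, which vanishes by the first part; so $P_{L^2}(G):=P_{L^2}(X)$ does not depend on $X$.

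For the last assertion, suppose in addition that $G$ is amenable and $G\not\cong\Z$. If $H^1(G;\Z)=0$ there is nothing to prove, so I fix a primitive $\phi\colon G\to\Z$ with kernel $K$; since $G$ is torsion-free and not isomorphic to $\Z$, the group $K$ is infinite, and $K$ inherits from $G$ amenability, torsion-freeness and the Atiyah conjecture. The plan is to prove the stronger statement that $C_*(\widetilde X)\otimes_{\Z G}R_\phi$ is acyclic over $R_\phi$ itself. Indeed $C_*(\widetilde X)$ is a resolution of the trivial module by finitely generated free $\Z G$-modules, so its homology after $\otimes_{\Z G}R_\phi$ is $H_*(G;R_\phi)$; and $R_\phi$, as a $\Z G$-module, is induced from the $\Z K$-module $\D(K)$, so by Shapiro's lemma $H_*(G;R_\phi)\cong H_*(K;\D(K))$. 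Now $K$ is amenable with $\Z K$ a domain (the latter because $K$ is torsion-free and satisfies Atiyah), so $\Z K$ is an Ore domain with Ore field of fractions $\D(K)$; in particular $\D(K)$ is flat over $\Z K$, whence $H_*(K;\D(K))$ vanishes in positive degrees and equals the coinvariants $\D(K)/\sum_{g\in K}(g-1)\D(K)$ in degree $0$; and since $K$ is infinite it contains some $g\neq1$, and $g-1$ is a unit of the skew-field $\D(K)$, so this degree-zero term vanishes too. Therefore $C_*(\widetilde X)\otimes_{\Z G}R_\phi$ is $R_\phi$-acyclic, its Whitehead torsion lies in $K_1(R_\phi)$ and maps to $\tau(C_*(\widetilde X)\otimes_{\Z G}\D(G))$; since every class of $K_1(R_\phi)$ is represented by a monomial, the $\phi$-component of $P_{L^2}(G)$ is a single point. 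As $\phi$ was arbitrary, $P_{L^2}(G)$ has zero width in every direction and is a singleton.

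Most of the above is routine: the passage to one-dimensional slices, Shapiro's lemma, flatness of Ore localisations, and the elementary fact that a polytope of vanishing width in every direction is a point. The real content of the first assertion is the input of the paper's main theorem, used for $A$ and for its inverse simultaneously. The main obstacle in the amenable case is to recognise that everything collapses to the single vanishing $H_*(K;\D(K))=0$, which is precisely where amenability is consumed: it is amenability (together with $\Z K$ being a domain) that makes $\Z K$ an Ore domain with Ore localisation the Linnell skew-field $\D(K)$. For non-amenable kernels the analogous homology can be non-zero --- e.g.\ for $K\cong F_2$ it is non-zero in degree one since $\chi(F_2)\neq0$ --- and the argument genuinely breaks down there; this is also why the hypothesis $G\not\cong\Z$ is needed, since for $K$ trivial the coinvariants $\D(K)_K=\Q$ do not vanish.
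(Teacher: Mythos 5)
Your slice computations are individually correct, and two of the three parts track the paper: the Whitehead part is at bottom the paper's own argument (that $P(A)+P(A^{-1})=0$ with both summands single polytopes, proved here as \cref{Whitehead} via \cref{single poly}), and the homotopy-invariance part is identical. For the amenable part your route is genuinely different: the paper (\cref{amenable}, via \cref{inheriting Ore} and Funke's localisation lemma) builds a global Ore set $S_0\subseteq \Z G$ from a subnormal chain, whereas you work one primitive $\phi$ at a time, using Shapiro's lemma and flatness of the Ore localisation $\D(K)$ of $\Z K$ to get $H_*(K;\D(K))=0$ and hence $R_\phi$-acyclicity, so that the torsion lifts to $K_1(R_\phi)$, which is generated by monomials. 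This is cleaner for the purely amenable statement and absorbs the abelian case that the paper treats separately in \cref{FLT conj}, though it proves less than \cref{amenable}.

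There is, however, a genuine gap in the step you use twice to assemble the slices: from ``every one-dimensional projection $N_\phi$ of the class is a point'' you conclude that the class is a point. This implication is false in $\P_T(\fab{G})$ once $\fab{G}$ has rank at least $2$: if $P$ is a triangle and $Q=-P$ its point-reflection, then $\max_P\phi-\min_P\phi=\max_Q\phi-\min_Q\phi$ for all $\phi$, so every projection of $P-Q$ is a point, yet $P-Q\neq 0$ in $\P_T$. Vanishing of all widths only gives $P+(-P)=Q+(-Q)$, and a priori both the Whitehead class and $P_{L^2}(G)$ are formal differences, not single polytopes.

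Both occurrences are repairable from ingredients you already have. For the Whitehead part, run the argument once in full dimension: apply \cref{single poly} over $\K\fab{G}$ (with $\K=\D(\ker(G\to\fab{G}))$, via \cref{linnell ore}) to $A$ and to $A^{-1}$ simultaneously; then $P(A)$ and $P(A^{-1})=-P(A)$ are single polytopes summing to zero, hence points --- no slicing needed. This is exactly the paper's proof. For the amenable part, your monomial representation actually gives more than zero width: it exhibits $P_{L^2}(G)$ as a difference $P_\phi-Q_\phi$ of two \emph{$\phi$-flat} polytopes for every primitive $\phi$ (the numerator and denominator of $d\in\D(K)^\times$ are supported in $\ker\phi$). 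Then \cref{flat polys} upgrades $P_{L^2}(G)$ to a single polytope $P$, and $P+Q_\phi=P_\phi+\{\mathrm{pt}\}$ forces $P$ to be $\phi$-flat for every $\phi$, so $P$ is a point. With these two repairs your proof is complete.
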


\smallskip
The methods used in the proofs of both theorems above are of an algebraic character: we use Sikorav's theorem (\cref{sikorav}) to relate BNS invariants to matrices over the group ring, and study these by extending scalars to a skew-field (given by the property of being agrarian -- usually, we will use the skew-field constructed by Linnell~\cite{Linnell1993} for torsion-free groups satisfying the Atiyah conjecture). Thus, the methods can be traced to the theory of $L^2$-invariants (already used for the computation of the BNS invariants of some $2$-generator $1$-relator groups by Friedl--Tillmann~\cite{FriedlTillmann2015}), but stripped  of its analytic flavour.
Let us also mention the article of Friedl--L\"uck~\cite{FriedlLueck2017}, which is foundational for the point of view presented here.

The paper is divided into four parts. Let us briefly discuss the contents of each of these.

\subsection*{Twisted group rings, biorderable groups, and the Ore localisation}

In \cref{sec prelims} we recall the three notions, which will be of central importance throughout the paper.

\subsection*{Matrices and their polytopes}
In \cref{sec dets} we prove the two main technical results of the paper. We let $H$ be a finitely generated free-abelian group, $\K$ be a skew-field, and denote by $\K H$ some twisted group ring of $H$ (this is a variation on the usual group ring, which appears naturally when one considers extensions of groups).
The ring $\K H$ can be thought of as a group ring, but also as a ring of twisted Laurent polynomials in several commuting variables over a skew-field.

We will embed $\K H$ into its skew-field of fractions $\D$ (formally, the Ore localisation), and study square matrices over $\K H$ which become invertible over $\D$. For any such matrix $A$, we have the Dieudonn\'e determinant $\det A$ at our disposal; since $\D$ is the skew-field of fractions, we can write $\det A$ as a fraction of two elements in $\K H$. Thinking of these elements as Laurent polynomials, we can take their Newton polytopes (convex hulls of their supports), and thus associate to $\det A$ a formal difference of two polytopes (one for the numerator, and one for the denominator).

The first result of significance is \cref{single poly}, which shows that in fact we can represent $\det A$ by a single polytope, denoted by $P(A)$. This rather innocuous statement has interesting consequences (listed below).
\cref{single poly} is also interesting in its own right: were our coefficients $\K$ forming a commutative field, and were the group ring $\K H$ not twisted, we would be dealing with matrices over an abelian ring of classical interest, the ring of Laurent polynomials. It is clear that in this case the determinant $\det A$ is a polynomial, and thus its Newton polytope is a single polytope. In our setting, the Dieudonn\'e determinant is not a polynomial (it is a rational function), but nevertheless we do obtain a single polytope, in analogy with the classical case.

\smallskip
The second theorem we prove in \cref{sec dets} is
\cref{K-bns matrix}.
It says that the shape of the Newton polytope $P(A)$ determines the existence of right-inverses of $A$ over the Novikov rings $\widehat {\K H}^\phi$. The superscript $\phi$ denotes a character $\phi \colon H \to \R$, and the Novikov ring $\widehat {\K H}^\phi$ is the ring of formal sums of elements in $H$ with coefficients in $\K$ (just like $\K H$), with the caveat that sums with infinite support are allowed, provided that the supports go to infinity `only in the direction of $\phi$'. \cref{K-bns matrix} says that $A$ admits a right-inverse over $\widehat {\K H}^\phi$ \iff $\phi$ attains its minimum on $P(A)$ at a unique vertex.

The Novikov rings play a central role in the $\Sigma$-theory (theory of the BNS invariants), since Sikorav~\cite{Sikorav1987} proved that a character $\phi$ lies in $\Sigma(G)$ \iff $H_1(G;\widehat{ \Z G}^\phi)=0$ (see \cref{sikorav}). Motivated by this result, we introduce the notion of the $\Sigma$-invariant of the matrix $A$, denoted by $\Sigma(A)$; \cref{K-bns matrix} tells us that $\Sigma(A)$ is determined by $P(A)$, and this will be the source of our polytopes throughout the paper.

\subsection*{Agrarian groups}
The focal point of the remainder of the paper will be the class of \emph{agrarian groups}, that is groups $G$ whose group ring $\Z G$ embeds in a skew-field. We introduce the class in \cref{sec agrarian}, where we also introduce the potentially smaller class of equivariantly agrarian groups.

It is clear that agrarian groups are torsion-free and satisfy the zero-divisor conjecture of Kaplansky; there are currently no other obstructions to being agrarian known -- there is not a single torsion-free example of a non-agrarian group. On the other hand, there are many positive examples: torsion-free amenable groups whose integral group rings have no zero-divisors, biorderable groups, and torsion-free groups satisfying the Atiyah conjecture are all examples of agrarian groups. The class is also closed under subgroups, directed unions, and many extensions, and being agrarian is a fully-residual property. Noteworthy from our point of view is that the class is known to contain all descending HNN extensions of free groups, and the fundamental groups of $3$-manifolds which fibre over the circle.

Note that \cref{sec agrarian,sec dets} are completely independent.

\subsection*{Applications}

In \cref{sec apps} we list the applications of \cref{single poly,K-bns matrix}.

Funke~\cite{Funke2018} introduced the notion of a group of \emph{polytope class}. He defined it as a subclass of the class of torsion-free groups satisfying the Atiyah conjecture and with finitely generated abelianisations. We show (in \cref{polytope class}) that in fact every group in this (a priori) larger class is of polytope class. This allows us to use the work of Funke in greater generality: we prove that the Newton polytopes of elements of the Whitehead group $\mathrm{Wh}(G)$ of a finitely-generated torsion-free group satisfying the Atiyah conjecture are trivial (\cref{Whitehead}). The Whitehead group is a quotient of $K_1(\Z G)$, the first $K$-group, by the subgroup $\{ \pm g \mid g \in G \}$. It plays an important role in homotopy theory:  $\mathrm{Wh}(G)$ is trivial \iff every homotopy equivalence between two CW-complexes with fundamental group $G$ is a simple-homotopy equivalence.
The elements of $\mathrm{Wh}(G)$ can be thought of as matrices over $\Z G$, and so it makes sense to talk about Newton polytopes of elements of $\mathrm{Wh}(G)$.
Our vanishing result can be seen as evidence towards the conjecture on triviality of the Whitehead groups of torsion-free groups.

The vanishing of the Newton polytopes of the elements of $\mathrm{Wh}(G)$ implies that the $L^2$-torsion polytope of Friedl--L\"uck \cite{FriedlLueck2017} is a homotopy invariant, and hence it can be defined as  a group invariant of $L^2$-acyclic groups of type $\typeF{}$ satisfying the Atiyah conjecture. The $L^2$-torsion polytope is obtained as the Newton polytope of (the determinant of) the universal $L^2$-torsion. This latter invariant can be thought of as an $L^2$-version of the Whitehead torsion -- we refer the reader to \cite{FriedlLueck2017}, where it is defined and discussed in more detail.

The next situation we look at occurs when the group $G$ admits a finite subnormal chain terminating in an amenable group $N$. We prove that if $N$ is not abelian, or $G$ has trivial centre, then the $L^2$-torsion polytope $P_{L^2}(G)$ is a singleton. We use this to
prove a conjecture of Friedl--L\"uck--Tillmann~\cite{Friedletal2016}, which states that if $G$ as above is amenable but not cyclic, then $P_{L^2}(G)$ is a singleton.

Afterwards, we look at an agrarian group $G$ of deficiency one. For such a group we show (\cref{defic 1}) that $\Sigma(G)$ is determined by an integral polytope. If $G$ satisfies the Atiyah conjecture then we obtain a slightly stronger result (\cref{defic 1 atiyah}). These results confirm a conjecture of Friedl (under the additional assumption of the group being agrarian). They are also interesting in view of a conjecture of Bieri, which states that any group of deficiency one with non-trivial $\Sigma(G)$ is a descending HNN extension of a free group. The BNS-invariants of descending HNN extensions will be discussed below, but the structure we exhibit for the BNS-invariants for agrarian groups of deficiency one is of the same kind as in the case of descending HNN extensions.

 We proceed to discuss precisely the descending HNN extensions of finitely generated free groups. Note that this class includes \{finitely generated free\}-by-$\Z$ groups (usually referred to as free-by-cyclic groups). It was the author's original motivation to prove that for such a group $G$, the invariant $\Sigma(G)$ has finitely many connected components -- this was not known before, except for free-by-cyclic groups with polynomially growing monodromy, see the work of Cashen--Levitt~\cite{CashenLevitt2016}.
 In fact we show more; we prove a result fully analogous to the statement of Thurston for $3$-manifolds: the $L^2$-torsion polytope $P_{L^2}(G)$ admits a marking of its vertices, and a character $\phi$ lies in $\Sigma(G)$ \iff it attains its minimum on $P_{L^2}(G)$ uniquely at a marked vertex (\cref{free by cyclic}). Note that the polytope $P_{L^2}(G)$ is (for descending HNN extensions of free groups) very much analogous to the Thurston polytope $B_{x^\ast}$ -- when the group is a $3$-manifold group (this happens for free-by-cyclic groups with geometric monodromy), the $L^2$-torsion polytope coincides with $B_{x^\ast}$ (see \cite{FriedlLueck2017}). Further analogies between $P_{L^2}(G)$ and $B_{x^\ast}$ for general descending HNN extensions of free groups were studied by Funke and the author in ~\cite{FunkeKielak2018}. In particular, $B_{x^\ast}$ contains enough information to recover the Thurston norm itself. Using the same technique one can use $P_{L^2}(G)$ to induce a function $H^1(G;\R) \to \R$. It was shown in~\cite{FunkeKielak2018} that in fact this function is also a semi-norm, and given an integral character it returns (up to a sign) the $L^2$-Euler characteristic of the kernel of the character. In particular, when the kernel is finitely generated, it returns its (usual) Euler characteristic, and so determines the rank of such a kernel (which is necessarily a free group).

 In the two final sections, we look at Poincar\'e duality groups of type $\typeF{}$ in dimension $3$, and then at $3$-manifolds themselves. In the first situation, again we prove  that $\Sigma(G)$ is determined by an integral marked polytope (\cref{pd}). When $M$ is a $3$-manifold, we reprove Thurston's theorem. Note that the proof we give is completely independent, and of an algebraic character.

\subsection*{Acknowledgements}
The author would like to thank Kai-Uwe Bux for comments on an earlier version of the article, as well as Stefan Friedl,
Fabian Henneke, Wolfgang L\"uck,
and Stefan Witzel for helpful discussions.

The author was supported by the Priority Programme 2026 \href{https://www.spp2026.de/}{`Geometry at infinity'} of the German Science Foundation (DFG).

\section{Twisted group rings, biorderable groups, and the Ore localisation}
\label{sec prelims}

We start by introducing three concepts that will be of the utmost importance.

\subsection{Twisted group rings}

Throughout the article, rings are unital and not necessarily commutative, modules are by default right-modules, and groups are discrete.

\begin{dfn}
Let $R$ be a ring. We denote its group of units by $R^\times$. An element $r$ is a \emph{zero-divisor} \iff $r$ is non-zero, and there exist $x \in R$ such that $xr = 0$ or $rx = 0$.
\end{dfn}

Let $R$ be a ring  and $G$ a group. We denote the set of functions $G \to R$ by $R^G$. Clearly, pointwise addition turns $R^G$ into an abelian group.
Since $R$ has a distinguished $0$, we can talk about supports.
\begin{dfn}[Support]
 For $x \in R^G$, we define its \emph{support} to be \[\supp x = \{ g \in G \mid x(g) \neq 0 \}\]
\end{dfn}
Note that for an element $x \in R^G$ we will use both the function notation, and the sum notation
\[
x = \sum_{g \in G} x(g) g
\]
In the sum notation, we will often ignore the elements $g$ with $x(g) = 0$, that is we will focus on the values $x$ attains on its support. The sum notation is particularly common when talking about the subgroup $R G$ of $R^G$ consisting of functions with finite support. We now explain how to endow $R G$ with a ring structure.

\begin{dfn}[Twisted group ring]
Let $\phi \colon G \to \Aut(R)$ and $\mu \colon G \times G \to R^\times$ be two functions satisfying
\begin{align*}
 \phi(g) \circ \phi(g') &= c\big(\mu(g,g')\big) \circ \phi(gg') \\
 \mu(g,g') \cdot \mu(gg',g'') &= \phi(g)\big(\mu(g',g'')\big) \cdot \mu(g,g'g'')
\end{align*}
where $c \colon R^\times \to \Aut(R)$ takes $r$ to the left conjugation by $r$. The functions $\phi$ and $\mu$ are called \emph{structure functions}, and turn $RG$ into a \emph{twisted group ring} by setting
\begin{equation}
\tag{$\star$}
\label{twisted conv}
r g \cdot r' g' = r \phi(g)(r') \mu(g,g') gg'
\end{equation}
and extending to sums by linearity (here we are using the sum notation).

When $\phi$ and $\mu$ are trivial, we say that $RG$ is \emph{untwisted} (in this case we obtain the usual group ring).
We adopt the convention that the group ring with $\Z$-coefficients is always untwisted.
\end{dfn}

The following is the key example of a twisted group ring, and we will use it repeatedly throughout the article.

\begin{ex}
\label{key example}
 Let $\Z G$ be the (untwisted) integral group ring. Let $\alpha \colon G \to H$ be a quotient with kernel $K$, and let $s \colon H \to G$ be a set-theoretic section of $\alpha$. Then the map
 \[
  g \mapsto \Big(g \cdot  \big(s \circ \alpha(g)\big)^{-1}\Big) \alpha(g)
 \]
induces an isomorphism of twisted group rings $\Z G \cong (\Z K)H$, where $\Z G$ and $\Z K$ are untwisted, and the structure functions of $(\Z K)H$ are as follows: $\phi(h)$ is the automorphism of $\Z K$ induced by the (left) conjugation by $s(h)$, and $\mu(h,h') = s(h)s(h') \big( s(hh')\big)^{-1}$.

Note that the inverse isomorphism $(\Z K)H \cong \Z G$ is given by
\[
\sum_{h \in H} x(h) h \mapsto \sum_{h \in H} x(h) s(h)
\]
where the coefficients $x(h)$ lie in $\Z K$, the sum on the left-hand side is formal, and the sum on the right-hand side is taken in $\Z G$.
\end{ex}

Note that in the generality that we introduced them, the rings $R G$ are sometimes called \emph{crossed products}.

The structure functions do not appear in the notation, but they are (implicitly) assumed to be specified. Note that the structure functions can be used to define products on other subsets of $R^G$ as follows.
Let $x,y \in R^G$ be two functions. For every $g \in G$ define $X_g$ and $Y_g$ to be the smallest subsets of $G$ such that if $g = a \cdot b$ and $(a,b) \in \supp x\times \supp y$, then $(a,b) \in X_g \times Y_g$. If $X_g$ and $Y_g$ are finite for every $g$, then \eqref{twisted conv} gives a method of calculating the product $x\cdot y$ by setting its value at $g$ to the value at $g$ of
\[
\Big( \sum_{a \in X_g} x(a) a \Big) \cdot \Big( \sum_{b \in Y_g} y(b) b \Big)
\]
We will refer to this product as induced by the \emph{twisted convolution \eqref{twisted conv}}.


\subsection{Biorderable groups}

Biorderable groups form a class of groups whose (twisted) group rings are well-behaved.

\begin{dfn}
\label{biord}
 A group $G$ is \emph{biorderable} \iff there exists a total ordering $\leqslant$ on $G$ which is invariant under right and left multiplication.
\end{dfn}

The class of biorderable groups is clearly closed under taking subgroups and arbitrary products -- for the latter, it is enough to order the factors of a product in some way, and then use lexicographic order on the resulting group. Together these two properties imply that residually biorderable groups are themselves biorderable.
The class is also closed under central extensions, and contains finitely generated free-abelian groups. Putting these properties together yields that residually torsion-free nilpotent groups are biorderable: this is a rather large class, containing free groups and surface groups (see e.g. \cite{Baumslag2010}), as well as RAAGs (as shown by Droms \cite{Droms1983}).

The class of biorderable groups is also closed under directed unions and free products -- see the book by Deroin--Navas--Rivas \cite[Section 1.1.1 and Theorem 2.1.9]{Deroinetal2016}.

The key property for us is that twisted group rings of biorderable groups with skew-field coefficients embed in skew-fields.

\begin{thm}[Malcev~{\cite{Malcev1948}}; Neumann~{\cite{Neumann1949}}]
\label{malcev-neumann}
 Let $G$ be a biorderable group, let $\K$ be a skew-field, and let $\K G$ be a twisted group ring. The ring $\K G$ embeds into a skew field $\mathcal F$, its \emph{Malcev--Neumann completion}.
\end{thm}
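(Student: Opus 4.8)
The plan is to realise $\mathcal F$ explicitly as a ring of Malcev--Neumann power series. Fix a total order $\leqslant$ on $G$ invariant under left and right multiplication, and let $\mathcal F$ be the subset of $\K^G$ consisting of those $x$ whose support $\supp x$ is a \emph{well-ordered} subset of $(G,\leqslant)$. I would equip $\mathcal F$ with pointwise addition and with the multiplication induced by the twisted convolution \eqref{twisted conv}. Observe that since each $\phi(g)$ is an automorphism of $\K$ it fixes $0$, and $\mu$ takes values in $\K^\times$, so the twisting affects only the coefficients occurring in a product, never the set of group elements; in particular the support calculus is exactly the same as in the untwisted case. Granting that this multiplication is well-defined and preserves well-orderedness, the ring axioms for $\mathcal F$ follow from the cocycle identities imposed on $\phi$ and $\mu$ precisely as they do for $\K G$, and the inclusion $\K G \subseteq \mathcal F$ is immediate because finite sets are well-ordered.

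\textbf{Closure under multiplication.} For well-definedness I would invoke the two classical combinatorial facts of B.\,H.~Neumann about well-ordered subsets of an ordered group: (i) if $S,T \subseteq G$ are well-ordered, then the product set $ST = \{st : s\in S,\ t\in T\}$ is well-ordered; and (ii) for each $g \in G$ the fibre $\{(s,t)\in S\times T : st = g\}$ is finite. Fact (ii) shows that the sets $X_g, Y_g$ from the twisted-convolution discussion in the excerpt are finite, so the product $x\cdot y$ is the honest finite sum described there; fact (i) shows its support is again well-ordered; and addition is handled trivially, a union of two well-ordered sets being well-ordered. (If one prefers not to cite these, both follow from a short argument: given an infinite strictly descending sequence $s_n t_n$ in $ST$, pass to a subsequence on which $s_n$ is monotone using well-orderedness of $S$, and derive a contradiction in $T$; a similar extraction bounds the fibres.)

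\textbf{Inverting a nonzero element.} The crux is that every nonzero $x \in \mathcal F$ is invertible. Let $g_0 = \min(\supp x)$, which exists by well-ordering, and let $u = x(g_0)\,g_0$, a monomial; since $x(g_0) \in \K^\times$, the element $u$ is a unit of $\mathcal F$ (monomials are clearly invertible, using that $\K$ is a skew-field). Using invariance of $\leqslant$ under multiplication, $u^{-1}x$ has minimal support element $e$ with coefficient $1$, so $u^{-1}x = 1 - y$ where $y \in \mathcal F$ has $\supp y \subseteq \{g \in G : g > e\}$. It then suffices to invert $1-y$, and the candidate is the geometric series $\sum_{n\geqslant 0} y^n$. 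To make sense of this I would prove, once more via Neumann's lemma, that the sub-semigroup $\langle \supp y\rangle$ generated by a well-ordered set of strictly positive elements is itself well-ordered, and moreover is \emph{locally finite} in the sense that each $g\in G$ lies in only finitely many of the sets $(\supp y)^n$. Local finiteness makes the coefficient of $\sum_n y^n$ at each $g$ a finite sum in $\K$, well-orderedness of $\langle\supp y\rangle$ puts the series in $\mathcal F$, and then one checks $(1-y)\sum_n y^n = \sum_n y^n(1-y) = 1$ termwise. Hence $x^{-1} = \big(\sum_{n\geqslant 0} y^n\big)u^{-1}$, and $\mathcal F$ is a skew-field.

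\textbf{Main obstacle.} The real content lies entirely in the combinatorics of ordered groups: that products of well-ordered subsets are well-ordered with finite fibres, and the consequent well-orderedness and local finiteness of the semigroup generated by a well-ordered set of positive elements. Everything else is routine, though the bookkeeping forced by the structure functions $\phi$ and $\mu$ (leading coefficients, inverses of monomials, twisted convolution of infinite sums) requires some care. As this is a classical result, I would either cite the combinatorial lemmas from a standard source (e.g.\ Passman's book, or the treatment in Deroin--Navas--Rivas~\cite{Deroinetal2016} already referenced) or include the short transfinite-extraction proofs indicated above.
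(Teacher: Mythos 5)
Your proposal is correct and follows exactly the route the paper takes: the paper only sketches the Malcev--Neumann construction (formal series with well-ordered support, multiplication via the twisted convolution, with the combinatorial lemmas of B.~H.~Neumann doing the real work) and cites the original sources for the details you fill in. The one genuinely hard ingredient --- well-orderedness and local finiteness of the semigroup generated by a well-ordered set of positive elements, needed for the geometric series --- is correctly identified and appropriately delegated to the classical references.
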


Let us briefly describe the construction: fix a biordering $\leqslant$ on $G$. We define $\mathcal F \subseteq \K^G$ to be the subset of those functions $G \to \K$ whose support is \emph{well-ordered} with respect to $\leqslant$, that is such that any non-empty subset of the support has a $\leqslant$-minimum. It is clear that $\mathcal F$ is an abelian group; it can be turned into a ring using the twisted convolution \eqref{twisted conv} -- one has to argue that when taking products, it is always sufficient to take product of two finitely supported functions, but this follows from the fact that functions in $\mathcal F$ have well-ordered supports. It turns out that the ring $\mathcal F$ is actually a skew-field.

\begin{lem}
 \label{units of biord}
 Let $G$ be a biorderable group, let $\K$ be a skew-field, and let $\K G$ denote some twisted group ring of $G$. The units of $\K G$ have supports of cardinality $1$.
\end{lem}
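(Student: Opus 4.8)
The plan is to argue directly from a biordering, using only that $\K$ has no zero-divisors; no completion or localisation is needed. Fix a bi-invariant total order $\leqslant$ on $G$, and let $u \in (\K G)^\times$ with two-sided inverse $v$. Since elements of $\K G$ have finite support and $u \neq 0$, I may write $u = \sum_{i=1}^m a_i g_i$ and $v = \sum_{j=1}^n b_j h_j$ with all coefficients $a_i, b_j$ non-zero in $\K$ and $g_1 < \dots < g_m$, $h_1 < \dots < h_n$. The goal is to show $m = 1$.

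The first step is the combinatorial heart of the argument: among the products $g_i h_j$ there is a \emph{unique} $\leqslant$-smallest element, namely $g_1 h_1$, and a unique largest, namely $g_m h_n$. Indeed, $g_1 \leqslant g_i$ gives $g_1 h_1 \leqslant g_i h_1$ by right-invariance, and $h_1 \leqslant h_j$ gives $g_i h_1 \leqslant g_i h_j$ by left-invariance, so $g_1 h_1 \leqslant g_i h_j$; and if equality holds then $g_i h_1 = g_1 h_1$, whence $g_i = g_1$ and then $h_j = h_1$, i.e. $(i,j)=(1,1)$. The maximum is handled symmetrically. The point of uniqueness is that, in computing the product $uv$ via the twisted convolution \eqref{twisted conv}, the coefficient at the group element $g_1 h_1$ receives no cancellation: it equals $a_1\,\phi(g_1)(b_1)\,\mu(g_1,h_1)$, and likewise the coefficient at $g_m h_n$ equals $a_m\,\phi(g_m)(b_n)\,\mu(g_m,h_n)$. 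Since $\phi(g)$ is a ring automorphism (so preserves non-vanishing) and $\mu(g,g') \in \K^\times$, each of these is a product of non-zero elements of the skew-field $\K$, hence non-zero.

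The second step concludes. After the standard normalisation of the structure functions (as in \cref{key example} with $s(e)=e$), the identity of $\K G$ is $1_\K\, e$, supported on the single group element $e$, and $uv = 1_{\K G}$. Therefore $g_1 h_1$ and $g_m h_n$ both lie in $\{e\}$, so $g_1 h_1 = g_m h_n$; but by uniqueness of the minimum this forces $(m,n) = (1,1)$. Hence $\supp u = \{g_1\}$ has cardinality one, as claimed (and the analogous statement for $v$ follows by symmetry).

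I do not anticipate a genuine obstacle. The only place demanding care is the bookkeeping around the twist: one must make sure the automorphisms $\phi(g)$ and the $2$-cocycle values $\mu(g,g')$ cannot make an extreme coefficient vanish — which is precisely why it matters that $\mu$ takes values in $\K^\times$, that $\phi$ takes values in $\Aut(\K)$, and that $\K$ has no zero-divisors. A minor cosmetic point is the normalisation ensuring $1_{\K G} = 1_\K e$; should one prefer to avoid it, the same extremality argument applied to the idempotent $1_{\K G} = (1_{\K G})^2$ (forcing its minimal and maximal support elements to equal $e$) pins its support down to a single element directly.
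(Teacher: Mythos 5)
Your proof is correct and follows essentially the same route as the paper's: both fix a biordering, expand $u$ and $v$ in increasing order of support, observe that the coefficients at the extremal products $g_1h_1$ and $g_mh_n$ are non-zero products in the skew-field $\K$ (using that $\mu$ lands in $\K^\times$ and $\phi(g)\in\Aut(\K)$), and conclude from $uv=1$ that the two extremes coincide, forcing singleton supports. Your extra care about the normalisation $1_{\K G}=1_\K e$ (or the idempotent workaround) is a minor refinement the paper passes over silently, not a different argument.
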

\begin{proof}
 Let $\leqslant$ be a biordering of $G$. Take $x \in \K G^\times$; by definition, there exists $y \in \K G$ such that $xy = 1$. Write
 \[
  x = \sum_{i=0}^n \lambda_i g_i, \ y = \sum_{j=o}^m \nu_j h_j
 \]
where the finite sequences $(g_i)$ and $(h_j)$ of elements of $G$ are strictly $\leqslant$-increasing, and the coefficients $\lambda_i$ and $\nu_j$ are never $0$.

We are now going to carry out the twisted multiplication and compute $x\cdot y$. It is immediate that the support of $xy$ does not contain any group element $\leqslant$-smaller than $g_0 h_0$. The coefficient at $g_0h_0$ of $xy$ is
\[\
 \lambda_0 \phi(g_0)(\nu_0)\mu(g_0,h_0)
\]
Crucially, none of the factors above are zero, and the multiplication is carried out in a skew-field $\K$. Therefore $g_0h_0$ is the strictly $\leqslant$-smallest element of the support of $xy$.

Arguing analogously, we conclude that $g_nh_m$ is the strictly $\leqslant$-greatest element of the support of $xy$. But $xy = 1$, and therefore $1$ is the only element in $\supp xy$. We conclude that $g_0 h_0 = g_n h_m$, which is only possible when $n=0=m$, in which case $x$ has support of cardinality $1$, as claimed.
\end{proof}

\subsection{Ore localisation}

In this section we review the notion of Ore localisation, and show how it comes into play when considering various twisted group rings of amenable groups.

\begin{dfn}
 Let $R$ be a ring. A subset $S \subseteq R$ is said to satisfy the \emph{left Ore condition} \iff for all $(p,r) \in R \times S$ there exists $(q,s) \in R \times S$ such that
 \[
  sp = qr
 \]
(this can be interpreted as the existence of left common multiples). The \emph{right Ore condition} is defined analogously by the equation
\[
 ps = rq
\]
The ring $R$ is said to be an \emph{Ore ring} \iff the subset $S$ of $R$ consisting of all non-trivial non-zero-divisors satisfies both the left and the right Ore condition. 
If additionally $R$ has no zero-divisors, then we say that it is an \emph{Ore domain}.
\end{dfn}

\begin{dfn}
Given a ring $R$ and a subset $S \subseteq R \s- \{0\}$ closed under multiplication and satisfying the left and right Ore conditions, we define the \emph{localisation} of $R$ at $S$ by
\[
 RS^{-1} = (R,S) /\sim
\]
where $\sim$ is the transitive closure of the relation identifying $(p,r)$ with $(px,rx)$ for any $x \in S$.

When $S$ is the subset of $R$ consisting of all non-zero non-zero-divisors, we call $R S^{-1}$ the \emph{Ore localisation}.
\end{dfn}

One should think of $(p,r)$ as the right fraction $p/r$. The Ore conditions allow us to change left fractions into right ones and vice versa; they also allow us to find common denominators. Therefore the localisation is itself a ring, and the map $p \mapsto p/1$ is a ring morphism. The construction is explained in Cohn's book \cite[Section 1.2]{Cohn1977}, and in more details in Passman's book~\cite[Section 4.4]{Passman1985}

\begin{rmk}
 \label{Ore rmk}
 Clearly, the Ore condition can be used to find common denominators for any finite number of elements in the localisation $R S^{-1}$.
\end{rmk}

The following facts may also be found in \cite[Section 1.2]{Cohn1977}.

\begin{prop}
\label{Ore loc}
Let $R$ be an Ore ring, and let $S$ denote the set of non-zero non-zero-divisors.
\begin{enumerate}
\item $R$ embeds into its Ore localisation $ R S^{-1}$.
\item Every automorphism of $R$ extends to an automorphism of $R S^{-1}$.
\label{Ore autos}
\item \label{Ore funct} When $R$ is an Ore domain, then any ring monomorphism $R' \into R$ extends to a monomorphism $R' S'^{-1} \into R S^{-1}$, where $S'$ is the group of units of $R'$.
\item When $R$ is an Ore domain, then $R S^{-1}$ is a skew-field.
\end{enumerate}
\end{prop}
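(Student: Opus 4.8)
The plan is to treat this as an exercise in assembling the classical facts about Ore localisation — all of it is in \cite[Section~1.2]{Cohn1977} (and \cite[Section~4.4]{Passman1985}), so most of the proof will be a pointer to those references together with the right order of implications. For (1) I would first check that $RS^{-1}$ is a ring at all: using \cref{Ore rmk} to put any two right fractions over a common right denominator, define their sum and product by the evident formulas, and verify that these are independent of the chosen denominator and satisfy the ring axioms. This is the one genuinely tedious step, and I expect it to be the main obstacle — which is why I would simply invoke Cohn here rather than reproduce it. Granted that, injectivity of $r \mapsto r/1$ is short: one records the standard criterion that $(p,q)\sim(p',q')$ \iff there are $u,u'\in R$ with $qu=q'u'\in S$ and $pu=p'u'$, and applies it to $r/1\sim 0/1$ to obtain $u\in S$ with $ru=0$; since $u$ is a non-zero-divisor, $r=0$.

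For (2) I would avoid a direct computation and use the universal property of the localisation instead. An automorphism $\beta$ of $R$ fixes $0$, hence permutes the set $S$ of non-zero non-zero-divisors, so the composite $R\xrightarrow{\beta}R\to RS^{-1}$ sends $S$ into the units of $RS^{-1}$ and therefore factors through a ring endomorphism of $RS^{-1}$ extending $\beta$; applying the same to $\beta^{-1}$ and using the uniqueness clause of the universal property shows that the two endomorphisms compose to the identity in either order, so we obtain an automorphism.

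For (4), now $S=R \s- \{0\}$: by (1) the ring $R$ sits inside $RS^{-1}$ and, by construction of the localisation, each $q\in S$ acquires a two-sided inverse there; since every element of $RS^{-1}$ has the form $pq^{-1}$ with $p\in R$ and $q\in S$, and such an element is $0$ exactly when $p=0$ (because $p=(pq^{-1})q$), any non-zero element is a product $pq^{-1}$ with $p,q\in S$, hence invertible. So $RS^{-1}$ is a skew-field. For (3), since $R'$ is a subring of the domain $R$ it has no zero-divisors; the composite $\iota\colon R'\into R\into RS^{-1}$ is an injective ring homomorphism carrying $S'$ into the units of $RS^{-1}$ (the images are non-zero, hence units by (4)), so the universal property of $R'S'^{-1}$ yields a ring homomorphism $\bar\iota\colon R'S'^{-1}\to RS^{-1}$ with $\bar\iota(p/s)=\iota(p)\,\iota(s)^{-1}$. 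If $\bar\iota(p/s)=0$ then $\iota(p)=0$ and hence $p=0$, so $\bar\iota$ is injective — this is (3). In short: the only real work is the foundational bookkeeping in (1), which I would quote from Cohn; (2), (3) and (4) then follow formally from the universal property and from the fact that $S$ consists of non-zero-divisors.
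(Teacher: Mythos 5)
Your proposal is correct and matches the paper's treatment: the paper likewise defers the foundational verification to Cohn \cite[Section 1.2]{Cohn1977} and Passman, and its (unpublished) sketch handles (1) via the equivalence relation on fractions, (2) by noting that an automorphism preserves $S$, and (4) by inverting $p/r$ as $r/p$ — exactly the arguments you give, with your appeal to the universal property being only a slightly more formal packaging of the same ideas.
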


Let us now introduce amenability.

\begin{dfn}[Amenable groups]
 A countable group $G$ is \emph{amenable} \iff there exists a sequence $(F_i)$ of non-empty finite subsets of $G$ (a \emph{F\o lner sequence}), such that for every $g \in G$ we have
 \[
  \frac {\vert F_i \vartriangle g.F_i \vert} {\vert F_i \vert} \longrightarrow 0
 \]
as $i \longrightarrow \infty$ (here $\vartriangle$ denotes the symmetric difference).
\end{dfn}
Note that in the definition above one can easily replace the left translates $g . F_i$ by the right translates $F_i . g$.

In the context of group rings, the Ore condition is (almost) equivalent to amenability of the underlying group, as shown by the author in an appendix to an article of Bartholdi~\cite{Bartholdi2016}.

\begin{thm}[{\cite[Theorem A.1]{Bartholdi2016}}]
Let $G$ be a group, and suppose that the group ring $\Z G$ has no zero-divisors. Then $\Z G$ is an Ore domain \iff $G$ is amenable.
\end{thm}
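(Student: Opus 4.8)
The plan is to prove the two implications separately: ``$G$ amenable $\Rightarrow \Z G$ Ore'' is a dimension count with F\o lner sets, while ``$\Z G$ Ore $\Rightarrow G$ amenable'' I would attack in contrapositive form, and it is by far the harder half.

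\emph{Amenable implies Ore.} As $\Z G$ is a domain by hypothesis, it is enough to verify the left Ore condition -- the right one then follows by applying the anti-automorphism $g \mapsto g^{-1}$ of $\Z G$ -- and this reduces to showing $\Z G p \cap \Z G r \neq 0$ whenever $p, r$ are non-zero, since a non-zero element of that intersection, written as $sp = qr$, is exactly the left common multiple required. Suppose it vanishes, fix a F\o lner sequence $(F_i)$, and set $A = \supp p \cup \supp r \cup \{1\}$. Since $\Z G$ has no zero-divisors, $\Z F_i p$ and $\Z F_i r$ are free abelian of rank $|F_i|$, and $\Z F_i p \cap \Z F_i r \subseteq \Z G p \cap \Z G r = 0$, so $\Z F_i p + \Z F_i r$ has rank $2|F_i|$; but it is contained in $\Z(F_i A)$, which has rank $|F_i A| = |F_i|(1 + o(1))$ by the F\o lner condition. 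Hence $2 \leqslant 1 + o(1)$, a contradiction for large $i$.

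\emph{Ore implies amenable.} I would prove the contrapositive: a non-amenable $G$ has non-zero $a, b \in \Z G$ with $\Z G a \cap \Z G b = 0$ (equivalently, $\Z G$ contains a free left submodule of rank $2$), so it is not left Ore. One first reduces to the finitely generated case: $G$ has a finitely generated non-amenable subgroup $H$, and $\Z G$ is free as a right $\Z H$-module on a set $\{c\}$ of left coset representatives, so $\Z G a = \bigoplus_c c(\Z H a)$ for $a \in \Z H$; thus $\Z H a \cap \Z H b = 0$ already forces $\Z G a \cap \Z G b = 0$. For $G$ finitely generated and non-amenable, failure of the F\o lner condition yields a finite symmetric $S \ni 1$ with (after replacing $S$ by a power) $|SF| \geqslant 2|F|$ for all finite non-empty $F$. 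Feeding this into the locally finite version of Hall's marriage theorem -- applied to the bipartite graph on $G \sqcup (G \times \{1,2\})$ with $g$ adjacent to $(h,i)$ when $g \in hS$ -- gives an injection $\Psi \colon G \times \{1,2\} \into G$ with $\Psi(x,i) \in xS$ for all $x,i$; that is, $G$ is ``boundedly doubled''.

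\emph{The crux, and the expected obstacle.} It remains to convert $\Psi$ into an algebraic witness, i.e.\ non-zero $a,b$ for which the translates $\{ga : g \in G\} \cup \{gb : g \in G\}$ are $\Z$-linearly independent in $\Z G$. The guiding example is $G = F_2 = \langle x, y \rangle$, where $a = x-1$ and $b = y-1$ work: the augmentation ideal of $\Z F_2$ is free as a left module on $\{x-1, y-1\}$, so $\Z F_2 a \cap \Z F_2 b = 0$ and $\Z F_2$ is not left Ore. Carrying this out for an arbitrary non-amenable $G$ is the step I expect to be the genuine obstacle: the maps one reads off from $\Psi$ are only $\Z$-linear, not $\Z G$-linear, so building from them an honest free rank-$2$ left submodule of $\Z G$ must use non-amenability itself (and not merely the presence of a free subgroup, since non-amenable groups need not contain $F_2$). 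Once $a, b$ are in hand, $\Z G$ fails left Ore and the theorem is proved.
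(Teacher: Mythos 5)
Your first implication is correct and is essentially Tamari's argument, which is the only part of this theorem the paper actually proves (as \cref{tamari}, in the more general twisted, skew-field-coefficient setting): the paper phrases it as a homogeneous linear system with fewer equations than unknowns over a skew-field, you phrase it as a rank count of free abelian groups inside $\Z(F_iA)$, and over $\Z$ your version is the right one. The reduction of the left Ore condition to $\Z G p \cap \Z G r \neq 0$, the use of the anti-automorphism $g \mapsto g^{-1}$ to get the right-hand condition, and the linear independence of $\{fp \mid f \in F_i\}$ from the no-zero-divisors hypothesis are all fine.

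The second implication has a genuine gap, and you have located it precisely yourself. The reduction to finitely generated non-amenable subgroups is sound, and Hall's marriage theorem does produce the doubling injection $\Psi \colon G \times \{1,2\} \into G$ with $\Psi(x,i) \in xS$. But the passage from this set-theoretic doubling to two non-zero $a,b \in \Z G$ with $\Z G a \cap \Z G b = 0$ is not a detail to be filled in later: it is the entire content of the result being cited. The paper gives no proof of this direction at all; it quotes \cite[Theorem A.1]{Bartholdi2016}, and there the failure of the Ore condition for non-amenable $G$ is deduced from the main theorem of that paper (non-amenable groups admit pre-injective, non-surjective linear cellular automata, which is what produces a free left submodule of too large a rank inside a finitely generated free $\K G$-module). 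That theorem is proved by a genuinely involved linear-algebraic refinement of the doubling map, not by the doubling map alone. Your own remark that the maps read off from $\Psi$ are merely $\Z$-linear is exactly the difficulty: one must manufacture a $\Z G$-linear (i.e.\ translation-equivariant) injection out of non-equivariant combinatorial data, and no soft argument is known to do this -- in particular the $F_2$ model case gives no guidance for non-amenable groups without free subgroups. As written, the proposal proves one direction and leaves the other open at its crux.
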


One of the implications was shown by Tamari~\cite{Tamari1957}. We will go back to his proof, since we will require a version of his theorem for twisted group rings.

\begin{thm}[Tamari~{\cite{Tamari1957}}]
\label{tamari}
 Let $G$ be an amenable group, and let $\K$ be a skew-field. If a twisted group ring $\K G$ does not contain zero-divisors, then $\K G$ is an Ore domain.
\end{thm}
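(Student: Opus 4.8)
The plan is to adapt Tamari's Følner-averaging argument to the twisted setting. Fix a Følner sequence $(F_i)$ for $G$. Given $p \in \K G$ and $r \in \K G \s- \{0\}$, we want $q, s \in \K G$ with $s \neq 0$ and $sp = qr$ (and the analogous right condition). The idea is to look for $q$ and $s$ supported on a large Følner set $F_i$: write $s = \sum_{g \in F_i} \lambda_g g$ and $q = \sum_{g \in F_i} \nu_g g$ with unknown coefficients $\lambda_g, \nu_g \in \K$, and view the equation $sp = qr$ as a system of $\K$-linear equations in these $2|F_i|$ unknowns. The support of $sp - qr$ is contained in $F_i \cdot (\supp p \cup \supp r)$, a set of size at most $|F_i|\,(|\supp p| + |\supp r|)$; each group element in this set contributes one equation (with coefficients in $\K$, read off via the twisted convolution \eqref{twisted conv}). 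So we have a homogeneous system with roughly $2|F_i|$ unknowns and at most $|F_i|\,(|\supp p| + |\supp r|)$ equations. This is not yet enough to force a solution, so the Følner condition must be used to cut down the number of \emph{effective} equations.

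The key observation is that if $g \in F_i$ and $g$ is ``deep inside'' $F_i$ in the sense that $g \cdot \supp p \subseteq F_i \cdot \supp p$ lands in a region only reachable from $F_i$, then the contributions to near-boundary elements vanish; more precisely, one partitions the target set $F_i \cdot (\supp p \cup \supp r)$ into the ``interior'' part and a ``boundary'' part whose size is $o(|F_i|)$ by the Følner property. Restricting attention to coefficients indexed by $g$ with $g \cdot \supp p$ and $g \cdot \supp r$ both contained in a fixed finite translate pattern, one sets up the system so that the number of genuinely constraining equations is at most (interior size) $+ o(|F_i|)$, while the number of unknowns grows like $2|F_i|$. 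Since over a skew-field $\K$ a homogeneous linear system with strictly more unknowns than equations has a nontrivial solution (Gaussian elimination / dimension count works verbatim over division rings), for $i$ large we obtain a nonzero $(q,s)$. One then needs $s \neq 0$: arrange the count so that one cannot kill all the $\lambda_g$, e.g. by a separate dimension bookkeeping, or observe that if $s = 0$ then $qr = 0$ forces $q = 0$ since $\K G$ has no zero-divisors, contradicting nontriviality. The right Ore condition is symmetric, using right translates $F_i . g$ in the Følner sequence.

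The main obstacle is the careful counting that makes the Følner property bite: one must choose the portion of $F_i$ over which the coefficients $\lambda_g, \nu_g$ are allowed to be nonzero, and the portion of $G$ on which equations are imposed, so that (unknowns) $>$ (equations) holds for large $i$. In the untwisted case this is Tamari's original estimate; here the only new point is that the structure functions $\phi, \mu$ enter the convolution, but since $\mu$ takes values in $\K^\times$ and $\phi(g)$ is an automorphism of $\K$, each matrix entry of the linear system is a unit times an automorphism-image of a coefficient of $p$ or $r$ — in particular the ranks and the dimension count are unaffected, because the relevant notions of linear (in)dependence over the skew-field $\K$ are insensitive to applying an automorphism of $\K$ or multiplying by a unit. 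So the combinatorial heart of the argument goes through unchanged, and the absence of zero-divisors in $\K G$ is used only at the very end to upgrade ``$(q,s) \neq (0,0)$'' to ``$s \neq 0$''.
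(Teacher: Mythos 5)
Your overall strategy is exactly the paper's (and Tamari's): seek $q,s$ supported on a large F\o lner set, view $sp=qr$ as a homogeneous left-$\K$-linear system in the $2|F|$ coefficients, and win by a dimension count over the skew-field $\K$. Your treatment of the two genuinely new twisted features is also correct: the structure functions only multiply matrix entries by units of $\K$ and apply automorphisms of $\K$, so the linear algebra is unaffected; and the absence of zero-divisors is used only to upgrade $(q,s)\neq(0,0)$ to $s\neq 0$ (if $s=0$ then $qr=0$ with $r\neq 0$ forces $q=0$). The paper additionally economises by checking the Ore condition on one side only and transporting it to the other side via the anti-automorphism induced by $g\mapsto g^{-1}$, but your symmetric argument with right translates is equally valid.

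The one real gap is the step you yourself flag as ``the main obstacle'': you never actually establish that the number of equations is smaller than $2|F_i|$, and the mechanism you sketch for it (partitioning the target into interior and boundary, and restricting which coefficients $\lambda_g,\nu_g$ are allowed to be nonzero) is both unnecessary and not obviously workable as stated --- nothing about the contributions to ``near-boundary elements'' vanishes, and restricting the support of $q,s$ to a proper subset of $F_i$ would shrink your pool of unknowns. The correct observation is simpler: with $U=\supp p\cup\supp r$, the equation indexed by $h$ is the trivial identity $0=0$ unless $h\in\bigcup_{g\in U}F_i.g$, because $\supp(sp)\subseteq F_i\cdot\supp p$ and $\supp(qr)\subseteq F_i\cdot\supp r$. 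Now choose the F\o lner set so that $|F_i\vartriangle F_i.g|/|F_i|<1/|U|$ for every $g\in U$; then
\[
\Big|\bigcup_{g\in U}F_i.g\Big|\;\leqslant\;|F_i|+\sum_{g\in U}|F_i.g\s- F_i|\;<\;|F_i|+|U|\cdot\frac{|F_i|}{|U|}\;=\;2|F_i| ,
\]
so the system has strictly fewer than $2|F_i|$ nontrivial equations in $2|F_i|$ unknowns and a nonzero solution exists. In other words, your initial bound $|F_i|\,(|\supp p|+|\supp r|)$ on the number of equations is the trivial one; the F\o lner property is used precisely to replace it by $|F_i\cdot U|<2|F_i|$, with no further restriction on the unknowns. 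With that count inserted, your argument coincides with the paper's proof.
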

\begin{proof}
 Since $\K G$ admits an anti-automorphism induced by $g \mapsto g^{-1}$, it is enough to check the Ore condition on one side.
 (Note that this anti-automorphism sends $\lambda g$ to $g^{-1} \lambda = \phi(g^{-1})(\lambda) g^{-1}$ for $\lambda \in \K$.)

 Let $p,r \in \K G $ with $r \neq 0$. Let $F$ be a F\o lner set such that
 \[
 \frac {\vert F \vartriangle F.g \vert} {\vert F \vert} < \frac 1 { \vert U \vert}
 \]
 for every $g \in U = \supp p \cup \supp r$. Consider $q = \sum_{f \in F} \lambda_f f$ and $s = \sum_{f \in F} \lambda'_f f$; we treat the elements $\lambda_f, \lambda_f' \in \K$ as $2 \vert F \vert$ variables. We now try to solve the linear equation
 \[
 qr(h) = sp(h)
 \]
 for every $h \in G$. Note that this equation is trivial except possibly for
 \[h \in F \cup \bigcup_{g \in U} F.g\]
  that is except for fewer than $2 \vert F \vert$ elements $h$. Hence we are solving a system of fewer than $2 \vert F \vert$ equations with $2 \vert F \vert$ variables. Since we are working over a skew-field, a non-zero solution exists. Let us pick one such -- this way we have defined $q$ and $s$ such that $qr = sp$. If $s=0$ then $r$ is a zero-divisor, which is a contradiction. Therefore the pair $(q,s)$ is as required.
\end{proof}

For the sake of completeness, let us also introduce the elementary amenable groups.

\begin{dfn}[Elementary amenable groups]
 The class of \emph{elementary amenable} groups is the smallest class containing all finite groups and all abelian groups, and closed under subgroups, quotients, direct unions, and extensions.
\end{dfn}
It is classical that all elementary amenable groups are amenable -- the class of amenable groups contains finite groups and $\Z$, and is closed under subgroups, colimits, quotients, and extensions.

In \cref{sec: vanishing} we will need the following lemma.

\begin{lem}
\label{inheriting Ore}
 Let $G$ be any group, and let $N$ be a normal subgroup. 
 Suppose that $S \subseteq \Z N \s- \{0\}$ is closed under multiplication and satisfies the left Ore condition in $\Z N$. Then $S^G$, the multiplicative closure of the $G$-conjugates of $S$, satisfies the left Ore condition in $\Z G$.
\end{lem}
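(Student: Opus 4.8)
The plan is to verify the left Ore condition for $S^G$ in $\Z G$ by hand, after two routine reductions and one genuine computation with cosets. Note first that, since $N$ is normal, conjugation by any $g\in G$ restricts to a ring automorphism $c_g$ of $\Z N$; in particular every $G$-conjugate of an element of $S$ lies in $\Z N$, so $S^G\subseteq\Z N$. Moreover $c_g$ permutes the generating set $\{\,hsh^{-1}:h\in G,\ s\in S\,\}$ of the monoid $S^G$ and is a ring automorphism, hence $c_g(S^G)=S^G$.

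\textbf{Two reductions.} I would first record the elementary observation that if $T\subseteq R$ is multiplicatively closed and for every $p\in R$ and every $r$ in some monoid generating set of $T$ there are $q\in R$, $\sigma\in T$ with $\sigma p=qr$, then the same holds for all $r\in T$: writing $r=r_1\cdots r_k$ with the $r_i$ generators and clearing them from the right one at a time ($\sigma_k p=q_kr_k$, then $\sigma_{k-1}q_k=q_{k-1}r_{k-1}$, and so on), the element $\sigma_1\cdots\sigma_k\in T$ together with $q_1$ does the job. Applied to $T=S^G$ this reduces the lemma to the case $r=gsg^{-1}$ with $g\in G$, $s\in S$. Applying now the automorphism $c_{g^{-1}}$, which preserves $\Z N$ and permutes $S^G$, the equation $\sigma p=q(gsg^{-1})$ is equivalent to $c_{g^{-1}}(\sigma)\,c_{g^{-1}}(p)=c_{g^{-1}}(q)\,s$; hence it suffices to solve $\sigma p=qs$ for arbitrary $p\in\Z G$ and $s\in S$.

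\textbf{The auxiliary claim and the coset computation.} The heart of the argument is that $S^G$ satisfies the left Ore condition \emph{in $\Z N$}. This follows from the two reductions above carried out entirely inside $\Z N$ (using $c_g(\Z N)=\Z N$): one is reduced to pairs $(p,s)$ with $p\in\Z N$ and $s\in S$, and for these the hypothesis that $S$ is a left Ore set in $\Z N$ applies (recall $S\subseteq S^G$). Consequently, by the standard common-denominator argument (\cref{Ore rmk}), any finitely many elements of $S^G$ have a common left multiple lying in $S^G$. Now fix $p\in\Z G$ and $s\in S$, choose right coset representatives $\{x_j\}$ for $N$ in $G$ so that $\Z G=\bigoplus_j (\Z N)x_j$ as left $\Z N$-modules, and write $p=\sum_j p_jx_j$ with $p_j\in\Z N$, finitely many nonzero. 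For each $j$ with $p_j\ne 0$ the element $s_j:=x_jsx_j^{-1}$ lies in $S^G$, so the auxiliary claim gives $a_j\in\Z N$ and $\tau_j\in S^G$ with $\tau_j p_j=a_js_j$; picking $\sigma\in S^G$ and $u_j\in\Z N$ with $\sigma=u_j\tau_j$ for all these $j$, and setting $q_j:=u_ja_j$ (and $q_j:=0$ when $p_j=0$), we obtain $\sigma p_j=q_js_j$ for every $j$. With $q:=\sum_j q_jx_j\in\Z G$ this yields
\[
qs=\sum_j q_jx_js=\sum_j q_j(x_jsx_j^{-1})x_j=\sum_j q_js_jx_j=\sum_j \sigma p_jx_j=\sigma p,
\]
using $\sigma\in\Z N$, which completes the reduced case and hence the proof.

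\textbf{Main obstacle.} The one point that genuinely requires care is producing a \emph{single} $\sigma\in S^G$ clearing all of the coset components $p_j$ at once; this is exactly why one needs the auxiliary claim that $S^G$ — and not merely $S$ — is a left Ore set in $\Z N$, so that \cref{Ore rmk} supplies a common left multiple \emph{inside $S^G$}. Everything must also be kept strictly one-sided: unlike in the proof of \cref{tamari}, there is no anti-automorphism available here to trade the left Ore condition for the right one.
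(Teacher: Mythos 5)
Your proof is correct and follows essentially the same route as the paper: both first establish that $S^G$ satisfies the left Ore condition in $\Z N$ (by peeling off one conjugate factor of an element of $S^G$ at a time), and then pass to $\Z G$ by decomposing $p$ over cosets of $N$ and clearing the coset components. The only differences are minor — you clear all components simultaneously via a common left multiple inside $S^G$ (after an initial conjugation reduction to $r\in S$), whereas the paper clears them one at a time by a second induction.
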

\begin{proof}
We first argue that $S^G$ satisfies the left Ore condition in $\Z N$, and then we argue that any $G$-invariant subset of $\Z N$ closed under multiplication and satisfying the left Ore condition in $\Z N$ actually also satisfies the left Ore condition in $\Z G$.

To prove the first claim, take $p \in \Z N$ and $s = s_1 \dots s_n$ where each $s_i$ lies in some $G$-conjugate of $S$. We will find left common multiples of $p$ and $s$.
We argue by induction on $n$. If $n=1$ then the claim follows from the Ore condition for $S$ (which clearly holds for all conjugates of $S$ as well). Otherwise, using the Ore condition, there exist $q \in \Z N$ and $t_n$ in the same conjugate of $S$ as $s_n$, such that
\[
t_n p = q s_n
\]
Now by the inductive hypothesis there exist $r \in \Z N$ and $ u \in S^G$ such that
\[
 uq = r s_1\dots s_{n-1}
\]
and therefore
\[
 ut_n p = r s
\]
This proves the first claim.

\smallskip
To prove the second claim, take $s \in S$ and $p \in \Z G$, where we assume that $S$ is multiplicatively closed and $G$-invariant.
We have $\Z G = (\Z N) \, G/N$ (as in \cref{key example}), and so we can write
\[
 p = \sum_{i=1}^n p_i
\]
where $p_i = \kappa_i g_i$, with $g_i \in G/N$ and $\kappa_i \in \Z N$.
Using the Ore condition in $\Z N$ we conclude that there exist $\kappa_1' \in \Z N$ and $s_1 \in S$ such that $s_1 {\kappa_1} =  \kappa_1' s^{{g_1}^{-1}}$, where exponentiation denotes right conjugation, and we are using the fact that $S$ is $G$-invariant.

We now argue by induction on $n$: if $n=1$, then
\[
 {s_1} p = {s_1} \kappa_1 g_1 = \kappa_1' s^{{g_1}^{-1}} g_1 = \kappa_1' g_1 s
\]
and the Ore condition is verified. Otherwise, we have
\[
 s_1 p -  \kappa_1' g_1 s = s_1 p - s_1 p_1 =  \sum_{i=2}^n s_1\kappa_i p_i
\]
and $s_1 \kappa_i \in \Z N$ for every $i$.
By the inductive hypothesis there exist $q \in \Z G$ and $t \in S$ such that
\[
 t \cdot (\sum_{i=2}^n s_1 \kappa_i p_i) = q s
\]
and so
\[
 t s_1 p = (t \kappa_1' g_1 + q)s \qedhere
\]
\end{proof}

\section{Matrices and their polytopes}
\label{sec dets}

In this section, we consider a finitely generated free-abelian group $H$.
With the applications in mind, one will not err by thinking of $H$ as the free part of the abelianisation of a finitely generated group. On the other hand, the twisted group ring $\K H$ (where $\K$ is a skew-field), which will be the main focal point of the section, can be interpreted as a ring of twisted Laurent polynomials in finitely many commuting variables, and the more algebraically-minded reader might prefer this point of view.

\subsection{Polytopes}

We start by looking at polytopes. Note that $H_1(H;\R) = H\otimes_\Z \R$ is a finite dimensional vector space over $\R$.

\begin{dfn}[Polytopes]
 A \emph{polytope} in $H_1(H;\R)$ is a compact subset which is the intersection of finitely many affine halfspaces. Note that, in particular, the empty set is a polytope.

Given a polytope $P$ and a character $\phi \in H^1(H;\R)$, we define 
\[
F_\phi(P) = \big\{ p \in P \mid \phi(p) = \min_{q \in P} \phi(q) \big\}
\]
(clearly, this is precisely the set of points in $P$ on which $\phi$ attains its minimum).

The image $F_\phi(P)$ is also a polytope. The collection $\{ F_\phi(P) \mid \phi \in H^1(H;\R) \}$ is the collection of \emph{faces} of $P$. A face is called a \emph{vertex} \iff it has dimension $0$. Note that $P = F_0(P)$ is also a face of $P$.

A polytope $P$ is \emph{integral} \iff its vertices lie in $H \subseteq H_1(H;\R)$; a polytope $P$ is \emph{$\phi$-flat} \iff $P = F_\phi(P)$.

The \emph{Minkowski sum} of two polytopes $P$ and $Q$ defined by
\[
 P + Q = \{p+q \mid p \in P, q \in Q \}
\]
turns the set of all non-empty polytopes in $H_1(H;\R)$ into a cancellative abelian monoid. It is clear that the Minkowski sum restricts to an operation on the set of non-empty integral polytopes in $H_1(H;\R)$, turning this set into an abelian cancellative monoid as well.

We define $\P(H)$ to be the Grothendieck group of fractions of the monoid of non-empty integral polytopes in $H_1(H;\R)$: the group $\P(H)$ is constructed from the free-abelian group with basis equal to the set of non-empty integral polytopes by factoring out relations given by the Minkowski sum.
It is easy to see that every element in $\P(H)$ is represented by a formal difference $P - Q$ of polytopes. We say that an element is a \emph{single polytope} \iff we can take $Q$ to be a singleton. In this case $P$ is uniquely determined up to translation.
Note that $F_\phi$ induces a homomorphism
$F_\phi \colon \P(H) \to \P(H)$ given by $F_\phi(P-Q) = F_\phi(P) - F_\phi(Q)$.

We also define $\P_T(H)$ to be the quotient of $\P(H)$ by the subgroup consisting of formal differences of singletons. It is immediate that this is equivalent to considering pairs of polytopes $P-Q$ up to translation of the first polytope. Note that a single polytope in $\P(H)$ is represented by a unique single polytope in $\P_T(H)$.
\end{dfn}

Let us now introduce duals, which establish a correspondence between polytopes in $H_1(H;\R)$ and subsets of $H^1(H;\R)$.

\begin{dfn}[Duals]
 Let $P \subset H_1(H;\R)$ be a polytope. Given a face $Q$ of $P$, we define its \emph{duals} to be the connected components of
\[
 \{\phi \in H^1(H;\R) \s-\{0\} \mid F_\phi(P) = Q \}
\]
\end{dfn}
\begin{rmk}
The only case in which a face $Q$ admits more then one dual occurs when $Q = P$ and $P$ is of codimension $1$ in $ H_1(H;\R)$. In this case, there are precisely two duals, each of which consists of a single ray.
\end{rmk}

\begin{rmk}
 Observe also that duals are convex, and if $P$ is not empty, then every dual of a vertex of $P$ is open, and the union of duals of vertices of $P$ is dense in $H^1(H;\R)$.
\end{rmk}

\begin{lem}
\label{faces decomp}
Let $P = \sum_{i=1}^n P_i$ be a Minkowski sum of polytopes, and let $Q$ be a face of $P$. There exist unique faces $Q_i$ of $P_i$ such that $Q = \sum_{i=1}^n Q_i$.
\end{lem}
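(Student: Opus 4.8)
The plan is to rely on two elementary facts about the face operator $F_\phi$: every face of a polytope $P$ in $H_1(H;\R)$ has the form $F_\phi(P)$ for some $\phi \in H^1(H;\R)$ (with $\phi = 0$ allowed, in which case $F_\phi(P) = P$), and $F_\phi$ is additive with respect to Minkowski sums. We may assume each $P_i$ is non-empty, as otherwise $P = \emptyset$ and there is nothing to prove.

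First I would establish the additivity $F_\phi\big(\sum_{i=1}^n P_i\big) = \sum_{i=1}^n F_\phi(P_i)$. Writing $m_i = \min_{q \in P_i} \phi(q)$, every element of $\sum_i P_i$ is of the form $\sum_i p_i$ with $p_i \in P_i$, and $\phi\big(\sum_i p_i\big) = \sum_i \phi(p_i) \geqslant \sum_i m_i$, with equality precisely when $\phi(p_i) = m_i$ for every $i$, i.e. when $p_i \in F_\phi(P_i)$ for every $i$. Hence $\min_{q \in P}\phi(q) = \sum_i m_i$ and the locus where this minimum is attained is exactly $\sum_i F_\phi(P_i)$. Existence of the $Q_i$ now follows at once: since $Q$ is a face of $P$, write $Q = F_\phi(P)$ and put $Q_i := F_\phi(P_i)$, which are faces of the $P_i$ summing to $Q$.

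For uniqueness, suppose $Q = \sum_i Q_i'$ with each $Q_i'$ a face of $P_i$, and fix $\phi$ with $F_\phi(P) = Q$. For arbitrary $q_i' \in Q_i'$ the point $\sum_i q_i'$ lies in $Q = F_\phi(P)$, so $\sum_i \phi(q_i') = \min_P \phi = \sum_i m_i$; since each $\phi(q_i') \geqslant m_i$, this forces $\phi(q_i') = m_i$, and therefore $q_i' \in F_\phi(P_i)$. Thus $Q_i' \subseteq F_\phi(P_i) = Q_i$ for every $i$, while $\sum_i Q_i' = Q = \sum_i Q_i$. To upgrade these containments to equalities, suppose instead $Q_j' \subsetneq Q_j$ for some $j$; choosing $x \in Q_j \setminus Q_j'$ and separating $x$ from the compact convex set $Q_j'$ yields $\psi \in H^1(H;\R)$ with $\psi(x) > \max_{q \in Q_j'}\psi(q)$, so $\max_{Q_j}\psi > \max_{Q_j'}\psi$ whereas $\max_{Q_i}\psi \geqslant \max_{Q_i'}\psi$ for all $i$. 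Since the maximum of a linear functional over a Minkowski sum is the sum of the maxima, this gives $\max_{\sum_i Q_i}\psi > \max_{\sum_i Q_i'}\psi$, contradicting $\sum_i Q_i' = \sum_i Q_i$. Hence $Q_i' = Q_i$ for all $i$.

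The only genuinely non-formal step is this last one: cancellativity of the Minkowski monoid (which gives $A + C = B + C \Rightarrow A = B$) does not by itself imply that containments $Q_i' \subseteq Q_i$ with $\sum_i Q_i' = \sum_i Q_i$ must be equalities; the separating-hyperplane argument -- equivalently, a comparison of support functions -- is what supplies this. Everything else is routine bookkeeping with minima and maxima of linear functionals, so I do not expect any further difficulty.
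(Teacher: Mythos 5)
Your proof is correct, and the existence half (take $Q_i = F_\phi(P_i)$ for any $\phi$ with $F_\phi(P)=Q$, using additivity of $F_\phi$ over Minkowski sums) is exactly what the paper does. The uniqueness half, however, goes a genuinely different route. The paper only shows that the \emph{canonical} decomposition $Q_i = F_\psi(P_i)$ is independent of the choice of $\psi$ in the dual of $Q$: it perturbs $\psi$ inside the dual, notes that the faces $F_\psi(P_i)$ can only shrink under perturbation while their sum stays fixed, and then uses connectivity of the dual (handling separately the degenerate codimension-one case with two antipodal dual rays). You instead prove the literal, stronger statement of the lemma -- uniqueness among \emph{all} decompositions of $Q$ into faces of the $P_i$ -- by first showing that any competing faces $Q_i'$ must satisfy $Q_i' \subseteq F_\phi(P_i)$ (feed a point of $Q_i'$ into the minimality equation), and then upgrading these containments to equalities via support functions. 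Your observation that cancellativity alone does not close this last gap, and that one needs the identity $\max_{\sum Q_i}\psi = \sum_i \max_{Q_i}\psi$ together with a separating functional, is exactly the right point to flag. What your version buys is the full uniqueness as stated; what the paper's version buys is precisely the form of well-definedness it actually invokes later (that $F_\psi(P_i) = F_\phi(P_i)$ whenever $F_\psi(P) = F_\phi(P)$), which your argument of course also yields.
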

\begin{proof}
 Let $Q$ be a face of $P$.
By definition, this means that there exists a character $\psi$ such that $Q = F_\psi(P)$. Let $Q_i = F_\psi(P_i)$; we clearly have $Q = \sum Q_i$. We now argue that the definition of $Q_i$ is in fact independent of the choice of $\psi$.

Perturb $\psi$ slightly inside of the dual of $Q$ containing it; this way we can potentially decrease the dimensions of some of the faces $F_\psi(P_i)$ without increasing the dimension of any other. Since $Q$ is well-defined, this proves that the faces $Q_i$ do not depend on the choice of $\psi$ locally. Now, the dual of a face is unique or there are two duals, each consisting precisely of one character, the two characters being antipodal. In the former case, the fact that $Q_i$ is well defined follows immediately from connectivity; 
in the latter case, it is clear that if $Q = F_\psi(P) = F_{-\psi}(P)$ then $F_\psi(P_i) = F_{-\psi}(P_i)$ for each $i$.
\end{proof}

We now state a result of Funke which gives us a method of recognising single polytopes among the elements of $\P(H)$.

\begin{prop}[Funke~{\cite[Lemma 4.3]{Funke2018}}]
\label{funke}
Suppose that $H$ is of rank at least $2$.
For every $P \in \P(H)$, the element $P$ is a single polytope \iff $F_\phi(P)$ is a single polytope for each $\phi \in H^1(H;\Z) \s- \{0\}$.
\end{prop}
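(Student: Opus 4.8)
The plan is to transcribe the statement into the language of support functions. For a nonempty polytope $P\subseteq H_1(H;\R)$ write $h_P\colon H^1(H;\R)\to\R$ for its support function $h_P(\phi)=\min_{p\in P}\phi(p)$; recall the standard facts that $h_P$ is piecewise linear, positively homogeneous of degree one, and concave, that $h_{P+Q}=h_P+h_Q$, and that conversely every function with these three properties equals $h_P$ for a unique nonempty polytope $P$. The key reformulation is that an element $R=A-B$ of $\P(H)$ is a single polytope if and only if the function $h_A-h_B$ is concave (the latter depends only on the class $R$, since $h_A-h_B=h_{A'}-h_{B'}$ whenever $A-B=A'-B'$). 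Indeed, if $h_A-h_B$ is concave it equals $h_C$ for some polytope $C$, and the identity $A=B+C$ forces each vertex of $C$ to be a difference of a vertex of $A$ and a vertex of $B$, so $C$ is integral and $R=C-\{0\}$; conversely, the function associated to a single polytope $C-\{v\}$ is $h_C$ minus a linear functional, hence concave. The easy half of the proposition is now immediate: $F_\phi$ is a homomorphism $\P(H)\to\P(H)$ carrying singletons to singletons, so if $P$ is a single polytope then $F_\phi(P)$ is a single polytope for every $\phi\in H^1(H;\R)$, not merely for every integral one.

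For the hard half I argue by contraposition. Suppose $P=A-B$ is not a single polytope, so that $g:=h_A-h_B$ is not concave; I must exhibit an integral $\phi\neq 0$ with $F_\phi(P)$ not a single polytope. The first step is the only place the hypothesis $\operatorname{rank}H\geqslant 2$ enters: I claim that a continuous, positively-homogeneous-of-degree-one function on a real vector space of dimension at least two which is concave on a neighbourhood of every nonzero point is globally concave. To prove this one checks midpoint concavity $g(\tfrac{x+y}{2})\geqslant\tfrac{g(x)+g(y)}{2}$: if the segment $[x,y]$ misses the origin it is covered by neighbourhoods on which $g$ is concave, whence $g$ is concave along it; in the only remaining case, that of antipodal points $y=-tx$ with $t>0$, one uses $\dim\geqslant 2$ to perturb $y$ slightly off the line $\R x$ so that the resulting segment misses the origin, and then passes to the limit, obtaining $g(x)+g(-x)\leqslant 0$, which is exactly what is needed. (For $\dim=1$ the claim — and the proposition — fails, the defect of concavity being concentrated at the origin, which no nonzero character detects.) Applying the contrapositive of this claim to $g$ produces a nonzero $\phi_0\in H^1(H;\R)$ at which $g$ is not locally concave.

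Near $\phi_0$ I invoke the standard local expansion $h_P(\phi_0+\eta)=h_P(\phi_0)+h_{F_{\phi_0}(P)}(\eta)$, valid for all $\eta$ of sufficiently small norm. It shows that on a neighbourhood of $\phi_0$ the function $g$ agrees, up to an affine term, with $\widetilde g:=h_{F_{\phi_0}(A)}-h_{F_{\phi_0}(B)}$; therefore $\widetilde g$ is not concave, i.e.\ $F_{\phi_0}(P)=F_{\phi_0}(A)-F_{\phi_0}(B)$ is not a single polytope, by the reformulation applied to the (integral) faces $F_{\phi_0}(A)$, $F_{\phi_0}(B)$. It remains to replace $\phi_0$ by an integral character: the faces $F_\phi(A)$ and $F_\phi(B)$, and hence the property ``$F_\phi(P)$ is not a single polytope'', depend only on which relatively open cone of the common refinement $\Sigma$ of the normal fans of $A$ and $B$ contains $\phi$. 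Since $A$ and $B$ are integral, $\Sigma$ is a rational fan; the cone $\tau_0$ containing $\phi_0$ has $\dim\tau_0\geqslant 1$ because $\phi_0\neq 0$, so its relative interior contains a nonzero element $\phi\in H^1(H;\Z)$. For this $\phi$, the element $F_\phi(P)$ is not a single polytope, contradicting the hypothesis; this proves the contrapositive.

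The main obstacle is precisely this last passage — from a defect of concavity of $g$ at some real character to one at an integral character — which is what makes both the rank-$\geqslant 2$ lemma (to move the defect away from the origin) and the rationality of $\Sigma$ (to then move the witness onto the lattice $H^1(H;\Z)$) necessary. Everything else — the support-function dictionary, the directional-derivative expansion, the concavity bookkeeping, and the integrality of a Minkowski difference of integral polytopes — is routine.
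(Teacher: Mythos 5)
The paper does not actually prove this proposition -- it is imported verbatim as Funke's Lemma~4.3 -- so there is no in-paper argument to compare against; your proof has to stand on its own, and it does. The support-function dictionary is set up correctly (with the paper's $\min$ convention, so concavity rather than convexity is the right notion), and the reformulation ``$A-B$ is a single polytope iff $h_A-h_B$ is concave'' is sound, including the point that an integral Minkowski summand of integral polytopes is integral (each vertex of $C$ in $A=B+C$ is $F_\phi(A)-F_\phi(B)$ for a generic $\phi$, hence a lattice point). The two genuinely non-routine steps are exactly the ones you flag, and both are handled: the rank-$\geqslant 2$ lemma correctly localises a failure of concavity at some \emph{nonzero} $\phi_0$ (the perturbation of an antipodal segment off the line through the origin is valid precisely because $\dim\geqslant 2$, and midpoint concavity plus continuity does give concavity), and the passage from $\phi_0$ to an integral character works because the relatively open cone of the common refinement of the two normal fans containing $\phi_0$ is the relative interior of a rational cone of dimension $\geqslant 1$, on which $F_\phi(A)$ and $F_\phi(B)$ are constant. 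The local expansion $h_P(\phi_0+\eta)=h_P(\phi_0)+h_{F_{\phi_0}(P)}(\eta)$ is the standard fact that for small perturbations the minimum over the vertex set is attained among the vertices of $F_{\phi_0}(P)$. Note also that your argument is consistent with the necessity of the rank hypothesis, as the rank-one example $\{0\}-[0,1]$ shows. This is a clean, self-contained proof; whether it coincides with Funke's original argument cannot be judged from this paper, but nothing here needs repair.
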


The following corollary occurs as \cite[Lemma 4.7]{Funke2018}.
\begin{cor}
\label{flat polys}
Let $P \in \P(H)$. Suppose that for every $\phi \in H^1(H;\Z) \s- \{0\}$ there exists a $\phi$-flat polytope $X_\phi$ such that $P+X_\phi$ is a single polytope. Then $P$ is a single polytope.
\end{cor}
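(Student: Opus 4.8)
The plan is to argue by induction on the rank of $H$, using Funke's criterion \cref{funke} at each stage to reduce the single-polytope property of $P$ to that of its faces $F_\phi(P)$; the point is that each $F_\phi(P)$ is ``$\phi$-flat'' and therefore genuinely lives in the polytope group of a free-abelian group of rank one smaller, where the inductive hypothesis can be applied. The base case is $\operatorname{rank} H \leqslant 1$, which is immediate: when $\operatorname{rank} H = 1$ the only $\phi$-flat polytopes for a non-zero character are points, so the hypothesis says directly that some translate of $P$ is a single polytope, hence so is $P$; and $\P(H)$ is trivial when $\operatorname{rank} H = 0$.

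For the inductive step, assume $n = \operatorname{rank} H \geqslant 2$ and that the corollary holds in all smaller ranks. By \cref{funke} it suffices to show that $F_\phi(P)$ is a single polytope for each $\phi \in H^1(H;\Z) \smallsetminus \{0\}$. Fix such a $\phi$; let $\psi$ be the primitive character with $\phi \in \Z_{>0}\,\psi$, put $H' = \ker\psi = \ker\phi$ (free abelian of rank $n-1$), choose $t \in H$ with $\psi(t) = 1$, and let $\pi \colon H_1(H;\R) \to H_1(H';\R)$ be the projection along $\R t$. Then $\pi$ maps $H$ onto $H'$, hence induces a homomorphism $\pi_* \colon \P(H) \to \P(H')$. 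Two observations drive the argument. First, since $X_\phi$ is $\phi$-flat we have $F_\phi(X_\phi) = X_\phi$, so by \cref{faces decomp} and the homomorphism property of $F_\phi$, $F_\phi(P) + X_\phi = F_\phi(P + X_\phi)$, which is a single polytope (the image of a single polytope under $F_\phi$ is again one). Second --- and this is the technical core --- if $R \in \P(H)$ is a difference of $\phi$-flat polytopes, then $R$ is a single polytope in $\P(H)$ if and only if $\pi_*(R)$ is a single polytope in $\P(H')$: for the forward direction one notes that a single representative $C$ of such an $R$ must itself be $\phi$-flat (apply $F_\phi$ to $C$), and $\pi$ restricts to an affine isomorphism of $C$ onto an integral polytope; the converse is a short computation inside the cancellative Minkowski monoid, in which the translation vector that appears is forced to lie in the lattice.

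It remains to check that $\pi_*(F_\phi(P)) \in \P(H')$ satisfies the hypotheses of the corollary, i.e.\ that for every $\chi \in H^1(H';\Z) \smallsetminus \{0\}$ there is a $\chi$-flat polytope $Z_\chi$ with $\pi_*(F_\phi(P)) + Z_\chi$ a single polytope. Given $\chi$, extend it to $\tilde\chi \in H^1(H;\Z) \smallsetminus \{0\}$ by setting $\tilde\chi(t) = 0$, so that $\tilde\chi = \chi \circ \pi$; apply the hypothesis of the corollary to $\tilde\chi$ to obtain a $\tilde\chi$-flat $X_{\tilde\chi}$ with $P + X_{\tilde\chi}$ a single polytope, and set $Z_\chi = \pi_*\big(F_\phi(X_{\tilde\chi})\big)$. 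As in the first observation, $F_\phi(P + X_{\tilde\chi}) = F_\phi(P) + F_\phi(X_{\tilde\chi})$ is a single polytope and a difference of $\phi$-flat polytopes, so by the second observation $\pi_*(F_\phi(P)) + Z_\chi = \pi_*\big(F_\phi(P) + F_\phi(X_{\tilde\chi})\big)$ is a single polytope in $\P(H')$; and $Z_\chi$ is $\chi$-flat because $F_\phi(X_{\tilde\chi})$, being a face of the $\tilde\chi$-flat polytope $X_{\tilde\chi}$, is $\tilde\chi$-flat, and $\tilde\chi = \chi\circ\pi$. The inductive hypothesis now gives that $\pi_*(F_\phi(P))$ is a single polytope, hence so is $F_\phi(P)$ by the second observation; as $\phi$ was arbitrary, \cref{funke} yields that $P$ is a single polytope.

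The main obstacle is the second observation: one has to set up the identification between $\phi$-flat elements of $\P(H)$ and elements of $\P(H')$ carefully enough to transport the notion of single polytope both ways, paying attention to integrality --- the polytopes $Z_\chi$ are integral because they are $\pi_*$-images of faces of integral polytopes, and the translation vector in the converse half of the observation is automatically integral since it is a difference of lattice vertices. Everything else is routine bookkeeping with \cref{funke} and \cref{faces decomp}.
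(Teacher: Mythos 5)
Your proof is correct and follows essentially the same route as the paper: induction on the rank of $H$ with the same base cases, reduction via \cref{funke} to the faces $F_\phi(P)$, and verification of the inductive hypothesis for the face inside $\P(\ker\phi)$ using faces of the flat polytopes supplied by the hypothesis for extended characters. The only difference is presentational --- you make explicit, via the projection $\pi$ and your ``second observation'', the identification of $\phi$-flat differences with elements of $\P(\ker\phi)$ that the paper dispatches with the phrase ``up to translation''.
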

\begin{proof}
 We will argue by induction on the rank of $H$. When this rank is $0$,
 every element of $\P(H)$ is a single polytope. When the rank is $1$, then the only $\phi$-flat polytopes are singletons, and so $P$ is a single polytope.

 Now suppose that the result holds for $n-1$, and let $H$ be of rank $n$ (with $n\geqslant 2$). We will argue that for every $\psi \in H^1(H;\Z) \s- \{0\}$, the face $F_\psi(P)$ is a single polytope; in view of \cref{funke}, this suffices.

Note that $F_\psi(X_\phi)$ is a $\phi$-flat polytope. Also, $F_\psi(P+X_\phi) = F_\psi(P) + F_\psi(X_\phi)$ is a single polytope, since $P + X_\phi$ is.
But, up to translation, we have $F_\psi(P) = Q - Q'$ such that $Q,Q'$ and $F_\psi(X_\phi)$ lie in $\P(\ker \psi)$, and the rank of $\ker \psi$ is lower than that of $H$. Hence, by the inductive hypothesis, $F_\psi(P)$ is a single polytope.
\end{proof}

\subsection{Dieudonn\'e determinant}

Let us start by introducing an important convention: whenever we talk about a matrix, we will explicitly state over which ring it lies. In particular, properties like invertibility will always be taken in the ring over which the matrix is defined. If we want to consider a matrix as a matrix over a larger ring (via extension of scalars), we will use tensor notation. Unless specified otherwise, we tensor over $\Z G$. We will use $M_n(R)$ to denote the ring of $n \times n$ matrices over a ring $R$.

We are now going to introduce the Dieudonn\'e determinant which can be computed for square matrices over a skew-field. We will later show how to associate a polytope to such determinants.

\begin{dfn}
Let $A=(a_{ij}) \in M_n(\D)$, where $\D$ is a skew-field. The \emph{canonical representative} of the Dieudonn\'e determinant $\det^c A \in \D$ is defined inductively as follows:
\begin{enumerate}
\item If $n=1$ then $\det^c A = a_{11}$.
\item If the last row of $A$ consists solely of zeros, then $\det^c A = 0$.
\item If $a_{nn} \neq 0$ then we form an $(n-1) \times (n-1)$ matrix $A' = (a'_{ij})$ by setting
$a'_{ij} = a_{ij} - a_{in} a_{nn}^{-1} a_{nj}$,
and declare $\det^c A = \det^c A' \cdot a_{nn}$.
\item If $a_{nn} =0$ and not every entry in the least row of $A$ is zero then let $j$ be maximal such that $a_{nj} \neq 0$. Let $B$ be the symmetric matrix interchanging $j$ and $n$. We declare $\det^c A = -\det^c (AB)$.
\end{enumerate}

The \emph{Diedonn\'e determinant} $\det A$ is defined to be the image of $\det^c A$ in \[\D^\times /[\D^\times,\D^\times] \sqcup \{0\}\]
\end{dfn}

The procedure in step $(3)$ corresponds to multiplying $A$ on the right by elementary matrices until the last row has a single non-zero element. Thus, computing the canonical representative of the Dieudonn\'e determinant consists of putting the matrix $A$ into an upper-diagonal form, and then taking the product of the diagonal entries. Hence it is clear that over commutative fields the Dieudonn\'e determinant coincides with the usual determinant.

Note that when writing $\det A$ we do not need to specify the skew-field, since we have adopted the convention that every matrix comes with a specified ring over which it lies.

\begin{thm}[Dieudonn\'e~{\cite{Dieudonne1943}}]
For any $n$, the Dieudonn\'e determianant \[\det \colon M_n(\D) \to \D^\times /[\D^\times,\D^\times] \sqcup \{0\}\] is multiplicative.
\end{thm}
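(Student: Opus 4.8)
The plan is to reduce multiplicativity to a computation with the recursive definition of $\det^c$, performed in the abelian group $\D^\times/[\D^\times,\D^\times]$ --- note that $\det^c$ itself is \emph{not} multiplicative, so passing to the commutator quotient is essential. First I would dispose of the singular case. The defining algorithm for $\det^c$ is Gaussian elimination by column operations, and one sees that $\det^c A \neq 0$ exactly when $A$ is invertible (otherwise the algorithm meets an all-zero last row at some stage and returns $0$). If $A$ is not invertible then neither is $AB$, since a one-sided inverse of a square matrix over a skew-field is two-sided, so an inverse of $AB$ would give a right, hence two-sided, inverse of $A$; symmetrically $\det B = 0$ forces $\det(AB) = 0$. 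In either case both sides of the claimed identity are $0$ in $\D^\times/[\D^\times,\D^\times]\sqcup\{0\}$. From now on $A, B\in\GL_n(\D)$.

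Next I would use the classical fact that $\GL_n(\D)$ is generated by the transvections $E_{ij}(\lambda)$ (with $i\neq j$, $\lambda\in\D$) together with the diagonal matrices --- this follows from Gaussian elimination, essentially the column reduction underlying the definition of $\det^c$. Granting the two identities
\[
\det(SX) = \det(S)\cdot\det(X)\quad\text{for every generator }S\text{ and every }X\in\GL_n(\D)
\]
together with $\det(E_{ij}(\lambda)) = 1$ and $\det(\operatorname{diag}(d_1,\dots,d_n)) = [d_1\cdots d_n]$, multiplicativity is immediate by induction on the length $k$ of a word $A = S_1 S_2\cdots S_k$ in the generators: indeed $\det(AB) = \det\!\big(S_1(S_2\cdots S_k B)\big) = \det(S_1)\det(S_2\cdots S_k B) = \det(S_1)\det(S_2\cdots S_k)\det(B) = \det(A)\det(B)$, where the middle step is the inductive hypothesis applied to the shorter word $S_2\cdots S_k$ and the outer steps are the displayed identity.

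The two scalar computations $\det(E_{ij}(\lambda)) = 1$ and $\det(\operatorname{diag}(d_1,\dots,d_n)) = [d_1\cdots d_n]$ fall straight out of the recursion: an elementary matrix has last row or last column equal to a standard basis vector, so the Schur-complement step $(3)$ simply erases it, and a diagonal matrix reduces step by step to the product of its diagonal entries. The real work --- and the step I expect to be the main obstacle --- is the local identity $\det(SX) = \det(S)\det(X)$, which has to be checked against the recursion on the last row. Left multiplication by $E_{ij}(\lambda)$ adds $\lambda$ times row $j$ to row $i$, and left multiplication by a diagonal matrix rescales the rows. When the operation avoids row $n$, passing to the Schur complement $A' = \big(a_{kl} - a_{kn}a_{nn}^{-1}a_{nl}\big)$ converts it to the same operation on the smaller matrix (after a step-$(4)$ column swap if $a_{nn}$ vanishes, which commutes with row operations), so the inductive hypothesis applies; when row $n$ is affected one expands directly, the representative cases being $S = E_{in}(\lambda)$ (where $a'_{kl} = x'_{kl}$ for $k,l<n$ and $a_{nn} = x_{nn}$, so $\det^c(SX) = \det^c X$), $S = \operatorname{diag}(d_1,\dots,d_n)$ (where $(SX)' = \operatorname{diag}(d_1,\dots,d_{n-1})X'$ with bottom-right entry $d_n x_{nn}$, and one finishes in the abelianization using $[d_1\cdots d_{n-1}]\,[d_n x_{nn}] = [d_1\cdots d_n]\,[x_{nn}]$), and the delicate case $S = E_{nj}(\lambda)$ with $j<n$, which can be handled by a similar expansion or by first rewriting $E_{nj}(\lambda)$ through the other generators. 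Throughout, the non-commutativity of $\D$ only ever intrudes in the single bottom-right entry at each stage of the recursion, which is precisely why everything becomes clean after projecting to $\D^\times/[\D^\times,\D^\times]$.
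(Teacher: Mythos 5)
The paper offers no proof of this statement at all --- it is imported verbatim from Dieudonn\'e's 1943 article and used as a black box --- so there is no in-paper argument to compare yours against; I can only assess the outline on its own terms. Your strategy (dispose of the singular case, generate $\GL_n(\D)$ by transvections and diagonal matrices, compute $\det$ on the generators, and verify the local identity $\det(SX)=\det(S)\det(X)$ against the recursion defining $\det^c$) is exactly the standard proof, and the cases you actually carry out --- $E_{ij}(\lambda)$ with $i,j<n$, the case $E_{in}(\lambda)$ where the Schur complement is literally unchanged, and diagonal $S$ --- are correct as written, as is the reduction of the singular case and the induction over words in the generators.

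The one genuine gap is the case you yourself flag as delicate, $S=E_{nj}(\lambda)$ with $j<n$, and your proposed escape route does not exist: $E_{nj}(\lambda)$ cannot be rewritten as a word in the remaining generators, because for any product $P$ of diagonal matrices and transvections $E_{ik}(\mu)$ with $i\ne n$, the $n$-th row of $P$ is a scalar multiple of $e_n$, whereas the $n$-th row of $E_{nj}(\lambda)$ is $e_n+\lambda e_j$. (The commutator identity $E_{nj}(\lambda)=[E_{nk}(\lambda),E_{kj}(1)]$ only trades one row-$n$ transvection for another, so it does not help either.) Hence the direct expansion is unavoidable, and it is genuinely the crux: left multiplication by $E_{nj}(\lambda)$ changes the pivot from $x_{nn}$ to $x_{nn}+\lambda x_{jn}$, which may vanish when $x_{nn}$ does not, and vice versa, so the recursion can branch into step $(4)$ for $SX$ while using step $(3)$ for $X$. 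One must then compare canonical representatives obtained from Schur complements taken at different pivot positions and check that they differ by an element of $[\D^\times,\D^\times]$ --- this is precisely where passing to the abelianisation earns its keep, and it is essentially the entire content of Dieudonn\'e's theorem. Until that case is written out in full, what you have is an accurate plan rather than a proof.
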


\subsection{The Newton polytope of a matrix}

Recall that $\K$ is a skew-field, and $\K H$ is a twisted group ring of $H$, a finitely generated free-abelian group.
We start by introducing Newton polytopes of elements in $\K H$.
Note that $\K H$ has no zero-divisors -- this follows directly from \cref{malcev-neumann}, since $H$ is biorderable.

To study the Newton polytopes let us first introduce minima $\mu_\phi$, which are algebraic counterparts to the face maps $F_\phi$.

\begin{dfn}[$\mu_\phi$]
\label{minima}
For any character $\phi \in H^1(H;\R)$ we define
$\mu_\phi \colon { \K H} \to \K H$ by setting
\[
 \mu_\phi(x)(h) = \left\{ \begin{array}{cl}
                           x(h) & \textrm{ if } \phi(h) \textrm{ is minimal in } \phi(\supp x) \\
                           0 & \textrm{ otherwise}
                          \end{array} \right.
\]
(we are using the function notation here).
\end{dfn}
Note that $\supp \mu_\phi(x)$ consists of elements in $H$ with the same value under $\phi$.

Since $\K H$ has no zero-divisors, it is an easy exercise to see that $\mu_\phi$ is multiplicative.

\begin{dfn}
 Let $p \in \K H$ be an element. The associated \emph{Newton polytope} $P(p)$ is the convex hull of $\supp p$ taken in the $\R$-vector space $H_1(H;\R)$.
\end{dfn}

Note that, for every $\phi \in H^1(H;\R)$ and every $p \in \K H$, we have
\[
 F_\phi\big(P(p)\big) = P\big( \mu_\phi(p) \big)
\]

\begin{lem}
\label{P is hom}
The map $P \colon \K H \s- \{0\} \to \P(H)$ satisfies
\[
 P(pq) = P(p) + P(q)
\]
for every $p,q \in \K H \s- \{0\}$
\end{lem}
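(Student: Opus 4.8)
The plan is to show $P(pq) = P(p) + P(q)$ by proving the two inclusions of convex hulls separately. One direction is elementary from the definition of the twisted convolution: every element of $\supp(pq)$ is of the form $gg'$ with $g \in \supp p$ and $g' \in \supp q$ (since coefficients $x(g)\phi(g)(y(g'))\mu(g,g')$ are computed in the skew-field $\K$, and cancellation can only shrink the support, never enlarge it). Hence $\supp(pq) \subseteq \supp p \cdot \supp q$, and taking convex hulls gives $P(pq) \subseteq P(p) + P(q)$, using that the convex hull of a sumset is the Minkowski sum of the convex hulls.

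The reverse inclusion $P(p) + P(q) \subseteq P(pq)$ is the substantive part, and this is where the hypothesis that $\K H$ has no zero-divisors (via \cref{malcev-neumann}, since $H$ is biorderable) is essential — otherwise catastrophic cancellation could destroy extremal terms. The strategy is to test against characters: it suffices to show that for every $\phi \in H^1(H;\R)$, the face $F_\phi(P(p)+P(q))$ meets $P(pq)$, or more precisely that $\min_{\phi}$ on $P(p)+P(q)$ equals $\min_\phi$ on $P(pq)$ (together with the already-established inclusion, equality of support of minima in every direction forces equality of the polytopes). Concretely, fix $\phi$ and recall that $F_\phi(P(p)) = P(\mu_\phi(p))$ and similarly for $q$. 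Since $\mu_\phi$ is multiplicative (stated just above the lemma), $\mu_\phi(p)\mu_\phi(q) = \mu_\phi(pq)$, and since $\K H$ has no zero-divisors this product is nonzero; moreover $\supp \mu_\phi(p)$, $\supp \mu_\phi(q)$, and $\supp\mu_\phi(pq)$ each lie on a single affine hyperplane where $\phi$ is constant, with the $\phi$-values adding. This shows the $\phi$-minimal value on $P(pq)$ equals the sum of the $\phi$-minimal values on $P(p)$ and on $P(q)$, i.e. $F_\phi(P(pq)) = F_\phi(P(p)) + F_\phi(P(q)) = F_\phi(P(p)+P(q))$ (the last equality by \cref{faces decomp}).

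To conclude, I would observe that a non-empty polytope is determined by the collection of its faces $F_\phi$ over all $\phi \in H^1(H;\R)$ — indeed already by the $\phi$-minimum values, which recover the supporting halfspaces. Since $P(pq) \subseteq P(p)+P(q)$ and the two polytopes have the same face in every direction $\phi$ (in particular the same $\phi$-minimal value for every $\phi$), they must be equal as subsets of $H_1(H;\R)$, hence equal in $\P(H)$.

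The main obstacle is making rigorous the passage from "same $\phi$-minimum in every direction" to "equal polytopes": one must be slightly careful because $P(pq)$ is a priori only known to be contained in $P(p)+P(q)$, so the argument should be phrased as: a point $x$ of $P(p)+P(q)$ that is extremal (a vertex) is $F_\phi(P(p)+P(q))$ for suitable $\phi$, which by the computation above equals $F_\phi(P(pq))$, a nonempty subset of $P(pq)$ — so every vertex of $P(p)+P(q)$ lies in $P(pq)$, whence $P(p)+P(q) = \mathrm{conv}(\text{vertices}) \subseteq P(pq)$. This handles the reverse inclusion cleanly, and I expect the rest to be routine once the zero-divisor-freeness and multiplicativity of $\mu_\phi$ are invoked.
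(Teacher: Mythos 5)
Your proof is correct, but it takes a genuinely different route from the paper. The paper argues by induction on the rank of $H$: for rank at least $2$ it computes $F_\phi\big(P(pq)-P(p)-P(q)\big)=0$ for every non-zero $\phi$ (the key equality $P\big(\mu_\phi(p)\mu_\phi(q)\big)=P\big(\mu_\phi(p)\big)+P\big(\mu_\phi(q)\big)$ being supplied by the inductive hypothesis, since the minima are supported in a corank-one subgroup up to translation), and then invokes \cref{funke} for both the formal difference and its negative to conclude that the difference is a singleton. You instead work directly with the two polytopes as subsets of $H_1(H;\R)$: the containment $P(pq)\subseteq P(p)+P(q)$ from the support estimate, and the reverse containment from the fact that the $\phi$-minimal values agree for all $\phi$ (via multiplicativity of $\mu_\phi$ and the absence of zero-divisors), extracted through the vertices of $P(p)+P(q)$. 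Your approach avoids both the induction and \cref{funke}, and is arguably more elementary and self-contained; the paper's approach is shorter given that \cref{funke} is already established and reused elsewhere, and it stays entirely within the formalism of $\P(H)$ that the rest of the paper relies on. One small caveat: your phrase ``the $\phi$-minimal value on $P(pq)$ equals the sum of the $\phi$-minimal values \dots, i.e.\ $F_\phi(P(pq))=F_\phi(P(p))+F_\phi(P(q))$'' overstates what the computation gives --- equality of minimal values does not by itself yield equality of faces (that would require the lemma for $\mu_\phi(p)$ and $\mu_\phi(q)$, which is circular without induction) --- but your closing vertex argument uses only the equality of minimal values together with the already-established containment, so the proof is unaffected.
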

\begin{proof}
The proof is an induction on the rank $n$ of $H$. If $n=0$, then both sides of the desired equation are trivial. If $n=1$, then $\K H$ is a twisted Laurent polynomial ring in one variable, and the result is immediate.

Now suppose that $n\geqslant 2$. Take any character $\phi \in H^1(H;\R) \s- \{0\}$. We have
\[
 F_\phi\big( P(pq) \big) = P\big( \mu_\phi(pq) \big) = P\big( \mu_\phi(p) \cdot \mu_\phi(q) \big)
\]
Now we use the inductive hypothesis (we might have to translate the polytopes $P\big(\mu_\phi(p)\big)$ and $P\big( \mu_\phi(q) \big)$ first, so that they both lie in the same hyperspace of $H$). We obtain
\[
P\big( \mu_\phi(p) \cdot \mu_\phi(q) \big) =  P\big( \mu_\phi(p)\big) + P\big( \mu_\phi(q) \big) = F_\phi\big( P(p) \big) + F_\phi\big( P(q) \big) = F_\phi\big( P(p) + P(q) \big)
\]
Hence, for every non-trivial $\phi$, we have \[F_\phi\big( P(pq) - P(p) - P(q)\big)=0= {F_\phi\big( -P(pq) + P(p) + P(q)\big)}\]
But then \cref{funke} tells us that $P(pq) - P(p) - P(q)$ and $-\big(P(pq) - P(p) - P(q)\big)$ are single polytopes, which is only true when $P(pq) - P(p) - P(q)$ is a singleton. Using any of the maps $F_\phi$ we immediately see that this singleton is precisely the origin of $H_1(H;\R)$, and therefore $P(pq) = P(p)+P(q)$.
\end{proof}

Since $H$ is amenable and $\K H$ has no zero-divisors, $\K H$ is an Ore domain by \cref{tamari}; let $\D$
 denote the Ore localisation of $\K H$ -- recall that $\D$ is a skew-field containing $\K H$, and elements of $\D^\times$ are fractions of the form $pq^{-1}$ with $p,q \in \K H \s- \{0\}$.

Thanks to \cref{P is hom}, we immediately see that the map $P$ induces a homomorphism $P \colon \D^\times \to \P(H)$ defined by
\[
 P(pq^{-1}) = P(p) - P(q)
\]
Since $\P(H)$ is abelian, the homomorphism $P$ gives a well-defined group homomorphism
\[\D^\times / [\D^\times, \D^\times] \to \P(H)\]
 Hence, for every square matrix $A$ over $\Z H$, we may talk about $P(\det A \otimes \D) \in \P(H) \sqcup \{ \emptyset \}$ (setting $P(0) = \emptyset$).
\begin{dfn}[Newton polytope]
Given a square matrix $A$ over $\K H$, we define its \emph{Newton polytope} to be $P(A) = P(\det A\otimes \D) \in \P(H) \sqcup \{ \emptyset\}$.
\end{dfn}

We are now ready for the first result of the article.

\begin{thm}[Single polytope]
\label{single poly}
 Let $A$ be a square matrix over $\K H$. Then $P(A)$ is empty or a single polytope.
\end{thm}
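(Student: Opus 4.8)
The plan is to verify the hypothesis of \cref{flat polys} for the polytope $P(A)$. If $A$ fails to become invertible over $\D$, then $\det(A\otimes\D)=0$ and $P(A)=\emptyset$, so assume from now on that $A$ is invertible over $\D$, so that $P(A)\in\P(H)$. By \cref{flat polys} it then suffices to produce, for every non-zero $\phi\in H^1(H;\Z)$, a $\phi$-flat polytope $X_\phi$ such that $P(A)+X_\phi$ is a single polytope.

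Fix such a $\phi$ and put $H_0=\ker\phi$, a corank-one direct summand of $H$; choose a complement, infinite cyclic, generated inside $\K H$ by an element $t$. Restricting the structure functions to $H_0$ exhibits $\mathcal R:=\K H_0$ as a twisted group ring of $H_0$, and $\K H$ as the twisted Laurent polynomial ring $\mathcal R[t^{\pm1};\tau]$ in the single variable $t$ over $\mathcal R$, where $\tau\in\Aut(\mathcal R)$ is conjugation by $t$ (a $2$-cocycle with unit values may also appear, but it is harmless below). Since $H_0$ is biorderable and amenable, $\mathcal R$ has no zero divisors (\cref{malcev-neumann}) and is an Ore domain (\cref{tamari}); let $\D_0$ be its Ore localisation, a skew-field to which $\tau$ extends by \cref{Ore loc}. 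Then $\mathcal E_0:=\D_0[t^{\pm1};\tau]$ is a twisted Laurent polynomial ring over a skew-field; it is a (left and right) principal ideal domain, it contains $\K H$, and, since $\K H\subseteq\mathcal E_0\subseteq\D$ with $\D$ a skew-field, the universal property of the Ore localisation identifies $\D$ with the Ore localisation of $\mathcal E_0$.

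Now work over the principal ideal domain $\mathcal E_0$. Bringing $A$ into diagonal form there (Smith normal form) gives $UAV=\operatorname{diag}(d_1,\dots,d_n)$ with $U,V\in\GL_n(\mathcal E_0)$ and $d_i\in\mathcal E_0$, the $d_i$ being non-zero because $A$ is invertible over $\D=\operatorname{Ore}(\mathcal E_0)$. Over a principal ideal domain $\det U$ and $\det V$ are represented by units of the ring, so $\det(A\otimes\D)$ is represented by a genuine twisted Laurent polynomial $q=\sum_i q_i t^i\in\mathcal E_0\setminus\{0\}$ with $q_i\in\D_0$. Using the Ore condition in $\mathcal R$ (see \cref{Ore rmk}), pick $c\in\mathcal R\setminus\{0\}$ with $cq_i\in\mathcal R$ for every $i$; then $p:=cq=\sum_i(cq_i)t^i$ lies in $\mathcal R[t^{\pm1};\tau]=\K H$ and is non-zero. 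Multiplicativity of the Dieudonn\'e determinant gives $\det(A\otimes\D)=[pc^{-1}]$ in $\D^\times/[\D^\times,\D^\times]$, whence $P(A)=P(p)-P(c)$. Here $P(p)$ is an honest polytope, being the convex hull of a finite non-empty subset of $H$; and $\supp c\subseteq H_0=\ker\phi$, so $\phi$ is constant on $\supp c$ and hence on $P(c)$, i.e. $P(c)$ is $\phi$-flat. Setting $X_\phi:=P(c)$ we obtain $P(A)+X_\phi=P(p)$, a single polytope, and \cref{flat polys} finishes the proof.

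The step I expect to be the main obstacle is the passage to $\mathcal E_0$, namely the claim that after extending coefficients from $\K H_0$ to its skew-field of fractions $\D_0$ the Dieudonn\'e determinant of $A$ is represented by an actual twisted Laurent polynomial in $t$, rather than by a quotient of two such. This rests on $\mathcal E_0=\D_0[t^{\pm1};\tau]$ being a noncommutative principal ideal domain, so that Gaussian elimination keeps all entries polynomial, together with the fact that the transformation matrices, being already invertible over $\mathcal E_0$, have Dieudonn\'e determinants represented by units of $\mathcal E_0$. The remaining points — identifying $\D$ with $\operatorname{Ore}(\mathcal E_0)$, and checking compatibility of the Dieudonn\'e determinant and of the homomorphism $P$ with these localisations — are routine but should be spelled out.
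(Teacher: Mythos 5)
Your proof is correct and follows essentially the same route as the paper: split off $\ker\phi$, pass to the twisted Laurent polynomial ring over the Ore localisation of $\K\ker\phi$, reduce $\det(A\otimes\D)$ to a single Laurent polynomial there, clear denominators with the Ore condition to get $P(A)=P(p)-P(c)$ with $P(c)$ $\phi$-flat, and invoke \cref{flat polys}. The only (harmless) deviation is that you use Smith normal form, which requires knowing that $\GL_n(\mathcal E_0)$ is generated by elementary matrices and units -- true here because $\D_0[t^{\pm1};\tau]$ is Euclidean, though not for arbitrary noncommutative PIDs -- whereas the paper sidesteps this by only triangularising with elementary column operations.
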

\begin{proof}
Take $\phi \in H^1(H;\Z) \s- \{0\}$, and let $L = \ker \phi$. Take $z\in H$ with $\phi(z) = 1$, the generator of $\im \phi = \Z$.
Let $\K L$ denote the subring of $\K H$ consisting of elements of support lying in $L$. It is clear that $\K L$ is a twisted group ring of $L$ with coefficients $\K$, and hence an Ore domain; let us denote the Ore localisation of $\K L$ by $\LL$. The localisation $\LL$ embeds into the Ore localisation $\D$ of $\K H$ by \cref{Ore loc}(\ref{Ore funct}), and the action by conjugation  of $z$ on $\K L$ extends to an action on $\LL$ by \cref{Ore loc}(\ref{Ore autos}). Thus, we have an embedding of the twisted group ring $\LL \Z$ (with $\Z$ generated by $z$) into $\D$.


We will think of $\LL \Z$ as a twisted Laurent polynomial ring with variable $z$. We apply Euclid's algorithm over $z$ to $A \otimes \LL \Z$: using only elementary matrices over $\LL \Z$ we put $A \otimes \LL \Z$ into an upper-triangular form. Thus $\det A \otimes \D$ can be represented by $\sum_{i=-n}^n \kappa_i z^i$, a Laurent polynomial in $z$ with coefficients in $\LL$. Since $\LL$ is the Ore localisation of $\K L$,
 there exist $\mu_i, \nu \in \K L$ such that $\kappa_i =  \nu^{-1} \mu_i$ for every $i$ (see \cref{Ore rmk}). Hence $\det A \otimes \D$ is represented by
$\nu^{-1} \sum_{i=-n}^n \mu_i z^i$. If $P(A) \neq \emptyset$, it immediately follows that
\[
 P(A) = P(\det A \otimes \D ) = P(\sum_{i=-n}^n \mu_i z^i) - P (\nu)
\]
Crucially, $P(\sum_{i=-n}^n \mu_i z^i)$ and $P (\nu)$ are single polytopes, and $P(\nu)$ is $\phi$-flat. Thus \cref{flat polys} tells us that $P(A)$ is a single polytope.
\end{proof}

\subsection{Novikov rings}

To see why one should be interested in Newton polytopes of matrices over group rings, we need to introduce the Novikov rings.


\begin{dfn}[Truncated support]
Let $R$ be a ring and $G$ a group. Let $x \in R^G$ be a function.
Given a character $\phi \in H^1(G;\R)$ and a constant $\kappa \in \R$ we define the \emph{truncated support} to be
\[
 \supp_{\phi,\kappa} x = \{ g \in G \mid x(g) \neq 0 \textrm{ and } \phi(g) \leqslant \kappa \}
\]
\end{dfn}

\begin{dfn}[Novikov ring]
Given a character $\phi \in H^1(G;\R)$ we define
\[
 \widehat {R G}^\phi = \{ x \colon G \to R \mid \supp_{\phi,\kappa} x \textrm{ is finite for every } \kappa \in \R \}
\]
Pointwise addition turns $\widehat {R G}^\phi$ into an abelian group; to endow it with a ring structure we use the twisted convolution \eqref{twisted conv} -- this way the set-theoretic inclusion $R G \subseteq \widehat {R G}^\phi$ turns into an embedding of rings.

For any $S \subseteq H^1(H;\R)$ we define $\widehat {R G}^S = \bigcap_{\phi \in S} \widehat {R G}^\phi$.
\end{dfn}

We will treat $\widehat {R G}^\phi$ as a left $R G$ module, and a right $\widehat {R G}^\phi$ module, so we will tensor $R G$ modules with $\widehat {R G}^\phi$ on the right.

The Novikov rings play a crucial role in the Bieri--Neumann--Strebel invariants via the theorem of Sikorav (\cref{sikorav}), which we will discuss later. The key technical feature of the current article is that we will discuss $\Sigma$-invariants of matrices.

\subsection{\textSigma-invariants of matrices}

\begin{dfn}[$\Sigma(A)$]
Let $R$ be a ring, $G$ a group, and $R G$ a twisted group ring.
 Let $A$ be a (not necessarily square) matrix over $R G$. We define $\Sigma(A) \subseteq H^1(G;\R)$, the \emph{$\Sigma$-invariant} of $A$, by declaring $\phi \in  \Sigma(A)$ \iff $A \otimes \widehat {R G }^\phi$ admits a right inverse.
\end{dfn}

Recall our convention: $A \otimes \widehat {R G }^\phi$ admitting a right inverse means precisely that the inverse lies over  $\widehat {R G }^\phi$ as well.

\begin{dfn}[$\phi$-identity]
Let $\phi \in H^1(G;\R)$ be given. An $n \times m$ matrix $A$ over $\widehat {R G}^\phi$ is said to be a \emph{$\phi$-identity} \iff for every entry $x$ of $A - \I$ we have $\phi(\supp x) \subseteq (0,\infty)$. (Here, $\I$ denotes the identity matrix extended by a zero matrix either on the right or at the bottom, so that $\I$ is an $n \times m$ matrix.)
\end{dfn}

For the purpose of the above definition, we treat elements in $\widehat {R G}^\phi$ as $1 \times 1$ matrices.

\begin{lem}
\label{phi id}
Suppose that a square matrix $A$ over $R G$ is a $\phi$-identity for some ${\phi \in H^1(G;\R)}$. There exists an open neighbourhood $U \subseteq H^1(G;\R)$ of $\phi$ such that $A \otimes \widehat {R G}^U$ is right and left invertible.
\end{lem}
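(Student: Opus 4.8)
The plan is to invert $A$ by the obvious Neumann series and to make the construction robust under perturbation of $\phi$. Write $A = \I + B$, where $B = A - \I$ is a square matrix over $R G$, so only finitely many group elements occur in the union $F$ of the supports of the entries of $B$; the hypothesis that $A$ is a $\phi$-identity says precisely that $F$ is finite and $\phi(f) > 0$ for every $f \in F$. I would then take
\[
U = \{\psi \in H^1(G;\R) \mid \psi(f) > 0 \text{ for all } f \in F\},
\]
which contains $\phi$ and, as a finite intersection of preimages of $(0,\infty)$ under the continuous linear functionals $\psi \mapsto \psi(f)$, is open; this is the promised neighbourhood. (If $B = 0$ then $F = \emptyset$, $U$ is the whole space, and the lemma is trivial.)

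Next I would set $C = \sum_{k=0}^\infty (-B)^k$ and verify that it is a genuine matrix over $\widehat{R G}^U$. By the twisted convolution \eqref{twisted conv}, every entry of $(-B)^k = (-1)^k B^k$ is supported on products $f_1 \cdots f_k$ with all $f_i \in F$, hence on group elements $g$ with $\psi(g) \geqslant k \varepsilon_\psi$, where $\varepsilon_\psi := \min_{f \in F} \psi(f)$ is strictly positive for each $\psi \in U$. Consequently, for fixed $\psi \in U$ and $\kappa \in \R$, only the finitely many indices $k \leqslant \kappa / \varepsilon_\psi$ can contribute to $\supp_{\psi,\kappa}$ of any entry of $C$, and each $(-B)^k$ has finite support; so every entry of $C$ lies in $\widehat{R G}^\psi$ for every $\psi \in U$, i.e.\ in $\widehat{R G}^U$. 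Here I would also note that $\widehat{R G}^U = \bigcap_{\psi \in U} \widehat{R G}^\psi$ is a ring, being an intersection of rings all of whose products are computed by the same formula \eqref{twisted conv}, so that the matrix products below are meaningful.

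Finally, the telescoping identities
\[
A C = (\I + B)\sum_{k \geqslant 0}(-B)^k = \sum_{k \geqslant 0}(-B)^k - \sum_{k \geqslant 1}(-B)^k = \I = C A,
\]
valid because, evaluated at any single group element, every sum that occurs is finite, exhibit $C$ as a two-sided inverse of $A$ over $\widehat{R G}^U$. In particular $A \otimes \widehat{R G}^U$ is both right and left invertible, as claimed. The only step requiring care is the middle one: one must check that the formal sum $C$ actually lands in the intersection $\widehat{R G}^U$, not merely in each $\widehat{R G}^\psi$ separately, and this is exactly where the uniform positive lower bound $\varepsilon_\psi$ over the open cone $U$ is used; the rest is formal manipulation.
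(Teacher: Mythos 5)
Your proof is correct and follows essentially the same route as the paper: write $A$ as the identity plus a perturbation supported on group elements where $\phi$ is positive, take the Neumann series, and check convergence in $\widehat{RG}^\psi$ for every $\psi$ in the open positivity cone. The only cosmetic difference is that the paper shrinks $U$ so that a single $\kappa>0$ bounds $\psi(\supp B)$ from below for all $\psi\in U$, whereas you take the full cone and use a $\psi$-dependent bound $\varepsilon_\psi$; both work, since membership in $\widehat{RG}^U=\bigcap_{\psi\in U}\widehat{RG}^\psi$ is checked one $\psi$ at a time.
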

\begin{proof}
Let $B$ be defined by  the equation $A = \I - B$. Observe that the definition of $\phi$-identity tells us that $\phi$ is strictly positive on the supports of the entries of $-B$, and hence of $B$.

Since the supports of the entries of $B$ are finite, there exist $\kappa>0$ and an open neighbourhood $U$ of $\phi$ in $H^1(G;\R)$ such that for every $\psi \in U$ the supports of the entries of $B$ are sent by $\psi$ to $(\kappa,\infty)$. Thus
 \[
  Y = \sum_{i=0}^\infty B^i
 \]
defines a matrix over $\widehat {R G}^U$, as the supports of the entries of $B^i$ are mapped by every $\psi \in U$ to $(i \kappa, \infty)$.
Clearly $AY = (\I - B)Y = \I$ and $YA = Y(\I-B) = \I$.
\end{proof}

\begin{lem}
\label{entries far away}
\label{bns open}
 Let $A$ be an $n \times m$ matrix over $R G$, and let $\phi \in \Sigma(A)$. There exists an $m \times n$ matrix $X$ over $R G$ such that $AX$ is a $\phi$-identity. Moreover,
 there exists an open neighbourhood $U$ of $\phi$ in $H^1(G;\R)$ such that $A\otimes \widehat{R G}^U$ admits a right inverse.
\end{lem}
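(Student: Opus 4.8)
The plan is to obtain $X$ by truncating a right inverse of $A$ over the Novikov ring, and then to read off the neighbourhood statement directly from \cref{phi id}.

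Since $\phi \in \Sigma(A)$, there is an $m \times n$ matrix $Y$ over $\widehat{R G}^\phi$ with $AY = \I$. By definition of the Novikov ring, for each entry $y$ of $Y$ and each $\kappa \in \R$ the truncated support $\supp_{\phi,\kappa} y$ is finite. So I would fix a large constant $\kappa$ and define $X$ entrywise by discarding, in each entry $y$ of $Y$, all terms $y(h)h$ with $\phi(h) > \kappa$; each such entry then has finite support, so $X$ is a matrix over $R G$, and every entry of $Y - X$ is supported on $\{h \mid \phi(h) > \kappa\}$. Now write $AX - \I = -A(Y-X)$. Let $\delta$ be the minimum of $\phi$ over the (finite) union of the supports of the entries of $A$; since $\phi$ is additive on products, the twisted convolution shows that each entry of $A(Y-X)$ is supported on a subset of $\{h \mid \phi(h) > \kappa + \delta\}$. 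Choosing $\kappa > -\delta$ forces $\phi(\supp x) \subseteq (0,\infty)$ for every entry $x$ of $AX - \I$, i.e. the square matrix $AX$ over $R G$ is a $\phi$-identity; this is the first assertion.

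For the remaining claim, apply \cref{phi id} to the $\phi$-identity $AX$: there is an open neighbourhood $U$ of $\phi$ in $H^1(G;\R)$ and a matrix $Z$ over $\widehat{R G}^U$ with $(AX)Z = \I$. Then $A(XZ) = \I$ over $\widehat{R G}^U$, so $XZ$ is a right inverse of $A \otimes \widehat{R G}^U$, as desired. I do not foresee a genuine obstacle here: the only point needing care is the bookkeeping with the twisted convolution together with the additivity of $\phi$ on group products, to be sure that truncating $Y$ at a sufficiently high level pushes $AX - \I$ entirely into the region where $\phi$ is positive; the neighbourhood statement is then immediate from the previous lemma.
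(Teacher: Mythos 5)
Your proposal is correct and follows essentially the same route as the paper: truncate a Novikov right inverse at a level chosen relative to $\min \phi(\supp A)$ so that $A$ times the discarded tail lands in the region where $\phi$ is positive, making $AX$ a $\phi$-identity, and then invoke \cref{phi id} for the neighbourhood statement. No gaps.
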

\begin{proof}
We let $\supp A$ denote the union of the supports of the entries of $A$.
Take $C \in \N$ such that
\[
 C > - \min \phi(\supp A)
\]
Since $A \otimes \widehat {R G}^\phi$ is right-invertible, there exists a matrix $B = (b_{ij})$ over $\widehat {R G}^\phi$ such that $AB = \I$. Let a pair $(i,j)$ be fixed for a moment.
We have $b_{ij} = \sum \lambda_g g$ (where the sum is typically infinite). We set
\[
 b_{ij}^0 = \sum_{\phi(g) < C} \lambda_g g  \in R G                                                                                                                                                                                                                                                                                                                 \]
and define $b_{ij}^+ \in \widehat {R G}^\phi$ by the equation $b_{ij} = b_{ij}^0 + b_{ij}^+$. We now apply the procedure to every pair $(i,j)$, and define matrices $B^0 = (b_{ij}^0)$ and $B^+ = (b_{ij}^+)$; observe that we have $B =B^0 + B^+$.

By the choice of $C$,  the supports of the entries of $A B^+$ are mapped by $\phi$ to $(0,\infty)$. But $\I = AB = AB^0 + AB^+$, and so $AB^0$ is a $\phi$-identity. We set $X = B^0$.

The second assertion follows from \cref{phi id}, observing that $AX$ is a square matrix.
\end{proof}

Note that, in particular, the above result implies that $\Sigma(A)$ is an open subset of $H^1(G;\R)$, as one would expect.

\smallskip

As before, let $\K$ be a skew-field, $\K H$ a twisted group ring, and
 $\D$ the Ore localisation of $\K H$. In order to study the Novikov rings $\widehat {\K H}^\phi$, we will use the minima $\mu_\phi$: recall that we introduced them (in \cref{minima}) as maps $\K H \to \K H$. It is however immediate that the definition extends and defines multiplicative maps $\mu_\phi \colon \widehat {\K H}^\phi \to \K H$.

 In what follows, all tensoring takes place over $\K H$.


\begin{lem}
\label{malcev-neumann H}
For every $\phi \in H^1(H;\R)$ there exists a skew-field $\mathcal F$ such that $\widehat {\K H}^\phi$ and $\D$ both embed into $\mathcal F$ in such a way that the embeddings agree on $\K H$.
\end{lem}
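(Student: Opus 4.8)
The plan is to exhibit a single biorderable group $G$ together with a twisted group ring $\K G$ whose Malcev--Neumann completion $\mathcal F$ simultaneously receives $\widehat{\K H}^\phi$ and $\D$. The natural candidate is $G = H$ itself, equipped with a biordering adapted to $\phi$. First I would fix a biordering $\leqslant$ on $H$ with the property that $\phi(h) < \phi(h')$ implies $h < h'$ (and hence $\phi(h)=0$, $h\neq 1$ means $h$ is compared using some fixed auxiliary biordering of $\ker\phi$); concretely one totally orders $H$ lexicographically, using $\phi$ (or rather a refinement of $\phi$ to an injective homomorphism $H\to\R$, obtained by perturbing $\phi$ within the rational points if necessary — but since $H$ is free abelian of finite rank one can simply extend a basis of a complement of a rational hyperplane) in the first coordinate and an arbitrary biordering of the kernel afterwards. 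With this choice, a function $x\colon H\to\K$ with $\supp_{\phi,\kappa} x$ finite for every $\kappa$ automatically has $\leqslant$-well-ordered support: any nonempty subset $S$ of $\supp x$ has $\phi(S)$ bounded below (because $\supp_{\phi,\kappa}x$ is finite, $\phi$ takes a minimum value on $S$), and among the finitely many elements of $S$ attaining that minimal $\phi$-value the $\leqslant$-least one (which exists, being a minimum of a finite set) is the $\leqslant$-least element of $S$. Hence $\widehat{\K H}^\phi \subseteq \mathcal F$ as sets, and one checks the inclusion is a ring map since both use the twisted convolution \eqref{twisted conv} and the Malcev--Neumann product restricts to it.

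Next I would embed $\D$ into $\mathcal F$. We already know $\K H \subseteq \mathcal F$ (finitely supported functions certainly have well-ordered support), and by \cref{malcev-neumann} the target $\mathcal F$ is a skew-field. Since $\D$ is the Ore localisation of $\K H$ at its nonzero elements (all of which become units in the skew-field $\mathcal F$), the universal property of Ore localisation — equivalently \cref{Ore loc}(\ref{Ore funct}), applied to the identity inclusion $\K H\into\mathcal F$ whose image consists of units — produces a ring monomorphism $\D\into\mathcal F$ extending the inclusion of $\K H$. Both the embedding of $\widehat{\K H}^\phi$ and the embedding of $\D$ restrict to the standard inclusion of $\K H$, so the two embeddings agree on $\K H$, which is exactly the compatibility claimed.

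The one point that needs care — and which I expect to be the main obstacle — is the case $\phi = 0$, and more generally the interaction between the rationality of $\phi$ and the choice of biordering: if $\phi$ is irrational the refinement to an injective homomorphism is free, but if $\phi$ has a nontrivial kernel one must genuinely interleave $\phi$ with a biordering of $\ker\phi$ and verify the well-ordering argument survives the two-step comparison (this is the computation sketched above; it is routine but must be written out). When $\phi=0$ the Novikov ring $\widehat{\K H}^0$ is just $\K H$ and the statement is trivial. A secondary subtlety is checking that the set-theoretic inclusion $\widehat{\K H}^\phi\hookrightarrow\mathcal F$ really is multiplicative: one must observe that when multiplying two elements of $\widehat{\K H}^\phi$, the coefficient-gathering described after \eqref{twisted conv} involves only finite sums (because of the truncated-support condition together with well-ordering), so it literally agrees with the Malcev--Neumann convolution; this is the same verification already invoked in the construction of $\mathcal F$ and in the remark after \cref{tamari}, so I would simply cite that discussion.
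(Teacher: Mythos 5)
Your proposal is correct and follows essentially the same route as the paper's (sketch) proof: take the Malcev--Neumann completion $\mathcal F$ of $\K H$ with respect to a biordering of $H$ refining $\phi$, observe that the truncated-support condition forces elements of $\widehat{\K H}^\phi$ to have well-ordered support so that $\widehat{\K H}^\phi \subseteq \mathcal F$, and extend the inclusion $\K H \into \mathcal F$ to the Ore localisation $\D$ by sending $pq^{-1}$ to $\iota(p)\iota(q)^{-1}$. Your extra care about constructing the $\phi$-refining biordering and about multiplicativity of the inclusion just fills in details the paper leaves implicit.
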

\begin{proof}[Sketch proof]
 The skew-field $\mathcal F$ will be the Malcev--Neumann skew-field, which we discussed in \cref{malcev-neumann}; a similar embedding was constructed in \cite[Lemma 5.5]{FunkeKielak2018}.

 Let $\leqslant$ be a biordering of $H$ which makes $\phi \colon H \to \R$ into an order-preserving homomorphism. Then $\mathcal F$, the subset of $\K^H$ consisting of functions with supports well-ordered with respect to $\leqslant$, forms a skew-field. Now $\widehat {\K H}^\phi$ is actually a subset of $\mathcal F$.
 This also implies that $\K H$ is a subset of $\mathcal F$; let us denote the resulting embedding by $\iota$.

 It is easy to see that the embedding $\iota \colon \K H \into \mathcal F$ induces an embedding of the Ore localisation of $\K H$, that is of $\D$ -- we simply set $\iota(pq^{-1}) = \iota(p) \iota(q)^{-1}$. The injectivity of this extended $\iota$ is immediate.
\end{proof}

We obtain the following, immediate corollary.

\begin{cor}
\label{no zero divs}
The ring $\widehat {\K H}^\phi$ has no zero-divisors.
\end{cor}


Recall that we have implicit structural functions in the definition of $\K H$. The same functions (via formula \eqref{twisted conv}) are used to define multiplication in $\widehat {\K H}^\phi$; in fact they can be also used to define the multiplications \[\K H \times \K^H \to \K^H \textrm{ and } \K^H \times \K H \to \K^H\]

\begin{dfn}
We say that an element $x\in \D$ is \emph{represented} by $y \in \K^H$ \iff $x=pq^{-1}$ with $p,q \in \K H$ and $p = yq$.
\end{dfn}

Note that $x$ can be represented by different elements in $\K^H$, since $\K^H$ contains zero-divisors with respect to the twisted convolution. The situation is however different if $x$ is represented by an element of $\widehat{\K H}^\phi$.

\begin{lem}
\label{bns unique}
Let $x \in \D$ be an element represented by $y$ and $y'$, both lying in $\widehat{\K H}^\phi$ for some $\phi \in H^1(H;\R)$. Then $y=y'$.
\end{lem}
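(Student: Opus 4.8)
The plan is to reduce the statement to the fact that $\widehat{\K H}^\phi$ has no zero-divisors (\cref{no zero divs}). Suppose $x \in \D$ is represented by both $y$ and $y'$ in $\widehat{\K H}^\phi$. By definition of ``represented'', there are $p,q,p',q' \in \K H$ (with $q,q'$ nonzero, since they are denominators of fractions) such that $x = pq^{-1} = p'(q')^{-1}$, and $p = yq$, $p' = y'q'$. The goal is to show $y = y'$.

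First I would bring the two fractions to a common denominator. Since $\K H$ is an Ore domain (it sits inside $\D$), using the Ore condition there exist $a, b \in \K H$ with $aq = bq' =: d \neq 0$ (a common multiple of $q$ and $q'$; $d$ is nonzero because $\K H$ has no zero-divisors). From $pq^{-1} = p'(q')^{-1}$ one gets, after clearing denominators on the right, that $a p = b p'$ — more carefully, $x d = x a q = (pq^{-1})(aq)$; the cleanest route is to note that $pq^{-1} = p'(q')^{-1}$ in $\D$ means $p' = x q'$ and $p = xq$, so $a p = a x q = x (aq) = x d$ and likewise $b p' = x d$, hence $a p = b p'$.

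Now substitute $p = yq$ and $p' = y'q'$, where the products $yq$ and $y'q'$ are taken via the twisted convolution $\K^H \times \K H \to \K^H$ (and in fact land in $\K H$). Using associativity of the twisted convolution (which holds whenever the relevant products are defined — in particular when multiplying an element of $\widehat{\K H}^\phi$ by elements of $\K H$), we get
\[
y (aq) = (ya)q \ \text{? }
\]
— here a small care is needed since $a$ is on the left. The correct manipulation is: $ap = a(yq)$; but $a \in \K H$ and $y \in \widehat{\K H}^\phi$ need not commute, so instead I work on the right. Rewrite the identity $ap = bp'$ as $x d = x d$ trivially; the useful identity is obtained by multiplying $p = yq$ on the right by the element needed to reach the common denominator. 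Concretely: choose instead, via the Ore condition, $c, c' \in \K H$ with $qc = q'c' =: e \neq 0$. Then $pc = yqc = ye$ and $p'c' = y'q'c' = y'e$, where all these products are associative because $y, y' \in \widehat{\K H}^\phi \supseteq \K H$. Moreover $pq^{-1} = p'(q')^{-1}$ gives $p q^{-1} e = p'(q')^{-1} e$, i.e. $p c = p' c'$ (since $q^{-1} e = q^{-1} q c = c$ and likewise $(q')^{-1} e = c'$). Therefore $y e = y' e$, so $(y - y') e = 0$ with $e \neq 0$, and since $\widehat{\K H}^\phi$ has no zero-divisors (\cref{no zero divs}) we conclude $y = y'$.

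The only genuine point to watch — and the place I'd slow down in the final write-up — is the bookkeeping of left versus right multiplication and the associativity of the mixed products $\widehat{\K H}^\phi \times \K H \to \widehat{\K H}^\phi$; one must make sure the Ore common multiples are taken on the correct side so that every product that appears is of the form (Novikov element)$\cdot$(finitely supported element), for which associativity and the no-zero-divisor property are available. Everything else is a routine fraction manipulation in the skew-field $\D$.
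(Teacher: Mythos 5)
Your final argument is correct and follows essentially the same route as the paper: invoke the Ore condition in $\K H$ to produce a common multiple, then cancel using the fact that $\widehat{\K H}^\phi$ has no zero-divisors (\cref{no zero divs}). The only difference is bookkeeping --- the paper rewrites the first fraction as a left fraction $s^{-1}q$ via $sp=qr$ and cancels twice, whereas you take a right common multiple $e=qc=q'c'$ of the two denominators and cancel once, which is slightly cleaner; your abandoned first attempt with left multiples (where $axq \neq x(aq)$ in general) is correctly identified as flawed and discarded.
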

\begin{proof}
By definition, we have $x = pr^{-1} = p'r'^{-1}$ with $p,r,p',r' \in \K H$ and $r,r' \neq 0$. We also have
$p = yr$ and $p'=y'r'$. Now, using the Ore condition, there exist $q,s \in \K H$ such that $s\neq 0$ and $sp= qr$.
Now $pr^{-1} = p'r'^{-1}$ means precisely that $sp'=qr'$ as well.
Thus
\[
 qr' = sp' = sy'r'
\]
and so $(q-sy')r' = 0$. As $r' \neq 0$ and $\widehat{\K H}^\phi$ contains no zero-divisors, we conclude that $q = sy'$. Therefore
\[
syr = sp = qr = sy'r
\]
and arguing as above (using $s,r \neq 0$) we see that $y - y' = 0$.
\end{proof}

Because of the above lemma, we will say that $x\in \D$ is equal to $y \in \widehat{\K H}^\phi$ \iff it is represented by $y$.


\begin{lem}
\label{unique RI}
Let $A$ be a square matrix over $\K H$, such that $A \otimes \D$ is invertible.
Let $C\subseteq H^1(H;\R)$ be a non-empty subset. If $A\otimes \widehat{\K H}^{C}$ admits a right-inverse, then this inverse is unique.
\end{lem}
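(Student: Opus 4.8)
The plan is to transport the right-inverse into a common skew-field, in which a one-sided inverse of an invertible matrix is automatically the unique two-sided inverse. Since $C$ is non-empty, I will fix some $\phi \in C$ and invoke \cref{malcev-neumann H} to obtain a skew-field $\mathcal F$ into which both $\widehat{\K H}^\phi$ and $\D$ embed, compatibly over $\K H$. As $\widehat{\K H}^C = \bigcap_{\psi \in C} \widehat{\K H}^\psi \subseteq \widehat{\K H}^\phi$, every matrix over $\widehat{\K H}^C$ is in particular a matrix over $\widehat{\K H}^\phi$, hence (via the embedding) a matrix over $\mathcal F$; and since all the multiplications involved are induced by the twisted convolution \eqref{twisted conv}, products are preserved by these inclusions.

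Next I will note that $A$ is invertible over $\mathcal F$: the hypothesis that $A \otimes \D$ is invertible, together with the ring homomorphism $M_n(\D) \to M_n(\mathcal F)$ induced by $\D \into \mathcal F$, shows that $A \otimes \mathcal F$ admits an inverse, namely the image of $(A \otimes \D)^{-1}$. Now, given two right-inverses $B, B'$ of $A$ over $\widehat{\K H}^C$, the equalities $AB = \I$ and $AB' = \I$ — which hold over $\widehat{\K H}^C$, hence over $\mathcal F$ by the previous paragraph — force $B = (A \otimes \mathcal F)^{-1} = B'$ as matrices over $\mathcal F$, after multiplying on the left by $(A \otimes \mathcal F)^{-1}$. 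Finally, injectivity of the composite $\widehat{\K H}^C \subseteq \widehat{\K H}^\phi \into \mathcal F$ lets me conclude that $B = B'$ already as matrices over $\widehat{\K H}^C$, which is the claim.

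The argument is short, and the only step requiring genuine care is the bookkeeping in the first paragraph: one must check that $\widehat{\K H}^C$ sits inside $\mathcal F$ as a subring, so that the identity $AB = \I$, which a priori lives in $M_n(\widehat{\K H}^C)$, becomes a valid identity in $M_n(\mathcal F)$. This is precisely what \cref{malcev-neumann H} delivers once $\phi \in C$ has been chosen, combined with the observation that the multiplications in $\K H$, in $\widehat{\K H}^\phi$, and the mixed multiplications $\K H \times \K^H \to \K^H$ are all given by the single formula \eqref{twisted conv}. I do not anticipate any essential difficulty beyond this.
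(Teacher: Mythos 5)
Your proof is correct and is essentially the paper's own argument: both fix $\phi \in C$, embed $\widehat{\K H}^\phi$ and $\D$ into the common skew-field $\mathcal F$ of \cref{malcev-neumann H}, and left-multiply the equation $A\cdot(B-B')=0$ by an inverse of $A$ over $\mathcal F$ supplied by the invertibility of $A\otimes\D$. No gaps.
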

\begin{proof}
Take $\phi \in C$.
Let $\mathcal F$ denote the skew-field from \cref{malcev-neumann H}, which contains $\widehat {\K H}^\phi$ and $\D$ as subrings. We will carry all matrix multiplication out in $\mathcal F$.

Suppose that we have two right-inverses, $X$ and $X'$.  Then
\[A \cdot (X - X') = 0\]
But we have assumed that $A\otimes \D$ is invertible, and so there exists a matrix $Y$ over $\mathcal{F}$ such that $Y \cdot A = \I$. Hence
\[
X - X' =Y\cdot A  \cdot (X  - X' ) =  Y \cdot 0 = 0
\]
and so $X - X' = 0$.
\end{proof}

\begin{cor}
\label{bns union}
Let $A$ be a square matrix over $\K H$, such that $A \otimes \D$ is invertible. Let $X$ be a right inverse of  $A\otimes \widehat{\K H}^C$ and $X'$ be a right inverse of $A \otimes \widehat{\K H}^{C'}$ for some $C,C' \subseteq H^1(H;\R)$ with $C \cap C' \neq \emptyset$. Then $X = X'$, and $X$ is a right inverse of $A\otimes \widehat{\K H}^{C \cup C'}$.
\end{cor}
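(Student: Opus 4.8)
The plan is to deduce \cref{bns union} directly from \cref{unique RI} by a gluing argument. First I would observe that both $X$ and $X'$ are, in particular, right-inverses of $A$ over $\widehat{\K H}^{C \cap C'}$: indeed, $\widehat{\K H}^C \subseteq \widehat{\K H}^{C \cap C'}$ (the intersection defining the Novikov ring over a larger set is contained in the one over a smaller set), and similarly for $C'$, so the entries of $X$ and of $X'$ all lie in $\widehat{\K H}^{C \cap C'}$. Since $C \cap C' \neq \emptyset$ and $A \otimes \D$ is invertible by hypothesis, \cref{unique RI} applies to the non-empty set $C \cap C'$ and gives $X = X'$.

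Next I would argue that this common matrix $X = X'$ actually has all its entries in $\widehat{\K H}^{C \cup C'}$. This is the crux of the argument, though it is not hard: the entries of $X$ lie in $\widehat{\K H}^C$ (that is how $X$ was given) and, because $X = X'$, they also lie in $\widehat{\K H}^{C'}$. Here one must be slightly careful about what "equal" means — the entries are a priori elements of the larger skew-field $\mathcal F$ of \cref{malcev-neumann H}, or of $\D$, and the identification of an element of $\D$ with an element of a Novikov ring is the content of \cref{bns unique}; so the statement "$X$ has entries in $\widehat{\K H}^C$ and in $\widehat{\K H}^{C'}$" should be read via that identification, which is consistent. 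Then $\widehat{\K H}^{C \cup C'} = \widehat{\K H}^C \cap \widehat{\K H}^{C'}$ by the very definition $\widehat{\K H}^S = \bigcap_{\phi \in S} \widehat{\K H}^\phi$, so each entry of $X$ lies in $\widehat{\K H}^{C \cup C'}$.

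Finally, since the defining relation $A \cdot X = \I$ is an identity between matrices with entries in $\K H$ on the left and in $\widehat{\K H}^{C \cup C'}$ on the right, and this ring contains $\K H$ as a subring, the same relation exhibits $X$ as a right-inverse of $A \otimes \widehat{\K H}^{C \cup C'}$. This completes the proof.

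The main (and really only) obstacle is bookkeeping about ambient rings: one must make sure that "$X = X'$" is an honest equality of matrices over a single ring into which all the relevant Novikov rings and $\D$ embed compatibly — this is exactly what \cref{malcev-neumann H}, \cref{unique RI}, and \cref{bns unique} are set up to provide — rather than two separate statements living in incomparable rings. Once that is pinned down, the inclusions among Novikov rings are formal and the conclusion is immediate.
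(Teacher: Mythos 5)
Your proof is correct and follows essentially the same route as the paper: the paper also views both $X$ and $X'$ as right inverses over a common Novikov ring attached to (a point of) $C \cap C'$, invokes \cref{unique RI} to conclude $X = X'$, and then observes that $X$ lies over $\widehat{\K H}^{C} \cap \widehat{\K H}^{C'} = \widehat{\K H}^{C \cup C'}$. Your extra care about where the equality $X=X'$ takes place (via \cref{malcev-neumann H} and \cref{bns unique}) is consistent with the paper's setup.
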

\begin{proof}
Let $\phi \in C \cap C'$.
By assumption, there exist right inverses $X$ and $X'$ of, respectively, $A\otimes \widehat{\K H}^C$ and $A \otimes \widehat{\K H}^{C'}$.
In particular, both $X$ and $X'$ can be viewed as matrices over $\widehat{\K H}^\phi$, and so $X = X'$ by \cref{unique RI}. Now $X$ is a matrix over $\widehat{\K H}^{C} \cap \widehat{\K H}^{C'} =\widehat{\K H}^{C \cup C'}$.
\end{proof}

\begin{lem}
\label{bns convex}
 Let $S \subseteq H^1(H;\R)$ be any subset, and let $C$ denote its convex hull. Then
 \[
  \widehat{\K H}^S = \widehat{\K H}^C
 \]
\end{lem}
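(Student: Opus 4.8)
The plan is to prove both inclusions directly from the definition $\widehat{\K H}^S = \bigcap_{\phi \in S} \widehat{\K H}^\phi$. Since $S \subseteq C$, the inclusion $\widehat{\K H}^C \subseteq \widehat{\K H}^S$ is immediate: intersecting over a larger set of characters gives a smaller (or equal) ring. So the content is in the reverse inclusion $\widehat{\K H}^S \subseteq \widehat{\K H}^C$, i.e. showing that if a function $x \colon H \to \K$ has $\supp_{\phi,\kappa} x$ finite for every $\phi \in S$ and every $\kappa$, then the same holds for every $\phi$ in the convex hull $C$.

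First I would reduce to the case of a convex combination of two characters: a general point of $C$ is a finite convex combination $\phi = \sum_{i} t_i \phi_i$ with $\phi_i \in S$, and an easy induction (on the number of terms) lets us assume $\phi = t\phi_0 + (1-t)\phi_1$ with $\phi_0,\phi_1 \in S$ and $t \in [0,1]$; the endpoints $t \in \{0,1\}$ are trivial, so assume $t \in (0,1)$. Now fix $\kappa \in \R$ and suppose for contradiction that $\supp_{\phi,\kappa} x$ is infinite. The key observation is the elementary inequality: if $h \in H$ satisfies $\phi(h) \leqslant \kappa$, then $t\phi_0(h) + (1-t)\phi_1(h) \leqslant \kappa$, so it is impossible to have both $\phi_0(h) > \kappa/t$ \emph{(this is not quite the right bound — one must be slightly more careful)}. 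The clean way: for $h$ with $\phi(h) \le \kappa$, at least one of $\phi_0(h) \le \kappa'$ or $\phi_1(h) \le \kappa'$ must hold for a suitable $\kappa'$ depending on $\kappa, t$ and on lower bounds for the other character on the relevant set — and here is where I expect the main subtlety.

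The subtlety, and the main obstacle, is that $\phi_0$ and $\phi_1$ need not be bounded below on $\supp_{\phi,\kappa} x$ a priori, so one cannot naively split. The fix is to run the argument using the hypothesis more cleverly: write $\supp_{\phi,\kappa}x = \bigcup_{n \in \N}\bigl(\supp_{\phi,\kappa} x \cap \{h : \phi_0(h) \in [-n, -n+1)\}\bigr) \cup \bigl(\supp_{\phi,\kappa}x \cap \{h: \phi_0(h)\ge 0\}\bigr)$; on the piece where $\phi_0(h) \ge -n$ and $\phi(h)\le\kappa$ we get $(1-t)\phi_1(h) \le \kappa - t\phi_0(h) \le \kappa + tn$, hence $\phi_1(h) \le (\kappa+tn)/(1-t)$, so that piece is contained in $\supp_{\phi_1, (\kappa+tn)/(1-t)} x$, which is finite by hypothesis. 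Thus $\supp_{\phi,\kappa}x$ is a countable union of finite sets — not yet a contradiction. To get genuine finiteness I would instead argue: if $\supp_{\phi,\kappa}x$ were infinite, then since $\phi$ takes values in a discrete set on it (no — $\phi$ need not be discrete), pick a sequence $h_j \in \supp_{\phi,\kappa}x$; by pigeonhole on the integer parts, infinitely many have $\phi_0(h_j)$ in a single unit interval $[-n,-n+1)$, and then by the displayed inequality these infinitely many elements lie in $\supp_{\phi_1,(\kappa+tn)/(1-t)}x$, contradicting finiteness of the latter — unless only finitely many $h_j$ have $\phi_0(h_j) \ge -n$ for each $n$, i.e. $\phi_0(h_j) \to -\infty$; but then for large $j$, $\phi_0(h_j) < (\kappa - M)/t$ for any $M$, and combined with any a priori lower bound this is where one needs $x \in \widehat{\K H}^{\phi_0}$: the set where $\phi_0 \le$ some constant is finite, contradicting $\phi_0(h_j)\to-\infty$ along infinitely many $j$. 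Assembling these two cases yields the contradiction, proving $\supp_{\phi,\kappa}x$ is finite, hence $x \in \widehat{\K H}^\phi$ for all $\phi \in C$, so $\widehat{\K H}^S \subseteq \widehat{\K H}^C$ and the lemma follows.
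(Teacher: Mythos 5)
Your proof is correct in substance, but the ``main subtlety'' you identify is illusory, and chasing it leads you into a far more complicated argument than is needed. The clean bound you were looking for is simply $\kappa$ itself, with no dependence on $t$: if $\psi = \sum_i t_i \phi_i$ is a convex combination and $\psi(h) \leqslant \kappa$, then it cannot be that $\phi_i(h) > \kappa$ for every $i$ (a convex combination of numbers all exceeding $\kappa$ exceeds $\kappa$), so some $i$ has $\phi_i(h) \leqslant \kappa$. Hence $\supp_{\psi,\kappa} x \subseteq \bigcup_i \supp_{\phi_i,\kappa} x$, a finite union of finite sets, and one is done in two lines --- this is exactly the paper's proof, which needs no reduction to two characters, no contradiction, and no lower bounds on the $\phi_i$. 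Your roundabout version does eventually close up: the dichotomy ``either $\phi_0(h_j)$ is bounded below on an infinite subsequence, in which case the displayed inequality puts infinitely many $h_j$ into a single truncated support for $\phi_1$, or $\phi_0(h_j) \to -\infty$ on a subsequence, in which case infinitely many $h_j$ lie in $\supp_{\phi_0,0} x$'' does produce the contradiction in both cases. But note that your intermediate pigeonhole claim is mis-stated: with infinitely many unit intervals $[-n,-n+1)$ you cannot conclude that infinitely many $\phi_0(h_j)$ land in a single one (they could be bounded below but unbounded above); fortunately your subsequent inequality only uses the lower bound $\phi_0(h_j) \geqslant -n$, not membership in a single interval, so the slip is harmless. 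The lesson is that the naive split does work, precisely because the relevant condition is $\phi_i(h) \leqslant \kappa$ for \emph{some} $i$, which requires no control on the other characters.
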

\begin{proof}
Let $x \in  \widehat{\K H}^S$ be any element. Take $\phi_1, \dots, \phi_k \in S$, and $t_1, \dots, t_k \in [0,1]$ with $\sum_{i=1}^k t_i = 1$. Let $\psi = \sum_{i=1}^k t_i \phi_i$. It is enough to show that $x \in \widehat{\K H}^\psi$.

Pick $\kappa \in \R$ and for each $\rho \in H^1(H;\R)$ consider the truncation $x_{\rho,\kappa}$ defined by
\[
 x_{\rho,\kappa}(h) = \left\{ \begin{array}{cl}
                        x(h) & \textrm{if } \rho(h) \leqslant \kappa \\
                        0 & \textrm{otherwise}
                       \end{array} \right.
\]
We need to show that $\supp x_{\psi,\kappa}$ is finite for every $\kappa$. To this end, take $h \in \supp x_ {\psi,\kappa}$. If $\phi_i(h) > \kappa$ for all $i$, then the same is true for $\psi(h)$, which is a contradiction. So there exists $i$ such that $\phi(h) \leqslant \kappa$. Thus $h$ lies in the support of $x_{\phi_i, \kappa}$. So $\supp x_{\psi, \kappa}$ is contained in the union of the supports of the elements $x_{\phi_i, \kappa}$. But this union is finite.
\end{proof}

Let us summarise  the results obtained so far.

\begin{prop}
\label{bns summary}
Let $A$ be a square matrix over $\K H$, such that $A \otimes \D$ is invertible. Every connected component $C$ of $\Sigma(A)$ is open and convex, and $A \otimes \widehat{\K H}^C$ is right-invertible.
\end{prop}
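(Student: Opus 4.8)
The statement collects three assertions about a connected component $C$ of $\Sigma(A)$: it is open, it is convex, and $A \otimes \widehat{\K H}^C$ is right-invertible. The plan is to harvest these in order from the lemmata already established. Openness of $C$ is essentially immediate: by \cref{bns open}, each $\phi \in \Sigma(A)$ has an open neighbourhood $U_\phi \subseteq H^1(H;\R)$ with $U_\phi \subseteq \Sigma(A)$ (since $A \otimes \widehat{\K H}^{U_\phi}$ already admits a right inverse, a fortiori so does $A\otimes \widehat{\K H}^\psi$ for every $\psi \in U_\phi$). Hence $\Sigma(A)$ is open, and every connected component $C$ of an open subset of a locally connected space is itself open.

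For convexity, let $\phi_0, \phi_1 \in C$ and consider the segment $\phi_t = (1-t)\phi_0 + t\phi_1$. By what was just proved, $\Sigma(A)$ is open, so around each $\phi_t \in \Sigma(A)$ we have an open neighbourhood contained in $\Sigma(A)$; the point is to show the whole segment lies in $\Sigma(A)$, since then the segment is a connected subset of $\Sigma(A)$ meeting $C$, hence contained in $C$. Cover the (compact) segment by finitely many of the neighbourhoods $U_{\phi}$ from \cref{bns open}, say $U_{\psi_1}, \dots, U_{\psi_k}$ with consecutive ones overlapping along the segment. On each $U_{\psi_j}$ the matrix $A \otimes \widehat{\K H}^{U_{\psi_j}}$ has a right inverse $X_j$; since $A \otimes \D$ is invertible, \cref{bns union} applies to consecutive overlapping pieces and shows $X_j = X_{j+1}$ and that the common matrix $X$ is a right inverse over $\widehat{\K H}^{U_{\psi_j} \cup U_{\psi_{j+1}}}$. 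Iterating, $X$ is a right inverse of $A \otimes \widehat{\K H}^{\bigcup_j U_{\psi_j}}$, so in particular $\phi_t \in \Sigma(A)$ for every $t$. This proves the segment, hence $C$, lies in $\Sigma(A)$, and gives convexity.

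Finally, for right-invertibility of $A \otimes \widehat{\K H}^C$: repeating the covering argument above, for \emph{any} two points $\phi, \phi' \in C$ one joins them by a path inside the open connected set $C$, covers it by finitely many neighbourhoods of the type in \cref{bns open}, and applies \cref{bns union} along the chain to conclude that the right inverses agree, so there is a single matrix $X$ over $\K H$ that serves as a right inverse of $A \otimes \widehat{\K H}^\phi$ for every $\phi \in C$. Then $X$ has entries in $\bigcap_{\phi \in C} \widehat{\K H}^\phi = \widehat{\K H}^C$, and $AX = \I$ over this ring; thus $A \otimes \widehat{\K H}^C$ is right-invertible. (Alternatively, once convexity is known, \cref{bns convex} identifies $\widehat{\K H}^C$ with $\widehat{\K H}^S$ for $S$ a finite set of points whose convex hull is all of $C$ up to what is needed, and one combines the finitely many right inverses via \cref{bns union}.) The main obstacle is purely bookkeeping: making the "cover the path by finitely many neighbourhoods and glue" argument precise, i.e. ensuring the overlaps are non-empty so that \cref{bns union} can be chained; compactness of the path takes care of this, and the invertibility of $A \otimes \D$ is exactly what makes the uniqueness in \cref{unique RI}, hence the gluing in \cref{bns union}, available.
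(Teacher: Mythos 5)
Your treatment of openness and of the right-invertibility of $A \otimes \widehat{\K H}^C$ matches the paper's proof in substance: the paper likewise takes the neighbourhoods $U_\phi$ from \cref{bns open}, observes that they lie in $C$, and chains them (via the transitive closure of the non-empty-intersection relation, which is the same bookkeeping as your path-covering) using \cref{bns union} and the uniqueness from \cref{unique RI} to produce one matrix $X$ that is a right inverse over $\bigcap_{\phi} \widehat{\K H}^{U_\phi} = \widehat{\K H}^{\bigcup_\phi U_\phi} = \widehat{\K H}^C$.

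Your convexity argument, however, has a genuine gap: it is circular. To cover the segment from $\phi_0$ to $\phi_1$ by neighbourhoods supplied by \cref{bns open}, you need every point $\phi_t$ of the segment to lie in $\Sigma(A)$ already --- that lemma only produces a neighbourhood $U_\psi$ for $\psi \in \Sigma(A)$ --- and this is precisely what you are trying to prove. Openness of $\Sigma(A)$ makes $\{t \mid \phi_t \in \Sigma(A)\}$ open in $[0,1]$, but nothing in your argument makes it closed, so a priori the segment can exit $\Sigma(A)$. The correct route, which is the paper's, is to prove the claims in a different order: first obtain the single right inverse $X$ over $\widehat{\K H}^C$ as above, then invoke \cref{bns convex}, which gives $\widehat{\K H}^C = \widehat{\K H}^{\mathrm{conv}(C)}$; hence $X$ is a right inverse over $\widehat{\K H}^\psi$ for every $\psi$ in the convex hull of $C$, so $\mathrm{conv}(C) \subseteq \Sigma(A)$, and since $\mathrm{conv}(C)$ is connected and contains the component $C$, it equals $C$. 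You mention \cref{bns convex} only in a parenthetical and only ``once convexity is known'', i.e.\ in the wrong direction; it is in fact the indispensable ingredient for convexity itself, since without it there is no reason for a convex combination of characters in $\Sigma(A)$ to remain in $\Sigma(A)$.
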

\begin{proof}
For every $\phi \in C$, by \cref{bns open}, there exists an open neighbourhood $U_\phi$ of $\phi$ in $\Sigma(A)$ such that $A \otimes \widehat{\K H}^{U_\phi}$ admits a right-inverse $X_\phi$. We may assume $U_\phi$ to be connected.
Since $C$ is a connected component, it is immediate that $U_\phi \subseteq C$ for every $\phi$.

 Consider the transitive closure $\sim$ of the relation on $\mathcal U = \{ U_\phi \mid \phi \in C\}$ given by having non-empty intersection. The union of all sets $U_\phi$ belonging to an equivalence class of $\sim$ is open, and two such unions for two distinct equivalence classes are disjoint. Since $C$ is connected, $\sim$ admits only one equivalence class. Therefore for any two sets $U_\phi$ and $U_\psi$ in $\mathcal U$ there exists a finite sequence of sets $U_\phi = U_0, U_1, \dots, U_n = U_\psi$ in $\mathcal U$ such that $U_i \cap U_{i+1} \neq \emptyset$ for every $i$. Let $X_i$ denote the matrix corresponding to $U_i$. \cref{bns union} implies that $X_i = X_{i+1}$ for every $i$, and therefore $X_\phi = X_\psi$. Thus, letting $X = X_\phi$ for any $\phi \in C$, we see that $X$ lies over $\bigcap_\phi \widehat{\K H}^{U_\phi} = \widehat{\K H}^{\bigcup_\phi U_\phi} = \widehat{\K H}^C$.

The connected component $C$ is convex by \cref{bns convex} and open by the discussion above (or by \cref{bns open}).
\end{proof}

Before stating the main theorem of this section let us look at the following lemma, whose central purpose is to elucidate the statement of the theorem, as well as some arguments used in its proof.

\begin{lem}
\label{inv of novikov elements}
 Let $x \in \K H$ be any element, let $\phi \in H^1(H;\R)$ be any character, and let $C$ the unique dual of a face of $P(x)$ containing $\phi$. The following are equivalent.
 \begin{enumerate}
 \item The element $x$ is invertible in $\widehat {\K H}^C$.
  \item The element $x$ is invertible in $\widehat {\K H}^\phi$.
  \item The support $\supp \mu_\phi(x)$ is a singleton.
  \item The dual $C$ is a dual of a vertex of $P(x)$.
 \end{enumerate}
\end{lem}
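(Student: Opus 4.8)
The natural strategy is to prove the cycle of implications $(3) \Rightarrow (1) \Rightarrow (2) \Rightarrow (3)$, together with the easy equivalence $(3) \Leftrightarrow (4)$, which is just the unwinding of definitions: $\supp \mu_\phi(x)$ is a singleton precisely when $F_\phi(P(x)) = P(\mu_\phi(x))$ is a point, i.e.\ $\phi$ attains its minimum on $P(x)$ at a vertex, i.e.\ the dual $C$ of that face $F_\phi(P(x))$ is the dual of a vertex. Since the dual of a face is by definition a connected set of characters all sharing the same minimising face, $(4)$ makes the relevant face independent of which $\phi \in C$ we pick, so this equivalence is purely combinatorial.

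\smallskip
\textbf{Step 1: $(3) \Rightarrow (1)$.} Suppose $\supp \mu_\phi(x)$ is a single element, say $\lambda h$ with $\lambda \in \K^\times$, $h \in H$. After multiplying $x$ by $\lambda^{-1} h^{-1}$ (which is a unit in $\K H \subseteq \widehat{\K H}^C$, and hence does not affect invertibility), we may assume $\mu_\psi(x) = 1$ for every $\psi$ in the dual $C$ — note $\mu_\phi$ and $\mu_\psi$ pick out the same term for all $\psi$ in the same dual, since the minimising face of $P(x)$ is constant on $C$. Thus $x = 1 - b$ where every group element in $\supp b$ is sent strictly into $(0,\infty)$ by every $\psi \in C$ (the minimum at the vertex is strict, and $C$ is the dual, hence open as it is the dual of a vertex). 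Then $x$ is a $\psi$-identity in the sense already defined, for each $\psi \in C$; invoking the argument of \cref{phi id} (the Neumann series $\sum_{i \geqslant 0} b^i$), for each such $\psi$ there is a neighbourhood on which $x$ is invertible, and by the same bookkeeping as in \cref{bns summary} — using that $C$ is convex and $\widehat{\K H}^C = \bigcap_{\psi \in C} \widehat{\K H}^\psi$ — the inverse is a single element lying in $\widehat{\K H}^C$. (Alternatively: one checks directly that $\sum_{i \geqslant 0} b^i$ has truncated supports finite in every direction $\psi \in C$, because $\psi(\supp b^i) \subseteq (i\kappa,\infty)$ for a uniform $\kappa > 0$ on $C$; here is where openness of $C$ is used, to get $\kappa$ bounded away from $0$.)

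\smallskip
\textbf{Step 2: $(1) \Rightarrow (2)$.} Immediate, since $\phi \in C$ gives $\widehat{\K H}^C \subseteq \widehat{\K H}^\phi$, and a two-sided inverse in the smaller ring remains a two-sided inverse in the larger one.

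\smallskip
\textbf{Step 3: $(2) \Rightarrow (3)$.} This is the main obstacle and the place where the Malcev--Neumann machinery is needed. Suppose $x y = 1 = y x$ with $y \in \widehat{\K H}^\phi$. Apply the multiplicative map $\mu_\phi \colon \widehat{\K H}^\phi \to \K H$ (multiplicativity was noted right after \cref{no zero divs}): we get $\mu_\phi(x)\,\mu_\phi(y) = \mu_\phi(1) = 1$ in $\K H$, so $\mu_\phi(x)$ is a unit of the twisted group ring $\K H$. But $H$ is biorderable, so by \cref{units of biord} every unit of $\K H$ has support of cardinality exactly $1$. Hence $\supp \mu_\phi(x)$ is a singleton, which is $(3)$. (The subtle point to state carefully is that $\mu_\phi$ really is multiplicative on $\widehat{\K H}^\phi$, not just on $\K H$; this follows because $\K H \subseteq \widehat{\K H}^\phi$ has no zero-divisors — \cref{no zero divs} — so the lowest-$\phi$ terms of a product never cancel, exactly as in the $\K H$ case. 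If one is uneasy about $\mu_\phi$ on the whole Novikov ring, one can instead embed $\widehat{\K H}^\phi$ and $\D$ into the common Malcev--Neumann skew-field $\mathcal F$ of \cref{malcev-neumann H}, where the well-ordered support of $y$ has a $\leqslant$-minimum, and run the same lowest-term argument there.)

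\smallskip
This closes the cycle $(1) \Rightarrow (2) \Rightarrow (3) \Rightarrow (1)$ and, with the combinatorial $(3) \Leftrightarrow (4)$, proves the lemma. The only genuinely delicate ingredient is Step 3, and its heart is really just \cref{units of biord} applied to the leading term; everything else is an assembly of already-established facts (\cref{phi id}, \cref{bns summary}, \cref{bns convex}, \cref{malcev-neumann H}).
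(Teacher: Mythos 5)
Your proposal is correct and follows essentially the same route as the paper: the cycle is merely reordered, but the ingredients are identical -- multiplicativity of $\mu_\phi$ on the Novikov ring plus \cref{units of biord} for $(2)\Rightarrow(3)$, the combinatorial identity $F_\phi(P(x))=P(\mu_\phi(x))$ for $(3)\Leftrightarrow(4)$, and the $\psi$-identity/Neumann-series argument patched over $C$ via \cref{phi id} and \cref{bns summary} for the return to $(1)$. One small caveat: in your parenthetical ``alternative'' in Step 1, a $\kappa>0$ uniform over all of $C$ cannot exist since $C$ is a cone invariant under positive scaling; but this is not needed, as membership in $\widehat{\K H}^C=\bigcap_{\psi\in C}\widehat{\K H}^\psi$ only requires a constant $\kappa_\psi>0$ for each fixed $\psi$ separately, and your primary argument does not rely on the uniform claim.
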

\begin{proof}
\noindent \textbf{(1)$\Rightarrow$(2)}
This is immediate, since $\widehat {\K H}^C \leqslant \widehat {\K H}^\phi$ by definition.

\medskip
\noindent \textbf{(2)$\Rightarrow$(3)}
There exists $z \in \widehat {\K H}^\phi$ such that $xz=1$. Using the minima we see that
\[
 \mu_\phi(x) \mu_\phi(z) = 1
\]
as well. But the minima lie in $\K H$, and the units in $\K H$ have support of size $1$ by \cref{units of biord}. Thus $\mu_\phi(x)$ is supported on a singleton.

\medskip
\noindent \textbf{(3)$\Rightarrow$(4)}
We have $F_\phi\big( P(x) \big) = P\big( \mu_\phi(x) \big)$, which is a singleton. Thus,
$\phi$ lies in a dual of a vertex of $P(x)$, by the very definition of dual. But this dual is $C$, and so $C$ is as claimed.

\medskip
\noindent \textbf{(4)$\Rightarrow$(1)}
For every $\psi \in C$, we have $F_\psi\big( P(x) \big)$ equal to a singleton, say $h \in H$.
Thus $\mu_\psi(x) = \lambda h$ with $\lambda \in \K \s- \{0\}$. Therefore $\mu_\psi(x)$ is invertible in $\K H$, and $y = \mu_\psi(x)^{-1} x$ is an element of $\K H$ which is a $\psi$-identity. Hence $y$, and therefore also $x$, is invertible over $\widehat {\K H}^{U_\psi}$, where $U_\psi$ is some open neighbourhood  of $\psi$, by \cref{phi id}. Since the open sets $U_\psi$ cover $C$, we have $C\subseteq \Sigma(x)$. Therefore $x$ is invertible in $\widehat {\K H}^C$ by \cref{bns summary} (applied to the $1\times 1$ matrix $x$).
\end{proof}

We may view $x$ above as a $1 \times 1$ matrix; in this case we have $P(x) = P(\det x \otimes \D)$, and the invertibility of $x$ over $\widehat {\K H}^\phi$ is equivalent to $\phi \in \Sigma(x)$. The following result generalises this to square matrices of arbitrary size, and constitutes the second main result of the article.

\begin{thm}[$\Sigma$-invariants of matrices]
\label{K-bns matrix}
Let $\D$ denote the Ore localisation of $\K H$, and suppose that $A$ is a square matrix over $\K H$ with $A \otimes \D$ invertible. Let $\phi \in H^1(H;\R)$ be a character, and let $C$ denote the unique dual of a face of $P(A)$ containing $\phi$. The following are equivalent.
 \begin{enumerate}
 \item The matrix $A\otimes \widehat {\K H}^C$ is right-invertible.
  \item We have $\phi \in \Sigma(A)$.
  \item The dual $C$ is a dual of a vertex of $P(A)$.
 \end{enumerate}
\end{thm}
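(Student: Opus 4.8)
The strategy is to establish the cycle of implications $(1)\Rightarrow(2)\Rightarrow(3)\Rightarrow(1)$, mimicking the structure of \cref{inv of novikov elements} but upgrading from $1\times 1$ matrices to square matrices, using \cref{single poly} and \cref{bns summary} to control the Newton polytope and the right-inverse. The implication $(1)\Rightarrow(2)$ should be essentially formal: if $A\otimes\widehat{\K H}^C$ is right-invertible then, since $\phi\in C$ and $\widehat{\K H}^C\leqslant\widehat{\K H}^\phi$, the same matrix is right-invertible over $\widehat{\K H}^\phi$, i.e.\ $\phi\in\Sigma(A)$.

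For $(2)\Rightarrow(3)$, the plan is to use \cref{bns open} to obtain, from $\phi\in\Sigma(A)$, a matrix $X$ over $\K H$ with $AX$ a $\phi$-identity, and hence (by \cref{phi id}) a matrix $Y$ over some $\widehat{\K H}^{U}$, $U\ni\phi$ open, with $AXY=\I$; so $A\otimes\widehat{\K H}^\phi$ is in fact invertible, with inverse $Z:=XY$. Taking Dieudonn\'e determinants inside the Malcev--Neumann skew-field $\mathcal F\supseteq\widehat{\K H}^\phi$ of \cref{malcev-neumann H} (which also contains $\D$, compatibly), we get $\det(A)\cdot\det(Z)=1$ in $\mathcal F^\times/[\mathcal F^\times,\mathcal F^\times]$. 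The key point is that $\det A$, computed in $\mathcal F$, agrees with $\det(A\otimes\D)$, and its canonical representative lies in $\D$; likewise $\det Z$ has a representative in $\widehat{\K H}^\phi$, essentially because the Dieudonn\'e algorithm only uses ring operations and inverses of entries, and $Z$ has entries in $\widehat{\K H}^\phi$ — one must check the diagonal entries arising in the reduction are invertible in $\mathcal F$ so the computation stays put, or more robustly argue that $\det A\otimes\D$ is represented, as an element of $\D$, by an element of $\widehat{\K H}^\phi$. Then apply the minimum map: $\mu_\phi$ is multiplicative on $\widehat{\K H}^\phi$ and $P(A)=P(\det A\otimes\D)$, so $\mu_\phi$ applied to the representative of $\det A$ and of $\det Z$ multiply to $1$ in $\K H$; by \cref{units of biord} its support is a singleton, hence $F_\phi(P(A))=P(\mu_\phi(\det A))$ is a point, so $\phi$ — and therefore $C$ — is a dual of a vertex.

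For $(3)\Rightarrow(1)$: if $C$ is a dual of a vertex of $P(A)$, then for every $\psi\in C$ we have $F_\psi(P(A))$ a single point; using \cref{single poly}, write $\det A\otimes\D$ as represented by an element $p\in\K H$ with $P(p)=P(A)$ up to translation (after clearing a denominator $\nu$ which contributes nothing to the face structure — here one should be slightly careful and instead argue directly that $A\otimes\D$ being invertible plus $(3)$ forces, via the Euclidean-reduction argument in the proof of \cref{single poly}, that $A$ can be reduced over a suitable Novikov ring). Concretely, I would run Euclid's algorithm as in \cref{single poly} to put $A$ in upper-triangular form over $\LL\Z$, obtaining diagonal entries $d_1,\dots,d_n\in\D$ with $\prod d_i=\det A\otimes\D$ up to commutators; condition $(3)$ says $\sum_i P(d_i)$ has a unique $\psi$-minimal vertex for all $\psi\in C$, which forces each $P(d_i)$ to have a unique $\psi$-minimal vertex, so by \cref{inv of novikov elements} each $d_i$ is invertible over $\widehat{\K H}^C$ (after checking the $d_i$ lie in $\K H$ up to a unit, or passing through $\LL\Z$'s Novikov completion). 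Back-substituting the triangular form then yields a right inverse of $A$ over $\widehat{\K H}^C$, and by \cref{bns summary} (or \cref{bns union}) this is the genuine right inverse.

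The main obstacle I anticipate is bookkeeping the passage between the Dieudonn\'e determinant computed in $\D$ and in the Novikov- enlarged skew-field $\mathcal F$, and ensuring the canonical representatives stay in the intended subrings ($\D$, resp.\ $\widehat{\K H}^\phi$) so that $\mu_\phi$ can be applied multiplicatively — this is exactly the place where one needs \cref{malcev-neumann H}, \cref{bns unique}, and \cref{units of biord} working in concert. Secondarily, in $(3)\Rightarrow(1)$ one must be careful that the Euclidean reduction of the proof of \cref{single poly} takes place over $\LL\Z$, not $\K H$, so translating the vertex condition on $P(A)\subset H_1(H;\R)$ into invertibility of each triangular pivot over the correct Novikov ring requires tracking how $\LL\Z = \widehat{\LL\Z}$-type statements interact with $\widehat{\K H}^C$; the cleanest route may be to avoid triangularization entirely and instead deduce $(3)\Rightarrow(2)$ by showing $\det A\otimes\D$ is invertible over $\widehat{\K H}^C$ via \cref{inv of novikov elements} applied to a representative $p$ with $P(p)=P(A)$, then bootstrap from invertibility of the determinant to right-invertibility of the matrix using that $A\otimes\D$ is already invertible and a common-denominator argument over the Novikov ring.
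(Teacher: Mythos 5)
Your cycle of implications matches the paper's, and $(1)\Rightarrow(2)$ is exactly the paper's (and is indeed formal). For $(2)\Rightarrow(3)$ your target is right --- one wants $\det A\otimes\D$ represented by an element of $\widehat{\K H}^\phi$ whose $\mu_\phi$ is a unit of $\K H$ --- but the mechanism you propose does not deliver it. Computing $\det^c Z$ for $Z$ a matrix over $\widehat{\K H}^\phi$ need not stay in $\widehat{\K H}^\phi$: the Dieudonn\'e algorithm inverts pivots, and an element of $\widehat{\K H}^\phi$ is invertible there only when its $\phi$-minimal part is a monomial, which you cannot assume. Worse, the identity $\det A\cdot\det Z=1$ only holds modulo $[\mathcal F^\times,\mathcal F^\times]$, and neither $\mu_\phi$ nor the polytope homomorphism is defined on $\mathcal F^\times$ (elements of $\mathcal F$ can have infinite $\phi$-minimal part), so you cannot ``apply $\mu_\phi$'' to that equation. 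The paper avoids both problems by never leaving $\D$: it triangularises $AX$ by elementary matrices over $\D$ whose off-diagonal entries have $\phi$-positive support, so that $AXY$ is upper triangular with $\phi$-identity diagonal entries and $\det AX\otimes\D$ is represented inside $\D$ by a $\phi$-identity $x=pq^{-1}$; then $\mu_\phi(p)=\mu_\phi(q)$ and \cref{single poly} plus $P(AX)=P(A)+P(X)$ force $F_\phi(P(A))$ to be a point.

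The serious gap is in $(3)\Rightarrow(1)$. After Euclid's algorithm the pivots $d_i$ and the entries of the reducing matrix lie in $\LL\Z$, not $\K H$, and they are \emph{not} ``in $\K H$ up to a unit'': clearing denominators introduces elements $p,q_i\in\K L$ that are invertible in $\widehat{\K H}^\psi$ only for $\psi$ in duals of vertices of the $\phi$-flat polytopes $P(p),P(q_i)$. Consequently this argument only yields right-invertibility of $A$ over $\widehat{\K H}^{C_0}$, where $C_0$ is the intersection of $C$ with duals of vertices of an auxiliary $\phi$-flat polytope $Q$ --- a dense open subset of $C$, not $C$ itself. Your claim that ``each $d_i$ is invertible over $\widehat{\K H}^C$'' is therefore unjustified, and the fallback of ``bootstrapping from invertibility of the determinant to right-invertibility of the matrix'' has no proof over the Novikov ring (there is no adjugate for the Dieudonn\'e determinant). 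This is precisely where the paper needs its Steps 2 and 3: showing $C\s-\Sigma(A)$ has empty interior and is $\phi$-convex for \emph{every} integral $\phi\in C$, and then ruling out a point of $C\s-\Sigma(A)$ by spanning a simplex on characters $\phi_1,\dots,\phi_k$ and invoking $\phi_i$-convexity in all these directions simultaneously. Your proposal contains no substitute for this density-and-convexity argument, so the hard direction is not established.
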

\begin{proof}
\noindent \textbf{(1)$\Rightarrow$(2)}
This is immediate, since $\widehat {\K H}^C \leqslant \widehat {\K H}^\phi$ by definition, and $\phi \in \Sigma(A)$ means precisely that $A\otimes \widehat {\K H}^\phi$ is right-invertible.

\medskip
\noindent \textbf{(2)$\Rightarrow$(3)}
Set $P = P(A) = P(\det A \otimes \D)$. Recall that this is a single polytope by \cref{single poly} -- it cannot be empty, since $A\otimes \D$ is invertible and so its Dieudonn\'e determinant is not zero.
 We start by taking $\phi \in \Sigma(A)$, and claiming that $C$ is a dual of a vertex of $P$.

 By \cref{entries far away}, there exists a matrix $X$ over $\K H$ such that $AX$ is a $\phi$-identity. Set $AX = (x_{ij})$.
Consider the last row of $AX$. Its last entry $x_{nn}$ is invertible over $\widehat {\K H}^\phi$ by \cref{phi id}, since it is itself a $\phi$-identity. Thus, we may multiple $AX$ on the right by a number of elementary matrices in such a way that the resulting matrix $A'$ has zeroes in the last row, with the exception of the last entry (which remains unchanged).
Observe that we may use elementary matrices whose non-zero off-diagonal entries are precisely the elements $-x_{nn}^{-1}x_{ni}$ with $i \in \{1, \dots, n-1\}$.

Each of these off-diagonal entries is a product of an element of $\K H$ and an inverse of such an element, the inverse taken in $\widehat{ \K H}^\phi$. Thus, we may see each of these entries as lying over $\D$, since $\D$ contains all inverses of elements in $\K H$.
Moreover, the support of such an off-diagonal entry is taken by $\phi$ into $(0,\infty)$, since $AX$ is a $\phi$-identity, and therefore $\phi(\supp x_{nn}) \subset [0,\infty)$ and $\phi(\supp x_{in}) \subset (0,\infty)$, implying that $\phi(\supp x_{nn}^{-1} x_{in}) \subset (0,\infty)$ as well.
It is easy to see that multiplying a $\phi$-identity by elementary matrices whose non-zero off-diagonal entries are as above yields another $\phi$-identity.
Therefore $A'$ is a $\phi$-identity.

Repeating the process for other rows, we conclude the existence of a matrix $Y$ over $\D$, with $\det Y = 1$, and such that $AXY$ is upper triangular, and the diagonal entries are $\phi$-identities.
Hence $\det AX\otimes \D = \det AXY\otimes \D$ can be represented by an element $x \in \widehat{ \K H}^\phi$ which is also a $\phi$-identity.

We can also view $x$ as an element in $\D$, that is as a fraction of elements $p,q \in \K H$ (since $\D$ is the Ore localisation of $\K H$). Thus we have $x = p q^{-1}$, and so
\[
 xq = p
\]
Since $p$ and $q$ have finite support in $H$, we can carry out the multiplication above in $\widehat {\K H}^\phi$. Since $x$ is a $\phi$-identity, we have $\mu_\phi(x) = 1$, and therefore $\mu_\phi(q) = \mu_\phi(p)$.
Thus we have
\[
 F_\phi(P(p)) = F_\phi(P(q))
\]
But we know that $P(\det AX \otimes \D)$ is a single polytope by \cref{single poly}, and so $F_\phi\big(P(\det AX \otimes \D)\big) = F_\phi(P(p)) - F_\phi(P(q))$ is a singleton. We have \[P(AX) = P(A) + P(X)\]since $P\colon \D^\times \to \P(H)$ is a homomorphism. Both $P=P(A)$ and $P(X)$ are single polytopes, again using \cref{single poly}.
Therefore so are $F_\phi\big( P(A) \big)$ and $F_\phi\big( P(X) \big)$, and their sum is a singleton. This is only possible when both $F_\phi\big( P(A) \big)$ and $F_\phi\big( P(X) \big)$ are singletons. In particular, $F_\phi\big( P(A) \big)$ is a singleton, which means precisely that $C$ is a dual of a vertex of $P(A)$ (as $\phi \in C$). This proves the claim.

\medskip
\noindent \textbf{(3)$\Rightarrow$(1)}
Now let $C$ be a dual of a vertex of $P$. We aim at showing that $A\otimes \widehat{\K H}^C$ admits a right-inverse.

Since $C$ is open, there exists $\phi \in C \cap H^1(H;\Z)$. Let us take such a $\phi$. We will proceed in $3$ steps.

\step{1} We claim that there exists a $\phi$-flat polytope $Q$, such that for every intersection $C_0$ of a dual of a vertex of $Q$ with $C$, the matrix $A\otimes \widehat{ \K H}^{C_0}$ is right-invertible.


To prove the claim we start by introducing some extra notation (which is exactly the same as the one used in the proof of \cref{single poly}).
Let $L = \ker \phi$. The ring $\K H$ embeds into $\LL \Z$, where $\LL$ is the Ore localisation of $\K L$, and the embedding is induced by $\phi \colon H \to \Z$.
We now use Euclid's algorithm (treating $\LL \Z$ as a Laurent polynomial ring in one variable): it gives us a matrix $X$ over $\LL \Z$ (a product of elementary matrices) such that $A\otimes \LL \Z \cdot X = Y$ and $Y$ is an upper-triangular matrix over $\LL \Z$. Also, $\det X \otimes \D = 1$.

Let us focus first on $X$. It is a matrix over $\LL \Z$, and $\LL$ is the Ore localisation of $\K L$. Thus, there exists an element $p \in \K L$ such that for every entry $x$ of $X$ we have $px \in \K L$ (see \cref{Ore rmk}). This implies that for every $\zeta \in H^1(H;\R)$, if $p$ is invertible in $\widehat {\K H}^\zeta$ then every entry of $X$ can be represented by an element in $\widehat {\K H}^\zeta$ (in this case, we will simply say that $X$ lies over $\widehat {\K H}^\zeta$).
In fact more is true: if $C'$ denotes a dual of some vertex of $P(p)$, then $X$ lies over $\widehat {\K H}^{C'}$, as \cref{inv of novikov elements} tells us that $p$ is invertible in $\widehat {\K H}^{C'}$.

Now let us look at $Y=(y_{ij})$. Since $Y$ is upper-triangular, we have $\det Y\otimes \D$ represented by $\prod_i y_{ii}$. 
Therefore
\[
 P=P(\det A \otimes \D) = P(\det Y \otimes \D) = \sum_i  P(y_{ii})
\]
Take $\psi \in C$.
Since $\phi, \psi \in C$, we have $F_\phi(P) = F_\psi(P)$, and so
\[
 \sum_i F_\phi\big( P(y_{ii}) \big) = F_\phi(P) = F_\psi(P) = \sum_i F_\psi\big( P(y_{ii}) \big)
\]
Now \cref{faces decomp} implies that 
\[
 F_\psi P(y_{ii}) =  F_\phi P(y_{ii})
\]
for every $i$, since the decomposition of a face of $P$ into a sum of faces of the polytopes $P(y_{ii})$ is unique.

Recall that $y_{ii}$ lies in $\LL \Z$ for every $i$.
Let us now argue as for the matrix $X$ before: we see that for every $i$ there exists $q_i \in \K L$ such that \[q_i y_{ii} \in (\K L) \Z\] For notational convenience, let us fix $i$ for the moment. We then have
\[q_i y_{ii} = (r_0 + r_1 z + \dots + r_m z^m)z^n\]
for some $m \in \N$ and $n \in \Z$, where $z$ stands for a generator of $\im \phi = \Z$, the coefficients $r_i$ lie in $\K L$, and $r_0 \neq 0 \neq r_m$.
We have
\[
 \mu_\phi (q_i y_{ii}) = r_0 z^n
\]
Since $q_i \in \K L$, we also have $\mu_\phi(q_i) = q_i$, and so
\[
 q_i \cdot \mu_\phi ( y_{ii}) = r_0 z^n
\]
Applying the polytope map and using that $F_\psi P(y_{ii}) = F_\phi P(y_{ii})$ we obtain
\[
 P(q_i) + F_\psi\big( P(y_{ii}) \big) = P(r_0 z^n)
\]
It is now clear that replacing $P(q_i)$ by $F_\psi\big( P(q_i) \big)$ on the left-hand side results in obtaining a polytope on the right-hand side which is a subset of $P(r_0 z^n)$. But we also have
\[
 F_\psi\big(P(q_i)\big) + F_\psi\big( P(y_{ii}) \big) = F_\psi\big( P((r_0 + r_1 z + \dots + r_m z^m)z^n) \big)
\]
We conclude that $F_\psi\big( P((r_0 + r_1 z + \dots + r_m z^m)z^n) \big)$ is a subset of $P(r_0 z^n)$, which implies that $\mu_\psi((r_0 + r_1 z + \dots + r_m z^m)z^n) = \mu_\psi(r_0 z^n)$.

Let $C''$ denote the non-empty intersection of $C$ with a dual of some specified vertex of $P(r_0)$, and take $\rho \in C''$. Since $\rho$ lies in a dual of a vertex of $P(r_0)$, and hence also its translate $P(r_0z^n) = P(r_0) + P(z^n)$, \cref{inv of novikov elements} tells us that $\mu_\rho(r_0 z^n)$ is supported on a singleton. Taking $\psi = \rho$ above, we see that
\[
 \mu_\rho\big((r_0 + r_1 z + \dots + r_m z^m)z^n\big) = \mu_\rho(r_0 z^n)
\]
is also supported on a singleton. Therefore, using \cref{inv of novikov elements} again, we see that $q_iy_{ii} = (r_0 + r_1 z + \dots + r_m z^m)z^n$ is invertible in $\widehat{ \K H}^{C'''}$, where $C'''$ is a dual of a vertex of $P(q_i y_{ii})$ containing $\rho$. Now it is clear that $C'' \subseteq C'''$ since for every $\xi \in C''$ we have
\[
 \mu_\xi(q_iy_{ii}) = \mu_\xi(r_0 z^n) = \mu_\rho(r_0 z^n) = \mu_\rho(q_iy_{ii})
\]
Hence $q_iy_{ii}$, and therefore also $y_{ii}$, is invertible in $\widehat{ \K H}^{C''}$.
We set $s_i = r_0$, and again allow $i$ to vary.
We set $Q = P(p) + \sum P(s_i)$.

Let $C_0$ be the intersection of $C$ with some dual of a vertex of $Q$. A dual of a vertex of $Q$ is a subset of duals of vertices of the summands $P(p)$ and $P(s_i)$. Therefore, by the discussion above, the matrix $A\otimes \widehat {\K H}^{C_0}$ is right-invertible, as claimed.

\step{2} We now claim that $C \s- \Sigma(A)$ has empty interior and 
is \emph{$\phi$-convex}, that is for any $\psi \in C \s- \Sigma(A)$ we have $s \psi + t \phi \in C \s- \Sigma(A)$ for every $(s,t) \in (0,\infty) \times [0,\infty)$.

The first property is immediate, as $C \cap \Sigma(A)$ contains the intersection of $C$ with the union of all duals of vertices of $Q$ (by Step $1$). The union of all duals of vertices of any polytope is dense in $H^1(H;\R)$; this in particular applies to $Q$. Since $C$ is open, its intersection with a dense subset of $H^1(H;\R)$ is dense in $C$.

Now let us look at the second property. Since $Q$ is $\phi$-flat, every dual of a face of $Q$ is $\phi$-convex (this is easy to verify). Also, the intersection of such a dual with $C$ is still $\phi$-convex, as $C$ is $\phi$-convex (since it is convex and contains $\phi$), and the intersection of two $\phi$-convex sets is $\phi$-convex (this is immediate). Therefore, it is enough to show that $C \cap \Sigma(A)$, and therefore also $C \s- \Sigma(A)$, is the intersection of $C$ with some union of duals of faces of $Q$.

Let $\psi \in C\cap \Sigma(A)$ be any character, and let $F$ denote a face of $Q$ in whose dual $\psi$ lies. Let $C_1$ denote the connected component of $\psi$ in $C \cap \Sigma(A)$. By \cref{bns open}, there exists an open neighbourhood $U$ of $\psi$ in $\Sigma(A)$. It is easy to see that for every vertex $v$ of $F$, the neighbourhood $U$ intersects some dual $C_v$ of $v$ in a non-trivial manner. \cref{bns summary} implies that $C_1$ contains  $C \cap C_v$ for every $v$, as the sets $C_v$ are contained in $C\cap\Sigma(A)$ by Step $1$, and they are convex and thus connected; moreover, \cref{bns summary} says that $C_1$ is convex. Thus, $C_1$ contains the entire intersection of $C$ with the dual of $F$ containing $\psi$. This shows that every $C_1$ is the intersection of $C$ with  some union of duals of faces of $Q$, as claimed.

\step{3} We now claim that $C \subseteq \Sigma(A)$, which will finish the proof thanks to \cref{bns summary}, since $C$ is connected, and therefore is a subset of a connected component of $\Sigma(A)$.

Suppose for a contradiction that $C \not\subseteq \Sigma(A)$: let $\rho \in C \s- \Sigma(A)$ be a character. Since $C$ is a dual of a vertex, it contains an open neighbourhood $U$ of $\rho$; we find integral characters $\phi_1, \dots, \phi_k \in U$ which form a basis of the $\R$-vector space $H^1(H;\R)$, and such that $\rho$ cannot be generated (over $\R$) using fewer than $k$ of these characters. We span a non-degenerate $k$-simplex in $H_1(H;\R)$ with vertices $\rho, \phi_1, \dots, \phi_k$, 
which necessarily has non-empty interior. Now we use the fact that $C \s- \Sigma(A)$ is $\phi_i$-convex for every $i$ (shown in Step $2$, as $\phi$ was an arbitrary element of $C \cap H^1(H;\Z)$), and conclude that the interior of the simplex lies in $C \s- \Sigma(A)$. But this contradicts the fact that $C \s- \Sigma(A)$ has empty interior.
\end{proof}

\section{Agrarian groups}
\label{sec agrarian}

In this section we introduce agrarian groups, that is groups whose (untwisted) integral group rings embed in skew-fields (division algebras). Such groups will be of central importance for us -- the embedding into a skew-field allows us to take determinants of square matrices over the group ring; the determinants in turn lead to their Newton polytopes which in many cases control the Bieri--Neumann--Strebel invariants.

 To prove the inheritance properties and to list known classes of examples of agrarian groups we need to first cover the Atiyah conjecture.

\subsection{The Atiyah conjecture}

Recall our important convention: whenever we talk about a matrix, we will explicitly state over which ring it lies. 

Let $A$ be a matrix over the integral group ring $\Z G$ of a group $G$. Naturally, $A$ represents a linear map between two finitely generated free right $\Z G$ modules, say
\[
 A \colon \Z G^n \to \Z G ^m
\]
Tensoring with $L^2(G)$ we obtain
\[
 A \otimes L^2(G) \colon L^2(G)^n \to L^2(G)^m
\]
Let $\mathcal N(G)$ denote the  \emph{von Neumann algebra} of $G$, that is the algebra of bounded $G$-equivariant operators on $L^2(G)$.
The kernel of $A \otimes L^2(G)$ is a \emph{Hilbert $\mathcal{N}(G)$-module}, which essentially means that it is a topological space with a continuous module structure over $\mathcal N(G)$.

The von Neumann algebra has two properties of interest here. Firstly, the non-zero-divisors in $\mathcal N(G)$ satisfy the Ore condition, and so $\mathcal N(G)$ embeds into its Ore localisation. Secondly, Hilbert $\mathcal{N}(G)$-modules have a \emph{von Neumann dimension}, which is an element of $[0,\infty]$ -- for details see the book of L\"uck~\cite[Section 1.1]{Lueck2002}).

\begin{dfn}
 Let $G$ be a torsion free group. We say that $G$ satisfies the \emph{Atiyah conjecture} \iff for every matrix $A$ over $\Z G$, the von Neumann dimension of $\ker A \otimes L^2(G)$ is an integer.
\end{dfn}

The Atiyah conjecture can also be formulated for groups with torsion -- for such a formulation, as well as a discussion of counterexamples, see \cite[Chapter 10]{Lueck2002}. There are however no torsion-free groups known which do not satisfy the Atiyah conjecture.

Also, one can consider the Atiyah conjecture over group rings of $G$ with other coefficients. This is however not relevant to our discussion here.

\begin{thm}[Linnell~{\cite{Linnell1993}}; Schick~{\cite{Schick2002}}; Linnell--Okun--Schick~{\cite{Linnelletal2012}}; Schreve {\cite{Schreve2014}}]
\label{atiyah groups}
Let $\mathcal C_1$ be the smallest class of groups containing all free groups which is closed under directed unions and extension by elementary amenable groups.

Let $\mathcal C_2$ be the smallest class of groups containing the trivial group, closed under subgroups, colimits and inverse limits over directed systems, and such that if we have a quotient $q \colon G \to H$ with $G$ torsion-free, $H$ elementary amenable and $q^{-1}(F) \in \mathcal C_2$ for every finite subgroup $F$ of $H$, then $G \in \mathcal C_2$.

Let $\mathcal C_3$ denote the union of the class of \{right-angled Artin\}-by-\{elementary amenable\} groups and the class of \{right-angled Coxeter\}-by-\{elementary amenable\} groups where in the latter case in addition the finite subgroups of the elementary amenable quotients are $2$-groups.

Let $\mathcal C_4$ denote the class of virtually cocompact special groups.

If $G$ is a torsion-free group lying in $\mathcal C_1 \cup \mathcal C_2\cup \mathcal C_3 \cup \mathcal C_4$, then $G$ satisfies the Atiyah conjecture.
\end{thm}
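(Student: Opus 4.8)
The plan is to deduce the statement from the four cited sources, treating the classes $\mathcal C_1,\mathcal C_2,\mathcal C_3,\mathcal C_4$ in turn and exploiting the two features of $\mathcal N(G)$ recorded above: its non-zero-divisors satisfy the Ore condition, and Hilbert $\mathcal N(G)$-modules carry a $[0,\infty]$-valued von Neumann dimension that is additive, faithful, and continuous under directed colimits. The unifying device, due to Linnell~\cite{Linnell1993}, is the division closure $\mathcal D(G)$ of $\Z G$ inside the algebra of operators affiliated to $\mathcal N(G)$: once one knows that $\mathcal D(G)$ is a skew-field, a matrix $A\colon \Z G^n\to\Z G^m$ acquires a well-defined integer rank $r$ over $\mathcal D(G)$, and the dimension theory of $\mathcal N(G)$-modules (see~\cite{Lueck2002}) identifies the von Neumann dimension of $\ker\bigl(A\otimes L^2(G)\bigr)$ with $n-r$, whence the Atiyah conjecture for $G$. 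The whole theorem therefore reduces to proving that $\mathcal D(G)$ is a skew-field for every torsion-free $G$ in the four classes, and this is carried out by tracking the skew-field property through subgroups, directed colimits, and crossed products.

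For $\mathcal C_1$ I would follow \cite{Linnell1993} directly. The base case of a free group $G$ uses that $\C G$ is a free ideal ring, hence embeds into its universal skew-field of fractions, which one identifies with $\mathcal D(G)$. Stability under directed unions is immediate from continuity of the von Neumann dimension. The substantial input is stability under extension by an elementary amenable group $Q$: one would induct on the transfinite elementary-amenable hierarchy of $Q$, reducing to the cases $Q$ finite, $Q=\Z$, and $Q$ a directed union; the crossed product $\mathcal D(G)\ast\Z$ is again an Ore domain, so its Ore localisation is a skew-field, while the passage through finite quotients is controlled by Moody's induction theorem for $K_0$ of crossed products. \textbf{I expect this extension step to be the main obstacle}: it is the technical heart of Linnell's argument and recurs in the treatment of the remaining classes. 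The class $\mathcal C_2$ is then Schick's refinement~\cite{Schick2002}, where the same argument is propagated additionally through colimits and inverse limits over directed systems---handled by $\ell^2$-approximation---and under a weakened hypothesis on the elementary amenable quotient, requiring control only on the preimages of its finite subgroups, which is precisely the input Moody's theorem consumes.

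For $\mathcal C_3$ one establishes the skew-field property for right-angled Artin and Coxeter groups following Linnell--Okun--Schick~\cite{Linnelletal2012}: induct on the number of vertices of the defining graph, split a right-angled Artin group along the star of a vertex as an amalgamated product (or HNN extension) of right-angled Artin groups on fewer vertices over a right-angled Artin group on fewer vertices, and invoke Linnell's analysis of $\mathcal D(-)$ under amalgamated products and HNN extensions, the relevant Mayer--Vietoris $K$-theory sequences degenerating because all vertex and edge groups satisfy the skew-field property by induction; extension by elementary amenable groups is then absorbed as in $\mathcal C_1$, the stated hypotheses on finite subgroups ensuring that the dimension still lands in $\Z$. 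Finally, $\mathcal C_4$ is Schreve's theorem~\cite{Schreve2014}: by the work of Haglund--Wise, a virtually cocompact special group contains a finite-index subgroup that embeds into a right-angled Artin group, so the conclusion follows from $\mathcal C_3$ together with two elementary stability properties---under subgroups (for $H\leqslant G$, a matrix over $\Z H$ has the same kernel dimension computed over $\mathcal N(H)$ or over $\mathcal N(G)$, up to the index factor) and under finite overgroups (again via Moody's induction theorem). Taking the union of the four classes yields the statement.
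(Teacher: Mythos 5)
The paper does not prove this statement: \cref{atiyah groups} is imported wholesale from the four cited references, so there is no internal argument to compare yours against. Judged as a reconstruction of those references, your outline is broadly faithful: the reduction of the Atiyah conjecture to the assertion that the division closure $\D(G)$ is a skew-field is exactly Linnell's reformulation (recorded in the paper as \cref{atiyah field}), and the division of labour you describe --- Linnell for $\mathcal C_1$, Schick's approximation machinery for $\mathcal C_2$, the graph induction of Linnell--Okun--Schick for $\mathcal C_3$, Haglund--Wise plus passage to finite overgroups for $\mathcal C_4$ --- matches how the literature proceeds.

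Two cautions. First, what you have written is a roadmap rather than a proof: Moody's induction theorem, the analysis of the crossed products $\D(K)\ast Q$ for $Q$ elementary amenable, and the star/link induction for right-angled Artin and Coxeter groups are each substantial theorems, and you flag only the first of these as ``the main obstacle''. Second, and more substantively, you describe the passage from a finite-index special subgroup to the ambient torsion-free group in $\mathcal C_4$ as an ``elementary stability property \dots via Moody's induction theorem''. It is not: the Atiyah conjecture is not known to pass to torsion-free finite overgroups in general, and establishing this passage in the cocompact special setting --- via the factorization property and the existence of enough torsion-free nilpotent (congruence-type) quotients of the special subgroup --- is precisely the content of Schreve's paper and of the corresponding part of Linnell--Okun--Schick; executed as written, your $\mathcal C_4$ step would fail. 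A minor slip in the same vein: for $H \leqslant G$ and a matrix $A$ over $\Z H$, the von Neumann dimensions of $\ker\bigl(A\otimes L^2(H)\bigr)$ and $\ker\bigl(A\otimes L^2(G)\bigr)$ agree on the nose, since induction of Hilbert modules preserves dimension; there is no index factor.
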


Note that $\mathcal C_2$ contains all residually \{torsion-free solvable\} groups, which is a very rich class. Also, let us elaborate on the class $\mathcal C_4$: it contains random groups (in the density model) with density less than $1/6$ -- this follows from the result of Olliver--Wise~\cite{OlliverWise2011}, who have shown that such random groups are cocompactly cubulated, combined with the fact that they are hyperbolic, and cocompactly cubulated hyperbolic groups are virtually cocompact special by the work of Agol~\cite[Theorem 1.1]{Agol2013}.


\begin{lem}[{\cite[Lemma 10.4]{Lueck2002}}]
\label{atiyah inherit}
The class of torsion-free groups satisfying the Atiyah conjecture is closed under taking subgroups and directed unions.
\end{lem}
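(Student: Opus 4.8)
The plan is to reduce both closure properties to a single analytic ingredient: the von Neumann dimension is unchanged under induction along a subgroup inclusion. Precisely, if $H \leqslant G$ and $A$ is a matrix over $\Z H$ -- which we may equally regard as a matrix over $\Z G$ via $\Z H \into \Z G$ -- then
\[
\dim_{\mathcal N(G)} \ker\big( A \otimes_{\Z G} L^2(G) \big) = \dim_{\mathcal N(H)} \ker\big( A \otimes_{\Z H} L^2(H) \big).
\]
I would establish this first, with a brief justification. Choosing representatives for the cosets $H\backslash G$ identifies $L^2(G)$, as a left $\mathcal N(H)$-module, with a Hilbert-space direct sum of copies of $L^2(H)$ indexed by $H\backslash G$; consequently $A \otimes_{\Z G} L^2(G)$ is the corresponding direct sum of copies of the operator $A \otimes_{\Z H} L^2(H)$, its kernel is the induction to $\mathcal N(G)$ of $\ker(A \otimes_{\Z H} L^2(H))$, and induction of Hilbert modules along $H \leqslant G$ preserves the von Neumann dimension (see~\cite[Section 1.1]{Lueck2002}).

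Granting this identity, closure under subgroups is immediate: if $G$ satisfies the Atiyah conjecture and $H \leqslant G$, then $H$ is torsion-free, and for every matrix $A$ over $\Z H$ the left-hand side of the displayed equation is an integer by the Atiyah conjecture for $G$, hence so is the right-hand side; as $A$ was arbitrary, $H$ satisfies the Atiyah conjecture. For directed unions, write $G = \bigcup_i G_i$ with the $G_i$ a directed family of torsion-free subgroups each satisfying the Atiyah conjecture; then $G$ is torsion-free, since any torsion element would already lie in some $G_i$. A matrix $A$ over $\Z G$ has finitely many entries, each involving only finitely many group elements, so by directedness there is an index $i$ with $A$ a matrix over $\Z G_i$. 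Applying the displayed identity to $G_i \leqslant G$ gives
\[
\dim_{\mathcal N(G)} \ker\big( A \otimes_{\Z G} L^2(G) \big) = \dim_{\mathcal N(G_i)} \ker\big( A \otimes_{\Z G_i} L^2(G_i) \big),
\]
which is an integer by the Atiyah conjecture for $G_i$; since $A$ was arbitrary, $G$ satisfies the Atiyah conjecture.

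The only step carrying genuine content is the induction identity for the von Neumann dimension, and even there the essential input -- the coset decomposition of $L^2(G)$ over $\mathcal N(H)$ together with additivity of the dimension over (Hilbert) direct sums -- is classical. Everything else is bookkeeping: the subgroup case transports integrality downward along the inclusion, and the directed-union case reduces to it by the elementary observation that every matrix over $\Z G$ is already defined over one of the $\Z G_i$.
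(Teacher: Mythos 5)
Your proof is correct, and it is essentially the argument behind the paper's citation: the paper gives no proof of this lemma, deferring to L\"uck's Lemma 10.4, whose proof runs exactly as yours does --- a matrix over the group ring of a subgroup (or of a directed union, where directedness places any given matrix over one of the pieces) has the same von Neumann dimension of kernel whether computed over $\mathcal N(H)$ or over $\mathcal N(G)$, by dimension-preservation of induction. One small caution about your closing summary: the decisive input is that the induction functor preserves von Neumann dimension, not ``additivity of the dimension over Hilbert direct sums'' --- taken literally, additivity over the decomposition $L^2(G)\cong\bigoplus_{H\backslash G}L^2(H)$ would compute the $\mathcal N(H)$-dimension of the kernel as $[G:H]\cdot\dim_{\mathcal N(H)}\ker\big(A\otimes_{\Z H}L^2(H)\big)$, which is infinite whenever the index is; the body of your argument invokes the correct statement, so this is only a matter of phrasing.
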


In fact one can also give statements for groups with torsion, but they will not be relevant in our context.

We did not give a proper definition of the von Neumann dimension appearing in the Atiyah conjecture, since we will be using a reformulation due to Linnell. To state it we need one more definition.

\begin{dfn}[Division closure]
Let $R$ be a subring of a ring $S$. The \emph{division closure} of $R$ in $S$ is the smallest subring of $S$ containing $R$ such that if an element of this subring is invertible in $S$, then it is also invertible in the subring.
\end{dfn}

\begin{thm}[Linnell~{\cite{Linnell1993}}]
\label{atiyah field}
Let $G$ be a torsion-free group, and
 let $\D(G)$ denote the division closure of  $\Z G$ in the Ore localisation of the von Neumann algebra $\mathcal N(G)$. The group $G$ satisfies the Atiyah conjecture \iff $\D(G)$ is a skew-field.
\end{thm}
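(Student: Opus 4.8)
The plan is to prove both implications inside the Ore localisation $\mathcal U = \mathcal U(G)$ of $\mathcal N(G)$, using its standard ring-theoretic properties (see \cite[Chapter 8]{Lueck2002}): $\mathcal U$ is von Neumann regular and carries an additive, faithful dimension function $\dim_{\mathcal U}$ on finitely generated projective modules, and for every $m \times n$ matrix $A$ over $\Z G$ one has
\[
\dim_{\mathcal N(G)} \ker\big(A \otimes L^2(G)\big) \;=\; n - \dim_{\mathcal U}\big(\im(A \otimes \mathcal U)\big).
\]
Abbreviating $\operatorname{rk}_{\mathcal U}(A) = \dim_{\mathcal U}(\im(A\otimes\mathcal U)) \in [0,\min(m,n)]$, the Atiyah conjecture for $G$ is by this formula equivalent to the statement that $\operatorname{rk}_{\mathcal U}(A) \in \Z$ for every matrix $A$ over $\Z G$. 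The second ingredient is the classical fact that $\D(G)$ is contained in the \emph{rational closure} of $\Z G$ in $\mathcal U$: every $a \in \D(G)$ occurs as an entry of the inverse $M^{-1}$ of some square matrix $M$ over $\Z G$ that is invertible over $\mathcal U$ (see \cite[Chapter 7]{Cohn1977} and \cite[Chapter 10]{Lueck2002}).

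For the direction $(\Leftarrow)$, assume $\D = \D(G)$ is a skew-field. Given an $m \times n$ matrix $A$ over $\Z G$, the module $\im(A \otimes \D)$ is a free $\D$-module of some rank $r \in \{0,1,\dots,\min(m,n)\}$; since $\D$ is a skew-field, $\mathcal U$ is flat over $\D$, so $\im(A \otimes \mathcal U) \cong \im(A \otimes \D) \otimes_\D \mathcal U \cong \mathcal U^r$ and hence $\operatorname{rk}_{\mathcal U}(A) = r \in \Z$. By the displayed formula the von Neumann dimension of $\ker(A \otimes L^2(G))$ equals $n - r \in \Z$, so $G$ satisfies the Atiyah conjecture.

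For the direction $(\Rightarrow)$, assume $G$ satisfies the Atiyah conjecture. Since $\D$ is division-closed in $\mathcal U$ by definition, it suffices to show that every nonzero $a \in \D$ is invertible in $\mathcal U$. Write $a = v M^{-1} u$ with $M \in M_k(\Z G)$ invertible over $\mathcal U$ and $u, v$ a standard basis column and row over $\Z \subseteq \Z G$ (this is the rational-closure description above), and form
\[
N = \begin{pmatrix} M & u \\ v & 0 \end{pmatrix} \in M_{k+1}(\Z G).
\]
Multiplying $N \otimes \mathcal U$ on the left by $\left(\begin{smallmatrix} I & 0 \\ -vM^{-1} & I \end{smallmatrix}\right)$ and on the right by $\left(\begin{smallmatrix} I & -M^{-1}u \\ 0 & I \end{smallmatrix}\right)$ yields the block-diagonal matrix with blocks $M \otimes \mathcal U$ and the Schur complement $-vM^{-1}u = -a$; as these operations are invertible over $\mathcal U$ they preserve $\operatorname{rk}_{\mathcal U}$, whence
\[
\operatorname{rk}_{\mathcal U}(N) = \operatorname{rk}_{\mathcal U}(M \otimes \mathcal U) + \operatorname{rk}_{\mathcal U}(a) = k + \operatorname{rk}_{\mathcal U}(a),
\]
with $\operatorname{rk}_{\mathcal U}(a) = \dim_{\mathcal U}(a\mathcal U) \in [0,1]$. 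Since $N$ is a matrix over $\Z G$, the Atiyah hypothesis forces $\operatorname{rk}_{\mathcal U}(N) \in \Z$, hence $\operatorname{rk}_{\mathcal U}(a) \in \Z \cap [0,1] = \{0,1\}$; and it is nonzero, because $a\mathcal U$ is a nonzero finitely generated projective module and $\dim_{\mathcal U}$ is faithful. Thus $\operatorname{rk}_{\mathcal U}(a) = 1$, which (as $\mathcal U$ is von Neumann regular with faithful rank function, hence directly finite) forces $a$ to be a unit in $\mathcal U$; by division-closedness, $a^{-1} \in \D$. Hence $\D$ is a skew-field.

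The step I expect to be the main obstacle is the rational-closure input together with the reduction it enables: the Atiyah hypothesis is available only for matrices over $\Z G$, so the real work in $(\Rightarrow)$ is to push a statement about an arbitrary element of the much larger ring $\D(G)$ back to a single $\Z G$-matrix, which is precisely what the presentation $a = vM^{-1}u$ and the Schur-complement computation achieve. The remaining ingredients — von Neumann regularity and faithfulness of $\dim_{\mathcal U}$, flatness of $\mathcal U$ over a skew-field, additivity of $\operatorname{rk}_{\mathcal U}$ under the block decomposition, and the agreement of $\operatorname{rk}_{\mathcal U}$ with the von Neumann dimension of kernels — are standard properties of the Ore localisation of a finite von Neumann algebra and can be quoted from \cite{Lueck2002}.
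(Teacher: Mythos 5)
The paper does not actually prove this statement: it is quoted as Linnell's theorem, with only the remark that Linnell's argument for the division closure of $\C G$ works verbatim for $\Z G$. So there is no in-paper proof to compare against; your proposal has to be judged on its own, and it is correct. It is essentially the standard argument from the literature (cf.\ L\"uck's book, Lemma~10.39 together with the dimension theory of Chapter~8): the Atiyah conjecture is reformulated as integrality of $\operatorname{rk}_{\mathcal U}(A)=\dim_{\mathcal U}(\im(A\otimes\mathcal U))$ for $\Z G$-matrices via rank--nullity over the von Neumann regular ring $\mathcal U(G)$; the $(\Leftarrow)$ direction follows from flatness of $\mathcal U$ over a skew-field; and the $(\Rightarrow)$ direction reduces an arbitrary nonzero $a\in\D(G)$ to a single $\Z G$-matrix through the Cramer presentation $a=vM^{-1}u$ (valid because the division closure sits inside the rational closure) and the Schur-complement identity $\operatorname{rk}_{\mathcal U}(N)=k+\operatorname{rk}_{\mathcal U}(a)$, after which faithfulness of $\dim_{\mathcal U}$ and direct finiteness of $\mathcal U(G)$ upgrade $\operatorname{rk}_{\mathcal U}(a)=1$ to invertibility in $\mathcal U$, hence in $\D(G)$ by division-closedness. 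You correctly identify the two genuinely external inputs --- the containment of the division closure in the rational closure, and the compatibility of $\dim_{\mathcal N(G)}$ with $\dim_{\mathcal U}$ --- and everything else is routine. The only cosmetic caveat is that your presentation $a=vM^{-1}u$ with standard basis vectors is just the statement that $a$ is an entry of $M^{-1}$, which is exactly the definition of the rational closure, so no extra argument is hidden there.
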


Note that in Linnell's paper, $\D(G)$ denotes the division closure of $\C G$, rather than $\Z G$. The proofs however work verbatim for $\Z G$.



\begin{prop}
\label{props of D}
Let $G$ be a torsion-free group satisfying the Atiyah conjecture. Every automorphism of $G$ extends to an automorphism of $\D(G)$. Also, if $K$ is a subgroup of $G$ then the natural embedding $\Z K \into \Z G$ extends to an embedding $\D(K) \to \D(G)$.
\end{prop}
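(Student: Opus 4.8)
The plan is to carry out everything inside the Ore localisation of $\mathcal{N}(G)$, which I will write $\mathcal{U}(G)$; by \cref{atiyah field} we then have $\Z G \subseteq \D(G) \subseteq \mathcal{U}(G)$, with $\D(G)$ the \emph{smallest} division-closed subring of $\mathcal{U}(G)$ containing $\Z G$. The only formal facts about division closures that I will need are this minimality and the remark that a skew-field is automatically division-closed in every overring.

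For the statement on automorphisms: given $\alpha \in \Aut(G)$, the unitary $u_\alpha$ of $L^2(G)$ determined by $\delta_g \mapsto \delta_{\alpha(g)}$ conjugates the regular representation of $G$ onto itself, so conjugation by $u_\alpha$ is a ring automorphism of $\mathcal{N}(G)$ which on the subring $\Z G$ induces exactly the automorphism coming from $\alpha$. As $\mathcal{N}(G)$ is an Ore ring, this extends to an automorphism $\widetilde{\alpha}$ of $\mathcal{U}(G)$ by \cref{Ore loc}(\ref{Ore autos}), and $\widetilde{\alpha}$ carries $\Z G$ onto $\Z G$. It then suffices to see $\widetilde{\alpha}(\D(G)) = \D(G)$: the subring $\widetilde{\alpha}(\D(G))$ contains $\Z G$ and is division-closed in $\mathcal{U}(G)$ — if $\widetilde{\alpha}(x)$ is a unit of $\mathcal{U}(G)$ then so is $x$, hence $x^{-1} \in \D(G)$ and $\widetilde{\alpha}(x)^{-1} = \widetilde{\alpha}(x^{-1}) \in \widetilde{\alpha}(\D(G))$ — so minimality gives $\D(G) \subseteq \widetilde{\alpha}(\D(G))$, and the same argument applied to $\widetilde{\alpha}^{-1}$ gives equality. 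Thus $\widetilde{\alpha}$ restricts to an automorphism of $\D(G)$.

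For the statement on subgroups: first, $K$ satisfies the Atiyah conjecture by \cref{atiyah inherit}, so $\D(K)$ is a well-defined skew-field (\cref{atiyah field}). The inclusion $K \leqslant G$ induces an inclusion of von Neumann algebras $\mathcal{N}(K) \hookrightarrow \mathcal{N}(G)$ compatible with $\Z K \hookrightarrow \Z G$, and — this is the one external input — it is a standard fact from the theory of affiliated operators (see L\"uck~\cite{Lueck2002}) that this extends to an embedding of Ore localisations $\mathcal{U}(K) \hookrightarrow \mathcal{U}(G)$; note that this cannot be deduced from \cref{Ore loc}(\ref{Ore funct}), since $\mathcal{N}(K)$ is not a domain. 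Composing with $\D(K) \subseteq \mathcal{U}(K)$ gives an embedding $\iota \colon \D(K) \hookrightarrow \mathcal{U}(G)$ extending $\Z K \hookrightarrow \Z G \hookrightarrow \mathcal{U}(G)$. Finally, $\D(K)$ is built from $\Z K$ inside $\mathcal{U}(K)$ by repeatedly adjoining inverses of the elements that have become invertible; since invertibility in $\mathcal{U}(K)$ is preserved by $\iota$, and $\D(G)$ is a division-closed subring of $\mathcal{U}(G)$ containing $\Z K$, an induction over this construction shows $\iota(\D(K)) \subseteq \D(G)$. Hence $\iota$ is the desired embedding $\D(K) \to \D(G)$.

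The main obstacle is exactly the external input above: that the von Neumann algebra inclusion attached to a subgroup is compatible with passing to Ore localisations, i.e. that $\mathcal{N}(K) \hookrightarrow \mathcal{N}(G)$ induces $\mathcal{U}(K) \hookrightarrow \mathcal{U}(G)$. Everything else is formal manipulation of division closures, in the spirit of the results surrounding \cref{atiyah field}.
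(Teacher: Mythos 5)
Your proposal is correct and follows essentially the same route as the paper: extend the automorphism through $L^2(G)$, $\mathcal N(G)$ and its Ore localisation, and for a subgroup use the fact that $\mathcal N(K)\into\mathcal N(G)$ sends non-zero-divisors to non-zero-divisors so that it induces an embedding of Ore localisations. The paper leaves the division-closure bookkeeping (that the extended maps actually preserve $\D(G)$, resp.\ land in $\D(G)$) implicit, whereas you verify it via minimality of the division closure; that verification is correct.
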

\begin{proof}
Let $x$ be an automorphism of $G$. It is easy to see that $x$ extends to an automorphism of $L^2(G)$, hence of $\mathcal N(G)$, and therefore of the Ore localisation of $\mathcal N(G)$.

The second statement follows immediately from observing that the von Neumann algebra of a subgroup embeds into the von Neumann algebra of a supergroup in such a way that non-zero-divisors stay non-zero-divisors.
\end{proof}

The above proposition allows us in particular to construct a twisted group ring $\D(K) G/K$, when $K$ is normal.

\begin{prop}[{\cite[Lemma 10.69]{Lueck2002}}]
\label{linnell ore}
 If $G$  is a torsion-free group satisfying the Atiyah conjecture and if $G$ fits into a short exact sequence
 \[
  K \to G \to H
 \]
where $H$ is finitely-generated and abelian, then the Linnell skew-field $\D(G)$ is isomorphic to the Ore localisation of the twisted group ring $\D(K) H$.
\end{prop}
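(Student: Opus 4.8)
The plan is to exhibit an isomorphic copy of the Ore localisation of the twisted group ring $\D(K)H$ inside the Linnell skew-field $\D(G)$, and then force the two to coincide using the minimality of the division closure. Throughout, write $\mathcal U(G)$ for the Ore localisation of $\mathcal N(G)$, so that $\D(G) \subseteq \mathcal U(G)$ is the division closure of $\Z G$. Since $K$ is a subgroup of the torsion-free Atiyah group $G$, it is itself torsion-free and satisfies the Atiyah conjecture by \cref{atiyah inherit}, so $\D(K)$ is a skew-field by \cref{atiyah field}. The conjugation action of $G$ on $K$ extends to $\D(K)$ by \cref{props of D}, and the cocycle $\mu$ of \cref{key example} already takes values in $(\Z K)^\times \subseteq \D(K)^\times$, so the twisted group ring $\D(K)H$ is defined and contains $(\Z K)H \cong \Z G$. \cref{props of D} also provides a compatible embedding $\D(K) \into \D(G)$ carrying $\Z K$ into $\Z G$; since $\D(G)$ already contains a set-theoretic lift of $H$ inside $\Z G$, the $\D(K)$-subalgebra of $\D(G)$ generated by these lifts is a copy of $\D(K)H$, and thus $\D(K)H \subseteq \D(G)$ as subrings of $\mathcal U(G)$.

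I would first settle the case where $H$ is free abelian. Then $H$ is biorderable, so $\D(K)H$ has no zero-divisors by Malcev--Neumann (\cref{malcev-neumann}); and $H$ is amenable, so $\D(K)H$ is an Ore domain by Tamari's theorem (\cref{tamari}). Let $R$ be its Ore localisation, which is a skew-field by \cref{Ore loc}. As $G$ satisfies the Atiyah conjecture, $\D(G)$ is a skew-field, so every non-zero element of the domain $\D(K)H \subseteq \D(G)$ is a unit of $\D(G)$; hence the inclusion $\D(K)H \into \D(G)$ extends, via the universal property of the Ore localisation (\cref{Ore loc}), to an embedding $R \into \D(G)$ which is the identity on $\D(K)H \supseteq \Z G$. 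Now $R$ is a skew-field, hence a division-closed subring of $\mathcal U(G)$, and it contains $\Z G$; minimality of the division closure gives $\D(G) \subseteq R$, so $\D(G) = R$.

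For general finitely generated abelian $H$, I would first remove the torsion. Let $T \leqslant H$ be the (finite, characteristic) torsion subgroup and $G_0 \leqslant G$ its preimage; then $G_0 \trianglelefteq G$ is torsion-free and satisfies the Atiyah conjecture, while $G/G_0 \cong H/T$ is free abelian. As above, $\D(K)T \subseteq \D(G_0)$; it is a finite-dimensional algebra over the skew-field $\D(K)$, hence Artinian, and as a subring of the skew-field $\D(G_0)$ it is a domain, so it is an Artinian domain and therefore a skew-field; being division-closed it must equal $\D(G_0)$. Applying the free abelian case to $G_0 \trianglelefteq G$ then identifies $\D(G)$ with the Ore localisation of $\D(G_0)(H/T) = (\D(K)T)(H/T)$, which is canonically isomorphic to $\D(K)H$ by associativity of the crossed-product construction.

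I expect the main obstacle to be not any one computation but the compatibility bookkeeping in the first paragraph: one must check that the copy of $\D(K)$ placed inside $\D(G)$ by \cref{props of D} really is the Linnell skew-field of $K$, that the conjugation automorphisms used to twist $\D(K)H$ coincide with the ones induced by conjugation inside $\D(G)$, and that the abstract Ore localisation $R$ embeds in $\mathcal U(G)$ compatibly with all of this. Once these are in place, the identity $\D(G) = R$ is essentially forced, since $R$ is the smallest skew-field lying between $\Z G$ and $\mathcal U(G)$ that contains $\D(K)$ and a lift of $H$. The torsion reduction is then routine, the only real point being that an Artinian domain is a skew-field.
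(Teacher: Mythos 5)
The paper does not actually prove \cref{linnell ore}; it is imported wholesale from L\"uck's book (Lemma 10.69), so there is no internal proof to compare against. Your argument is the standard one and is essentially sound: identify $\D(K)H$ with a subring of the skew-field $\D(G)$, observe that it is an Ore domain (Malcev--Neumann plus Tamari in the free abelian case), embed its Ore localisation $R$ into $\D(G)$ by the universal property, and conclude $R = \D(G)$ because a skew-field sandwiched between $\Z G$ and $\mathcal U(G)$ is division closed and hence contains the division closure. The reduction of the torsion case to the free abelian case via ``$\D(K)T$ is an Artinian domain, hence a skew-field, hence equals $\D(G_0)$'' is also correct. (L\"uck's own proof instead inducts on the rank of $H$, treating one $\Z$-factor at a time; your one-shot use of \cref{malcev-neumann} and \cref{tamari} for the whole free abelian quotient is a mild simplification that the paper's toolkit fully supports.)

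The one genuine gap is the very first step: the assertion that the $\D(K)$-subalgebra of $\D(G)$ generated by set-theoretic lifts of $H$ ``is a copy of $\D(K)H$''. This is exactly the claim that the natural ring homomorphism $\D(K)H \to \D(G)$, $\lambda h \mapsto \lambda s(h)$, is injective, i.e.\ that the lifts $s(h)$ are left linearly independent over $\D(K)$ inside $\D(G)$. Nothing quoted in the paper (\cref{props of D}, \cref{atiyah inherit}, \cref{atiyah field}) delivers this; it requires the von Neumann algebra input that $L^2(G)$ decomposes as the orthogonal sum of the subspaces $L^2(Ks(h))$, so that a relation $\sum_h a_h s(h) = 0$ with $a_h \in \mathcal N(K)$ forces each $a_h = 0$, followed by clearing denominators to pass from $\mathcal N(K)$ to $\mathcal U(K) \supseteq \D(K)$. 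This is precisely the content of the lemmas preceding 10.69 in L\"uck's book. You flag several adjacent compatibility issues in your final paragraph, but this particular one --- the only step where the analytic structure of $\mathcal U(G)$ is genuinely used rather than formal ring theory --- is stated as if it were automatic. If the map $\D(K)H \to \D(G)$ had a kernel, every subsequent step (the domain property of $\D(K)T$, the embedding of $R$, the minimality argument) would collapse, so this needs to be proved or explicitly cited, not assumed.
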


Since we are talking about the Atiyah conjecture, let us introduce the $L^2$-homology.

\begin{dfn}
The $n^{th}$ $L^2$-homology group of a group $G$ is defined to be \[H_n(G;L^2(G))\]
with the caveat that one computes the homology topologically, that is one divides the cycles by the closure of the boundaries.

A group $G$ is \emph{$L^2$-acyclic} \iff its $L^2$-homology vanishes for every $n$. In this context one may in fact take the usual notion of homology, without taking closures.
\end{dfn}

\begin{prop}[L\"uck~{\cite[Lemma 10.28(3)]{Lueck2002}}]
\label{l2 acyclic via D}
Let $G$ be a torsion-free group satisfying the Atiyah conjecture. Then \[\dim_{\mathcal N (G)} H_n(G;L^2(G)) = \dim_{\D(G)}H_n(G;\D(G))\] for every $n$.
\end{prop}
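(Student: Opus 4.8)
The statement is L\"uck's \cite[Lemma 10.28(3)]{Lueck2002}, and the plan is to assemble it from the three ingredients below. Write $\mathcal U(G)$ for the Ore localisation of the von Neumann algebra $\mathcal N(G)$, so that $\mathcal U(G)$ is a ring into which $\mathcal N(G)$ embeds as an Ore localisation (hence $\mathcal U(G)$ is flat over $\mathcal N(G)$), and into which, by \cref{atiyah field}, the skew-field $\D(G)$ embeds as the division closure of $\Z G$. The von Neumann dimension $\dim_{\mathcal N(G)}$ extends to a dimension function on all $\mathcal U(G)$-modules, normalised so that $\dim_{\mathcal U(G)}\mathcal U(G)=1$, and compatible with the inclusion in the sense that $\dim_{\mathcal N(G)}N=\dim_{\mathcal U(G)}(N\otimes_{\mathcal N(G)}\mathcal U(G))$ for every $\mathcal N(G)$-module $N$.

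First I would reduce the left-hand side to algebraic group homology: it is standard that $\dim_{\mathcal N(G)}H_n(G;L^2(G))=\dim_{\mathcal N(G)}H_n(G;\mathcal N(G))$, the point being that replacing the coefficients $L^2(G)$ by $\mathcal N(G)$, and the topological homology by the purely algebraic one, leaves von Neumann dimension unchanged. Then, using flatness of $\mathcal U(G)$ over $\mathcal N(G)$ together with the dimension-compatibility recalled above, base change along $\mathcal N(G)\hookrightarrow\mathcal U(G)$ applied to a free $\Z G$-resolution of $\Z$ gives
\[
\dim_{\mathcal N(G)}H_n(G;\mathcal N(G))=\dim_{\mathcal U(G)}H_n(G;\mathcal U(G)).
\]

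Next I would descend from $\mathcal U(G)$ to $\D(G)$. Since $\D(G)$ is a skew-field, every $\D(G)$-module is free, so in particular $\mathcal U(G)$ is flat over $\D(G)$; factoring $\Z G\hookrightarrow\mathcal U(G)$ through $\D(G)$ and applying flat base change once more yields $H_n(G;\mathcal U(G))\cong H_n(G;\D(G))\otimes_{\D(G)}\mathcal U(G)$. Finally, for a $\D(G)$-module $M$, which is free of some rank $r\in\{0,1,2,\dots\}\cup\{\infty\}$, the module $M\otimes_{\D(G)}\mathcal U(G)$ is free of the same rank over $\mathcal U(G)$, whence $\dim_{\mathcal U(G)}(M\otimes_{\D(G)}\mathcal U(G))=r=\dim_{\D(G)}M$ by additivity of the dimension and the normalisation $\dim_{\mathcal U(G)}\mathcal U(G)=1$. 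Chaining the three identities produces the claim.

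The main obstacle is purely foundational and lies in the two inputs I am invoking rather than proving: constructing the extended dimension function on $\mathcal U(G)$-modules with its compatibility with $\dim_{\mathcal N(G)}$ and the normalisation, and the first reduction step identifying the von Neumann dimension of the topological $L^2$-homology with that of the algebraic homology with $\mathcal N(G)$-coefficients. Both are established in L\"uck's book, and once they are granted the remaining steps are formal homological algebra, made especially clean by the fact that $\D(G)$ is a skew-field, so that flatness over $\D(G)$ and preservation of dimension under the base change $\D(G)\to\mathcal U(G)$ are automatic.
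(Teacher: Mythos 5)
Your argument is correct and is essentially the proof of L\"uck's Lemma 10.28(3), which the paper cites verbatim rather than reproving; the chain $\Z G \subseteq \D(G) \subseteq \mathcal U(G)$, flatness of the Ore localisation, the extended dimension function, and the observation that a skew-field makes the second base change free are exactly the ingredients of L\"uck's argument. Since your proof uses only that $\D(G)$ is a skew-field containing $\Z G$ inside $\mathcal U(G)$, it also automatically disposes of the paper's one added remark, namely that L\"uck works with the division closure of $\C G$ whereas here $\D(G)$ is the division closure of $\Z G$.
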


Again, L\"uck deals with the division closure of $\C G$, but the proof carries over to the case of $\D(G)$ being the division closure of $\Z G$.

\subsection{Agrarian and equivariantly agrarian groups}
We now introduce the class of groups of main interest to us.

\begin{dfn}
 A group $G$ is \emph{agrarian} \iff the integral group ring $\Z G$ embeds into a skew-field. If the embedding can be made equivariant with respect to $\Aut(G)$ (which also acts on $\Z G$), then $G$ is said to be \emph{equivariantly agrarian}.
\end{dfn}

If $\Z G$ embeds in a skew-field $\D$, then we treat $\D$ as a left $\Z G$ module, and a right $\D$ module. Thus we will tensor $\Z G$ modules on the right with $\D$, and obtain right $\D$ modules.

Note that $\Z G$ embeds in a skew-field $\D$ \iff  $\Q G$ embeds in $\D$.
\smallskip

Let us list some properties of agrarian groups.

\begin{prop}
\begin{enumerate}
 \item Every equivariantly agrarian group is agrarian.
 \item Subgroups of agrarian groups are agrarian.
 \item Countable directed unions of agrarian groups are agrarian. \label{agrarian unions}
 \item Agrarian groups are torsion free and satisfy the zero-divisor conjecture of Kaplansky, that is their integral group rings have no zero-divisors.
 \item Torsion-free groups satisfying the Atiyah conjecture are equivariantly agrarian.
 \item Amenable groups whose integral group rings have no zero-divisors are equivariantly agrarian.
 \item Biorderable groups are agrarian.
 \item \{Equivariantly agrarian\}-by-$H$ groups are agrarian, provided that $H$ is biorderable or $H$ is amenable and its twisted group rings with skew-field coefficients have no zero-divisors.
 \item Countable fully-residually agrarian groups are agrarian.
\end{enumerate}
\end{prop}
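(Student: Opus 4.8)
The plan is to dispatch the nine items in three groups, according to the kind of argument needed. The items (1), (2), (4) are formal, and (5), (6), (7) are one-line applications of results already recorded. For (1) there is nothing to prove (forget the equivariance); for (2), if $H\leqslant G$ and $\Z G\into\D$ is an embedding into a skew-field then $\Z H\into\Z G\into\D$; for (4), a subring of a skew-field has no zero-divisors (which is Kaplansky's statement), and if $g\in G$ had finite order $n>1$ then $(1-g)(1+g+\dots+g^{n-1})=1-g^{n}=0$ with both factors non-zero, a contradiction, so $G$ is torsion-free. For (5), Linnell's \cref{atiyah field} gives that $\D(G)\supseteq\Z G$ is a skew-field, and by \cref{props of D} the $\Aut(G)$-action on $\Z G$ extends to $\D(G)$, so the embedding is equivariant. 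For (6), $\Z G$ has no zero-divisors and $G$ is amenable, so $\Z G$ is an Ore domain by \cite[Theorem~A.1]{Bartholdi2016}; it embeds into its Ore localisation, a skew-field by \cref{Ore loc}(4), and the canonical extension of automorphisms from \cref{Ore loc}(\ref{Ore autos}) makes this equivariant. For (7), $\Q$ is a skew-field and $\Q G$ is an untwisted twisted group ring of the biorderable group $G$, so by \cref{malcev-neumann} it embeds into its Malcev--Neumann completion, and $\Z G\subseteq\Q G$; one does not claim equivariance since the completion depends on a choice of biordering.

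For (3) and (9) I would isolate the common statement: \emph{if for every finite subset $\{x_1,\dots,x_k\}$ of $\Z G\s- \{0\}$ there is a ring homomorphism from $\Z G$ to a skew-field which is non-zero on each $x_j$, then $\Z G$ embeds into a skew-field}. Granting this, (9) follows because, $G$ being fully-residually agrarian, for any finite $S\subseteq G$ there is a quotient $q\colon G\to Q$ with $Q$ agrarian and $q|_S$ injective; taking $S$ to be the union of the finite supports of $x_1,\dots,x_k$ and composing $\Z G\to\Z Q\into\D_Q$ gives the required homomorphism. The isolated statement itself is proved by an ultraproduct: since $G$ is countable, enumerate $\Z G\s- \{0\}=\{r_1,r_2,\dots\}$, pick $f_n\colon\Z G\to\D_n$ non-zero on $r_1,\dots,r_n$, fix a non-principal ultrafilter $\omega$ on $\N$, and note that a product of skew-fields modulo an ultrafilter is again a skew-field and that the coordinate-wise homomorphism $\Z G\to\prod_n\D_n/\omega$ is injective, since for $r_m\neq 0$ one has $f_n(r_m)\neq 0$ for all $n\geqslant m$ and $\{n\geqslant m\}\in\omega$. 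For (3) one adapts this slightly: there are no homomorphisms out of $\Z G$ at hand, only subrings $\Z G_i\subseteq\Z G$ with embeddings $\Z G_i\into\D_i$, so one indexes the ultraproduct over the directed system, uses an ultrafilter containing every $\{i:i\geqslant i_0\}$ (a filter base, by directedness), and uses the partially-defined coordinate maps sending $x$ to its image in $\D_i$ for $x\in\Z G_i$; any finite family of elements of $\Z G$ lies in a common $\Z G_i$, so these maps are coherently defined on a set in $\omega$ and injective where defined.

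The substantial case is (8), which I expect to be the main obstacle. Let $1\to K\to G\to H\to 1$ with $K$ equivariantly agrarian, witnessed by an $\Aut(K)$-equivariant embedding $\Z K\into\D$ into a skew-field. The first move is to replace $\D$ by the division closure $\D_0$ of $\Z K$ in $\D$: this is still a skew-field (a non-zero element of $\D_0$ is invertible in $\D$, hence its inverse lies in $\D_0$), it is preserved by the $\Aut(K)$-action, and, crucially, any two ring homomorphisms out of $\D_0$ agreeing on $\Z K$ agree on all of $\D_0$, by induction along the construction of the division closure. By \cref{key example} identify $\Z G$ with the twisted group ring $(\Z K)H$, with structure functions $\phi(h)$ (conjugation by a section value $s(h)$, which lies in $\Aut(K)$ since $K\trianglelefteq G$) and $\mu(h,h')=s(h)s(h')s(hh')^{-1}\in K$. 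Let $\bar\phi(h)\in\Aut(\D_0)$ be the extension of $\phi(h)$ given by equivariance; I claim $(\bar\phi,\mu)$ are structure functions for a twisted group ring $\D_0 H$: the cocycle identity for $\mu$ is inherited from $(\Z K)H$ because $\mu$ is $\Z K$-valued and $\bar\phi(h)|_{\Z K}=\phi(h)$, while the mixed identity $\bar\phi(h)\circ\bar\phi(h')=c(\mu(h,h'))\circ\bar\phi(hh')$ holds because both sides are automorphisms of $\D_0$ whose restrictions to $\Z K$ are the two equal sides of the corresponding identity in $(\Z K)H$ — and it is here that one needs an automorphism of $\D_0$ to be determined by its restriction to $\Z K$, which fails for a general over-skew-field. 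Then $(\Z K)H\into\D_0 H$ coefficient-wise is a ring embedding, so $\Z G\into\D_0 H$. Finally $\D_0 H$ is a twisted group ring of $H$ over a skew-field: if $H$ is biorderable, \cref{malcev-neumann} embeds it into a skew-field; if $H$ is amenable and its twisted group rings over skew-fields have no zero-divisors, then $\D_0 H$ (being such a ring) has none either, hence is an Ore domain by \cref{tamari}, hence embeds into its Ore localisation, a skew-field by \cref{Ore loc}(4). In either case $\Z G$ embeds into a skew-field. The obstacle is thus the bookkeeping with the crossed-product axioms after extending scalars, and the division-closure reduction is precisely what makes it go through.
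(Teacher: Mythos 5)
Your proposal is correct and follows essentially the same route as the paper for every item: (1), (2), (4) are formal, (5)--(7) are applications of \cref{atiyah field}, \cref{tamari}/\cref{Ore loc}, and \cref{malcev-neumann} respectively, (3) and (9) are ultraproduct arguments, and (8) identifies $\Z G$ with $(\Z K)H$ and embeds the coefficients before applying Malcev--Neumann or Tamari. The one genuine point of divergence is in (8): the paper's proof simply asserts that $G$-equivariance of $\Z K\into\K$ yields the embedding $(\Z K)H\into\K H$, whereas you pause to check that the crossed-product axiom $\bar\phi(h)\circ\bar\phi(h')=c(\mu(h,h'))\circ\bar\phi(hh')$ must hold as an identity of automorphisms of the whole coefficient skew-field, not merely on $\Z K$, and you secure this by first replacing $\K$ with the division closure of $\Z K$, on which automorphisms are determined by their restriction to $\Z K$. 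This is a legitimate subtlety that the paper's two-line argument glosses over (an abstract equivariant embedding only guarantees that $\alpha(c_k)$ and conjugation by $k$ in $\K$ agree on $\Z K$), and your reduction is a clean way to dispose of it without changing the overall strategy; the rest of your write-up, including the more explicit handling of the ultrafilter over the directed system in (3), matches the paper's proof.
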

\begin{proof}
 \begin{enumerate}
  \item Obvious.
  \item Obvious.
  \item Let $(G_i)$ be a sequence of agrarian groups, and suppose that $j_i \colon \Z G_i \to \D_i$ is an embedding into a skew-field for every $i$. Let $\D$ denote the ultraproduct of the skew-fields $\D_i$; it is a standard exercise to show that $\D$ is itself a skew-field. Now we define $j \colon \Z (\bigcup G_i) \to \D$ by putting $j(g) = (j_i(g))$ for every $g \in \bigcup G_i$ (where we put $j_i(g) = 0$ if $g \not \in G_i$), and then extending linearly. Take $x \in \Z (\bigcup G_i)$: it is clear that $j(x) = 0$ implies that $j_i(x) = 0$ for infinitely many values $i$, and hence that $x = 0$. Thus $j$ is injective.
  \item Skew-fields have no zero-divisors, and groups with torsion have zero-divisors in their integral group rings.
  \item Follows from \cref{atiyah field}.
  \item Note that if $\Z G$ has no zero divisors, then $\Q G$ also does not admit any. Therefore, we may apply
  \cref{tamari} and \cref{Ore loc}(2),(4) to embed $\Q G$, and hence $\Z G$, into a skew-field in an equivariant way.
  \item Follows from \cref{malcev-neumann}.
  \item Let $K \to G \to H$ be an extension, with $K$ equivariantly agrarian. The very definition tells us that the untwisted group ring $\Z K$ can be embedded into a skew-field $\K$ in an $\Aut(K)$-equivariant fashion. In particular, the embedding is  equivariant with respect to the conjugation action of $G$ on $K$, and so we have
  \[
   \Z G = (\Z K) H \into \K H
  \]
Now the statement follows from \cref{malcev-neumann} when $H$ is biorderable and from \cref{tamari} when $H$ is amenable and $\K H$ has no zero-divisors.
\item This is similar to \cref{agrarian unions}: if $G$ is fully-residually agrarian, then we obtain a sequence $q_i$ of epimorphisms $q_i \colon G \to H_i$ such that every $H_i$ is agrarian, and for every $x \in \Z G$ there exists $j_0$ such that $q_j(x) \neq 0$ for every $j \geqslant j_0$, where $q_j$ now denotes the induced map on group rings. We embed $\Z H_i$ into a skew-field $\D_i$, and then take $\D$ to be the ultraproduct of the skew-fields. The maps $q_j$ define an embedding $\Z G \into \D$.
\qedhere
 \end{enumerate}
\end{proof}

We see that agrarian groups form a large class -- they contain torsion-free groups satisfying the Atiyah conjecture (including all elementary amenable groups and free groups), but they also contain extensions of such groups by free groups (as free groups are biorderable). Thus, for example, free-by-free groups are agrarian, but are not all known to satisfy the Atiyah conjecture; a related class of hyperbolic extensions of free groups has been studied by Dowdall--Taylor~\cite{DowdallTaylor2018}. Another example of agrarian groups which are not known to satisfy the Atiyah conjecture is the class of fundamental groups of $4$-manifolds which up to homotopy are surface-by-surface bundles (studied e.g. by Hillman~\cite{Hillman1991}) -- again, these are agrarian since they are Atiyah-by-biorderable.

 There is no torsion-free example of a non-agrarian group known.


\begin{lem}
\label{agrarian embedding}
 Let $G$ be an agrarian group, and suppose that $K \to G \to H$ is an exact sequence of groups. Then $\Z K$ embeds into a skew-field $\K$ in a $G$-equivariant way, and so we have an embedding
 \[
  \Z G \cong (\Z K) H \into \K H
 \]
\end{lem}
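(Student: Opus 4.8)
The plan is to realise the conjugation action of $G$ on $\Z K$ by inner automorphisms of a skew-field containing $\Z G$, and then to pass to a $G$-invariant skew-subfield.

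Since $G$ is agrarian, fix a ring embedding $j\colon \Z G \into \D$ with $\D$ a skew-field, and let $\K$ denote the division closure of $j(\Z K)$ in $\D$. Because every non-zero element of $\D$ is invertible in $\D$, the same holds in $\K$; hence $\K$ is a skew-field (indeed the smallest subskew-field of $\D$ containing $j(\Z K)$), and $j$ restricts to an embedding $\Z K \into \K$. To see that this is $G$-equivariant for the conjugation action, note that, as $K$ is normal in $G$, for each $g \in G$ conjugation by the unit $j(g) \in \D^\times$ is an automorphism $c_g$ of $\D$, and it fixes $j(\Z K)$ setwise because $gKg^{-1} = K$. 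Any automorphism of $\D$ carries the division closure of a subring onto the division closure of its image, so $c_g(\K) = \K$. Thus $g \mapsto c_g|_\K$ is an action of $G$ on $\K$ by ring automorphisms, and since $j$ is a ring homomorphism we have $j(gxg^{-1}) = c_g\big(j(x)\big)$ for all $g \in G$ and $x \in \Z K$; that is, $\Z K \into \K$ is equivariant with respect to the conjugation action of $G$.

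Finally, \cref{key example} gives an isomorphism of twisted group rings $\Z G \cong (\Z K)H$ whose structure functions send $h$ to the conjugation automorphism $x \mapsto s(h)xs(h)^{-1}$ of $\Z K$, for a set-theoretic section $s$ of $G \to H$, and send $(h,h')$ to $\mu(h,h') = s(h)s(h')s(hh')^{-1} \in K$. By the previous paragraph each $\phi(h)$ extends to the automorphism $c_{s(h)}|_\K$ of $\K$, while $\mu(h,h') \in K \subseteq \K^\times$; these data satisfy the two structure-function identities, since $c_{s(h)}\circ c_{s(h')} = c_{s(h)s(h')} = c_{\mu(h,h')}\circ c_{s(hh')}$ (the last equality because $s(h)s(h') = \mu(h,h')s(hh')$ in $G$), and since the remaining identity is an identity between elements of $K$ which already holds in $\Z G$. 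Hence $\K H$ is a well-defined twisted group ring, and applying $j$ coefficient-wise is an injective ring homomorphism $(\Z K)H \into \K H$ respecting the twisted convolution \eqref{twisted conv}. Composing with the isomorphism of \cref{key example} yields $\Z G \cong (\Z K)H \into \K H$.

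The only genuinely delicate point is producing a $G$-invariant skew-subfield: one needs $\K$ to be simultaneously a bona fide skew-field and stable under every conjugation $c_g$, and the division closure of $j(\Z K)$ inside the skew-field $\D$ does both at once. The rest is bookkeeping with \cref{key example}.
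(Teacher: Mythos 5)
Your proof is correct and follows essentially the same route as the paper: the paper simply takes $\K$ to be the ambient skew-field $\D$ itself and observes that conjugation by elements of $G \subseteq \D^\times$ preserves $\Z K$. Your extra step of passing to the division closure of $j(\Z K)$ yields a smaller (but equally valid) choice of $\K$ and is harmless, while your verification of the structure-function identities makes explicit what the paper leaves implicit.
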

\begin{proof}
 Let $\K$ be the skew-field coming from the definition of agrarianism of $G$. Since $G$ embeds therein, we have an action of $G$ on $\K$ by conjugation. It is clear that this action preserves $\Z K$.
\end{proof}

Note that if we know that $G$ is torsion free and satisfies the Atiyah conjecture, then we may take $\K = \D(K)$ above, and then the Ore localisation of $\D(K) H$ is isomorphic to $\D(G)$ by \cref{linnell ore}.

\section{Applications}
\label{sec apps}

Throughout the applications $G$ is a group, and we  set $H = \fab{G}$, the free part of the abelianisation of $G$, $K = \ker( G \to \fab{G})$, and identify the untwisted group ring $\Z G$ with the twisted group ring $(\Z K) H$, as in \cref{key example}.

If we know that $G$ is agrarian (which we typically will), we will embed $\Z K$ into a skew-field $\K$, as explained in \cref{agrarian embedding}, and hence obtain an embedding
\[
 \Z G \cong (\Z K) \into \K H
\]
We will use $\D$ to denote the Ore localisation of $\K H$.

Tensoring in this section is always taken over $\Z G$.

\subsection{Bieri--Neumann--Strebel invariants}

In \cite{Bierietal1987} Bieri--Neumann--Strebel introduced the geometric invariant $\Sigma(G)$ (also known as the $\Sigma$-invariant or the BNS-invariant), where $G$ is any finitely-generated group. Later, Bieri--Renz~\cite{BieriRenz1988} extended the definition to a series of invariants $\Sigma^m(G;\Z)$, with $\Sigma(G) = \Sigma^1(G;\Z)$. We recall the definition below.
(Note that for us $\Z$ is a right module, and so we avoid the minus sign which usually appears in the equality above.)

\begin{dfn}
For every $m \in \N \sqcup \{\infty\}$, we define \[\Sigma^m(G;\Z)\subseteq H^1(G;\R) \s- \{0\}\] by declaring  $\phi \in \Sigma^m(G;\Z)$ \iff the monoid $\{ g \in G \mid \phi(g) \geqslant 0 \}$ is of type $\typeFP{m}$.
We will write $\Sigma^1(G)$ for $\Sigma^1(G;\Z)$.

Recall that a monoid $M$ is of type $\typeFP{m}$ \iff the trivial $\Z M$-module $\Z$ admits a resolution by projective $\Z M$-modules such that the first $m$ terms of the resolution are finitely generated (here $\Z M$ is the untwisted monoid ring). When $m=\infty$ we require all the terms of the resolution to be finitely generated.
\end{dfn}

Note that $M$ is of type $\typeFP{\infty}$ \iff it is of type $\typeFP{m}$ for all $m \in \N$, and so $\Sigma^\infty(G;\Z) = \bigcap_{m\in \N} \Sigma^m(G;\Z)$.

The definition of type $\typeFP{m}$ applies of course also to groups. In the context of groups let us introduce one more notion.

\begin{dfn}
 A group $G$ is \emph{of type $\typeF{}$} \iff it admits a finite CW-model of its classifying space, and \emph{of type $\typeF{\infty}$} \iff it admits a CW-model of its classifying space with finite skeleton in every dimension.
\end{dfn}

\begin{rmk}
 Note that Bieri--Renz talk about $\Sigma^m(G;\Z)$ as a subset of the character sphere obtained by identifying classes in $H^1(G;\R) \s- \{0\}$ under positive homotheties. We prefer to define $\Sigma^m(G;\Z)$ as a subset of the whole cohomology -- in our setup $\Sigma^m(G;\Z)$ is of course stable under multiplication by positive scalars.
\end{rmk}

\begin{thm}[Bieri--Renz~{\cite[Theorem A]{BieriRenz1988}}]
\label{bns open orig}
Let $G$ be a finitely generated group, and let $m \in \N$. The sets $\Sigma^m(G;\Z)$ are open subsets of $H^1(G;\R)$.
\end{thm}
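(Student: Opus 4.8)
This is the classical Bieri--Renz openness theorem, so at the level of the present paper the cleanest option is to cite \cite[Theorem A]{BieriRenz1988}; but since the machinery is already in place, I would prefer to reprove it --- at least in the principal case $m=1$, which is the only one the later applications use --- by combining Sikorav's homological description of $\Sigma^1$ with the perturbation-stability of matrix identities over Novikov rings established in \cref{phi id,bns open}. The guiding idea is that membership in $\Sigma^1(G)$ is witnessed by a \emph{finite} piece of algebraic data, and finitely supported data survives a small perturbation of $\phi$ because only finitely many inequalities of the form $\phi(g)>0$ are involved.

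Concretely, for $m=1$ I would first reduce to finitely presented $G$: a finitely generated $G$ is a quotient of a finitely presented $G_0$, Sikorav's theorem (\cref{sikorav}) characterises $\phi\in\Sigma^1(G)$ by the vanishing of $H_1(G;\widehat{\Z G}^\phi)$, and --- since $\phi\in\Sigma^1(G)$ --- a Brown-style compactness argument on the Cayley $2$-complex shows that only finitely many relators are needed to witness this vanishing. So fix generators $x_1,\dots,x_n$ and relators $r_1,\dots,r_k$, with Fox-derivative boundary matrices $\partial_2$ over $\Z G$ and $\partial_1=(x_i-1)_i$; then $\phi\in\Sigma^1(G)$ is equivalent to exactness of $(\widehat{\Z G}^\phi)^k\xrightarrow{\partial_2}(\widehat{\Z G}^\phi)^n\xrightarrow{\partial_1}\widehat{\Z G}^\phi$ at the middle term. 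From such exactness I would extract a partial chain contraction, truncate the supports of its entries (discarding the part on which $\phi$ is large, exactly as in the proof of \cref{bns open}) to obtain matrices over $\Z G$, and observe that the resulting matrix equation is a $\phi$-identity. \cref{phi id} then propagates invertibility to $\widehat{\Z G}^U$ for an open neighbourhood $U$ of $\phi$, giving $U\subseteq\Sigma^1(G)$. For $m\ge2$ and $G$ of type $\typeFP{m}$ the same argument works verbatim with the higher criterion $H_i(G;\widehat{\Z G}^\phi)=0$ for $i\le m$ and a resolution of $\Z$ over $\Z G$ that is finitely generated through degree $m$; for $G$ not of type $\typeFP{m}$ I would either run a $\phi$-relative version of this over $\Z G_\phi$ (using that $\phi\in\Sigma^m(G;\Z)$ forces $G_\phi$ to be $\typeFP{m}$) or simply invoke \cite{BieriRenz1988}.

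The main obstacle is the translation from an \emph{exactness} statement ($H_i=0$) to the \emph{invertibility / $\phi$-identity} statement that \cref{phi id} is designed to consume: one has to produce the partial chain contraction so that, after truncation, all error entries have support strictly inside the open positive cone of $\phi$, and with enough uniformity that the same truncation works for nearby $\psi$. For merely finitely generated (not finitely presented) $G$ there is the extra preliminary step of cutting down to finitely many relators via a Cayley-complex compactness argument. Neither difficulty is genuinely new --- both are exactly the sort of bookkeeping already carried out in \cref{phi id,bns open} --- so I expect the reproof to be short modulo these finiteness checks.
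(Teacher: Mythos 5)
The paper gives no proof of this statement: it is quoted from Bieri--Renz and the citation \emph{is} the proof, so there is nothing internal to compare your argument against. The real question is whether your proposed reproof is sound. For $m=1$ it essentially is, and the paper already contains the key translation implicitly: as in the proof of \cref{defic 1}, for a generator $s_i$ with $\phi(s_i)\neq 0$ one has $\phi\in\Sigma^1(G)$ precisely when the matrix $A_i$ (that is, $\partial_2$ with the $i^{th}$ row deleted) is right-invertible over $\widehat{\Z G}^\phi$, i.e.\ $\phi\in\Sigma(A_i)$; openness is then exactly the second assertion of \cref{bns open}, intersected with the open set $\{\psi \mid \psi(s_i)\neq 0\}$. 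You do not need to manufacture a partial chain contraction here: surjectivity onto the free module $C_1'$ splits, and the splitting is the right inverse. Your reduction to finitely many relators also works, but for a more elementary reason than a Brown-style compactness argument: the target $C_1'$ is finitely generated and each basis vector is hit by a finitely supported element of $C_2\otimes\widehat{\Z G}^\phi$, so finitely many columns of $\partial_2$ already surject, and right-invertibility of that finite submatrix over $\widehat{\Z G}^\psi$ forces $H_1(G;\widehat{\Z G}^\psi)=0$ for all $\psi$ in the neighbourhood supplied by \cref{bns open}.

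Three caveats. First, deriving Theorem A from \cref{sikorav} is legitimate only because Sikorav's theorem is not itself proved using openness of $\Sigma^m$ (the proof in \cite{Bieri2007} is independent); this should be checked and said, since you are deducing a foundational result from a later one. Second, ``verbatim for $m\geqslant 2$'' is optimistic: vanishing of $H_i$ for $1<i\leqslant m$ is exactness in middle degrees, which does not reduce to right-invertibility of a single matrix the way $H_0$ and $H_1$ do via invertibility of $1-s_i$; one genuinely needs partial chain contractions $\gamma$ with $\partial\gamma+\gamma\partial=\id$ through degree $m$, and after truncating $\gamma$ the error term is a square $\phi$-identity in each degree, but converting these back into an honest contraction over $\widehat{\Z G}^\psi$ requires an inductive degree-by-degree correction rather than a single application of \cref{phi id} (the inverses of the error terms do not commute with the differentials). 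Third, when $G$ is not of type $\typeFP{m}$ the set $\Sigma^m(G;\Z)$ is empty, since membership of $\phi$ forces $G_\phi$, and hence $G$, to be of type $\typeFP{m}$; so that case is vacuous and needs no relative version. Since the paper only ever invokes openness for $m=1$, restricting the reproof to that case and citing Bieri--Renz for the rest is entirely reasonable.
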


\subsection{Sikorav's theorem}

Fundamental to the way we will treat BNS-invariants is the following theorem of Sikorav~\cite{Sikorav1987}, which gives an interpretation of $\Sigma^m(G;\Z)$ in terms of group homology of $G$ with coefficients in the Novikov ring $\widehat {\Z G}^\phi$. (We give a reference to an easily available paper of Bieri, rather than to the original thesis of Sikorav.)

\begin{thm}[Sikorav {\cite[Theorem 2]{Bieri2007}}]
\label{sikorav}
 Let $G$ be a group of type $\typeFP{m}$. For every $\phi \in H^1(G;\R) \s- \{0\}$ we have $\phi \in \Sigma^m(G;\Z)$ \iff $H_i(G;\widehat {\Z G}^\phi) = 0$ for every $i \leqslant m$.
\end{thm}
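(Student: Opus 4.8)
The plan is to feed both sides of the claimed equivalence through the $\phi$-filtration of a classifying space (or resolution) for $G$ and then to match them up: the left-hand side becomes a theorem of Bieri--Renz, and the right-hand side a $\varprojlim$--$\varprojlim^{1}$ computation. First I would set things up. Since $G$ is of type $\typeFP{m}$, fix a free $\Z G$-resolution $C_\ast\to\Z$ with $C_i$ finitely generated for $i\leqslant m$ (after a harmless truncation one may also take $C_{m+1}$ finitely generated). If $G$ is moreover of type $\typeF{m}$, take $C_\ast=C_\ast(X)$ for $X$ the universal cover of a $K(G,1)$ with finite $m$-skeleton; in general one uses the analogous algebraic model, but I will phrase everything geometrically. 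Regard $\phi$ as a homomorphism $G\to\R$ and choose a $G$-equivariant height function $h\colon X\to\R$ (the target carrying the translation action) restricting to $\phi$ on vertices; after subdivision arrange that $h$ is Morse in the Bestvina--Brady sense, so that for each $s\in\R$ the superlevel set $X_{\geqslant s}=h^{-1}[s,\infty)$ and sublevel set $X_{<s}=h^{-1}(-\infty,s)$ are genuine subcomplexes with $C_\ast(X_{<s})\subseteq C_\ast$. Note that the monoid $\{g\in G:\phi(g)\geqslant 0\}$ preserves every $X_{\geqslant s}$, and that $H_0(G;\widehat{\Z G}^\phi)=0$ already because, for $\phi\neq 0$, some $g-1$ is a unit in $\widehat{\Z G}^\phi$.

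For the left-hand side I would invoke the theorem of Bieri--Renz (Renz's thesis): $\phi\in\Sigma^m(G;\Z)$ if and only if the inverse system $\{X_{\geqslant s}\}_{s\to-\infty}$ is \emph{essentially $(m-1)$-acyclic}, i.e.\ for every $s$ there is $t\leqslant s$ such that $\widetilde H_j(X_{\geqslant s})\to\widetilde H_j(X_{\geqslant t})$ is zero for all $j\leqslant m-1$. This is the homological form of Ken Brown's finiteness criterion, and is exactly where the definition of $\Sigma^m(G;\Z)$ via finiteness properties of the monoid enters. Because only finitely many $G$-orbits of cells occur through degree $m$, the groups $\widetilde H_j(X_{\geqslant s})$ are countable, and for a tower of countable abelian groups essential acyclicity through degree $m-1$ is equivalent to $\varprojlim_s\widetilde H_j(X_{\geqslant s})=0=\varprojlim_s^{1}\widetilde H_j(X_{\geqslant s})$ for $j\leqslant m-1$ (Mittag--Leffler, together with vanishing of the limit, forces the transition maps to die eventually).

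For the right-hand side the crucial identification is that, through degrees $\leqslant m$,
\[
 C_\ast\otimes_{\Z G}\widehat{\Z G}^\phi\;\simeq\;\varprojlim_s\bigl(C_\ast/C_\ast(X_{<s})\bigr),
\]
using that $\widehat{\Z G}^\phi$ is the completion of $\Z G$ along the filtration by $\phi$-support, that the $\phi$-support of cells is controlled by $h$, and that finitely generated free modules commute with $\varprojlim$. Milnor's sequence then gives, for $i\leqslant m-1$,
\[
 0\to{\varprojlim_s^{1}}\,H_{i+1}(X,X_{<s})\to H_i(G;\widehat{\Z G}^\phi)\to\varprojlim_s H_i(X,X_{<s})\to 0,
\]
and the long exact sequence of the pair together with contractibility of $X$ rewrites $H_i(X,X_{<s})$ as $\widetilde H_{i-1}(X_{<s})$, while a further excision step trading the sublevel filtration for the superlevel one turns this into the system $\widetilde H_\ast(X_{\geqslant s})$. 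Combining, $H_i(G;\widehat{\Z G}^\phi)=0$ for all $i\leqslant m$ becomes precisely the vanishing of $\varprojlim$ and $\varprojlim^{1}$ of $\widetilde H_j(X_{\geqslant s})$ for $j\leqslant m-1$, i.e.\ $\phi\in\Sigma^m(G;\Z)$.

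The main obstacle is this middle identification: arranging $h$ so that ``truncate the chain complex by $\phi$-value'' is literally ``pass to the subcomplex $C_\ast(X_{<s})$'', and then keeping the degree shift and the passage between the sub- and super-level filtrations straight (and, correspondingly, the sign convention relating $\phi$ and $-\phi$) so that Sikorav's range $i\leqslant m$ lines up with Bieri--Renz's range $j\leqslant m-1$. A secondary issue is that for an irrational character the filtration is indexed by $\R$ rather than $\Z$, so one must pass to a cofinal countable subfamily before using the tower lemmas; alternatively one reduces the general case to the rational one via the openness of $\Sigma^m(G;\Z)$ (\cref{bns open orig}) and a continuity argument on both sides.
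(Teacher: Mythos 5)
First, note that the paper does not prove this theorem at all: it is imported as a black box from Bieri's 2007 paper (ultimately Sikorav's thesis), so there is no internal proof to compare against. Your strategy---Bieri--Renz essential acyclicity of the superlevel filtration on one side, a Milnor $\varprojlim$--$\varprojlim^1$ computation of Novikov homology on the other---is a legitimate known alternative to the standard algebraic proof (which builds a chain contraction of the Novikov complex as a convergent geometric series $\sum_k \mu^k\sigma$ out of a valuation-increasing chain endomorphism $\mu\simeq\id$). Your tower lemma is also correct: for towers of countable abelian groups, $\varprojlim^1=0$ forces Mittag--Leffler (Gray), and Mittag--Leffler plus $\varprojlim=0$ forces pro-triviality. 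However, there is a genuine off-by-one gap in the top degree.

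With the paper's conventions the correct tower is $C_\ast/C_\ast(X_{>\kappa})$ with $\kappa\to+\infty$; your sublevel tower $\varprojlim_{s\to-\infty}C_\ast/C_\ast(X_{<s})$ computes $C_\ast\otimes\widehat{\Z G}^{-\phi}$, and there is no ``excision step'' trading the sublevel for the superlevel filtration---one must simply set up the right filtration from the start, after which the superlevel sets appear directly. Granting that, Milnor's sequence reads
\[
0\to{\varprojlim_\kappa}^{1}\,\widetilde H_i(X_{>\kappa})\to H_i(G;\widehat{\Z G}^\phi)\to\varprojlim_\kappa \widetilde H_{i-1}(X_{>\kappa})\to 0 ,
\]
so ``$H_i=0$ for all $i\leqslant m$'' is equivalent to ``$\varprojlim \widetilde H_j=0$ for $j\leqslant m-1$ \emph{and} $\varprojlim^1 \widetilde H_j=0$ for $j\leqslant m$''. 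Essential $(m-1)$-acyclicity, which is what the Bieri--Renz criterion gives you for $\phi\in\Sigma^m(G;\Z)$, only controls $\varprojlim$ and $\varprojlim^1$ of $\widetilde H_j(X_{>\kappa})$ for $j\leqslant m-1$; it says nothing about $\varprojlim^1\widetilde H_m(X_{>\kappa})$. Hence your argument establishes ``$H_i=0$ for $i\leqslant m$ $\Rightarrow$ $\phi\in\Sigma^m(G;\Z)$'' and ``$\phi\in\Sigma^m(G;\Z)$ $\Rightarrow$ $H_i=0$ for $i\leqslant m-1$'', but not the vanishing of $H_m(G;\widehat{\Z G}^\phi)$, which is part of the statement. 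Closing this requires a separate argument that the degree-$m$ tower is Mittag--Leffler, or (better) the chain-level contraction argument, which treats all degrees $\leqslant m$ simultaneously. A secondary problem sits in exactly the same spot: when $G$ is only of type $\typeFP{m}$ you cannot in general take $C_{m+1}$ finitely generated, so the identification of $C_{m+1}\otimes\widehat{\Z G}^\phi$ with the inverse limit of truncations---needed for the Milnor sequence computing $H_m$---also fails without further care.
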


In practice, we will compute $H_i(G;\widehat {\Z G}^\phi)$ using the cellular chain complex of the universal covering of some CW-model for the classifying space of $G$; the chain complex is naturally a free resolution of $\Z$ by $\Z G$-modules. The group homology with coefficients in the Novikov completion $\widehat {\Z G}^\phi$ is obtained by tensoring the chain complex with $\widehat {\Z G}^\phi$ and then computing the homology.

When the group $G$ is of type $\typeF{\infty}$, then we can arrange for the above complex to be a complex of free finitely-generated $\Z G$-modules; hence the boundary maps are naturally matrices over $\Z G$, and the homology computation is related to their invertibility upon extension of scalars.
This is the reason for our interest in $\Sigma$-invariants of matrices.

\subsection{\texorpdfstring{$\Sigma$}{Sigma}-invariants of matrices revisited}

Let us record the following, which highlights some similarities between BNS-invariants for groups and for matrices over a group ring.

\begin{lem}
Let $G$ be a group, and let $A$ be a square matrix over $\Z G$.
 Then $\Sigma(A)$ is stable under positive homotheties, and is an open subset of $H^1(G;\R)$.
\end{lem}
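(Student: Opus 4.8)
The plan is to reduce the statement about matrices to the corresponding facts about group BNS-invariants, or rather to prove it directly from the definition of $\Sigma(A)$ together with the elementary properties of the Novikov rings $\widehat{\Z G}^\phi$ already developed in \cref{sec dets}. Recall that $\phi \in \Sigma(A)$ means precisely that $A \otimes \widehat{\Z G}^\phi$ admits a right inverse. Two things need checking: stability under positive scaling, and openness.

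For stability under positive homotheties, first I would observe that for any $\phi \in H^1(G;\R)\s-\{0\}$ and any real $t > 0$, the Novikov rings $\widehat{\Z G}^\phi$ and $\widehat{\Z G}^{t\phi}$ are literally equal as subsets of $\Z^G$: the condition that $\supp_{\phi,\kappa} x$ be finite for every $\kappa \in \R$ is unchanged when $\phi$ is replaced by $t\phi$, since $\phi(g) \leqslant \kappa$ iff $(t\phi)(g) \leqslant t\kappa$ and $\kappa \mapsto t\kappa$ is a bijection of $\R$. The ring structures (given by the same twisted convolution \eqref{twisted conv}) also coincide. Hence $A \otimes \widehat{\Z G}^\phi = A \otimes \widehat{\Z G}^{t\phi}$, and one admits a right inverse iff the other does; so $\phi \in \Sigma(A)$ iff $t\phi \in \Sigma(A)$.

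For openness, this is exactly the content of the second assertion of \cref{entries far away} (equivalently \cref{bns open}), applied with $R = \Z$ and the group $G$: if $\phi \in \Sigma(A)$, then there exists an open neighbourhood $U$ of $\phi$ in $H^1(G;\R)$ such that $A \otimes \widehat{\Z G}^U$ admits a right inverse, and since $\widehat{\Z G}^U \subseteq \widehat{\Z G}^\psi$ for every $\psi \in U$, extending scalars further shows $A \otimes \widehat{\Z G}^\psi$ is right-invertible, i.e. $\psi \in \Sigma(A)$. Thus every point of $\Sigma(A)$ has an open neighbourhood inside $\Sigma(A)$, so $\Sigma(A)$ is open. (One should note that the remark immediately following \cref{bns open} already records this openness; here we are just assembling the pieces into the asserted statement.)

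I do not expect any genuine obstacle here — the lemma is a bookkeeping consequence of definitions and of \cref{entries far away}. The only point requiring a line of care is the scaling argument: one must remember that $\Sigma(A)$ was defined with the convention that the right inverse is required to lie over the same Novikov ring, so the equality $\widehat{\Z G}^\phi = \widehat{\Z G}^{t\phi}$ (as rings, not merely as abelian groups) is what makes the implication trivially reversible. If one wanted, openness could alternatively be deduced from \cref{bns open orig} via Sikorav's theorem when $A$ arises as a boundary map, but the direct route through \cref{entries far away} is cleaner and applies to arbitrary square $A$.
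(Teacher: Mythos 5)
Your proof is correct and follows exactly the paper's route: the scaling claim is the observation that $\widehat{\Z G}^{\phi}=\widehat{\Z G}^{t\phi}$ as rings for $t>0$, and openness is an immediate application of \cref{entries far away}. The extra care you take in spelling out why the equality of Novikov rings makes the equivalence reversible is sound but not needed beyond what the paper records.
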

\begin{proof}
 The first assertion is obvious, since multiplying $\phi$ by a positive scalar does not change the ring $\widehat{\Z G}^\phi$.

The latter assertion follows immediately from \cref{entries far away}.
\end{proof}

To discuss the $\Sigma$-invariants of matrices further, we introduce the notion of a marked polytope.

\begin{dfn}
A function $m \colon H^1(G;\R) \s- \{0\} \to \{0,1\}$ (where $\{0,1\}$ is endowed with the discrete topology) is a \emph{marking function} \iff $m^{-1}(0)$ is closed. A character mapped to $1$ will be called \emph{marked}. 

Let $P$ be a polytope. We say that $m$ is a \emph{marking of $P$} \iff $m$ is constant on duals of faces of $P$. A dual mapped to $1$ will be called \emph{marked}. A polytope endowed with a marking will be called a \emph{marked polytope}.
When $m^{-1}(1)$ consists solely of duals of vertices, we say that $P$ is a polytope with \emph{marked vertices}.
\end{dfn}

Note that the definition above leads to statements which may seem a little awkward at first, since we will often assume the existence of a marked polytope, and then immediately treat being marked as a property of a character.

\begin{rmk}
When every face of $P$ admits a single dual, the definition of a marked polytope above gives the more natural notion of a marking, where it is faces of the polytope which are marked. The fact that $m^{-1}(0)$ is closed guarantees that faces of marked faces are also marked.
\end{rmk}

\smallskip

Recall that when $G$ is agrarian, we have a skew-field $\K$ containing $\Z K$, where $K = \ker ( G \to \fab{G})$, and hence we also have the Ore localisation $\D$ of the twisted group ring $\K \fab{G}$.

\begin{thm}
\label{bns matrix}
Suppose that $G$ is agrarian.
Let $A$ be a square matrix over $\Z G$ with $A \otimes \D$ invertible. The polytope $P(A)$ admits a marking of its vertices, such that for every ${\phi \in H^1(H;\R) \s- \{0\}}$ we have $\phi \in \Sigma(A)$ \iff $\phi$ is marked.
\end{thm}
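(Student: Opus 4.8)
The approach is to extract the marking directly from \cref{K-bns matrix}: once $P(A)$ is known to be a genuine non-empty single polytope, there is an obvious candidate marking, and the rest is bookkeeping. Following the standing conventions of this section I regard $A$ as a matrix over $\K H$ via the embedding $\Z G \cong (\Z K)H \into \K H$ of \cref{agrarian embedding}, so that $\Sigma(A)$ and $P(A) = P(\det A \otimes \D)$ have the meaning fixed in \cref{sec dets}. Since $A \otimes \D$ is invertible we have $\det A \otimes \D \neq 0$, so $P(A)$ is non-empty, and by \cref{single poly} it is a single polytope; in particular it has a well-defined collection of faces, vertices and duals. I then define $m \colon H^1(G;\R) \s- \{0\} \to \{0,1\}$ (recalling that $H^1(G;\R) = H^1(H;\R)$, as characters factor through the free abelianisation $H$) by declaring $m(\phi) = 1$ precisely when $F_\phi(P(A))$ is a vertex of $P(A)$; equivalently, when the unique dual of the face $F_\phi(P(A))$ containing $\phi$ is a dual of a vertex.

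The second step is to check that $m$ is a marking of $P(A)$ which marks only vertices. If $C$ is a dual of a face $Q$ of $P(A)$, then $F_\phi(P(A)) = Q$ for every $\phi \in C$, so $m$ is identically $1$ on $C$ when $Q$ is a vertex and identically $0$ otherwise; hence $m$ is constant on duals of faces of $P(A)$, and $m^{-1}(1)$ is exactly the union of the duals of the vertices of $P(A)$. It remains to see that $m$ is a marking function, i.e. that $m^{-1}(0)$ is closed in $H^1(H;\R) \s- \{0\}$; but since $P(A)$ is non-empty, each dual of a vertex of $P(A)$ is open, so $m^{-1}(1)$ is open and $m^{-1}(0)$ is closed.

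The final step is to invoke \cref{K-bns matrix}. Fix $\phi \in H^1(H;\R) \s- \{0\}$ and let $C$ denote the unique dual of the face of $P(A)$ containing $\phi$. As $A \otimes \D$ is invertible, \cref{K-bns matrix} applies and yields that $\phi \in \Sigma(A)$ \iff $C$ is a dual of a vertex of $P(A)$, which by the definition of $m$ is exactly the statement that $\phi$ is marked. This is the asserted equivalence. I do not expect a serious obstacle here: the genuine work sits in \cref{single poly} and \cref{K-bns matrix}, and the only point needing care is that the candidate $m$ really satisfies the technical definition of a marking function, which, as above, follows at once from the openness of the duals of vertices of a non-empty polytope.
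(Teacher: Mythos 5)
There is a genuine gap here, and it stems from a misreading of what $\Sigma(A)$ means in this statement. By the paper's standing convention, a matrix carries with it the ring over which it is defined, and the theorem declares $A$ to be a square matrix over $\Z G$; hence $\phi \in \Sigma(A)$ means that $A$ admits a right inverse over the Novikov ring $\widehat{\Z G}^\phi$, not over the larger ring $\widehat{\K H}^\phi$. By "regarding $A$ as a matrix over $\K H$" at the outset you have silently replaced $\Sigma(A)$ by the a priori larger set $\{\phi \mid A \otimes \widehat{\K H}^\phi \text{ is right-invertible}\}$, and for that set \cref{K-bns matrix} indeed says it equals the union of the duals of the vertices of $P(A)$ --- which would make the marking trivial (every vertex marked) and the theorem contentless. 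The actual theorem allows some vertices to be unmarked, and this is essential for the applications (e.g.\ \cref{free by cyclic}, where unmarked vertices correspond to characters outside $\Sigma^1(G)$).

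What \cref{K-bns matrix} gives you for free is only one half of the statement: since $\widehat{\Z G}^\phi$ is a subring of $\widehat{\K H}^\phi$, right-invertibility over the former implies it over the latter, so $\Sigma(A)$ is contained in the union of the duals of the vertices of $P(A)$; in particular no character $\phi$ for which $F_\phi\big(P(A)\big)$ has positive dimension lies in $\Sigma(A)$. The missing step is to show that the correct candidate marking --- mark a dual $C$ of a vertex \iff $C \cap \Sigma(A) \neq \emptyset$ --- is consistent, i.e.\ that $C \cap \Sigma(A) \neq \emptyset$ forces $C \subseteq \Sigma(A)$. This does not follow from \cref{K-bns matrix} alone; the paper's argument uses the uniqueness of right inverses (\cref{unique RI}): if $\phi \in C \cap \Sigma(A)$, then the right inverse $Y$ of $A$ over $\widehat{\Z G}^\phi$ and the right inverse $X$ of $A \otimes \widehat{\K H}^C$ supplied by \cref{K-bns matrix} are both right inverses over $\widehat{\K H}^\phi$, hence equal; therefore this common inverse lies over $\widehat{\K H}^C \cap \widehat{\Z G}^\phi = \widehat{\Z G}^C$, and so $A$ is right-invertible over $\widehat{\Z G}^\psi$ for every $\psi \in C$. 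You need to supply this argument (or an equivalent one) for the proof to establish the stated theorem.
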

\begin{proof}
This is a fairly straightforward corollary of \cref{K-bns matrix}: we know that $A\otimes \widehat {\K H}^\phi$ admits a right-inverse \iff $\phi$ lies in a dual of a vertex of $P(A)$. Now being right-invertible over $\widehat {\K H}^\phi$ is certainly a necessary condition for being right-invertible over $\widehat {\Z G}^\phi$, as $\widehat {\Z G}^\phi$ is a subring of $\widehat {\K H}^\phi$. Thus characters lying in duals of faces of dimension at least $1$ do not lie in $\Sigma(A)$.

We now define the marking on $P(A)$. A dual $C$ of a vertex is marked \iff $C$ intersects $\Sigma(A)$ non-trivially. We need only show that if $C$ intersects $\Sigma(A)$ non-trivially, then in fact $C \subseteq \Sigma(A)$. Take $\phi, \psi \in C$, with $\phi \in \Sigma(A)$. \cref{K-bns matrix} tells us that $A\otimes \widehat {\K H}^C$ is right-invertible; let $X$ denote its inverse. We also know that $A$ admits a right inverse $Y$ over $\widehat {\Z G}^\phi$. Right-inverses over $\widehat {\K H}^\phi$ are unique by \cref{unique RI}, and so $X=Y$, which in turn implies that $X$ lies over $\widehat {\K H}^C \cap \widehat {\Z G}^\phi=\widehat {\Z G}^C$, and we are done.
\end{proof}

Let us now discuss ways of combining  various marked polytopes into a single one.

\begin{dfn}
An \emph{atlas of markings} consists of a finite open cover $U_1, \dots, U_n$ of $H^1(G;\R) \s- \{0\}$, and a finite collection $(P_1,m_1), \dots, (P_n,m_n)$ of marked polytopes in $H_1(G;\R)$, such that
 for every pair $(i,j)$ we have $m_i\vert_{U_i \cap U_j} = m_j\vert_{U_i \cap U_j}$.
\end{dfn}

It is clear that an atlas of markings gives a well defined marking function $m\colon H^1(G;\R) \s- \{0\} \to \{0,1\}$. Hence, we will often specify $m$ instead of the markings $m_i$. It is clear that $m^{-1}(1)$ is open, since it is open inside every chart $U_i$. Thus $m^{-1}(0)$ is closed.

\begin{lem}
\label{one marked polytope}
Let $U_1, \dots, U_n$ and $P_1, \dots, P_n$ be an atlas of markings. The induced marking function $m$ defines a marking of
$P = \sum_{i=1}^n P_i$.
\end{lem}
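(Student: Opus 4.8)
The plan is to show that the marking function $m$ induced by the atlas is in fact a marking of $P = \sum_{i=1}^n P_i$ in the sense of the definition, i.e.\ that $m$ is constant on duals of faces of $P$. Since we already know that $m^{-1}(0)$ is closed (being the complement of a set that is open in each chart), it remains only to verify the constancy on duals. So I would fix a face $F$ of $P$, fix a dual $C$ of $F$, and prove that $m$ is constant on $C$.

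First I would invoke \cref{faces decomp}: writing $F = F_\psi(P)$ for some $\psi \in C$, there are unique faces $F_i$ of $P_i$ with $F = \sum_{i=1}^n F_i$, and by the proof of that lemma the faces $F_i$ depend only on $F$ (not on the choice of $\psi \in C$). Consequently, for every $\psi \in C$ we have $F_\psi(P_i) = F_i$, which says precisely that $C$ is contained in a single dual of $F_i$ in $P_i$ — call it $C_i$. Thus $C \subseteq \bigcap_{i=1}^n C_i$. Now take any two characters $\phi, \psi \in C$; I want $m(\phi) = m(\psi)$. Since $C$ is convex (duals are convex) and $C$ is open only when $F$ is a vertex, I cannot simply connect $\phi$ and $\psi$ by a path inside a single chart; instead I would connect them by the straight segment $[\phi,\psi] \subseteq C$ and cover this compact segment by finitely many of the charts $U_1,\dots,U_n$.

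The key local step: if $\phi$ and $\psi$ lie in a common chart $U_k$, then since $C \subseteq C_k$ and $C_k$ is a dual of the face $F_k$ of $P_k$, the marking $m_k$ is constant on $C_k$ (it is a marking of $P_k$), hence $m(\phi) = m_k(\phi) = m_k(\psi) = m(\psi)$. Here I use that on $U_k$ the function $m$ agrees with $m_k$, which is part of the atlas axiom. Chaining this along the finitely many charts covering the segment $[\phi,\psi]$ — choosing intermediate points in consecutive overlaps, which is possible since the $U_k$ are open and cover the connected compact segment — gives $m(\phi) = m(\psi)$. This shows $m$ is constant on $C$, and since $C$ was an arbitrary dual of an arbitrary face of $P$, the function $m$ is a marking of $P$.

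The main obstacle I anticipate is the bookkeeping in the second step — establishing that a dual $C$ of a face of $P$ really is trapped inside a single dual of the corresponding face of each summand $P_i$. This is exactly the content of the uniqueness clause in \cref{faces decomp} (the decomposition $F = \sum F_i$ is independent of $\psi \in C$), so the substantive work has been done there; what remains is to translate ``$F_\psi(P_i)$ is independent of $\psi\in C$'' into ``$C$ lies in one dual of $F_i$,'' which is immediate from the definition of dual as a connected component of $\{\phi \mid F_\phi(P_i) = F_i\}$ together with the connectedness of $C$. Everything else is the routine compactness/connectedness argument for propagating a locally constant function along a segment, using the atlas compatibility $m_i|_{U_i\cap U_j} = m_j|_{U_i\cap U_j}$.
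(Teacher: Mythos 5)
Your proof is correct and follows essentially the same route as the paper's: both rest on \cref{faces decomp} to see that $F_\psi(P_i)$ is independent of $\psi$ in the dual $C$, and then conclude that $m$ is locally constant, hence constant, on the connected set $C$ via the atlas compatibility. Your segment-plus-finite-subcover chaining is just a concrete instantiation of the paper's ``a continuous map from a connected set to a discrete space is constant'' step.
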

\begin{proof}
Recall \cref{faces decomp}: for any face $Q$ of $P$, we have unique faces $Q_i$ of $P_i$ such that $Q = \sum_{i=1}^n Q_i$.

Let $C$ denote a dual of a face $Q$ of $P$. We argue that either all characters in $C $ are marked, or none is. To this end, let $\phi \in C$.
There exists $i$ such that $\phi \in U_i$, and $\phi$ is marked \iff the dual of $F_\phi(P_i)$ containing $\phi$ is marked. Now,
for every $\psi$ in the connected component of $C \cap U_i$ containing $\phi$ we have $F_\phi(P_i) = Q_i = F_\psi(P_i)$, and so $\psi$ is marked \iff $\phi$ is. This shows that the marking map $m \colon \bigcup U_i \to \{0,1\}$ is continuous on $C$, which is connected. Since the target of the marking map is discrete, the map must be constant on $C$.
%
\end{proof}

\begin{lem}
\label{one marked L2 polytope}
Let $\S = \{s_1, \dots, s_n\}$ be a finite generating set of $G$, let $P$ be a polytope in $H_1(G;\R)$, and let $k_i \in \N \cup \{0\}$ be given for every $i \in \{ 1, \dots, n\}$.
Suppose that for every $i$ there exists a marking of vertices of $P_i = P + k_i \cdot P(1-s_i)$ such that the collections $U_1, \dots, U_n$ and $P_1, \dots, P_n$ form an atlas of markings, where \[U_i = \{ \phi \in H^1(G;\R) \mid \phi(s_i) \neq 0 \}\]
 Then the induced marking function $m$ defines a marking of vertices of $P$.
\end{lem}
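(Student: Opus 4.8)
The plan is to reduce everything to one geometric observation relating the faces of $P_i$ to those of $P$ over the chart $U_i$, and then to patch across charts using connectedness. First I would record the shape of $P(1-s_i)$: writing $\bar s_i$ for the image of $s_i$ in $H = \fab G$, the element $1-s_i$ of $\Z G = (\Z K)H$ has $H$-support $\{0,\bar s_i\}$ when $s_i \notin K$, so that $k_i\cdot P(1-s_i)$ is the segment $[0,k_i\bar s_i]$; when $s_i \in K$ (or $k_i=0$) one has $P_i = P$ and $m_i$ is already a marking of vertices of $P$. The key point is then that if $\phi \in U_i$, i.e.\ $\phi(\bar s_i) \neq 0$, then $\phi$ is non-constant on the segment $k_i P(1-s_i)$, so by \cref{faces decomp}
\[
 F_\phi(P_i) = F_\phi(P) + F_\phi\big(k_i P(1-s_i)\big),
\]
where the second summand is a single point: $\{0\}$ if $\phi(\bar s_i)>0$ (or in the degenerate cases), and $\{k_i\bar s_i\}$ if $\phi(\bar s_i)<0$. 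Hence $F_\phi(P_i)$ is a translate of $F_\phi(P)$; in particular $F_\phi(P_i)$ is a vertex of $P_i$ if and only if $F_\phi(P)$ is a vertex of $P$.

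Granting this, the claim that $m^{-1}(1)$ consists only of duals of vertices of $P$ is immediate: if $m(\phi)=1$, I would choose $i$ with $\phi \in U_i$; then $m_i(\phi)=m(\phi)=1$, and since $(P_i,m_i)$ is a polytope with marked vertices, $F_\phi(P_i)$ is a vertex of $P_i$, whence $F_\phi(P)$ is a vertex of $P$ and $\phi$ lies in a dual of a vertex of $P$.

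The remaining, and main, step is to show that $m$ is constant on each dual $C$ of a face $Q$ of $P$. Here $C$ is convex, hence connected. For each index $j$ with $\bar s_j \neq 0$ put $W_j^+ = C \cap \{\phi \mid \phi(s_j)>0\}$ and $W_j^- = C \cap \{\phi \mid \phi(s_j)<0\}$; these are convex, open in $C$, contained in $U_j$, and (after discarding empty ones) they cover $C$, because $\S$ generates $G$ and so no non-zero character vanishes on all of the $\bar s_j$. On $W_j^+$ one has $F_\phi(P)=Q$ and, by the observation above, $F_\phi(P_j)=Q+\{0\}$, a fixed face of $P_j$; since $W_j^+$ is connected this confines $W_j^+$ to a single dual of $P_j$, on which $m_j$ is constant, and there $m = m_j$. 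The same applies to $W_j^-$, with fixed face $Q+\{k_j\bar s_j\}$ of $P_j$. Thus $m$ is constant on every member of the finite open cover $\{W_j^\pm\}$ of the connected set $C$; since any two members of an open cover of a connected space are joined by a chain of pairwise-intersecting members, and $m$ agrees across any non-empty overlap (being constant on each member), $m$ must be constant on all of $C$. This last patching is the delicate point I expect to be the main obstacle: a priori $m$ is only locally constant within each chart $U_i$ (where it coincides with $m_i$), so one genuinely needs the refinement of the dual structure of $P_j$ by the hyperplanes $\phi(s_j)=0$, together with connectedness of $C$, to pass from constancy on charts to constancy on the dual $C$.
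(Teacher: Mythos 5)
Your proof is correct and follows essentially the same route as the paper: the key input in both is that on a connected subset of $C\cap U_j$ the sign of $\phi(s_j)$ is constant, so $F_\phi\bigl(k_jP(1-s_j)\bigr)$ is a fixed singleton and $F_\phi(P_j)$ is a fixed face of $P_j$, confining that subset to a single dual of $P_j$ where $m_j$ is constant; your chain argument over the cover $\{W_j^\pm\}$ is just a global repackaging of the paper's locally-constant-plus-connectedness step. You also spell out explicitly why $m^{-1}(1)$ consists only of duals of vertices of $P$ (via $F_\phi(P_i)$ being a translate of $F_\phi(P)$ on $U_i$), a point the paper leaves implicit.
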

\begin{proof}
The proof is identical to the previous one, with only one exception: the proof that $F_\phi(P_i) = F_\psi(P_i)$ holds for every $\psi$ in the connected component of $C \cap U_i$ containing $\phi$, where $C$ is a dual, is different. To show the desired statement, take such $C,U_i, \phi$ and $\psi$. Since $\phi$ and $\psi$ lie in a connected component of $U_i$, the real numbers $\phi(s_i)$ and $\psi(s_i)$ have the same sign. Therefore
\[F_\phi\big(k_i P(1-s_i)\big) = F_\psi\big(k_i P(1-s_i)\big)\]
Also, $\phi, \psi \in C$ implies that $F_\phi(P) = F_\psi(P)$. Thus $F_\phi(P_i) = F_\psi(P_i)$ as required.
%
\end{proof}

\subsection{Polytope class}

In \cite{Funke2018} Funke introduced the following notion.
\begin{dfn}
 Let $G$ be a torsion-free group satisfying the Atiyah conjecture and such that the abelianisation of $G$ is finitely generated. We say that $G$ is of \emph{polytope class} \iff for every square matrix $A$ over $\Z G$ the following holds: if $A \otimes \D(G)$ is invertible then the Newton polytope $P(\det A \otimes \D(G))$ is a single polytope.
\end{dfn}

Recall that $\D(G)$ is the skew-field of Linnell -- see \cref{atiyah field}. 

Funke proved in ~\cite[Theorem 4.1]{Funke2018} that torsion-free amenable groups $G$ satisfying the Atiyah conjecture and with finitely generated abelianisation are of polytope class. We show that one does not need to assume amenability.

\begin{thm}
\label{polytope class}
Every torsion free group $G$ with finitely generated abelianisation, and with $G$ satisfying the Atiyah conjecture, is of polytope class.
\end{thm}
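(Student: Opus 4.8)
The plan is to reduce the statement directly to \cref{single poly}. First I would set up the notation used throughout the applications: write $H = \fab{G}$ and $K = \ker(G \to \fab{G})$, so that $\Z G$ is identified with the twisted group ring $(\Z K) H$ as in \cref{key example}, and note that $H$ is finitely generated free-abelian by hypothesis. Since $G$ is torsion-free and satisfies the Atiyah conjecture, so does its subgroup $K$ by \cref{atiyah inherit}; hence the Linnell division closure $\D(K)$ is a skew-field by \cref{atiyah field}. Using \cref{props of D} (automorphisms of $K$, in particular the conjugation action of $G$ on $K$, extend to $\D(K)$) we obtain, exactly as in \cref{agrarian embedding}, an embedding $\Z G \cong (\Z K) H \into \D(K) H =: \K H$ into a twisted group ring over the skew-field $\K = \D(K)$. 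Thus $\K H$ is precisely of the form studied in \cref{sec dets}.

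Next I would identify the two relevant skew-fields of fractions. Since $H$ is biorderable, \cref{malcev-neumann} shows $\K H$ has no zero-divisors, and as $H$ is amenable, \cref{tamari} makes $\K H$ an Ore domain; let $\D$ denote its Ore localisation. By \cref{linnell ore} the localisation $\D$ is canonically isomorphic to $\D(G)$, compatibly with the inclusions of $\Z G$. Consequently, for a square matrix $A$ over $\Z G$, the matrix $A \otimes \D(G)$ is invertible if and only if $A \otimes \D$ is, and in that case $\det A \otimes \D(G)$ corresponds to $\det A \otimes \D$ under this isomorphism, so the Newton polytopes $P(\det A \otimes \D(G))$ and $P(A) = P(\det A \otimes \D)$ agree as elements of $\P(H)$.

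Finally, given a square matrix $A$ over $\Z G \subseteq \K H$ with $A \otimes \D$ invertible, \cref{single poly} tells us that $P(A)$ is empty or a single polytope; it cannot be empty, since $A \otimes \D$ invertible means its Dieudonn\'e determinant is a unit, whence $P(A) = P(\det A \otimes \D) \neq \emptyset$. Therefore $P(\det A \otimes \D(G))$ is a single polytope, which is exactly the assertion that $G$ is of polytope class. There is no real obstacle here: the content lies entirely in \cref{single poly}, and the present argument is bookkeeping. The only points needing care are that the Atiyah conjecture passes to the subgroup $K$ and that the Linnell skew-field $\D(G)$ coincides with the Ore localisation of the twisted group ring $\D(K) H$ — both are furnished verbatim by \cref{atiyah inherit} and \cref{linnell ore}, so nothing further is required.
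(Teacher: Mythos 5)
Your proof is correct and takes essentially the same route as the paper: identify $\D(G)$ with the Ore localisation of the twisted group ring $\D(K)\fab{G}$ via \cref{linnell ore} and then invoke \cref{single poly}. The extra details you supply (inheritance of the Atiyah conjecture by $K$, the equivariant embedding, and the non-emptiness of $P(A)$) are all correct and merely make explicit what the paper's three-line proof leaves implicit.
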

\begin{proof}
Recall that in this context we have the Linnell skew-field $\D(G)$ containing $\Z G$. Moreover, $\D(G)$ is isomorphic to the Ore localisation of the twisted group ring $\D(K) \fab{G}$, where $K = \ker (G \to \fab{G})$, by \cref{linnell ore}. Thus, we take $\K = \D(K)$ and $\D = \D(G)$, and we apply \cref{single poly}.
\end{proof}

This fact has interesting consequences for the Whitehead group.

\begin{dfn}
Let $\GL(\Z G)$ denote the directed union over $n$ of the groups of invertible $n \times n$ matrices over $\Z G$, where the embeddings are given by introducing the identity matrix in the bottom-right corner. The group $K_1(G)$ is the abelianisation of $\GL(\Z G)$. The \emph{Whitehead group} $\mathrm{Wh}(G)$ is the quotient of $K_1(G)$ by the subgroup generated by $1 \times 1$ matrices of the form $(\pm g)$, with $g \in G$.
\end{dfn}

Suppose now that $G$ is agrarian, and let $\D$ be a skew-field as discussed at the beginning of \cref{sec apps}.
It is not hard to see that the map $A \mapsto \det A \otimes \D$ is well-defined on $\GL(\Z G)$. Hence, one can talk about the Newton polytopes of elements of $\GL(\Z G)$. It is even easier to see that if we quotient the polytope group $\P(\fab{G})$ by the translations (and obtain $\P_T(\fab{G})$), then in fact we obtain a well defined map $\mathrm{Wh}(G) \to \P_T(\fab{G})$. Thus, we may talk about the Newton polytopes of elements of the Whitehead group.

It is an open question whether there exists a torsion-free group $G$ with non-trivial $\mathrm{Wh}(G)$. We show that in our setting at least the Newton polytopes vanish.

\begin{cor}
\label{Whitehead}
 Let $G$ be an agrarian group  with finitely generated abelianisation. The Newton polytope of every element of $\mathrm{Wh}(G)$ is a singleton.
\end{cor}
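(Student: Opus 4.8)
The plan is to exploit that $\mathrm{Wh}(G)$ is a \emph{group}, so every element has an inverse, in combination with \cref{single poly}. First I would unwind the setup: since $G$ is agrarian with $\fab G$ finitely generated, the start of this section supplies embeddings $\Z G \cong (\Z K)\fab G \into \K\fab G \into \D$, where $K = \ker(G \to \fab G)$ and $\D$ is the Ore localisation of $\K\fab G$; note that $\fab G$ is a finitely generated free-abelian group, so \cref{single poly} applies with $H = \fab G$. Given $\alpha \in \mathrm{Wh}(G)$, I would pick a representing matrix $A \in \GL_n(\Z G)$ and let $A^{-1} \in M_n(\Z G)$ be its inverse over $\Z G$. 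Then $A \otimes \D$ is invertible over $\D$ (with inverse $A^{-1}\otimes\D$), so neither $\det A \otimes \D$ nor $\det A^{-1}\otimes\D$ vanishes, and hence $P(A) = P(\det A\otimes\D)$ and $P(A^{-1}) = P(\det A^{-1}\otimes\D)$ are both single (in particular non-empty) polytopes by \cref{single poly}, rather than mere formal differences of polytopes.

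Next I would combine these. Since the Dieudonn\'e determinant is multiplicative and $P\colon \D^\times/[\D^\times,\D^\times] \to \P(\fab G)$ is a homomorphism, in $\P(\fab G)$ one has
\[
P(A) + P(A^{-1}) = P\big(\det(AA^{-1}) \otimes \D\big) = P(\det \I \otimes \D) = 0.
\]
Representing $P(A)$ and $P(A^{-1})$, up to translation, by honest non-empty integral polytopes $P$ and $P'$ in $H_1(\fab G;\R)$, this equality forces the Minkowski sum $P + P'$ to be a singleton; and a Minkowski sum of two non-empty polytopes can be a single point only if each summand is a single point (fixing any $p'\in P'$ pins down every $p \in P$). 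Hence $P$ is a singleton, so the image of $P(A)$ under the homomorphism $\mathrm{Wh}(G) \to \P_T(\fab G)$ constructed before the statement --- which is exactly the Newton polytope of $\alpha$ --- is trivial, i.e.\ a singleton.

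The argument is short, and its real content is concentrated entirely in \cref{single poly}: without knowing that $P(A)$ is represented by a single polytope, $P(A)$ would be a genuine formal difference of two polytopes and the Minkowski-sum cancellation step would collapse. The only other things worth spelling out are routine: that the assignment $A \mapsto \det A \otimes \D$ descends to a well-defined homomorphism $\mathrm{Wh}(G) \to \P_T(\fab G)$ (already observed before the statement, using that the $1\times1$ matrices $(\pm g)$ have singleton support and hence vanishing Newton polytope), and the elementary fact about Minkowski sums used above. I do not anticipate a genuine obstacle here; the one point requiring a moment's care is keeping straight the distinction between $\P(\fab G)$, where the relation is read off, and $\P_T(\fab G)$, where the conclusion is stated.
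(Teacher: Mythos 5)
Your argument is correct and is essentially the paper's own proof: both rest on the identity $P(A)+P(A^{-1})=0$ in $\P(\fab G)$ together with \cref{single poly} (guaranteeing both summands are single polytopes), from which the singleton conclusion follows. The extra detail you supply — the explicit embedding $\Z G \into \K\fab G \into \D$ and the Minkowski-sum cancellation — is just a fuller write-up of the same steps.
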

\begin{proof}
Since $G$ is agrarian, fix a skew-field $\D$ into which $\Z G$ embeds.

 Let $A \in \mathrm{Wh}(G)$ be given. We can think of $A$ as a square matrix over $\Z G$, which is invertible.
Recall that $P(A)$ denotes $P(\det A \otimes \D)$.

We have
 \[
  P(A) + P(A^{-1}) = P(AA^{-1}) = P (\det \I \otimes \D) = 0
 \]
and so $P(A) = -P(A^{-1})$. But both are single polytopes, and so $P(A) = P(A^{-1}) = 0$ in $\P_T(G)$.
\end{proof}

Note that a version of the above corollary for torsion-free groups satisfying the Atiyah conjecture is proven in \cite[Lemma 3.4]{Funke2018} under the extra assumption that $G$ is of $P\leq0$-class, which is implied by being of polytope class.

The corollary has a direct application to the study of the $L^2$-torsion polytopes.
These polytopes are defined for any free finite  $G$-CW-complex $X$ which is $L^2$-acyclic, provided that $G$ satisfies the Atiyah conjecture. The construction is due to Friedl--L\"uck~\cite{FriedlLueck2017}, and is a little complicated -- roughly speaking, to $X$ one associates an $L^2$-version of the Whitehead torsion, called the universal $L^2$-torsion. This torsion is naturally an element of $\D(G)^\times / [\D(G)^\times, \D(G)^\times]$, and hence one can take its Newton polytope up to translation. This element of $\P_T(\fab{G})$ is precisely the $L^2$-torsion polytope $P_{L^2}(X,G)$.

The universal $L^2$-torsion is a simple homotopy invariant, but not a homotopy invariant -- when one passes to a complex $G$-homotopy equivalent to $X$, the universal $L^2$-torsion gets multiplied by an element of $\mathrm{Wh}(G)$. When passing to the $L^2$-torsion polytope, the situation was a priori the same. However, the corollary above tells us that the Newton polytopes of elements in $\mathrm{Wh}(G)$ are trivial, and so the $L^2$-torsion polytope is actually a homotopy invariant.
Let us summarise this discussion.

\begin{thm}
Let $G$ be an $L^2$-acyclic group of type $\typeF{}$, and suppose that $G$ satisfies the Atiyah conjecture. For any two finite models $X$ and $Y$  for the classifying space of $G$, we have
\[
 P_{L^2}(X,G) = P_{L^2}(Y,G)
\]
Therefore, we may purge $X$ from the notation and talk about $P_{L^2}(G)$, the $L^2$-torsion polytope of $G$.
\end{thm}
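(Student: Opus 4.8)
The plan is to deduce the statement from \cref{Whitehead} using the known behaviour of the universal $L^2$-torsion under homotopy equivalences, exactly along the lines of the discussion preceding the theorem. First I would recall from Friedl--L\"uck~\cite{FriedlLueck2017} that, under our hypotheses, the universal $L^2$-torsion $\tor(X)$ is a well-defined element of $\D(G)^\times/[\D(G)^\times,\D(G)^\times]$ and that $P_{L^2}(X,G)$ is by definition its image under the Newton polytope homomorphism $P\colon \D(G)^\times/[\D(G)^\times,\D(G)^\times]\to\P_T(\fab G)$; this homomorphism makes sense because $G$ is torsion-free and satisfies the Atiyah conjecture, so $\D(G)$ is a skew-field (\cref{atiyah field}) and, by \cref{linnell ore}, it is the Ore localisation of the twisted group ring $\D(K)\fab G$, to which the map $P$ constructed in \cref{sec dets} applies.

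Next I would use that $X$ and $Y$, being two finite CW-models for the classifying space $BG$, are homotopy equivalent; lifting a homotopy equivalence to the universal covers produces a $G$-homotopy equivalence with a well-defined Whitehead torsion $\tau\in\mathrm{Wh}(G)$. By the transformation formula for the universal $L^2$-torsion under homotopy equivalences established in \cite{FriedlLueck2017} (it is a simple-homotopy invariant, and a homotopy equivalence changes it by the image of its Whitehead torsion), we get $\tor(Y)=\tor(X)\cdot\tau$ in $\D(G)^\times/[\D(G)^\times,\D(G)^\times]$. Applying the additive map $P$ yields
\[
 P_{L^2}(Y,G)=P_{L^2}(X,G)+P(\tau)
\]
in $\P_T(\fab G)$. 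Here $P(\pm g)$ is a singleton for every $g\in G$, so the composite $K_1(\Z G)\to K_1(\D(G))\xrightarrow{P}\P_T(\fab G)$ descends to $\mathrm{Wh}(G)$, and $P(\tau)$ is exactly the Newton polytope of $\tau$ as an element of $\mathrm{Wh}(G)$ in the sense of \cref{Whitehead}.

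Finally I would invoke \cref{Whitehead}: as a torsion-free group satisfying the Atiyah conjecture, $G$ is (equivariantly) agrarian, and as a group of type $\typeF{}$ it is finitely generated, hence $\fab G$ is finitely generated. Therefore \cref{Whitehead} applies and $P(\tau)$ is a singleton, i.e.\ $P(\tau)=0$ in $\P_T(\fab G)$, giving $P_{L^2}(X,G)=P_{L^2}(Y,G)$. The only step that is not purely formal is the bookkeeping at the $K$-theoretic level --- translating ``the universal $L^2$-torsion changes by the Whitehead torsion of the homotopy equivalence'' into the clean multiplicative identity $\tor(Y)=\tor(X)\cdot\tau$ in $\D(G)^\times/[\D(G)^\times,\D(G)^\times]$, and checking that the induced change of polytopes is the Newton polytope of an element of $\mathrm{Wh}(G)$ --- but this is just a matter of unwinding the definitions in \cite{FriedlLueck2017}, and I expect it to be the main (though minor) obstacle.
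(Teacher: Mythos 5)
Your proposal follows exactly the paper's own argument: the theorem is stated as a summary of the preceding discussion, which deduces homotopy invariance of $P_{L^2}$ from the fact that the universal $L^2$-torsion changes by an element of $\mathrm{Wh}(G)$ under $G$-homotopy equivalence, combined with \cref{Whitehead} (vanishing of Newton polytopes of Whitehead group elements, which applies since $G$ is agrarian by \cref{atiyah field} and has finitely generated abelianisation). The extra bookkeeping you flag at the end is indeed just unwinding the definitions in \cite{FriedlLueck2017}, and the paper treats it at the same informal level.
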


\subsection{Vanishing of the \texorpdfstring{$L^2$}{L\texttwosuperior}-torsion polytope in the presence of amenability}
\label{sec: vanishing}

We now use \cite[Lemma 5.2]{Funke2018} and the proof of \cite[Theorem 5.3]{Funke2018} of Funke to prove the following.

\begin{thm}
\label{amenable}
Let $G$ be a group of type $\typeF{}$ satisfying the Atiyah conjecture. Suppose that $G$ admits a finite chain of subnormal subgroups with the last term being a non-trivial amenable group $N$. Suppose further that $N$ is not abelian or that $G$ has trivial centre. Then $G$ is $L^2$-acyclic and the $L^2$-torsion polytope of $G$ is a singleton.
\end{thm}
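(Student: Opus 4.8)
The plan is to reduce the statement to an application of \cref{single poly} together with the amenability hypothesis, following Funke's strategy. First I would establish that $G$ is $L^2$-acyclic. Since $N$ is amenable and non-trivial, $N$ is infinite (a finite non-trivial group cannot be a subgroup of a torsion-free group, as $G$ satisfies the Atiyah conjecture hence is torsion-free), so $N$ has vanishing $L^2$-Betti numbers; by the subnormal chain and the standard multiplicativity/restriction properties of $L^2$-Betti numbers (using that each subquotient in the chain is of type $\typeF{}$ after a finiteness reduction, or arguing directly with the last term), $b_n^{(2)}(G)=0$ for all $n$, so $G$ is $L^2$-acyclic. Combined with $G$ being of type $\typeF{}$ and satisfying the Atiyah conjecture, the previous theorem tells us $P_{L^2}(G)$ is a well-defined element of $\P_T(\fab{G})$, and by \cref{polytope class} (since $G$ has finitely generated abelianisation, being of type $\typeF{}$) the group $G$ is of polytope class, so $P_{L^2}(G)$ is a single polytope.

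Next I would compute the universal $L^2$-torsion using a finite free $\Z G$-CW model $X$ for $BG$ and extending scalars to $\D(G)$, which by \cref{linnell ore} is the Ore localisation of $\D(K)\fab{G}$ where $K=\ker(G\to\fab{G})$. The universal $L^2$-torsion is then (the class of) an alternating product of Dieudonn\'e determinants of the boundary matrices $\partial_i$ of the $\D(G)$-chain complex after contracting it to an acyclic complex; its Newton polytope is $\sum_i (-1)^i P(\partial_i)$ up to translation, an element of $\P_T(H)$ with $H=\fab{G}$.

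The key step — and the main obstacle — is showing this polytope is a singleton, i.e.\ that it is $\phi$-flat for every $\phi\in H^1(G;\Z)\s-\{0\}$; equivalently, by \cref{funke}, that $F_\phi(P_{L^2}(G))$ is a single point. Here is where amenability of $N$ enters, via \cite[Lemma 5.2]{Funke2018}: for a fixed integral $\phi$, one considers the kernel $L=\ker\phi$ and studies the Novikov-type / localised chain complex over a ring built from $\Z L$. The point is that $N$ (or rather $N\cap L$, up to finite index and a chain argument) remains amenable, so the relevant twisted group ring $\D(\,\cdot\,)\cdot(\text{something})$ is an \emph{Ore} domain by \cref{tamari}, and hence the truncated determinants can be expressed as genuine fractions $\nu^{-1}\mu$ with $\mu,\nu\in\K L$; the denominator polytope $P(\nu)$ is then $\phi$-flat, and \cref{flat polys}/\cref{single poly} force the relevant face to collapse. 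The case distinction ($N$ non-abelian, or $G$ has trivial centre) is used exactly to rule out the one remaining obstruction: if $N$ were abelian and central, one could have $G\cong\Z$ (or more precisely a central $\Z$ direction surviving), which is the excluded degenerate case where $P_{L^2}$ is a nondegenerate segment rather than a point. I would handle this by showing that, unless we are in that degenerate situation, the amenable subnormal subgroup forces enough "thickness" transverse to every rational $\phi$ that the face $F_\phi(P_{L^2}(G))$ has dimension $0$; the argument is a direct transcription of the proof of \cite[Theorem 5.3]{Funke2018}, now applicable because \cref{polytope class} removed Funke's standing amenability assumption on $G$ itself. Assembling these: $P_{L^2}(G)$ is a single polytope all of whose rational faces are points, hence $P_{L^2}(G)$ is a singleton.
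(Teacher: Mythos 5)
Your opening steps ($L^2$-acyclicity from the infinite amenable subnormal subgroup, well-definedness of $P_{L^2}(G)$, and polytope class via \cref{polytope class}) match the paper, and you have correctly located the two external ingredients, namely \cite[Lemma 5.2]{Funke2018} and the proof of \cite[Theorem 5.3]{Funke2018}. But the mechanism you describe for the key step is the wrong one. Writing determinants as fractions $\nu^{-1}\mu$ with $\phi$-flat denominator and invoking \cref{flat polys} is precisely the proof of \cref{single poly}; it needs no amenability of any subnormal subgroup (the Ore localisation $\LL$ of $\K L$ exists simply because $L\leqslant \fab{G}$ is free abelian), and it only yields that $P_{L^2}(G)$ is a \emph{single polytope}, which you already have from polytope class. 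It cannot force $F_\phi\big(P_{L^2}(G)\big)$ to be a point. What \cite[Lemma 5.2]{Funke2018} actually requires is a multiplicative subset $T\subseteq \Z G$ satisfying the two-sided Ore condition in $\Z G$, with $P(t)$ a \emph{singleton} for every $t\in T$ and $H_\ast(G;T^{-1}\Z G)=0$. The paper builds $T$ from the non-zero elements of $\Z(N\cap K)$, where $K=\ker(G\to\fab{G})$: these have singleton Newton polytopes for the trivial reason that their supports lie in $K$, and the Ore condition is transported up the subnormal chain $N=N_k,\dots,N_0=G$ by repeated application of \cref{inheriting Ore} (conjugates and products of such elements still have singleton polytopes). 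This chain-lifting of the Ore condition is the genuinely new ingredient beyond Funke, and it is absent from your sketch; ``up to finite index and a chain argument'' does not supply it.

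The case distinction is also not the one you describe: the dichotomy is on whether $M=N\cap K$ is trivial, not on any ``thickness transverse to $\phi$''. If $M\neq 1$, the localisation argument above applies, and an element $1-g$ with $g\in M\setminus\{1\}$ lies in $T$ and kills $H_0(G;T^{-1}\Z G)$. If $M=1$, then $N$ embeds in $\fab{G}$ and is therefore abelian, so by hypothesis $G$ has trivial centre; one then observes that $[N_{k-1},N]\subseteq N\cap K=1$, so $N$ is central in $N_{k-1}$, replaces $N$ by the centre of $N_{k-1}$, and iterates up the chain until either some centre meets $K$ non-trivially (returning to Case 1) or one exhibits a non-trivial centre of $G$, a contradiction. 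Without this argument your proposal does not dispose of the degenerate configuration, and as written the central step does not prove that $P_{L^2}(G)$ is a singleton.
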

\begin{proof}
Note that $N$ is amenable and infinite, since $G$ is of type $\typeF{}$, and thus torsion free. Therefore the $L^2$-acyclicity of $G$ follows from \cite[Theorem 1.44]{Lueck2002} of L\"uck. 

Since $G$ is of type $\typeF{}$, it is finitely generated, and hence so is its abelianisation. Thus $G$ is of polytope class by \cref{polytope class}.

Now, to fix notation, we denote the elements of the subnormal chain by $N_i$, with $N_0 = G$ and $N_k = N$.
Let $M = N \cap K$ (recall that $K = \ker G \to \fab{G}$).

\smallskip
\noindent \textbf{Case 1:} Suppose that $M$ is not trivial, and hence $M$ is an infinite amenable group.
Let $S$ be the set of non-zero elements of $\Z M$. Note that the untwisted group ring $\Z G$ has no zero-divisors, since it embeds into the skew-field $\D(G)$. Thus $\Z M$ has no zero-divisors either. Therefore the subset $S$ of $\Z M$ satisfies the Ore condition inside of $\Z M$.
Note that $M$ is a normal subgroup of $N_{k-1}$, and that $S$ is $N_{k-1}$-invariant. Now \cref{inheriting Ore} tells us that $S$ satisfies the left Ore condition in $\Z N_{k-1}$. Set $S_{k-1} = S$. Replacing $S_{k-1}$ by
$S_{k-2}$, the multiplicative closure of the $N_{k-2}$-conjugates of $S_{k-1}$, and using \cref{inheriting Ore} again, we see that $S_{k-2}$ satisfies the left Ore condition in $\Z N_{k-2}$. We repeat the procedure until we arrive at the closure $S_0$ which satisfies the left Ore condition in $\Z G$.
The same argument shows that $S_0$ satisfies the right Ore condition, and hence the Ore conditions on both sides.

Note that the Newton polytope of every element in $S_0$ is a singleton, since this is true for elements in $S$ by construction (as these lie in the group ring of the kernel $K$) and is clearly preserved under conjugation and multiplication.

We are now going to use \cite[Lemma 5.2]{Funke2018} of Funke.
Considering only a classifying space for the group in question, and using \cref{polytope class} to remove the assumption of $P \geq 0$-class (which is weaker than being of polytope class),
we may state it as follows:

\begin{quote}
 Let $G$ be a group of type $\typeF{}$ satisfying the Atiyah conjecture. Let $T \subseteq \Z G$ be a multiplicative subset satisfying the left and right Ore conditions, and such that $P(t)$ is a singleton for every $t \in T$. If $H_i(G;T^{-1} \Z G) = 0$ for every $i$, then $P_{L^2}(G)$ is a singleton.
\end{quote}

The assumptions of \cite[Lemma 5.2]{Funke2018} are satisfied (taking $T = S_0$) -- we need only 
that
\[H_n(G; S_0^{-1} \Z G) = 0\]
for all $n$. 
To prove it, note that extending scalars to a localisation is exact, and so the only potential problem occurs for  $H_0(G; S_0^{-1} \Z G)$, since  $H_i(G; \Z G) = 0$ for all $i > 0$, and $H_0(G; \Z G) = \Z$. But $S_0$ contains an element of the form $1-g$ with $g \in M \s- \{1\} \subseteq G \s- \{1\}$ by assumption, and so $H_0(G; S_0^{-1} \Z G) = 0$ (as we could compute it from a presentation complex coming from a presentation of $G$ containing $g$ as a generator, and then the first boundary map is clearly onto over $S_0^{-1} \Z G$).

\smallskip
\noindent \textbf{Case 2:} Suppose that $M$ is trivial. Then $N$ embeds into $H = \fab{G}$, and so is abelian.
Thus $G$ has trivial centre by hypothesis.

Since $N$ is normal in $N_{k-1}$, and embeds into $H = \fab{G}$, the action of $N_{k-1}$ on $N$ must be trivial, and so $N$ lies in the centre of $N_{k-1}$. We replace $N$ by the centre of $N_{k-1}$: if this new $N$ does not embed into $H$, we are back in case 1. Otherwise, note that $N$ is in fact characteristic in $N_{k-1}$, and so normal in $N_{k-2}$. We repeat the argument, and replace $N$ by the centre of $N_{k-2}$. We continue doing so until either the new $N$ intersects $K$ non-trivially, or until we have produced a non-trivial centre of $G$, which is impossible.
\end{proof}

The theorem above allows us to confirm a conjecture of Friedl--L\"uck--Tilmann.
\begin{conj}[Friedl--L\"uck--Tillmann~{\cite[Conjecture 6.4]{Friedletal2016}}]
Let $G$ be an $L^2$-acyclic group of type $\typeF{}$ satisfying the Atiyah conjecture and with $\mathrm{Wh}(G) = 0$. If $G$ is amenable but not virtually $\Z$, then $P_{L^2}(G)$ is a singleton.
\end{conj}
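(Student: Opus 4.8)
The plan is to derive this directly from \cref{amenable}, treating the abelian case separately. Since $G$ is of type $\typeF{}$ it is finitely generated and torsion-free; in particular $G \not\cong \Z$ (as $\Z$ is itself virtually $\Z$), and this is all we shall use from the hypothesis ``not virtually $\Z$''. Suppose first that $G$ is not abelian. Then the trivial chain $N_0 = G = N$ is a subnormal chain terminating in the non-trivial, non-abelian amenable group $N = G$, so \cref{amenable} applies and tells us that $G$ is $L^2$-acyclic and that $P_{L^2}(G)$ is a singleton. (Observe that \cref{amenable} itself does not require the hypothesis $\mathrm{Wh}(G) = 0$ appearing in the conjecture; that $P_{L^2}(G)$ is well defined is guaranteed by \cref{Whitehead}.)

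It remains to handle $G$ abelian. Being finitely generated and torsion-free, $G \cong \Z^n$; since $G$ is $L^2$-acyclic it is non-trivial, so $n \geqslant 1$, and since $G \not\cong \Z$ we have $n \geqslant 2$. We must show $P_{L^2}(\Z^n)$ is a singleton for $n \geqslant 2$. Here $H = \fab{G} = \Z^n$, $K$ is trivial, $\K = \Q$, and $\D$ is the Ore localisation of $\Q[\Z^n]$, i.e.\ the field of rational functions in $n$ commuting variables $z_1, \dots, z_n$ (a basis of $\Z^n$). Using the $n$-torus as a finite classifying space, the cellular chain complex $C_\bullet$ of its universal cover over $\Z[\Z^n]$ is the Koszul complex on $(z_1-1, \dots, z_n-1)$. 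Each $z_i - 1$ is a unit of $\D$, so $C_\bullet \otimes \D$ is acyclic, and $P_{L^2}(\Z^n)$ is the Newton polytope of the Dieudonn\'e determinant over $\D$ of the torsion of $C_\bullet \otimes \D$. Decomposing $C_\bullet = C'_\bullet \otimes_{\Z[\Z^n]} \big(\Z[\Z^n] \xrightarrow{\,z_n-1\,} \Z[\Z^n]\big)$, with $C'_\bullet$ the Koszul complex on $(z_1-1, \dots, z_{n-1}-1)$, multiplicativity of torsion under tensor products shows that this determinant equals $(z_n-1)^{\chi(C'_\bullet)}$ in $\D^\times/[\D^\times,\D^\times]$; as $n-1 \geqslant 1$ we have $\chi(C'_\bullet) = \chi(T^{n-1}) = 0$, so the torsion is trivial and $P_{L^2}(\Z^n)$ is the one-point polytope $\{0\}$. (This computation of $P_{L^2}(\Z^n)$ is already contained in \cite{FriedlLueck2017}.)

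The only genuine obstacle is this abelian case: it falls outside the scope of \cref{amenable} precisely because, when $G = \Z^n$, the subgroup $N \cap K$ is trivial for every subnormal $N$ while $G$ has non-trivial centre, so neither alternative of the hypothesis of \cref{amenable} is available. The care required there lies entirely in the standard torsion bookkeeping for the Koszul complex, whose outcome is forced by the vanishing of the Euler characteristic of $T^{n-1}$ for $n \geqslant 2$; everything else is immediate from \cref{amenable}.
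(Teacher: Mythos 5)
Your proposal is correct and follows essentially the same route as the paper: split into the non-abelian case, handled by \cref{amenable} with $N=G$, and the abelian case, where type $\typeF{}$ forces $G\cong\Z^n$ with $n\geqslant 2$. The only difference is that in the abelian case the paper simply cites the vanishing of the universal $L^2$-torsion of $\Z^n$ from \cite[Theorem 2.5(5)]{FriedlLueck2017}, whereas you verify that fact directly via the Koszul complex and $\chi(T^{n-1})=0$ — a correct, self-contained substitute for the citation.
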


We prove the following, stronger version.

\begin{cor}
\label{FLT conj}
Let $G$ be an $L^2$-acyclic group of type $\typeF{}$ satisfying the Atiyah conjecture. If $G$ is amenable but not isomorphic to $\Z$, then $P_{L^2}(G)$ is a singleton.
\end{cor}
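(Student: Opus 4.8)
The plan is to split according to whether $G$ is abelian, reducing the non-abelian case directly to \cref{amenable}.

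Assume first that $G$ is not abelian. Then $G$ carries the degenerate subnormal chain consisting of $G$ alone, with last term $N = G$. This $N$ is amenable by hypothesis, non-trivial (an $L^2$-acyclic group is infinite, since $H_0(G;L^2(G)) \neq 0$ for finite $G$), and non-abelian, so the hypotheses of \cref{amenable} hold and it yields immediately that $P_{L^2}(G)$ is a singleton. The only thing to verify is that \cref{amenable} applies to a chain of length zero; inspecting its proof, here $M = N \cap K = K$, which is non-trivial precisely because $G/K = \fab{G}$ is abelian while $G$ is not, and $K$ is amenable (a subgroup of the amenable group $G$) with $\Z K$ having no zero-divisors (it lies in $\Z G \subseteq \D(G)$). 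Thus Case~1 of that proof runs verbatim: a single application of \cref{inheriting Ore} shows that the set of non-zero elements of $\Z K$ already satisfies the Ore conditions in $\Z G$, and the homology computation with coefficients in the resulting localisation goes through since that set contains $1 - g$ for some $g \in K \smallsetminus \{1\}$.

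It remains to handle the abelian case. Being of type $\typeF{}$, $G$ has a finite classifying space, hence is finitely generated and torsion-free, so $G \cong \Z^n$; the trivial group is not $L^2$-acyclic and $G \not\cong \Z$, so $n \geq 2$. It then suffices to show $P_{L^2}(\Z^n)$ is a singleton. I would use the $n$-torus $T^n$ as a finite model for $B\Z^n$; by the homotopy invariance of the $L^2$-torsion polytope proved just above, this computes $P_{L^2}(\Z^n)$. Writing $T^n = T^{n-1} \times S^1$ exhibits the cellular $\Z[\Z^n]$-chain complex of $T^n$ as the algebraic mapping cone of multiplication by $t_n - 1$ on $C_*(T^{n-1}) \otimes_{\Z[\Z^{n-1}]} \Z[\Z^n]$, a complex which is acyclic over $\D(\Z^n)$ since $\Z^{n-1}$ is $L^2$-acyclic. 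The multiplicativity of the universal $L^2$-torsion under mapping cones then writes the torsion of $T^n$ as a combination of the (vanishing) torsion of the acyclic complex $C_*(T^{n-1}) \otimes \D(\Z^n)$ and a term $\chi(T^{n-1}) \cdot [\,t_n - 1\,]$; as $\chi(T^{n-1}) = 0$ for $n \geq 2$, the universal $L^2$-torsion of $T^n$ is trivial, so $P_{L^2}(\Z^n)$ is a singleton. Alternatively one checks by hand that, for $n = 2$, the torsion of $0 \to \D \xrightarrow{\partial_2} \D^2 \xrightarrow{\partial_1} \D \to 0$ is represented by $-1$, and then induces on $n$.

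I do not anticipate a serious difficulty: the substance is entirely in \cref{amenable} (hence ultimately in \cref{single poly}). The only genuinely new input is the vanishing $P_{L^2}(\Z^n)$ for $n \geq 2$, together with the routine verification that \cref{amenable} covers the degenerate chain $N = G$. The point deserving some care is fixing the sign and basing conventions in the mapping-cone torsion formula, but this is standard.
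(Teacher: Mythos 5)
Your proof is correct and follows essentially the same route as the paper: the non-abelian case is handled by \cref{amenable} with the degenerate chain $N = G$, and the abelian case reduces to $G \cong \Z^n$ with $n \geqslant 2$. The only difference is that for the abelian case the paper simply cites the vanishing of the universal $L^2$-torsion of $\Z^n$ from Friedl--L\"uck, whereas you rederive it via the product/mapping-cone formula for the torus --- a harmless substitution, since $\chi(T^{n-1}) = 0$ does give the vanishing.
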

\begin{proof}
If $G$ is non-abelian, then the result follows directly from \cref{amenable} by taking $N = G$. If $G$ is abelian, then type $\typeF{}$ tells us that $G$ is a finitely generated free abelian group, which is not $\Z$. Therefore the universal $L^2$-torsion of $G$ vanishes by \cite[Theorem 2.5(5)]{FriedlLueck2017}. Hence $P_{L^2}(G) = 0$.
\end{proof}

Note also that $G=\Z$ fails the assertion of the conjecture: we have \[P_{L^2}(\Z) = - P(1-z)\] where $z$ is a generator of $\Z$.

\begin{rmk}
 \cref{amenable}  corresponds nicely with \cite[Theorem 4.10]{Wegner2009} of Wegner: we are assuming that $G$ satisfies the Atiyah conjecture, whereas Wegner assumed that $G$ has \emph{semi-integral determinant}, which is a related  condition. Also, in his theorem one requires $G$ to posses an elementary amenable subnormal subgroup, whereas for our purposes amenable subnormal subgroups suffice. Wegner proved the vanishing of the $L^2$-torsion, whereas we prove the vanishing of the $L^2$-torsion polytope. Both invariants are determined by the universal $L^2$-torsion of Friedl--L\"uck \cite{FriedlLueck2017}.
\end{rmk}

\subsection{Agrarian groups of deficiency \texorpdfstring{$1$}{1}}

Throughout this section, $G$ is going to be an agrarian group of deficiency $d \geqslant 1$, that is $G$ will admit a presentation with the number of generators exceeding the number of relators by $d$ (formally speaking, it also means that there is no presentation with an even greater discrepancy). We will fix a presentation of $G$ with a finite generating set $\S = \{s_1, \dots, s_k\}$ realising the deficiency. We will also build a CW-classifying space for $G$ whose $2$-skeleton coincides with the presentation complex, and let $(C_\ast, \partial_\ast)$ denote the cellular chain complex of the universal covering of the classifying space, with $\partial_n \colon C_n \to C_{n-1}$. We will choose representatives for the $G$-orbits of cells, and hence identify $C_\ast$ with a chain complex of free $\Z G$-modules. Note that, by our convention,  $C_\ast$ is a chain complex of right modules, and so the boundary maps are matrices acting on the left.

We will look at two boundary maps more closely. The first one, $\partial_1$, is the row vector $(1-s_i)_i$. The second one, $\partial_2$, is a matrix with $d$ more rows than columns, and we will denote it by $A$. We let $A_i$ denote $A$ with the $i^{th}$ row removed.


\begin{thm}[Bieri--Neumann--Strebel~{\cite[Theorem 7.2]{Bierietal1987}}]
 For any finitely generated group $G$, if $d>1$ then $\Sigma^1(G) = \emptyset$.
\end{thm}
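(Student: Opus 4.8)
The plan is to reduce to a character with infinite cyclic image, use the Bieri--Neumann--Strebel description of $\Sigma^1$ in terms of ascending HNN decompositions, and then contradict $d>1$ by a homological upper bound on the deficiency.

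First I would observe that $\Sigma^1(G)$ is open in $H^1(G;\R)$ by \cref{bns open orig} and invariant under multiplication by positive scalars. Since $H^1(G;\Q)$ is dense in $H^1(G;\R)$, if $\Sigma^1(G)$ were non-empty it would contain a positive rational multiple of a surjection $G\twoheadrightarrow\Z$, and hence, by scale-invariance, such a surjection itself; so we may fix $\phi\colon G\twoheadrightarrow\Z$ with $\phi\in\Sigma^1(G)$. Then I would invoke the Bieri--Neumann--Strebel structure theorem: for a surjection onto $\Z$, membership $\phi\in\Sigma^1(G)$ forces $G$ to be an ascending HNN extension $G=\langle B,t\mid t^{-1}bt=\psi(b),\ b\in B\rangle$ with $B\leqslant\ker\phi$ finitely generated, $\psi\colon B\into B$ an injective endomorphism, and $\phi(t)=1$.

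Next, since $d>1$ the group $G$ is finitely presented, so I may choose a finite presentation realising the deficiency, with $g$ generators and $r=g-d$ relators, and let $X$ be its presentation $2$-complex. Attaching cells of dimension $\geqslant 3$ to $X$ to build a $K(G,1)$ can only kill classes in $H_2$, so $b_2(X;\Q)\geqslant b_2(G;\Q)$, and comparing $\chi(X)=1-d$ with $\chi(X)=1-b_1(G;\Q)+b_2(X;\Q)$ yields $d\leqslant b_1(G;\Q)-b_2(G;\Q)$. I would then feed the ascending HNN decomposition into the Mayer--Vietoris (Wang) exact sequence with $\Q$-coefficients, whose relevant segment reads
\[
 H_2(G;\Q)\to H_1(B;\Q)\xrightarrow{\,1-\psi_*\,}H_1(B;\Q)\to H_1(G;\Q)\to\Q\xrightarrow{\,0\,}\Q ,
\]
the two copies of $\Q$ on the right being $H_0$, on which $1-\psi_*$ acts as zero. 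Since $B$ is finitely generated, $V:=H_1(B;\Q)$ is finite dimensional, so $\dim\ker(1-\psi_*|_V)=\dim\operatorname{coker}(1-\psi_*|_V)$. Reading off the sequence gives $b_1(G;\Q)=1+\dim\operatorname{coker}(1-\psi_*|_V)$ and $b_2(G;\Q)\geqslant\dim\ker(1-\psi_*|_V)$, whence $b_1(G;\Q)-b_2(G;\Q)\leqslant 1$. Combined with the previous inequality this forces $d\leqslant 1$, a contradiction; therefore $\Sigma^1(G)=\emptyset$.

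The main obstacle is the appeal to the ascending-HNN description of $\Sigma^1$ in the second step, which is itself a substantial Bieri--Neumann--Strebel result and must logically precede this statement. One might hope to argue purely via Sikorav's theorem (\cref{sikorav}): the presentation complex furnishes an exact sequence $(\widehat{\Z G}^\phi)^{l}\xrightarrow{A}(\widehat{\Z G}^\phi)^{k}\xrightarrow{\partial_1}\widehat{\Z G}^\phi\to 0$ with $k-l=d\geqslant 2$, and one would like to conclude that $A$ cannot be surjective for rank reasons. For a general, non-agrarian $G$, however, the Novikov ring $\widehat{\Z G}^\phi$ need not embed into any skew-field, so this naive rank count is unavailable, and some genuine structural input on $\widehat{\Z G}^\phi$ --- equivalently, on $G$ itself --- appears unavoidable.
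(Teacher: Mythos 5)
The paper does not prove this statement at all --- it is quoted verbatim from Bieri--Neumann--Strebel \cite[Theorem 7.2]{Bierietal1987} and used as a black box --- so there is no internal proof to compare against; I can only assess your argument on its own terms, and it is correct. The reduction to a surjective integral character via openness (\cref{bns open orig}), scale-invariance and density of rational points is fine. The Wang (Mayer--Vietoris) sequence for the ascending HNN extension is set up correctly: exactness gives $\dim\ker(1-\psi_*|_V)\leqslant b_2(G;\Q)$ and $b_1(G;\Q)=1+\dim\operatorname{coker}(1-\psi_*|_V)$, and finite-dimensionality of $V=H_1(B;\Q)$ (which needs $B$ finitely generated, exactly what the structure theorem provides) yields $b_1-b_2\leqslant 1$; combined with the Euler-characteristic bound $d\leqslant b_1(G;\Q)-b_2(G;\Q)$, obtained correctly from Hopf's surjection $H_2(X;\Q)\twoheadrightarrow H_2(G;\Q)$ for the presentation complex, this contradicts $d>1$. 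The one substantive input you rightly flag is the ascending-HNN characterisation of $\Sigma^1$ for integral characters; that is Proposition 4.3 of the same BNS paper (logically prior to their Theorem 7.2, so no circularity), and any proof of the present statement needs comparable input. Your closing remark is also well judged: the ``rank count'' over $\widehat{\Z G}^\phi$ via Sikorav's theorem is not free, since one would need the strong rank condition for the Novikov ring (a nontrivial result in the spirit of the Kochloukova--Bieri von Neumann finiteness the paper invokes elsewhere), so routing through the HNN structure theorem is the cleaner choice. Note only that the sign/direction convention (ascending versus descending, $\phi(t)=\pm 1$) is immaterial to your computation, since the Wang sequence sees only that $G$ is an HNN extension over a finitely generated base with one associated subgroup equal to the base.
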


From now on we will assume that $d=1$. Crucially for us, in this case the matrices $A_i$ are square, since $C_2$ has rank precisely one less than $C_1$.

Friedl conjectured that a group $G$ of deficiency one which admits an aspherical presentation complex realising the deficiency has $\Sigma^1(G)$ determined by a polytope. We prove this conjecture under the assumption of $G$ being agrarian, but we will not need the assumption of asphericity of the presentation complex.
\begin{thm}
\label{defic 1}
 Let $G$ be an agrarian group of deficiency $1$. Then there exists a marked integral polytope $P$ in $H_1(G;\R)$, 
 such that $\phi \in \Sigma^1(G)$ \iff $\phi$ is marked.
\end{thm}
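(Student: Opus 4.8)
The goal is to show that $\Sigma^1(G)$ is determined by a marked integral polytope, and the natural strategy is to produce this polytope as a Minkowski sum of the Newton polytopes $P(A_i)$ of the square matrices $A_i$ coming from the presentation, glued together via \cref{one marked L2 polytope}. First I would note that, since $G$ is finitely presented (of deficiency $1$), it is of type $\typeF{2}$, and \cref{sikorav} applies in degree $m=1$: a character $\phi \neq 0$ lies in $\Sigma^1(G)$ if and only if $H_0(G;\widehat{\Z G}^\phi) = 0$ and $H_1(G;\widehat{\Z G}^\phi) = 0$. Using the chain complex $(C_\ast,\partial_\ast)$ of the presentation complex, with $\partial_1 = (1-s_i)_i$ and $\partial_2 = A$, this homology is computed from
\[
\widehat{\Z G}^{\phi}{}^{\oplus(k-1)} \xrightarrow{\ A\ } \widehat{\Z G}^{\phi}{}^{\oplus k} \xrightarrow{\ \partial_1\ } \widehat{\Z G}^{\phi} \to 0.
\]
The vanishing of $H_0$ amounts to $\partial_1\otimes\widehat{\Z G}^\phi$ being surjective, which happens precisely when some $1-s_i$ with $\phi(s_i)\neq 0$ is invertible over $\widehat{\Z G}^\phi$ — this is a codimension-$\geq 1$ (in fact, measure-zero) condition, and it will be absorbed into the $U_i$-chart bookkeeping below. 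The key point is the vanishing of $H_1$.

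**Reducing $H_1$ to invertibility of the $A_i$.** Fix $\phi$ with $\phi(s_i)\neq 0$ for some $i$, so that $1-s_i$ is invertible over $\widehat{\Z G}^\phi$ (this needs an argument: over the Novikov ring, $1-s_i$ has a geometric-series inverse when $\phi(s_i)>0$, and $1-s_i = -s_i(1-s_i^{-1})$ handles $\phi(s_i)<0$). Then on the chart $U_i = \{\phi \mid \phi(s_i)\neq 0\}$ one can use the standard trick: invertibility of $1-s_i$ lets us split off the $i$-th coordinate of $C_1$, so that $H_0 = 0$ automatically and $H_1(G;\widehat{\Z G}^\phi) = 0$ if and only if $A_i \otimes \widehat{\Z G}^\phi$ is invertible — more precisely, the composite of $A$ with the projection killing row $i$ is the relevant boundary map after the change of basis, and exactness in the middle is equivalent to $A_i$ being invertible over $\widehat{\Z G}^\phi$. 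This is exactly the kind of statement \cref{K-bns matrix} and \cref{bns matrix} are designed to handle: we pass to the skew-field $\K = \K$ containing $\Z K$ (where $K = \ker(G\to\fab G)$), form $\D$ the Ore localisation of $\K H$, observe that $A_i\otimes\D$ is invertible (this follows since, say, over $\D$ the homology must vanish — agrarianness of $G$ and the fact that $H_1(G;\D)$ has the appropriate dimension; alternatively one invokes that $A_i\otimes\widehat{\K H}^\psi$ is invertible for at least one $\psi\in U_i$, which forces $\det A_i\otimes\D\neq 0$), and then \cref{bns matrix} gives a marked polytope $P(A_i)$ — actually we take $P_i = P(A_i) + k_i\cdot P(1-s_i)$ for a suitable $k_i$ to arrange the atlas compatibility, as in \cref{one marked L2 polytope} — such that on $U_i$, $\phi\in\Sigma^1(G)$ iff $\phi$ is marked.

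**Gluing and integrality.** With a marked polytope on each chart $U_i$ controlling $\Sigma^1(G)\cap U_i$, the markings agree on overlaps $U_i\cap U_j$ because both describe the same set $\Sigma^1(G)\cap U_i\cap U_j$ (via \cref{sikorav}, which is chart-independent), so $(U_i, P_i, m_i)$ forms an atlas of markings. Since $\bigcup_i U_i = H^1(G;\R)\smallsetminus\{0\}$ (every nonzero character is nonzero on some generator), \cref{one marked L2 polytope} then produces a single marked polytope $P = \sum_i P_i$ — or, more precisely, $P$ with marked vertices — whose marking function $m$ satisfies $\phi\in\Sigma^1(G)$ iff $\phi$ is marked, for all $\phi\neq 0$. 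Integrality of $P$ is automatic: each $P(A_i) = P(\det A_i\otimes\D)$ is an integral polytope since $A_i$ has entries in $\Z G\subseteq\Z H \cdot(\text{stuff})$ — more carefully, the support of any element of $\Z G$ maps into $H$ under abelianisation, hence $P(p)$ is integral for $p\in\Z G$, and \cref{single poly} guarantees $P(A_i)$ is a single polytope, which is then integral as a formal difference of integral polytopes represented by a singleton difference; likewise $P(1-s_i)$ is integral, and Minkowski sums of integral polytopes are integral.

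**Main obstacle.** The delicate point is the chart-wise reduction "$H_1(G;\widehat{\Z G}^\phi)=0 \iff A_i$ invertible over $\widehat{\Z G}^\phi$," i.e. correctly performing the change of basis that splits off $1-s_i$ and verifying that the middle homology of the length-three Novikov complex is exactly the cokernel/kernel data measured by $A_i$; one must be careful that the relevant invertibility is over $\widehat{\Z G}^\phi$ (not just over $\widehat{\K H}^\phi$), which is why \cref{bns matrix} — with its uniqueness of Novikov right-inverses (\cref{unique RI}) forcing the $\widehat{\K H}^\phi$-inverse to descend to $\widehat{\Z G}^\phi$ — is the right tool. Establishing that $A_i\otimes\D$ is invertible (so that $P(A_i)$ is a genuine nonempty single polytope and \cref{bns matrix} applies) also requires a short argument, most cleanly by exhibiting one character in $U_i$ lying in $\Sigma^1(G)$, or by a rank/dimension count over $\D$ using agrarianness.
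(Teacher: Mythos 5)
Your proposal follows the paper's proof almost step for step: Sikorav's theorem, the observation that $1-s_i$ is invertible over $\widehat{\Z G}^\phi$ whenever $\phi(s_i)\neq 0$ (so $H_0$ vanishes and the $i$-th coordinate of $C_1$ splits off), the identification of $H_1$-vanishing with surjectivity of $A_i\otimes\widehat{\Z G}^\phi$, the passage to the marked polytope $P(A_i)$ via \cref{bns matrix}, and the gluing over the cover by the charts $U_i$. Two points deserve correction. First, the gluing lemma you invoke is the wrong one: \cref{one marked L2 polytope} requires all the chart polytopes to have the form $P+k_i\cdot P(1-s_i)$ for a \emph{common} $P$, and no choice of $k_i$ will manufacture such a $P$ in the bare agrarian setting -- that identity is the $L^2$-torsion relation $P(A_i)-P(1-s_i)=P_{L^2}(G)$, available only under the Atiyah conjecture and used in \cref{defic 1 atiyah}. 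What you want here is \cref{one marked polytope}, applied directly to the atlas $U_1,\dots,U_k$ and $P(A_1),\dots,P(A_k)$, which outputs the Minkowski sum $P=\sum_i P(A_i)$ with the induced marking; your own verification that the markings agree on overlaps (both compute $\Sigma^1(G)$ there) is exactly the atlas condition needed. Second, a minor but recurring imprecision: what the homology computation requires of $A_i\otimes\widehat{\Z G}^\phi$ is surjectivity onto the free module $C_1'$, i.e.\ right-invertibility (which is the definition of $\Sigma(A_i)$), not two-sided invertibility; injectivity is a separate matter tied to $\Sigma^2$ and is not needed here. Your concern about whether $A_i\otimes\D$ is invertible (so that \cref{bns matrix} applies) is legitimate -- the paper passes over it silently, and the clean resolution is that if $A_i\otimes\D$ fails to be invertible then $A_i$ admits no right inverse over any Novikov ring, so $\Sigma^1(G)\cap U_i=\emptyset$ and one marks nothing on that chart.
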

\begin{proof}
%
In view of Sikorav's Theorem (\cref{sikorav}), for every character \[\phi \in H^1(G;\R) \s- \{0\}\] we have $\phi \in \Sigma^1(G)$ \iff
\[
 H_1(C_\ast \otimes \widehat{\Z G}^\phi) =  H_0(C_\ast \otimes \widehat{\Z G}^\phi) = 0
\]

Let $s_i \in \S$. If $\phi(s_i) \neq 0$, we can modify the complex $C_\ast\otimes \widehat {\Z G}^\phi$ without changing its homology as follows: let $v$ denote the $i^{th}$ basis vector of $C_1$. Since $\phi(s_i) \neq 0$, the element $1-s_i$ is invertible in $\widehat {\Z G}^\phi$, and so the boundary map $\partial_1 \otimes \widehat {\Z G}^\phi$ restricts to an isomorphism from the $\widehat {\Z G}^\phi$-span of $v$ onto $C_0 \otimes \widehat {\Z G}^\phi$. This immediately implies that $H_0(C_\ast \otimes \widehat{\Z G}^\phi) = 0$. Also, we may split off the span of $v$ from $C_1 \otimes  \widehat{\Z G}^\phi$ and write
\[
C_1 \otimes  \widehat{\Z G}^\phi = \langle v \rangle \oplus C_1'
\]
where $C_1'$ is spanned by the remaining basis vectors of $C_1$.
We will use this procedure repeatedly, whenever we are working over a ring $R$ in which $1-s_i$ is invertible. As a shorthand, we will say that we `replace  $C_1 \otimes R$ by the $R$-module $C_1'$'.

It is immediate that $H_1(C_\ast \otimes \widehat{\Z G}^\phi) = 0$ \iff the composition
\[
C_2\otimes \widehat{\Z G}^\phi \to C_1 \otimes \widehat{\Z G}^\phi \to C_1'
\]
of $\partial_2$ and the projection
is onto.
But this map is precisely $A_i \otimes \widehat{\Z G}^\phi$, and it is onto \iff $\phi \in \Sigma(A_i)$. This in turn is equivalent to $\phi$ being marked under $m_i$, where the marking $m_i$ comes from \cref{bns matrix} and is a marking of $P(A_i)$.

Now the collections $U_1, \dots, U_k$ and $P(A_1), \dots, P(A_k)$ form an atlas of markings, where \[U_i = \{ \psi \in H^1(G;\R) \s- \{0\} \mid \psi(s_i) \neq 0 \}\]
 An application of \cref{one marked polytope} allows us to combine the marked polytopes
 \[P(A_1), \dots, P(A_k)\]
 into a single marked polytope $P$ which satisfies the assertion of our theorem.
\end{proof}

\begin{rmk}
The bulk of the above proof will be used repeatedly in the sequel.
\end{rmk}

Under stronger hypotheses we can in fact prove a somewhat stronger theorem: we will assume now that $G$ satisfies the Atiyah conjecture, and use the $L^2$-torsion polytope $P_{L_2}(G)$.

\begin{thm}
\label{defic 1 atiyah}
 Let $G$ be a torsion-free group of deficiency $1$, which satisfies the Atiyah conjecture and has trivial first $L^2$-Betti number. 
 We have $\Sigma^1(G) = \Sigma^\infty(G;\Z)$, and there exists an integral polytope $P$ in $H_1(G;\R)$ with marked vertices,
 such that $\phi \in \Sigma^1(G)$ \iff $\phi$ is marked.

 Moreover, the Cayley $2$-complex of any presentation of $G$ realising the deficiency is contractible.
\end{thm}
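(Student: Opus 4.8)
The plan is to extract everything from the single input $H_1(G;\D(G)) = 0$, which holds because $G$ is torsion-free and satisfies the Atiyah conjecture, so that $\dim_{\D(G)} H_1(G;\D(G)) = b_1^{(2)}(G) = 0$ by \cref{l2 acyclic via D}. Write $H = \fab{G}$, $K = \ker(G\to H)$, $\K = \D(K)$ and $\D = \D(G)$, which is the Ore localisation of $\K H$ by \cref{linnell ore}; thus we are in the setting of \cref{sec dets}. Given any presentation of $G$ realising the deficiency, with $k$ generators and $k-1$ relators, let $\partial_1 = (1-s_i)_i$ and $\partial_2 = A$ be the first two cellular boundary maps; extending the presentation complex to a classifying space by attaching cells of dimension $\geqslant 3$ leaves $\partial_1,\partial_2$ unchanged and produces a free resolution of $\Z$ over $\Z G$, so $\ker(\partial_1\otimes\D)/\im(\partial_2\otimes\D) = H_1(G;\D) = 0$. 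Since $G$ is infinite some $s_i\neq 1$, hence $1-s_i\in\D^\times$ and $\partial_1\otimes\D$ is onto, so $\dim_\D\ker(\partial_1\otimes\D) = k-1 = \dim_\D\im(\partial_2\otimes\D)$; as the image of $\partial_2\otimes\D\colon\D^{k-1}\to\D^k$ has full dimension, $\partial_2\otimes\D$ is injective, and therefore so is $\partial_2$ over $\Z G$. The Cayley $2$-complex $\widetilde X$ is simply connected, $2$-dimensional, and has $H_2(\widetilde X) = \ker\partial_2 = 0$, hence is contractible by Hurewicz and Whitehead; consequently the presentation complex $X$ is a finite $K(G,1)$, $G$ is of type $\typeF{}$ with $\mathrm{cd}(G)\leqslant 2$, the chain complex $C_\ast = C_\ast(\widetilde X)$ is a length-$2$ free resolution of $\Z$, and since $b_0^{(2)} = b_1^{(2)} = b_2^{(2)} = 0$ the group $G$ is $L^2$-acyclic, so $P_{L^2}(G)$ is defined.

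For $\Sigma^1(G) = \Sigma^\infty(G;\Z)$ I would follow the proof of \cref{defic 1}. Given $\phi\neq 0$, pick $s_i$ with $\phi(s_i)\neq 0$; then $1-s_i$ is a unit of $\widehat{\Z G}^\phi$, and splitting it off makes $C_\ast\otimes\widehat{\Z G}^\phi$ quasi-isomorphic to $C_2\otimes\widehat{\Z G}^\phi\xrightarrow{A_i\otimes\widehat{\Z G}^\phi}C_1'$ in degrees $2,1$, so $H_0 = 0$, $H_1 = \operatorname{coker}(A_i\otimes\widehat{\Z G}^\phi)$, $H_2 = \ker(A_i\otimes\widehat{\Z G}^\phi)$. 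Here $A_i$ is square over $\Z G$ and $A_i\otimes\D$ is invertible: it equals $\pi_i\circ(A\otimes\D)$, where $\pi_i$ deletes the $i$-th coordinate, $A\otimes\D$ maps isomorphically onto $\ker(\partial_1\otimes\D)$ by the first paragraph, and $\pi_i$ is injective (hence bijective) on $\ker(\partial_1\otimes\D)$ since a kernel vector supported only on coordinate $i$ is annihilated by $1-s_i\neq 0$. By Sikorav's theorem (\cref{sikorav}), $\phi\in\Sigma^1(G)$ iff $H_1 = 0$ iff $\phi\in\Sigma(A_i)$; and since $A_i\otimes\D$ is invertible, a right inverse of $A_i\otimes\widehat{\Z G}^\phi$ is automatically a two-sided inverse over the Malcev--Neumann field $\mathcal F\supseteq\widehat{\K H}^\phi,\D$ of \cref{malcev-neumann H} (as in the proof of \cref{bns matrix}), so $A_i\otimes\widehat{\Z G}^\phi$ is injective and $H_2 = 0$. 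Because $C_\ast$ has length $2$, \cref{sikorav} then gives $\phi\in\Sigma^\infty(G;\Z)$, and the reverse inclusion is trivial.

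For the polytope I would apply \cref{bns matrix} to each $A_i$ with $s_i\neq 1$, obtaining a marking $m_i$ of the vertices of $P(A_i)$ detecting $\Sigma(A_i)$. Comparing the factorisations $A_i\otimes\D = \pi_i\circ(A\otimes\D)$ for two indices shows $[\det A_i]\cdot[1-s_i]^{-1}\in K_1(\D)$ is independent of $i$, so $Q := P(A_i) - P(1-s_i)\in\P_T(H)$ is well defined; this is, up to sign, the $L^2$-torsion polytope $P_{L^2}(G)$. I claim $Q$ is a single integral polytope: for each $\psi\in H^1(H;\Z)\setminus\{0\}$ there is $j$ with $\psi(s_j)\neq 0$ (the $s_j$ generate $G$), hence $s_j\neq 1$ and $F_\psi(Q) = F_\psi(P(A_j)) - F_\psi(P(1-s_j))$ is a translate of a face of the single polytope $P(A_j)$ (\cref{single poly}), hence single, so $Q$ is single by \cref{funke} (the rank $\leqslant 1$ case being immediate), and integral because $Q + P(1-s_j) = P(A_j)$ with both summands integral. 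Assuming (after a harmless reduction) that no generator is trivial, set $P = Q$ and $k_i = 1$: then $P_i := P + k_i\, P(1-s_i)$ is a translate of $P(A_i)$, carrying the marking $m_i$ of its vertices; the sets $U_i = \{\phi : \phi(s_i)\neq 0\}$ cover $H^1(G;\R)\setminus\{0\}$, and since $\Sigma^1(G) = \bigcup_i\bigl(U_i\cap\Sigma(A_i)\bigr)$ the markings agree on overlaps, so we have an atlas of markings. By \cref{one marked L2 polytope} the induced marking is a marking of the vertices of $P$, and $\phi$ is marked iff $\phi\in\Sigma^1(G)$.

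The main obstacle is the third paragraph: upgrading the mere ``marking of faces'' that \cref{defic 1} supplies to a genuine marking of \emph{vertices} requires knowing that the polytopes $P(A_i)$ all differ from one fixed single polytope only by the segments $P(1-s_i)$, which rests both on the comparison of the row-deleted matrices over $\D$ (to see $[\det A_i][1-s_i]^{-1}$ is $i$-independent) and on the single-polytope criterion \cref{funke}. By contrast, the contractibility of the Cayley $2$-complex is short once one notices that $H_1$ of a resolution is insensitive to $\partial_3$, so that $b_1^{(2)}(G) = 0$ already forces $\partial_2\otimes\D$ to be injective.
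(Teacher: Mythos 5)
Your proposal is correct and follows the paper's architecture almost step for step: the contractibility of the Cayley $2$-complex via $H_1(G;\D(G))=0$ and a dimension count over the skew-field, the reduction of $\Sigma^1(G)$ on the chart $U_i$ to $\Sigma(A_i)$ exactly as in \cref{defic 1}, the single-polytope claim via \cref{funke}, and the final assembly via \cref{one marked L2 polytope} with $P_i = P + P(1-s_i)$. You deviate in two local places, both legitimately. For $\Sigma^1(G)=\Sigma^2(G;\Z)$ the paper quotes Bieri's theorem that $\widehat{\Z G}^\phi$ is von Neumann finite, whereas you deduce injectivity of $A_i\otimes\widehat{\Z G}^\phi$ from the invertibility of $A_i\otimes\D$ inside the common overfield $\mathcal F$ of \cref{malcev-neumann H}, in the spirit of \cref{unique RI}; this is self-contained and works. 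For the key fact that $P(A_i)-P(1-s_i)$ is independent of $i$, the paper cites additivity of the universal $L^2$-torsion, whereas you propose a direct computation in $K_1(\D)$. The claim is true, but your one-line justification (``comparing the factorisations $A_i\otimes\D=\pi_i\circ(A\otimes\D)$'') does not yet constitute a proof, since $\pi_i$ and $A$ are not square; you need something like the torsion of the based exact complex $0\to\D^{k-1}\to\D^k\to\D\to 0$: append the column $e_i(1-s_i)^{-1}$ to $A\otimes\D$ to get a square matrix $C_i$ with Dieudonn\'e determinant $\pm\det(A_i\otimes\D)\cdot(1-s_i)^{-1}$ and with $\partial_1 C_i=(0,\dots,0,1)$; then $C_j-C_i$ is supported in the last column with values in $\im(A\otimes\D)$, so $C_j=C_i(\I+N)$ with $\det(\I+N)=1$. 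Finally, one genuine (if minor) slip: the rank-one case is not ``immediate'' in the form you state it. Whether $Q=P(A_j)-P(1-s_j)$ is a single polytope when $\fab{G}\cong\Z$ is precisely what the paper does not claim there -- it instead takes $P=P(1-s_j)$ with an ad hoc marking; equivalently you may take $P=P(A_j)$ for a single $j$ with $s_j$ nontrivial in $\fab{G}$, since the single chart $U_j$ then covers all of $H^1(G;\R)\s-\{0\}$ and no gluing is needed.
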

\begin{proof}
We continue with the notation of the proof of \cref{defic 1}.
We first claim that the boundary map $\partial_3 \colon C_3 \to C_2$ is trivial.
Since $G$ satisfies the Atiyah conjecture and is torsion free, we have the skew-field $\D(G)$ of Linnell (introduced in \cref{atiyah field}) at our disposal. Since we also know that the first $L^2$-Betti number of $G$ vanishes, we have
$
 H_1(G;\D(G)) = 0
$
by \cref{l2 acyclic via D}.

Take $v$ to be the first basis vector of $C_1$. Since
$(1-s_1)$ is invertible over $\D(G)$, we argue precisely as in the  proof of \cref{defic 1}, and replace $C_1 \otimes \D(G)$ by $C_1'$, a free module over $\D(G)$ of rank one less than $C_1$. The vanishing of $H_1(G;\D(G))$ tells us that the matrix \[A_1\otimes \D(G) \colon C_2\otimes \D(G) \to C_1'\]
is onto. But the ranks of $C_2 \otimes \D(G)$ and $C_1'$ are equal, and so $A_1\otimes \D(G)$ is also injective. Thus $\partial_2$ is injective, and so $\partial_3 = 0$. This implies that the Cayley $2$-complex of $G$ is contractible, and so we may take $X$ to be the presentation complex (which is of dimension $2$).

Now we argue precisely as in the proof of \cref{defic 1}: letting $A = \partial_2$, and setting $A_i$ as before to be a square matrix over $\Z G$, we conclude that for every $\phi \in H^1(G;\R) \s- \{0\}$ we have $\phi \in \Sigma^1(G)$ \iff $\phi$ lies in the dual of a marked vertex of $P(A_i)$ for every $i$ with $\phi(s_i) \neq 0$. Now, instead of combining the polytopes $P(A_i)$ into a single polytope by taking the sum, we will use the fact that
\[
P(A_i) - P(1-s_i) = P_{L^2}(G)
\]
for every $i$ -- this follows from the additivity of the universal $L^2$-torsion that underpins the $L^2$-torsion polytope, see \cite[Lemma 1.9]{FriedlLueck2017}. (For the special case of descending HNN extensions of free groups, a more direct reference is \cite[Theorem 3.2(1)]{FunkeKielak2018}; the computation given there applies to the current setting as well.)

If the rank of $H= \fab{G}$ is $0$ then there is nothing to prove.
Suppose that the rank of $H$ is equal to $1$. We take $P = P(1-s_i)$, where $s_i$ is not trivial in $H$. We mark the vertices of $P$ in the unique way making the assertion about $\Sigma^1(G)$ true.

Now suppose that the rank of $H$ is at least $2$.
We claim that $P_{L^2}(G)$ is a single polytope. To this end, let $\phi \in H^1(G;\Z)$ be non-trivial. There exists $s_i \in \S$ such that $\phi(s_i) \neq 0$, and so
\[
F_\phi\big( P_{L^2}(G) \big) =  F_\phi\big( P(A_i) \big)
\]
up to translation, since $F_\phi\big( P(1-s_i) \big)$ is a singleton. But $P(A_i)$, and so $F_\phi\big( P(A_i) \big)$, is a single polytope.
Now the claim follows from \cref{funke}.

We conclude the existence of the desired marked polytope by applying \cref{one marked L2 polytope} with $P = P_{L^2}(G)$ and $P_i = P(A_i)$.

\smallskip

We are left with the claim about $\Sigma^i(G;\Z)$ for $i>0$. Let $\phi \in H^1(G;\R) \s- \{0\}$, and take $i$ such that $\phi(s_i) \neq 0$. We have shown above that $\phi \in \Sigma^1(G)$ \iff $A_i$ admits a right inverse over $\widehat { \Z G}^\phi$. But Bieri~\cite[Theorem 3]{Bieri2007} (building on the work of Kochloukova~\cite{Kochloukova2006}) showed that $\widehat { \Z G}^\phi$ is \emph{von Neumann finite}, which means that a square matrix over $\widehat { \Z G}^\phi$ admits a right-inverse \iff it admits a two-sided inverse. Thus $\phi \in \Sigma^1(G)$ \iff $A_i \otimes \widehat { \Z G}^\phi$ is an isomorphism. Also, $\phi \in \Sigma^2(G;\Z)$ \iff $\phi \in \Sigma^1(G)$ and $A_i \otimes \widehat { \Z G}^\phi$ is injective, by \cref{sikorav}. Thus we have established $\Sigma^1(G) = \Sigma^2(G;\Z)$. For $i>2$ we have $H_i(G;\widehat {\Z G}^\phi) = 0$ since $X$ is of dimension $2$, and so another application of \cref{sikorav} completes the proof.
\end{proof}

\subsection{Descending HNN extensions of free groups}

We are now going to focus on a special kind of groups of deficiency $1$, namely descending HNN extensions of finitely generated free groups.

\begin{dfn}
Let $F_n = \langle a_1, \dots, a_n \rangle$ denote the free group of rank $n$, and let $g \colon F_n \to F_n$ be a monomorphism. The \emph{descending HNN extension} $F_n \ast_g$ of $F_n$ by $g$ is defined by the presentation
\[
F_n \ast_g = \langle a_1, \dots, a_n, t \mid t^{-1} a_1 t g(a_1)^{-1}, \dots, t^{-1} a_n t g(a_n)^{-1} \rangle
\]
\end{dfn}
In particular, \{finitely generated free\}-by-$\Z$ groups are descending HNN extensions of free groups, where $g$ is an automorphism.

Note that changing the sign of the stable letter $t$ in the presentation gives the definition of an \emph{ascending} HNN extension.

Let us mention the following conjecture of Bieri:
\begin{conj}[Bieri~{\cite[Conjecture]{Bieri2007}}]
\label{bieris conj}
Let $G$ be a group of deficiency $1$ with $\Sigma^1(G) \neq \emptyset$. Then $G$ is a descending HNN extenson of a finitely generated free group.
\end{conj}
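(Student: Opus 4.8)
The plan is to combine the classical translation of $\Sigma^1$ into HNN language with the dimension estimates that the paper already establishes for deficiency-one groups. First I would reduce to a well-behaved character. A group of deficiency $1$ is by definition finitely presented, and a presentation $2$-complex realising the deficiency has Euler characteristic $1-d=0$, so $b_1(G)\geqslant 1$ and $H^1(G;\R)\neq 0$. Since $\Sigma^1(G)\neq\emptyset$ by hypothesis and $\Sigma^1(G)$ is open in $H^1(G;\R)$ (by \cref{bns open orig}, or via \cref{bns matrix} once $G$ is known to be agrarian), it contains an integral character, and after rescaling we get $\phi\in\Sigma^1(G)$ with $\phi\colon G\twoheadrightarrow\Z$.

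Next I would extract an HNN decomposition. Here I would invoke the standard structural description of $\Sigma^1$ for surjections onto $\Z$ (Bieri--Strebel, Bieri--Neumann--Strebel): for $G$ finitely generated and $\phi\colon G\twoheadrightarrow\Z$ one has $\phi\in\Sigma^1(G)$ \iff $G$ is an ascending HNN extension $G=B\ast_\psi$ over a finitely generated subgroup $B\leqslant\ker\phi$, with $\psi\colon B\into B$ a monomorphism and the stable letter mapped by $\phi$ to a generator of $\Z$ (and $\psi$ an isomorphism if moreover $-\phi\in\Sigma^1(G)$). After swapping the stable letter for its inverse this is precisely a descending HNN extension of $B$ in the sense of the paper, so the whole problem collapses to proving that the finitely generated group $B$ is free.

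Then I would prove $B$ free by bounding its cohomological dimension: by Stallings--Swan it is enough that $\operatorname{cd}(B)\leqslant 1$ (this already forces $B$, hence $G$, to be torsion-free). The crucial input is $\operatorname{cd}(G)\leqslant 2$, and this is exactly where the paper's results are used. Under the Atiyah conjecture together with vanishing first $L^2$-Betti number, \cref{defic 1 atiyah} shows that the Cayley $2$-complex of a presentation realising the deficiency is contractible; thus $G$ is of type $\typeF{}$ with a $2$-dimensional classifying space, so $\operatorname{cd}(G)\leqslant 2$, and moreover $\Sigma^1(G)=\Sigma^\infty(G;\Z)$, which forces $B$ to have strong finiteness properties (type $\typeFP{\infty}$). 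Feeding $G=B\ast_\psi$ into a Wang (Mayer--Vietoris) sequence gives
\[
H^2(B;\Z G)\xrightarrow{\ 1-\psi^\ast\ }H^2(B;\Z G)\longrightarrow H^3(G;\Z G)\longrightarrow H^3(B;\Z G),
\]
and since $\operatorname{cd}(B)\leqslant\operatorname{cd}(G)=2$ while $H^3(G;\Z G)=0$, if we had $\operatorname{cd}(B)=2$ (so $H^2(B;\Z B)\neq 0$) then $1-\psi^\ast$ would have to be surjective on $H^2(B;\Z G)\cong\bigoplus_{B\backslash G}H^2(B;\Z B)$ — impossible for a proper ascending HNN extension. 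Hence $\operatorname{cd}(B)=1$, $B$ is free of finite rank, and $G=B\ast_\psi$ is a descending HNN extension of a finitely generated free group, as \cref{bieris conj} predicts.

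The hard part is the bound $\operatorname{cd}(G)\leqslant 2$: it amounts to asserting that a deficiency-one group with non-empty $\Sigma^1$ admits an \emph{aspherical} presentation $2$-complex realising the deficiency, which in full generality is essentially open (a cousin of Whitehead's asphericity conjecture). So in honesty the argument above establishes \cref{bieris conj} only under a hypothesis that secures asphericity, for instance: $G$ torsion-free, satisfying the Atiyah conjecture, with $b_1^{(2)}(G)=0$, so that \cref{defic 1 atiyah} applies (this also matches the agrarian standing assumption of the relevant section). The remaining technical points to nail down are the exact finiteness properties of the HNN base needed to compute $H^\ast(B;\Z G)$ from $H^\ast(B;\Z B)$, and the degenerate case where $\psi$ is an automorphism, in which one must still exclude $\operatorname{cd}(B)=2$ by a supplementary argument — most naturally using that the deficiency equals one together with the identity $\chi^{(2)}(G)=0$ forced by the surjection onto $\Z$ with finitely generated kernel.
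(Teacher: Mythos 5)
You have correctly identified where the difficulty lies, but the conclusion to draw is that your argument does not prove the statement: \cref{bieris conj} is stated in the paper as an open \emph{conjecture} of Bieri, not as a theorem, and the paper offers no proof of it -- it only records (citing Bieri) that the conjecture is known when some $\phi$ with $\{\phi,-\phi\}\subseteq\Sigma^1(G)$ exists. Your reduction is the standard one: openness of $\Sigma^1(G)$ gives an integral $\phi\in\Sigma^1(G)$, the Bieri--Strebel/BNS criterion turns this into a descending HNN decomposition $G=B\ast_\psi$ with $B$ finitely generated, and the whole content of the conjecture becomes the freeness of $B$. But the input you need for that, namely $\operatorname{cd}(G)\leqslant 2$ (equivalently, asphericity of a presentation $2$-complex realising the deficiency), is precisely the open part; as you say yourself, in full generality this is a relative of Whitehead's asphericity problem. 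So the proposal establishes at best a conditional statement, not \cref{bieris conj}.

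Two further cautions about the conditional version. First, under the hypotheses you impose (torsion-free, Atiyah conjecture, $b_1^{(2)}(G)=0$) the asphericity is indeed supplied by \cref{defic 1 atiyah}, but at that point you are re-deriving a known circle of results rather than advancing the conjecture; note also that \cref{defic 1 atiyah} itself does not assume $\Sigma^1(G)\neq\emptyset$, so the logical dependence should be stated carefully. Second, the Mayer--Vietoris step needs more care than the display suggests: to identify $H^2(B;\Z G)$ with $\bigoplus_{B\backslash G}H^2(B;\Z B)$ you need $B$ to be of type $\typeFP{2}$, which is not automatic from $B$ being a finitely generated base of an ascending HNN extension (the equality $\Sigma^1(G)=\Sigma^\infty(G;\Z)$ constrains the monoid $\phi^{-1}([0,\infty))$, not directly the group $B$), and the non-surjectivity of $1-\psi^\ast$ must be argued separately in the properly ascending and the automorphism cases. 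You flag both issues, which is to your credit, but they remain gaps even in the conditional argument.
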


The conjecture is known to hold (by \cite[Corollary B]{Bieri2007}) if there exists a character $\phi \in H^1(G;\R) \s- \{0\}$ such that $\{\phi,-\phi\} \subseteq \Sigma^1(G)$.

Before proceeding to the main statement of this section, let us discuss semi-norms induced by polytopes, which will be important in the proof.

\begin{dfn}[Semi-norms]
Let $P$ be a polytope in $H_1(G;\R)$, where $G$ is a group with finitely generated abelianisation. We define
$
 \| \cdot \|_P \colon H^1(G;\R) \to [0,\infty)
$
by
\[
 \| \phi \|_P = \max_{x,y \in P} |\phi(x) - \phi(y)|
\]
It is easy to see that $ \| \cdot \|_P$ is a semi-norm.

Let $P-Q$ represent an element $R$ of $\P_T(G)$, where $P$ and $Q$ are single polytopes. We define \[\| \cdot \|_R = \| \cdot \|_P - \| \cdot \|_Q \colon H^1(G;\R) \to \R\]
Again, one readily sees that this function is independent of the choice of $P$ and $Q$.
\end{dfn}

When $P = P_{L^2}(G)$ is defined, it is interesting to study $\| \cdot \|_P$. This has in particular been done for descending HNN extensions of non-trivial free groups in \cite{FunkeKielak2018}, where it was shown that the function is indeed a semi-norm (see \cite[Corollary 3.5]{FunkeKielak2018}); this semi-norm was christened the \emph{Thurston norm} (and denoted $\| \cdot \|_T$), due to analogies with the $3$-manifold case (which we will discuss later).

For every character $\phi$ with $\im \phi = \Z$, the number $\| \phi \|_T$ is equal to minus the $L^2$-Euler characteristic of $\ker \phi$. When $\ker \phi$ is finitely generated (which happen precisely when $\{\phi, -\phi\} \subset \Sigma^1(G)$), then it is in fact a free group (as shown by Geoghegan--Mihalik--Sapir--Wise \cite[Theorem 2.6 and Remark 2.7]{Geogheganetal2001}).
Thus, for such a $\phi$, the number $\| \phi \|_T$ is equal to minus the usual Euler characteristic of $\ker \phi$ (see \cite[Theorem 1.35(2)]{Lueck2002} for a proof of this fact), that is we have
\[
 G \cong F_{1+\| \phi \|_T} \rtimes \Z
\]
with $\phi$ being equal to the projection map to $\Z$.


\begin{thm}
\label{free by cyclic}
Let $G$ be a descending HNN extension of a finitely generated non-trivial free group $F_n$. We have $\Sigma^1(G) = \Sigma^\infty(G;\Z)$, and there exists a marking of the vertices of the $L^2$-torsion
polytope $P_{L^2}(G)$
 such that $\phi \in \Sigma^1(G)$ \iff $\phi$ is marked.
\end{thm}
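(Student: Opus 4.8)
The plan is to realise this as a strengthening of \cref{defic 1 atiyah} applied to the defining presentation of $G=F_n\ast_g$. The first task is to verify the hypotheses of \cref{defic 1 atiyah}. Torsion-freeness is clear ($G$ is agrarian). The Atiyah conjecture holds because the kernel $N$ of the canonical epimorphism $G\to\Z$ sending the stable letter to a generator is the directed union $\bigcup_{k\ge 0}t^kF_nt^{-k}$ of free groups, hence lies in the class $\mathcal C_1$ of \cref{atiyah groups}; since $G$ is an extension of $N$ by $\Z$, we get $G\in\mathcal C_1$. The remaining hypothesis, $b_1^{(2)}(G)=0$, is the one point requiring genuine work: it follows from the fact that $G$ is the fundamental group of the mapping torus of a self-immersion of a finite graph, together with the vanishing of all $L^2$-Betti numbers of the mapping torus of a selfmap of a finite CW-complex \cite{Lueck2002}. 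Alternatively one can argue internally: the square submatrix $A_{n+1}$ of the second boundary matrix $\partial_2$ obtained by deleting the row of $t$ is exactly $t^{-1}I-D_g$, where $D_g\in M_n(\Z F_n)\subseteq M_n(\D(N))$ is the Fox Jacobian of $g$; since $F_n\le N$ this lives over the twisted Laurent ring $\D(N)\langle t\rangle$, whose Ore localisation is $\D(G)$ by \cref{linnell ore}, and up to the invertible scalar $t^{-1}I$ it equals the $\phi_t$-identity $I-tD_g$, hence is invertible over the relevant Novikov ring by \cref{phi id} and therefore over $\D(G)$ by \cref{malcev-neumann H}. Thus $\partial_2\otimes\D(G)$ is injective, so $H_2(G;\D(G))=0$; as $H_0(G;\D(G))=0$ (since $1-t$ is a unit in $\D(G)$) and the chain complex has Euler characteristic $0$, we get $H_1(G;\D(G))=0$, i.e.\ $b_1^{(2)}(G)=0$ by \cref{l2 acyclic via D}. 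This same count also pins the deficiency of $G$ down to $1$.

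Once the hypotheses are in place, \cref{defic 1 atiyah} directly gives $\Sigma^1(G)=\Sigma^\infty(G;\Z)$, the contractibility of the Cayley $2$-complex of the presentation (so $G$ is $L^2$-acyclic of type $\typeF{}$ and $P_{L^2}(G)$ is a well-defined element of $\P_T(\fab{G})$), and an integral polytope with marked vertices detecting $\Sigma^1(G)$. The only work left is to arrange that this polytope be $P_{L^2}(G)$ itself. For this I would reuse the mechanics of the proof of \cref{defic 1 atiyah}: writing $\S=\{a_1,\dots,a_n,t\}$ and letting $A_i$ be the square matrix obtained from $\partial_2$ by deleting the row of $s_i$, that proof shows that for $\phi\in H^1(G;\R)\s-\{0\}$ and any $s_i$ with $\phi(s_i)\ne 0$ one has $\phi\in\Sigma^1(G)\iff\phi\in\Sigma(A_i)$, that $\Sigma(A_i)$ is detected by the marking of the vertices of $P(A_i)$ supplied by \cref{bns matrix}, and that $P(A_i)=P_{L^2}(G)+P(1-s_i)$ in $\P_T(\fab{G})$. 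Setting $U_i=\{\phi:\phi(s_i)\ne 0\}$, the cover $\{U_i\}$ together with the marked polytopes $\{P(A_i)\}$ forms an atlas of markings, the compatibility on overlaps being exactly the displayed equivalence. When $\operatorname{rk}\fab{G}\ge 2$, the polytope $P_{L^2}(G)$ is a single polytope by \cref{funke} applied face-by-face using \cref{single poly}, and \cref{one marked L2 polytope} applied with $P=P_{L^2}(G)$, $P_i=P(A_i)$ and $k_i=1$ furnishes the asserted marking of the vertices of $P_{L^2}(G)$.

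The one case needing a separate remark is $\operatorname{rk}\fab{G}=1$, which genuinely occurs (for instance $G=BS(1,k)$ when $1$ is not an eigenvalue of $g$): there $P_{L^2}(G)$ may degenerate to a single point. But a point in a one-dimensional space has codimension one and hence \emph{two} duals, namely the two rays making up $H^1(G;\R)\s-\{0\}$, so \cref{one marked L2 polytope} still applies verbatim and marks the appropriate vertex (or both). I expect the genuine difficulty of the argument to be concentrated entirely in establishing $b_1^{(2)}(G)=0$; everything afterwards is a bookkeeping reduction to \cref{defic 1 atiyah} and its proof, together with the low-rank observation about the duals of a point.
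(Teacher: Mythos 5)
Your reduction to \cref{defic 1 atiyah} is exactly the route the paper takes: the Atiyah conjecture via the locally free kernel of $G \to \Z$ and the class $\mathcal C_1$ of \cref{atiyah groups}, $L^2$-acyclicity via L\"uck's mapping torus theorem, and then the identity $P(A_i) = P_{L^2}(G) + P(1-s_i)$ fed into \cref{one marked L2 polytope} over the charts $U_i = \{\phi \mid \phi(s_i) \neq 0\}$. (Your alternative in-house verification of $b_1^{(2)}(G)=0$ via the Fox Jacobian and the $\phi_t$-identity $\I - tD_g$ is sound, but not needed.)

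The gap is in your treatment of the case where $\fab{G}$ has rank $1$. The issue there is not whether a degenerate polytope has enough duals; it is whether $P_{L^2}(G)$ is a single polytope at all. A priori $P_{L^2}(G) = P(A_i) - P(1-s_i)$ is only a formal difference of two segments in $\P_T(\fab{G})$, and it is a genuine polytope only if the first segment is at least as long as the second. For rank at least $2$ this follows from \cref{funke}, as you say, but Funke's criterion explicitly requires rank at least $2$, and the proof of \cref{defic 1 atiyah} sidesteps the problem in rank $1$ by taking $P = P(1-s_i)$ rather than $P_{L^2}(G)$ --- so you cannot apply \cref{one marked L2 polytope} ``verbatim'' there, since that lemma takes an honest polytope as input. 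The paper closes this by citing \cite[Corollary 3.5]{FunkeKielak2018}: the function $\|\cdot\|_{P_{L^2}(G)}$ is a semi-norm, hence non-negative, which forces the difference of the two segments to be a single (possibly degenerate) segment. You need this input, or some substitute for it, before your final step goes through in the rank-one case.
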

\begin{proof}
The group $G$ satisfies the Atiyah conjecture: it fits into a short exact sequence
\[
\langle \! \langle F_n \rangle \! \rangle \to G \to \Z
\]
with a  locally-free kernel. Hence, $G$ belongs to the class $\mathcal C_1$ of \cref{atiyah groups}. It is clear that $G$ is also torsion-free.

The group $G$ admits an obvious $2$-dimensional classifying space -- see \cite[Lemma 3.1]{FunkeKielak2018}. It is $L^2$-acyclic since it is a mapping torus, and mapping tori are $L^2$-acyclic by \cite[Theorem 1.39]{Lueck2002}.
Hence the assumptions of \cref{defic 1 atiyah} are satisfied.

We now argue as in the proof of \cref{defic 1 atiyah}. If the rank of $\fab{G}$ is at least $2$, then $P_{L^2}(G)$ is the polytope constructed in the proof of \cref{defic 1 atiyah}.

Suppose that the rank of $\fab{G}$ is $1$ (which is the smallest possible value). \cite[Corollary 3.5]{FunkeKielak2018} tells us that $P_{L^2}(G)$ is a single polytope -- it is a difference of two segments, inducing a semi-norm, and so in particular a non-negative function. Thus we may argue as in the  proof of \cref{defic 1 atiyah} even when the rank of $\fab{G}$ is $1$.
%
\end{proof}

Let us note that understanding the BNS invariants of free-by-cyclic groups is related to a large body of current research, see \cite{Dowdalletal2014,Dowdalletal2015,Dowdalletal2017} by Dowdall--I. Kapovich--Leininger and \cite{Algom-Kfiretal2015} by Algom-Kfir--Hironaka--Rafi. In particular, Dowdall--I. Kapovich--Leininger~\cite[Theorem 1.2]{Dowdalletal2017a} showed that if $G = F_n \rtimes_g \Z$ is hyperbolic, $\phi$ denotes the induced character, and  $g$ is fully irreducible, then every other integral character in the same component of $\Sigma^1(G)$ as $\phi$ also comes from a free-by-cyclic decomposition of $G$ with a fully-irreducible monodromy. In our language, this implies that if $G$ is hyperbolic then we can talk about \emph{fully-irreducible vertices} of $P_{L^2}(G)$.

\subsection{Poincar\'e duality groups}

We are moving towards (aspherical) manifolds in dimension $3$. First, however, we will discuss their homological cousins.

\begin{dfn}
A \emph{Poincar\'e duality group in dimension $n$} is a group $G$ such that there exists a $G$-module $\mathcal O$ (the \emph{orientation module}) isomorphic to $\Z$ as an abelian group, and a class $f \in H_n(G;\mathcal O)$ (the \emph{fundamental class}), such that for any $G$-module $M$ the cup product with $f$ gives an isomorphism
\[
H^i(G;M) \cong H_{n-i}(G; M \otimes \mathcal O)
\]
When $\mathcal O$ is the trivial $G$-module, we say that $G$ is \emph{orientable}.
\end{dfn}
Note that the notation above requires a $G$-bimodule structure on $M$ and $\mathcal O$. This is readily available for any right $\Z G$ module -- the left multiplication by $g$ is equal to the right multiplication by $g^{-1}$.

A natural example of a Poincar\'e duality group is the fundamental group of a closed aspherical manifold. Davis~\cite{Davis2000} constructed other examples, which however are not finitely presented. It is open whether there exist finitely presented Poincar\'e duality groups which are not fundamental groups of  closed aspherical manifolds.

Note that the fundamental class $f$ is not trivial, since $H_0(G;\Z)$ is not trivial.


The author suspects that the following is well-known, but was unable to find a reference in the literature.

\begin{prop}
\label{pd finite complex}
Let $G$ be a Poincar\'e duality group in dimension $n$ (with $n\geqslant 3$) which is of type  $\typeF{}$. There exists a free resolution of length $n$ of the trivial module $\Z$ by finitely generated $\Z G$-modules. Moreover, we can arrange for the $0^{th}$ 
term of the resolution to be of rank $1$, and the boundary map $\partial_1$ can be taken to be a row vector $(1-s_i)_i$, where $\S = \{s_1, \dots, s_k\}$ is some generating set  of $G$.
\end{prop}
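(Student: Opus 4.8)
The plan is to build the resolution by truncating the cellular chain complex of a finite classifying space at degree $n$ and then repairing its top term.

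First, since $G$ is of type $\typeF{}$, fix a finite CW-model $X$ for its classifying space; collapsing a maximal tree in the $1$-skeleton we may assume $X$ has a single $0$-cell, and then the $1$-cells give a finite generating set $\S=\{s_1,\dots,s_k\}$ of $G$. The cellular chain complex $C_\ast$ of the universal cover $\widetilde X$ is a resolution of the trivial module $\Z$ by finitely generated free $\Z G$-modules in which $C_0=\Z G$ has rank $1$ and $\partial_1$ is (after reorienting the $1$-cells) the row vector $(1-s_i)_i$. Write $N=\dim X$. Poincar\'e duality forces $\operatorname{cd}(G)=n$: for $i>n$ the duality isomorphism identifies $H^i(G;M)$ with $H_{n-i}(G;-)=0$ for every $M$, so $\operatorname{cd}(G)\leqslant n$, while $H^n(G;\operatorname{Hom}_\Z(\mathcal O,\Z))\cong H_0(G;\Z)=\Z\neq 0$, so $\operatorname{cd}(G)\geqslant n$; in particular $N\geqslant n$. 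Setting $Z=\ker\big(\partial_{n-1}\colon C_{n-1}\to C_{n-2}\big)$ gives an exact sequence
\[
0\longrightarrow Z\longrightarrow C_{n-1}\longrightarrow C_{n-2}\longrightarrow\cdots\longrightarrow C_1\longrightarrow C_0\longrightarrow\Z\longrightarrow 0
\]
with $C_0,\dots,C_{n-1}$ finitely generated free and $Z$ finitely generated (a quotient of $C_n$). A standard dimension shift — $Z$ is an $n$-th syzygy of $\Z$, so $\operatorname{Ext}^j_{\Z G}(Z,-)\cong\operatorname{Ext}^{j+n}_{\Z G}(\Z,-)=0$ for $j\geqslant 1$ — shows $Z$ is projective. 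Since $n\geqslant 3$, this truncation leaves $C_0$, $C_1$ and $\partial_1$ untouched.

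It remains to replace $Z$ by a finitely generated free module. I would first show $Z$ is stably free: we now have two finite-length resolutions of $\Z$ by finitely generated projectives, namely the truncated one above (all terms free except $Z$ in degree $n$) and the original complex $C_\ast$ (all terms free, length $N$). Padding the first with zeros to length $N$ and applying the generalised Schanuel lemma, the corresponding alternating direct sums of the terms of the two resolutions become isomorphic; as every module occurring is free except one copy of $Z$, this yields $Z\oplus\Z G^{a}\cong\Z G^{b}$ for some $a,b\in\N$. (Equivalently, this is the vanishing of the Wall finiteness obstruction of $G$, which holds since $G$ is of type $\typeF{}$.) Finally I would enlarge the top of the truncated resolution: replace $C_{n-1}$ by $C_{n-1}\oplus\Z G^{a}$ and $Z$ by $Z\oplus\Z G^{a}\cong\Z G^{b}$, taking the new top differential $\Z G^{b}\to C_{n-1}\oplus\Z G^{a}$ to be the inclusion $Z\hookrightarrow C_{n-1}$ on the first summand and the identity on $\Z G^{a}$, and $C_{n-1}\oplus\Z G^{a}\to C_{n-2}$ to be $\partial_{n-1}$ precomposed with the projection. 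One checks directly that exactness is preserved, every module is now finitely generated free, and — because $n-1\geqslant 2$ — the bottom of the complex, in particular $C_0=\Z G$ and $\partial_1=(1-s_i)_i$, is unchanged. This is the desired resolution of length $n$.

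The only genuinely delicate point is the stable freeness of $Z$: one has to be careful with the parity and padding bookkeeping in the generalised Schanuel lemma (or else quote the finiteness-obstruction formalism together with its homotopy invariance). Everything else is routine homological algebra and diagram-chasing.
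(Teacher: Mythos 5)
Your proof is correct, but it takes a genuinely different route from the paper's. The paper does not pass through cohomological dimension or syzygies at all: it represents the fundamental class by a cycle $c_f$, uses cap/cup product with $c_f$ to get a chain map $\xi\colon C^\ast\to C_{n-\ast}$, upgrades it to a chain homotopy equivalence by Whitehead's theorem for complexes of projectives, and then exploits the resulting homotopy $p$ — specifically the identity $p_{m-1}\partial_m=\id$, valid because $C^{n-m}=0$ when $m>n$ — to ``fold'' the top term $C_m$ into $C_{m-2}$ via the new differential $(\partial_{m-1},p_{m-1})^T$, iterating until the length is $n$. That argument keeps every term explicitly free at each stage (so no stably-free step is ever needed) and even realises the modified complex by a new CW-structure. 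Your argument instead extracts from duality only the single fact $\operatorname{cd}(G)=n$, takes the $n$-th syzygy $Z$, shows it is finitely generated projective by dimension shifting, shows it is stably free by the generalised Schanuel lemma (equivalently, vanishing of the finiteness obstruction, which holds since $G$ is of type $\typeF{}$), and then stabilises $C_{n-1}$ and the top term to make everything free; the exactness check for your repaired top two differentials is routine and correct, and $n-1\geqslant 2$ indeed protects $C_0$, $C_1$ and $\partial_1$. What your approach buys is generality — it proves the proposition for any type-$\typeF{}$ group of cohomological dimension $n\geqslant 3$, with Poincar\'e duality used only to pin down $\operatorname{cd}(G)$ — at the cost of the Schanuel/stabilisation bookkeeping you correctly flag as the delicate point; the paper's approach buys explicitness and avoids $K_0$ considerations entirely by using the self-duality of the chain complex directly.
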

\begin{proof}
Let $C = (C_\ast, \partial_\ast)$ be the cellular chain complex of the universal cover of some finite CW-classifying space $X$ of $G$. We denote the cochain complex by $(C^\ast, \partial^\ast)$ with $\partial^i \colon C^{i-1} \to C^{i}$.
By collapsing a maximal tree in the $1$-skeleton of $X$, we easily arrange for $C_0$ to be of rank $1$ as a $\Z G$-module, and for $\partial_1$ to be as desired.

Let us represent the fundamental class $f$ by a cycle $c_f$ in $C_n$. Wall in the proof of \cite[Lemma 1.1]{Wall1967} explains how to make sense of taking the cup product with a cycle; thus the cup product with $c_f$ gives us a chain map $\xi \colon C^\ast \to C_{n-\ast}$. The map $\xi$ induces the isomorphism $H_i(C^\ast \otimes M) \cong H_{n-i}(C_\ast \otimes M \otimes \mathcal O)$ for every $\Z G$-module $M$. In particular, this holds for $M = \Z G$; observe that $H_{i}(C_\ast \otimes \mathcal O) = H_{i}(C^\ast)$ for every $i$, as both coincide with the homology of the universal cover of $X$.
Since the modules $C_n$ and $C^n$ are free (and hence projective) and finitely generated, Whitehead's theorem tells us that  $\xi$ is a chain homotopy equivalence. This means that there exist chain maps $\xi' \colon C_{\ast} \to C^{n-\ast}$, $p \colon C_\ast \to C_{\ast+1}$ and $p' \colon C^\ast \to C^{\ast-1}$ such that
$
\xi \xi' = \id - \partial p - p \partial \textrm{ and } \xi' \xi = \id - \partial p' - p' \partial
$.
\[
\xymatrix{
\cdots \ar[r] & C_{i+1} \ar[r]^{\partial_{i+1}} & C_0 \ar[r]^{\partial_{i}} & C_{i-1} \ar[r] & \cdots \\
\cdots \ar[r] & C^{n-i-1} \ar[r]^{\partial^{n-i}} \ar[u]^{\xi_{n-i-1}} & C^{n-i} \ar[r]^{\partial^{n-i+1}} \ar[u]^{\xi_{n-i}} & C^{n-i+1} \ar[r] \ar[u]_{\xi_{n-i+1}} & \cdots \\
\cdots \ar[r] & C_{i+1} \ar[r]^{\partial_{i+1}} \ar[u]^{\xi'_{i+1}} & C_{i} \ar[r]^{\partial_{i}} \ar[u]_{\xi'_{i}} \ar@{.>}[uul]|<<<<<<<<{p_i} & C_{i-1} \ar[r] \ar[u]_{\xi'_{i-1}} \ar@{.>}[uul]|<<<<<<<<{p_{i-1}} & \cdots \\
\cdots \ar[r] & C^{n-i-1} \ar[r]^{\partial^{n-i}} \ar[u]^{\xi_{n-i-1}} & C^{n-i} \ar[r]^{\partial^{n-i+1}} \ar[u]_{\xi_{n-i}} \ar@{.>}[uul]|<<<<<<<<{p'_{n-i}} & C^{n-i+1} \ar[r] \ar[u]_{\xi'_{i-1}} \ar@{.>}[uul]|<<<<<<<<{p'_{n-i+1}} & \cdots
}
\]

Let $m$ be maximal such that $C_m \neq 0$.
Note that $m\geqslant n$, since $H_n(G; \Z) \neq 0$, as it contains $f$.
We claim that we can modify the chain complex $C$ so that $m=n$, and the new complex is still a free resolution of $\Z$ by finitely generated $\Z G$-modules.

If $m=n$ then we are done. Suppose that $m>n$.
We modify $C$ to form a new chain complex, $D$, as follows: $D$ is identical to $C$ except $D_m = 0$, and $D_{m-2} = C_{m-2} \oplus C_m$; the boundary map $D_{m-1} \to D_{m-2}$ is equal to the transpose $(\partial_{m-1}, p_{m-1})^T$, and the boundary map $D_{m-2} \to D_{m-3}$ is equal to $(\partial_{m-2}, 0)$.
\[
\xymatrixcolsep{3pc}
\xymatrix{
 C = & C_{m} \ar[r]^{\partial_{m}} & C_{m-1} \ar[r]^{\partial_{m-1}} & C_{m-2} \ar[r]^{\partial{m-2}} & C_{m-3} \ar[r] & \cdots \\
 D = & 0 \ar[r] & C_{m-1} \ar[r]^-{(\partial_{m-1}, p_{m-1})^T } \ar@{.>}[lu]^{p_{m-1}} & C_{m-2}\oplus C_m \ar[r]^-{(\partial_{m-2},0)} &  C_{m-3} \ar[r] & \cdots
}
\]
It is immediate that $D$ is a chain complex of finitely generated free modules; it is also immediate that it is exact everywhere, except possibly at $D_{m-1}$ and $D_{m-2}$.

Crucially, since $C^{n-m} = 0$, we have $\xi_{n-m} = 0$ and so $p_{m-1} \partial_m = \id$.
Let $x$ be a cycle in $D_{m-1} = C_{m-1}$. By exactness of $C$, we see that $x \in \im \partial_{m}$; let $y$ denote a preimage of $x$ in $C_m$. Since $x$ is a cycle, we have $p_{m-1}(x) = 0$. But $p_{m-1}(x) = p_{m-1} \partial_m(y) = y$, and so $y=0$ and thus $x=0$.

Now let $x \in D_{m-2}$ be a cycle. We write $x = (x_0,x_1)^T \in C_{m-2} \oplus C_{m}$; being a cycle means that $x_0$ is a cycle in $C_{m-2}$, and so there exists $y \in C_{m-1}$ such that $x_0 = \partial_{m-1}(y)$.
We can pick $y$ so that $p_{m-1}(y) = 0$, using exactness of $C$ and the fact that $p_{m-1} \partial_m = \id$.
Now
\[
(\partial_{m-1},p_{m-1})^T(y+\partial_m(x_1)) = (x_0, p_{m-1}(y) + p_{m-1} \partial_m(x_1) )^T = (x_0,x_1)^T = x
\]
since $p_{m-1}(y) = 0$ (by construction) and $p_{m-1} \partial_m = \id$.

We replace $C$ by $D$;
note that we can still define the chain homotopy equivalence $\xi$ between $D$ and its dual cochain complex. Alternatively, we can realise $D$ as the cellular chain complex of the universal cover of a finite CW-classifying space of $G$: we can remove each $m$-cell from $X$ and attach an $(m-2)$ sphere instead of it. Then we modify the attaching maps of the $(m-1)$ cells so that they coincide (in the universal cover) with the boundary map in $D$. The resulting space is aspherical, since $D$ is exact away from degree $0$. Since $m > n \geqslant 3$, the fundamental group did not change under these modifications.

Note that when passing from $C$ to $D$ we did not alter $C_1$ nor the boundary map $\partial_1 \colon C_1 \to C_0$.
We now repeat the argument until the length of the resolution is $n$.
\end{proof}

\begin{thm}
\label{pd}
Let $G$ be a Poincar\'e duality group in dimension $3$. Suppose further that $G$ is agrarian and of type $\typeF{}$.
We have $\Sigma^\infty(G;\Z)=  \Sigma^1(G)$, and there exists a marked polytope $P$ such that for every $\phi \in H^1(G;\R) \s-\{0\}$ we have   $\phi \in \Sigma^1(G)$ \iff $\phi$ is marked.
\end{thm}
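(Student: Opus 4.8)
The plan is to run the argument of \cref{defic 1} one dimension higher, using Poincar\'e duality to keep the relevant matrices square. By \cref{pd finite complex} we may fix a length-$3$ free resolution $C_\ast$ of $\Z$ by finitely generated free $\Z G$-modules, realised as the cellular chain complex of the universal cover of a $3$-dimensional finite CW classifying space $X$, with $C_0$ of rank $1$ and $\partial_1 = (1-s_i)_i$ for a finite generating set $\S = \{s_1,\dots,s_k\}$ of $G$. The first task is to improve this: using that $G$ is a Poincar\'e duality group of dimension $3$, the cochain complex of $X$ (twisted by the orientation module $\mathcal O$) is chain homotopy equivalent to $C_\ast$, and I would use this, in the spirit of the surgeries performed in the proof of \cref{pd finite complex}, to arrange additionally that $C_3$ has rank $1$ and that $\partial_3$ is (up to the signs supplied by $\mathcal O$) the antipode-transpose of $\partial_1$, i.e. the column vector $(1-s_i^{-1})_i$. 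Once this is done, $\chi(G)=0$ forces $\partial_2 =: A$ to be a square matrix over $\Z G$; write $A_{ii}$ for $A$ with its $i$-th row and $i$-th column deleted. I also pass, as at the start of \cref{sec apps}, to the embedding $\Z G \cong (\Z K)\fab{G} \into \K\fab{G}$ with Ore localisation $\D$, and note that the $\mathcal O$-twist, being the algebra automorphism $g \mapsto \varepsilon(g)g$ of $\Z G$ (which fixes supports and extends to $\D$ and to every Novikov ring), changes neither $\Sigma(A_{ii})$ nor $P(A_{ii})$, so I may work with $A_{ii}$ itself.

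Next I would compute the relevant Novikov homology via Sikorav's theorem (\cref{sikorav}). Fix $\phi \in H^1(G;\R)\s-\{0\}$ and $i$ with $\phi(s_i)\neq 0$. Then both $1-s_i$ and $1-s_i^{-1}$ are units in $\widehat{\Z G}^\phi$, so, exactly as in the proof of \cref{defic 1}, one may cancel $C_0$ against a rank-one summand of $C_1$ and, dually (using $\partial_3$), cancel $C_3$ against a rank-one summand of $C_2$, without altering the homology; because $\partial_1\partial_2 = 0 = \partial_2\partial_3$, the reduced complex is $0 \to C_2'' \xrightarrow{A_{ii}\otimes\widehat{\Z G}^\phi} C_1' \to 0$ with the same matrix $A_{ii}$ over $\Z G$ (not a Schur complement). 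Hence $H_0(G;\widehat{\Z G}^\phi)=H_3(G;\widehat{\Z G}^\phi)=0$, while $H_1(G;\widehat{\Z G}^\phi)=\operatorname{coker}(A_{ii}\otimes\widehat{\Z G}^\phi)$ and $H_2(G;\widehat{\Z G}^\phi)=\ker(A_{ii}\otimes\widehat{\Z G}^\phi)$; in particular $\phi\in\Sigma^1(G)$ if and only if $\phi\in\Sigma(A_{ii})$. The equality $\Sigma^\infty(G;\Z)=\Sigma^1(G)$ then follows as in \cref{defic 1 atiyah}: if $\phi\in\Sigma^1(G)$, then $A_{ii}\otimes\widehat{\Z G}^\phi$ is right-invertible, hence — since $\widehat{\Z G}^\phi$ is von Neumann finite by Bieri's theorem — two-sided invertible, so $H_2(G;\widehat{\Z G}^\phi)=0$; combined with $H_3=0$ (which holds for every $\phi\neq 0$) and $\dim X = 3$, this gives $\phi\in\Sigma^\infty(G;\Z)$, and the reverse inclusion is automatic.

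For the marked polytope I would assume $\Sigma^1(G)\neq\emptyset$ (otherwise any unmarked polytope works) and pick $\psi\in\Sigma^1(G)$. For a suitable $j$ with $\psi(s_j)\neq 0$ the matrix $A_{jj}\otimes\widehat{\Z G}^\psi$ is invertible, hence so is $A_{jj}\otimes\mathcal F$ over the Malcev--Neumann field $\mathcal F$ of \cref{malcev-neumann H}, which contains both $\widehat{\Z G}^\psi$ and $\D$; consequently $\det(A_{jj}\otimes\D)\neq 0$ and $H_1(G;\D)=0$. As $H_1(G;\D)=\operatorname{coker}(A_{jj}\otimes\D)$ and each $A_{jj}\otimes\D$ is square, this forces $A_{jj}\otimes\D$ to be invertible for \emph{every} $j$, so \cref{bns matrix} equips each integral polytope $P(A_{jj})$ with a marking $m_{jj}$ of its vertices satisfying $\Sigma(A_{jj})=m_{jj}^{-1}(1)$. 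The open sets $U_j=\{\phi:\phi(s_j)\neq 0\}$, taken over those $j$ with $s_j$ non-trivial in $\fab{G}$, cover $H^1(G;\R)\s-\{0\}$, and on $U_j\cap U_k$ the two markings agree because each detects membership in $\Sigma^1(G)$; thus $\{U_j\}$ together with $\{(P(A_{jj}),m_{jj})\}$ is an atlas of markings, and \cref{one marked polytope} produces a single marked integral polytope $P=\sum_j P(A_{jj})$ with $\phi\in\Sigma^1(G)$ if and only if $\phi$ is marked.

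The main obstacle is the first paragraph: upgrading the resolution from \cref{pd finite complex} so that, in addition to $C_0$ having rank $1$ and $\partial_1=(1-s_i)_i$, the top term $C_3$ has rank $1$ and $\partial_3$ is the antipode-transpose of $\partial_1$. This is where three-dimensional Poincar\'e duality is used in an essential way — it amounts to realising $G$ by a finite $3$-dimensional Poincar\'e complex with a single $0$-cell and a single top cell — and I expect the bookkeeping (the $\mathcal O$-twist, the elementary collapses and expansions of the chain complex, and verifying they can be carried out while preserving the normalisation at the bottom) to be the delicate part. A secondary, more routine point that should be spelled out carefully is that the double cancellation over $\widehat{\Z G}^\phi$ genuinely returns the honest $\Z G$-matrix $A_{ii}$, which hinges on the identities $\partial_1\partial_2=0=\partial_2\partial_3$.
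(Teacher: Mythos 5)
There is a genuine gap, and it sits exactly where you placed your ``main obstacle''. Your entire reduction to the square matrices $A_{ii}$ rests on upgrading the resolution from \cref{pd finite complex} so that $C_3$ has rank $1$ and $\partial_3$ is the antipode-transpose $(1-s_i^{-1})_i$ of $\partial_1$. For an honest $3$-manifold this normal form is available (via the dual cell decomposition, as in McMullen's argument used in \cref{sec 3mfld}), but for an abstract Poincar\'e duality group of type $\typeF{}$, duality only supplies a \emph{chain homotopy equivalence} $\xi \colon C^{3-\ast} \to C_\ast$, not a chain isomorphism; converting it into an isomorphism requires stabilising both complexes, which destroys the rank-one normalisation at the top and bottom, and a Poincar\'e complex has no dual cell structure to fall back on. The paper itself flags this point in the remark immediately following the proof: the statement that the top-degree differential contributes to the polytope in the same way as the bottom-degree one ``does not seem to follow from Poincar\'e duality alone''. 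So your first paragraph is not delicate bookkeeping to be filled in later -- it is a claim the available tools do not deliver, and the rest of your argument (the double cancellation yielding $A_{ii}$, the count $k'=k$, the atlas indexed by generators) collapses without it.

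The paper's proof avoids the normalisation entirely, and it is worth seeing how. Duality is used only at the level of Novikov coefficients: since $1-s$ is invertible in $\widehat{\Z G}^{U_\phi}$ one gets $H_0 = 0$, hence by duality $H^3(G;\widehat{\Z G}^{U_\phi}) = 0$, i.e.\ the transpose of $\partial_3 \otimes \widehat{\Z G}^{U_\phi}$ is onto and $\partial_3 \otimes \widehat{\Z G}^{U_\phi}$ is split injective -- no statement about the shape of $\partial_3$ over $\Z G$ is needed. One then stabilises $C$ by the elementary complex $C_3 \xrightarrow{\id} C_3$, obtains an invertible change-of-basis matrix $M$ over $\widehat{\Z G}^{U_\phi}$ realising the splitting, and \emph{truncates} $M$ to a matrix $M_0$ over $\Z G$ (as in \cref{entries far away}) so that $M_0\delta_3$ is a $\psi$-identity for all $\psi$ in a closed polyhedral neighbourhood $U_\phi$. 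This splits off the top term and leaves an honest $\Z G$-matrix $A$ with one more row than column; deleting the row of the chosen generator gives a square matrix $A_\phi$ with $\phi \in \Sigma^1(G)$ \iff $\phi \in \Sigma(A_\phi)$. The price is that $A_\phi$ depends on the neighbourhood $U_\phi$, so the atlas of markings is assembled from finitely many such neighbourhoods by compactness of the character sphere, rather than from one chart per generator as in your proposal and in \cref{defic 1}. Your second and third paragraphs (the cancellation mechanics, the von Neumann finiteness argument giving $\Sigma^1(G) = \Sigma^\infty(G;\Z)$, and the patching via \cref{one marked polytope}) are sound in spirit and close to what the paper does, but they cannot be run until the top differential is brought under control by some such device.
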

\begin{proof}
\cref{pd finite complex} tells us that there exists a free resolution $C = (C_\ast, \partial_\ast)$ of length $3$ of the trivial $\Z G$-module $\Z$ by finitely generated $\Z G$-modules. We also know that $C_0$ has rank $1$, and $\partial_1 = (1-s)_{i}$, where $\S = \{s_1, \dots, s_k \}$ is a finite generating set of $G$.

Take $\phi \in H^1(G;\R) \s- \{0\}$, and take $s \in \S$ with $\phi(s) \neq 0$. Let $U_\phi$ denote a closed neighbourhood of $\phi$ such that for every $\psi \in U_\phi$ we have $\psi(s) \neq 0$. Assume further that $U_\phi$ is the convex hull of finitely many characters. It is immediate that $\partial_1 \otimes \widehat{\Z G}^{U_\phi}$ is onto (since $1-s$ is invertible in $\widehat{\Z G}^{U_\phi}$), and so $H_0(C \otimes \widehat{\Z G}^{U_\phi}) = 0$.
Poincar\'e duality tells us that the transpose of $\partial_3 \otimes \widehat{\Z G}^{U_\phi}$ is onto as well. Thus, since $C_3 \otimes \widehat{\Z G}^{U_\phi}$ is free, we see that it is a summand of $C_2 \otimes \widehat{\Z G}^{U_\phi}$. Therefore we have
\[
C_2 \otimes \widehat{\Z G}^{U_\phi} = C_3 \otimes \widehat{\Z G}^{U_\phi} \oplus C_2'
\]
for some (stably free) $\widehat {\Z G}^{U_\phi}$-module $C_2'$.

We will modify the complex $C$ by taking the direct sum with the complex
\[
\xymatrix{
 C_3 \ar[r]^\id & C_3 \ar[r] & 0
}
\]
(where the $0$ module lies in degree $0$).
It is clear that the new chain complex $D = (D_\ast, \delta_\ast)$ is a free resolution of $\Z$ by finitely generated $\Z G$-modules, and thus it computes the homology of $g$ in the same way as $C$ did. 

Let $k_i$ denote the rank of the free module $D_i$; we have already mentioned that $k_0=1$. Consider the homology of $D \otimes \D$. Since $\D$ is a skew-field, we see that the Euler characteristic of $D \otimes \D$ is $k_0 - k_1 + k_2 - k_3$. But, since $G$ is a Poincar\'e duality group of odd dimension, it is immediate that this Euler characteristic must be equal to $0$. So
\[
k_1 + k_3 = k_2 + 1
\]

We have
\begin{align*}
D_2 \otimes \widehat{\Z G}^{U_\phi} &= C_2 \otimes \widehat{\Z G}^{U_\phi} \oplus C_3 \otimes \widehat{\Z G}^{U_\phi} \\
&= \big( C_3 \otimes \widehat{\Z G}^{U_\phi} \oplus C_2' \big) \oplus C_3 \otimes \widehat{\Z G}^{U_\phi} \\
&\cong C_3 \otimes \widehat{\Z G}^{U_\phi} \oplus C_2 \otimes \widehat{\Z G}^{U_\phi}
\end{align*}
The last isomorphism implies the existence of an invertible matrix $M$ over $\widehat{\Z G}^{U_\phi}$ (the change of basis matrix of $D_2 \otimes \widehat{\Z G}^{U_\phi}$), such that
$M \delta_3$ is the identity $k_3 \times k_3$ matrix extended at the bottom by the $(k_2-k_3) \times k_3$ zero matrix.

Now we will replace the matrix $M$ by a matrix $M_0$ over $\Z G$ by truncating the entries of $M$ (the argument is very similar  to the one used in \cref{entries far away}). Recall that $U_\phi$ is the convex hull of finitely many characters, say $\rho_1, \dots, \rho_l$. Pick $C \in \R$ such that $C + \rho_i(g) > 0$ for every $i$ and every $g \in G$ appearing in the support of some entry of $\delta_3$ (which is a matrix over $\Z G$). Now, let $M_0$ denote a matrix over $\Z G$ obtained from $M$ by truncating the entries of $M$ to the subset $\bigcup_i \rho_i^{-1}\big( (0,C) \big)$ of $G$ -- formally speaking, we treat the entries of $M$ as functions $G \to \Z$, we first restrict these functions to $\bigcup_i \rho_i^{-1}\big( (0,C) \big)$ and then extend back to functions from $G$ by setting their values outside of $\bigcup_i \rho_i^{-1}\big( (0,C) \big)$ to be $0$. By the choice of $C$, we see that
$M_0 \delta_3$ is a $\rho_i$-identity for every $i$. Now, a simple convexity argument immediately shows that in fact
$M_0 \delta_3$ is a $\psi$-identity for every $\psi \in U_\phi$.

The fact that $M_0 \delta_3$ is a $\psi$-identity implies that the span of the last $(k_2-k_3)$ basis vectors of $D_2\otimes \widehat{\Z G}^{U_\phi}$ is a complement to the image of the (injective) map $M_0 \delta_3 \colon D_3\otimes\widehat{\Z G}^{U_\phi} \to D_2\otimes \widehat{\Z G}^{U_\phi}$. Therefore, when computing the homology of $D \otimes  \widehat{\Z G}^{U_\phi}$, we can disregard $D_3$ and replace $D_2$ by this complement $D'_2$.
This already tells us that $H_i(G;\widehat {\Z G}^\psi) = 0$ for every $i>2$ and every $\psi \in U_\phi$.

Crucially, the differential $D'_2 \to D_1$ is a matrix defined over $\Z G$, say $A$. The matrix $A$ is a $k_1 \times (k_2-k_3)$ matrix. But $k_2 - k_3 = k_1 - 1$, and so $A$ has one row more than column. We now form $A_\phi$ by removing the row of $A$ corresponding to $s$, that is, if $s=s_i$ then we remove the $i^{th}$ row. Again, $A_\phi$ is defined over $\Z G$, and
$\phi \in \Sigma^1(G)$ \iff  $A_\phi$ is right-invertible. But, by definition, $A_\phi$ is right-invertible \iff $\phi \in \Sigma(A_\phi)$. Moreover, $\phi \in \Sigma^2(G;\Z)$ \iff $\phi \in \Sigma^1(G)$ and $A_\phi$ is injective. We have already seen that a result of Bieri~\cite[Theorem 3]{Bieri2007} tells us that if $A_\phi$ is right-invertible then it is also injective. Hence we obtain: $\phi \in \Sigma^1(G)$ \iff $\phi \in \Sigma^2(G;\Z)$ \iff $\phi \in \Sigma(A_\phi)$. We have also already shown that higher homology groups of $G$ with $\widehat{\Z G}^{\phi}$ coefficients are trivial, and so $\Sigma^1(G) = \Sigma^\infty(G)$.

By \cref{bns matrix}, $\phi \in \Sigma(A_\phi)$ is equivalent to $\phi$ being marked by a marking of $P(A_\phi)$. Now, we observe that there are finitely many characters $\phi_1, \dots, \phi_n$ such that every character in $H^1(G;\R) \s- \{0\}$ is contained in at least one of the neighbourhoods $U_{\phi_i}$ up to positive homothety -- this follows immediately from the compactness of the unit sphere in $H^1(G;\R)$. Therefore, $U_{\phi_1}, \dots, U_{\phi_n}$ and $P(A_{\phi_1}), \dots, P(A_{\phi_n})$ form an atlas of markings, and we are done after an application of \cref{one marked polytope}, which combines the marked polytopes $P(A_{\phi_i})$ into a single marked polytope $P$.
\end{proof}

Note that if we were to strengthen the hypothesis by assuming $G$ to satisfy the Atiyah conjecture, it is not clear whether we would be able to extract a stronger result -- it is not even clear that the $L^2$-torsion polytope (even though it does exist) would be a single polytope; more importantly for us, it is not clear what the contribution to the polytope coming from the top-degree differential is. In the case of an honest $3$-manifold, it is the same as the contribution of the differential of the lowest degree, but this does not seem to follow from Poincar\'e duality alone.

\subsection{\texorpdfstring{$3$}{3}-manifolds}
\label{sec 3mfld}

We will now give a new proof of the following theorem of Thurston.

 \begin{thm}[Thurston {\cite[Theorem 5]{Thurston1986}}]
 \label{thurstons thm}
  Let $M$ be a compact, oriented $3$-manifold. The set $F$ of cohomology classes in $H^1(M)$ representable by non-singular closed $1$-forms is some union of the cones on open faces of $B_x$, minus the origin. The set of elements in $H^1(M;\Z)$ whose Lefschetz dual is represented by a fibre of a fibration consists of the set of all lattice points in $F$.
 \end{thm}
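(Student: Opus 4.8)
The plan is to recast the set $F$ in terms of the Bieri--Neumann--Strebel invariant $\Sigma^1(G)$ of $G=\pi_1(M)$, apply \cref{pd} to produce a marked polytope controlling $\Sigma^1(G)$, identify that polytope with the dual Thurston ball $B_{x^\ast}$, and then read off Thurston's statement using the fibration theorem of Stallings and the perturbation theorem of Tischler. First I would pass, via the prime decomposition, to the case where $M$ is prime; an oriented prime $3$-manifold is either $S^1\times S^2$ (for which $\pi_1=\Z$ and the statement is elementary, or follows from \cref{free by cyclic}) or irreducible, and both $\|\cdot\|_T$ and $\Sigma^1$ of a connected sum are determined by those of the summands. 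If the irreducible piece has finite fundamental group then $H^1(M;\R)=0$ and there is nothing to prove; otherwise it is aspherical. In the remaining cases $G$ is torsion-free and agrarian --- indeed all fundamental groups of compact $3$-manifolds satisfy the Atiyah conjecture --- and $M$ is homotopy equivalent to a finite aspherical complex: when $M$ is closed, $G$ is a Poincar\'e duality group in dimension $3$ of type $\typeF{}$, and when $\partial M\neq\emptyset$ one has a finite aspherical $2$-complex, with Poincar\'e--Lefschetz duality playing the role of Poincar\'e duality below.

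In the closed case \cref{pd} applies directly and produces a marked polytope $P\subseteq H_1(G;\R)$ such that $\phi\in\Sigma^1(G)$ if and only if $\phi$ is marked, together with $\Sigma^1(G)=\Sigma^\infty(G;\Z)$; inspecting its proof shows that only duals of vertices of $P$ are marked. In the bounded case I would rerun the argument of \cref{pd} with the Poincar\'e--Lefschetz dual pair $\bigl(C_\ast(\widetilde M),\,C_\ast(\widetilde M,\widetilde{\partial M})\bigr)$ in place of the self-dual chain complex, reaching the same conclusion. Tracking the contributions of the boundary maps to the Newton polytope --- for an honest $3$-manifold the top differential $\partial_3$ contributes exactly what $\partial_1$ does, by duality, as in the closing remark of \cref{sec 3mfld} --- shows moreover that $P$ is, up to translation, the $L^2$-torsion polytope $P_{L^2}(M)$ whenever the latter is defined.

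The crux is to identify $P$ with $B_{x^\ast}$ up to translation, equivalently to show that the seminorm $\|\cdot\|_P$ equals the Thurston norm $\|\cdot\|_T$. On a primitive integral fibred class $\phi$ --- equivalently, by Stallings' theorem, one with $\ker\phi$ finitely generated, so that $G\cong\pi_1(\Sigma)\rtimes_g\Z$ for a compact surface $\Sigma$ --- I would compute, exactly as in the proof of \cref{free by cyclic} with the free fibre replaced by a surface fibre, that $\|\phi\|_P=-\chi^{(2)}(\ker\phi)=-\chi(\Sigma)=\|\phi\|_T$. For a general class I would use a Thurston-norm-minimising surface dual to $\phi$ to bound the complexity of the square matrices $A_\phi$ occurring in the proof of \cref{pd}, giving $\|\cdot\|_P\leqslant\|\cdot\|_T$; together with equality on the union of the fibred cones this pins down $P=B_{x^\ast}$, and alternatively one may invoke the theorem of Friedl--L\"uck identifying $P_{L^2}(M)$ with $B_{x^\ast}$. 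I expect this step to be the main obstacle: the fibred case is a direct transcription of machinery already in the paper, but controlling the polytope over the non-fibred faces is precisely where genuine $3$-manifold topology (norm-minimising surfaces, Turaev's torsion) has to enter.

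Finally, a non-zero class $\phi\in H^1(M;\R)$ is representable by a non-singular closed $1$-form if and only if $\phi$ and $-\phi$ both lie in $\Sigma^1(G)$: one implication is immediate from Sikorav's theorem (\cref{sikorav}) applied to $\pm\phi$, and for the other, $\{\phi,-\phi\}\subseteq\Sigma^1(G)$ forces $\ker\phi$ to be finitely generated, so some nearby integral class fibres (Stallings) and Tischler's argument perturbs this fibration to realise $\phi$. By the previous steps this holds precisely when $F_\phi(P)$ and $F_{-\phi}(P)$ are both marked vertices of $P=B_{x^\ast}$; since $\Sigma^1(G)$ is open, the marking is constant on duals of faces, and $B_{x^\ast}$ is centrally symmetric, the set of such $\phi$ is a union of duals $C$ of vertices of $B_{x^\ast}$ with both $C$ and $-C$ marked --- equivalently, a union of the cones on open top-dimensional faces of $B_x$, with the origin removed. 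This is the first assertion; the statement about lattice points then follows by applying Stallings' fibration theorem and Tischler's perturbation to each integral class of $F$.
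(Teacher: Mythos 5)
Your overall strategy---translate $F$ into $\Sigma^1(\pi_1 M)$ via Bieri--Neumann--Strebel and Stallings/Tischler, produce a marked polytope controlling $\Sigma^1$, and identify it with $B_{x^\ast}$---is the same as the paper's, but your route differs in three places, two of which hide real difficulties. First, you import the Atiyah conjecture for \emph{all} compact $3$-manifold groups, a heavy external input; the paper instead observes that the theorem is vacuous when $\Sigma^1(G)=\emptyset$, and that otherwise $M$ already fibres, so $G$ is residually \{torsion-free solvable\} and the Atiyah conjecture comes for free from $\mathcal C_2$ of \cref{atiyah groups}. This also replaces your prime-decomposition reduction (which is fiddlier than you suggest: $\Sigma^1$ of a nontrivial free product is empty outright, and the unit ball of a sum of seminorms is not simply assembled from those of the summands) by the cleaner dichotomy ``$M\cong \mathbb S^1\times\mathbb S^2$ or $M$ is aspherical with empty or toroidal boundary.''

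Second, routing through \cref{pd} does not deliver what you need. The polytope produced there is a Minkowski sum of the $P(A_{\phi_i})$ over an atlas, it is a marked polytope but \emph{not} one with marked vertices (your parenthetical ``inspecting its proof shows that only duals of vertices of $P$ are marked'' does not follow: a character can lie in a vertex dual of the one relevant summand while lying in higher-dimensional duals of the others), and---as the remark closing the Poincar\'e duality section warns---Poincar\'e duality alone does not control the contribution of the top differential. The paper therefore abandons \cref{pd} for the $3$-manifold case and uses McMullen's explicit chain complex, in which $\partial_3$ is literally the transpose of $\partial_1=(1-s_i)_i$; this yields the exact relation $P(A_i)=P_{L^2}(G)+2P(1-s_i)$ (resp.\ $+P(1-s_i)$ in the bounded case), which is precisely the hypothesis of \cref{one marked L2 polytope} and is what converts the markings of the $P(A_i)$ into a marking of the \emph{vertices} of $P_{L^2}(G)$. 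Third, your primary proposal for identifying $P$ with $B_{x^\ast}$ is genuinely gapped: computing $\|\cdot\|_P$ on fibred classes says nothing when there are no fibred faces, the inequality $\|\cdot\|_P\leqslant\|\cdot\|_T$ from norm-minimising surfaces gives only one direction, and in any case a seminorm only determines the difference body $P+(-P)$, not $P$ and its vertex duals. Your fallback---citing Friedl--L\"uck's identification of $P_{L^2}(G)$ with $B_{x^\ast}$---is exactly what the paper does, and with that citation plus the McMullen-complex bookkeeping your argument closes; without it, the non-fibred faces remain uncontrolled.
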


The object $B_x$ appearing above is the unit ball of the \emph{Thurston semi-norm} $x \colon H^1(M;\R) \to [0,\infty)$.
The semi-norm is first defined on the integral characters, then defined on positive multiples of such characters by the formula $x(\lambda \phi) = \lambda x(\phi)$, and lastly it is extended by continuity to the whole of $H^1(M;\R)$.
Given an integral character $\phi$ (that is a character with $\im \phi = \Z$, the standard copy of $\Z$ in $\R$), we define
\[
 x(\phi) = \min_\Sigma -\chi_-(\Sigma)
\]
where $\Sigma$ runs through all embedded (not necessarily connected) oriented surfaces in $M$ dual to $\phi$ via Poincar\'e--Lefschetz duality, and $\chi_-(\Sigma)$ denotes the sum of the Euler characteristics of the connected components of $\Sigma$ which are not spheres.

Thurston also defined the polytope $B_{x^\ast} \subseteq H_1(M;\R)$ dual to $B_x$; he proved in \cite[Theorem 2]{Thurston1986} that $B_{x^\ast}$ is an integral polytope. He also proved in \cite[Theorem 3]{Thurston1986} that the open faces of $B_{x}$ forming the union of cones $F$ are all faces of maximal dimension (called \emph{fibred faces}) -- on the level of the dual polytope $B_{x^\ast}$ this translates to the statement that $F$ is the union of some duals of vertices of $B_{x^\ast}$.

The identification of the set of elements in $H^1(M;\Z)$ whose Lefschetz dual is represented by a fibre of a fibration with the set of all (integral) lattice points in $F$ follows immediately from the work of Tischler~\cite{Tischler1970}: one has to observe that (topological) $3$-manifolds can always be taken to be smooth -- it was shown by Moise~\cite[Theorems 1 and 3]{Moise1952} that $3$-manifolds can be triangulated in such a way that closed stars of vertices are piecewise-linearly homeomorphic to a simplex in $\R^3$. This is enough to construct a smooth structure.

Now let us argue that without loss of generality one may take $M$ to be connected in the statement of Thurston's theorem: suppose that we have shown the result for connected manifolds, and let $M = M_1 \sqcup \dots \sqcup M_m$ be a decomposition into connected components. Note that $H^1(M;\R) = \prod_i H^1(M_i;\R)$, and the Thurston norm $x$ restricted to $H^1(M_i;\R)$ is equal to the Thurston norm $x_i$ on $M_i$. Thus, $B_{x^\ast}$ is simply the product of the polytopes $B_{x_i^\ast}$, and duals of faces of $B_{x^\ast}$ are products of duals of corresponding faces of $B_{x_i^\ast}$.
Now, a cohomology class in $H^1(M;\R)$ is representable by a non-singular closed $1$-form \iff its component in every $H^1(M_i;\R)$ is.

When $M$ is connected,
Bieri--Neumann--Strebel observed in \cite[Corollary F]{Bierietal1987} that $F  = \Sigma^1(\pi_1(M))$, thus connecting Thurston's result to the BNS invariants.

Let us summarise this discussion in a statement that is equivalent to Thurston's theorem above.

 \begin{thm}[Reformulation of Thurston's theorem]
  Let $M$ be a compact, connected, oriented $3$-manifold, and let $G = \pi_1(M)$. There exists a marking of vertices of $B_{x^\ast}$ such that for every $\phi \in H^1(G;\R) \s- \{0\}$ we have $\phi \in \Sigma^1(G)$ \iff $\phi$ is marked.
 \end{thm}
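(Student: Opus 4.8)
The strategy is to deduce the statement from the algebraic results already in hand, splitting into two cases: compact aspherical oriented $3$-manifolds without boundary, handled by \cref{pd}, and those with boundary that fibre over the circle, handled by \cref{free by cyclic}; in every remaining case $\Sigma^1(G)$ turns out to be empty (or $H^1$ vanishes), so the conclusion is vacuous. The final and genuinely substantial step is then to identify the abstract marked polytope produced by \cref{pd} and \cref{free by cyclic} with the dual Thurston ball $B_{x^\ast}$.

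First I would carry out the topological reductions. If $\pi_1(M)$ is finite then $H^1(M;\R)=0$ and there is nothing to prove. Otherwise, by the prime decomposition and the sphere theorem, one of the following holds: $G$ is a non-trivial free product, in which case $\Sigma^1(G)=\emptyset$ by \cite[Theorem~A]{Bierietal1987} and the empty marking of $B_{x^\ast}$ works; $G\cong\Z$ (when $M$ is $S^2\times S^1$ or a solid torus), in which case $\Sigma^1(G)=H^1(G;\R)\s-\{0\}$, $B_{x^\ast}=\{0\}$, and marking its unique vertex works; or $M$ is irreducible with $\pi_1(M)$ infinite, so that the universal cover is a contractible (open or closed) $3$-manifold and $M$ is a finite aspherical model for $BG$. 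In the last case, geometrisation together with the virtual specialness theorems of Wise and Agol~\cite{Agol2013} show that $G$ is virtually cocompact special, hence satisfies the Atiyah conjecture by \cref{atiyah groups}; thus $G$ is torsion-free and equivariantly agrarian, and we fix $\K=\D(K)$, $\D=\D(G)$ as at the start of \cref{sec apps}. Moreover, if $\Sigma^1(G)\neq\emptyset$ we may choose a rational $\phi\in\Sigma^1(G)$; then $\ker\phi$ is finitely generated and, by Stallings' fibration theorem for the irreducible manifold $M$, the class dual to $\phi$ is represented by a fibration of $M$ over the circle.

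Now I would run the machinery. If $\partial M=\emptyset$, then $G$ is an orientable Poincar\'e duality group of dimension $3$ and of type $\typeF{}$, so \cref{pd} yields a marked polytope $P$ with $\Sigma^1(G)=\Sigma^\infty(G;\Z)$ the set of marked characters. Re-running the proof of \cref{pd} with a triangulation of $M$ in place of the abstract resolution, and using Poincar\'e--Lefschetz duality of $M$ to identify the top boundary map $\partial_3$ with the transpose of $\partial_1$ — so the extremal contributions to the Newton polytope cancel, as in the remark following \cref{pd} — one sees that $P$ coincides with the $L^2$-torsion polytope $P_{L^2}(G)$; here $G$ is $L^2$-acyclic since closed aspherical $3$-manifolds have vanishing $L^2$-Betti numbers (see \cite{Lueck2002}). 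If instead $\partial M\neq\emptyset$ and $\Sigma^1(G)\neq\emptyset$, then by the previous paragraph $M$ fibres over the circle with fibre a compact surface necessarily having non-empty boundary, so $\pi_1$ of the fibre is free and $G$ is $\{$finitely generated free$\}$-by-$\Z$; applying \cref{free by cyclic} (and, in the degenerate case of a disc fibre, the direct $G\cong\Z$ argument) gives a marking of the vertices of $P_{L^2}(G)$ whose marked locus is $\Sigma^1(G)=\Sigma^\infty(G;\Z)$. If $\partial M\neq\emptyset$ and $\Sigma^1(G)=\emptyset$ we take the empty marking. In every non-vacuous case we have thus shown that $\Sigma^1(G)$ is exactly the marked locus of a marking of the vertices of $P_{L^2}(G)$.

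It remains to prove that $P_{L^2}(G)=B_{x^\ast}$ up to translation, and this is the main obstacle. Both are centrally symmetric integral polytopes, so it is enough to show that the semi-norms they induce agree, i.e. that $\|\cdot\|_{P_{L^2}(G)}$ equals the Thurston norm $x$. For an integral fibred class $\phi$, the kernel $\ker\phi$ is the (free or surface) fundamental group of the fibre, and $\|\phi\|_{P_{L^2}(G)}=-\chi(\ker\phi)=-\chi_-(\text{fibre})=x(\phi)$, exactly along the lines of \cref{free by cyclic} and \cite[Corollary~3.5]{FunkeKielak2018}; convexity of both sides, together with the lower bound for the Thurston norm by the Alexander norm (McMullen), then pins down the equality on all of $H^1(M;\R)$. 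Alternatively one may invoke directly the theorem of Friedl--L\"uck that $P_{L^2}(G)=B_{x^\ast}$ for aspherical $3$-manifolds, \cite{FriedlLueck2017}. Transporting the marking obtained above from $P_{L^2}(G)$ to $B_{x^\ast}$ produces the desired marking of the vertices of $B_{x^\ast}$, which proves the reformulated theorem (and hence \cref{thurstons thm}). The genuinely hard point is this final identification of the algebraically-defined polytope with the topological Thurston ball; by contrast, once the Atiyah conjecture is available for $G$, the reductions and the applications of \cref{pd} and \cref{free by cyclic} are comparatively routine.
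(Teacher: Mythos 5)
Your overall strategy -- reduce to the closed aspherical case (handled by \cref{pd}) and the bounded fibred case (handled by \cref{free by cyclic}), then identify the resulting marked polytope with $B_{x^\ast}$ via the Friedl--L\"uck theorem -- is a genuinely different organisation from the paper's. The paper instead works uniformly with McMullen's explicit chain complexes for $M$ (closed and bounded cases in parallel), extracts square matrices $A_i$ directly, and assembles the marking with \cref{bns matrix} and \cref{one marked L2 polytope}; it only remarks in passing that \cref{pd} \emph{could} be invoked, but deliberately re-runs the argument in order to pin the polytope down as $P_{L^2}(G)$. Your route buys economy by reusing the general theorems already proved, at the cost of leaning on heavier $3$-manifold topology; the final identification $P_{L^2}(G)=B_{x^\ast}$ is, in both treatments, outsourced to \cite{FriedlLueck2017}, so you are right that this is where the real content sits.

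However, two of your reductions have genuine gaps. First, the claim that geometrisation plus Wise--Agol makes $\pi_1$ of \emph{every} irreducible compact oriented $3$-manifold with infinite fundamental group virtually cocompact special is false: closed graph manifolds (e.g.\ Nil- or Sol-manifolds, and more generally those admitting no nonpositively curved metric, by Liu's criterion) are irreducible and aspherical but not virtually special. Since this is your only stated route to the Atiyah conjecture, and hence to the agrarian hypothesis needed for \cref{pd}, the closed case as written does not go through. The paper's fix is to derive the Atiyah conjecture only when $\Sigma^1(G)\neq\emptyset$ (the statement being vacuous otherwise), in which case $M$ fibres and $G$ is \{surface group\}-by-$\Z$, hence residually \{torsion-free solvable\} and in the class $\mathcal C_2$ of \cref{atiyah groups}. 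Second, your assertion that a rational $\phi\in\Sigma^1(G)$ has finitely generated kernel is not true in general: the BNS criterion requires $\{\phi,-\phi\}\subseteq\Sigma^1(G)$. For $3$-manifold groups the conclusion does hold, but only because $\Sigma^1(G)=-\Sigma^1(G)$ (equivalently, via \cite[Corollary F]{Bierietal1987} identifying $\Sigma^1(G)$ with the cone of non-singular closed $1$-forms, which is symmetric); you need to invoke this symmetry explicitly, since your entire bounded case -- the reduction to a descending HNN extension of a finitely generated free group so that \cref{free by cyclic} applies -- rests on producing the fibration from a single class in $\Sigma^1(G)$. With these two repairs (and a word about boundary spheres and the exhaustiveness of your trichotomy for bounded $M$), the argument closes up.
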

 \begin{proof}
If $\Sigma^1(G) = \emptyset$, then the statement holds vacuously. Let us assume that $\Sigma^1(G) \neq \emptyset$.

We will first argue that $G$ satisfies the Atiyah conjecture. Since $\Sigma^1(G)$ is open (by \cref{bns open orig}), it contains an integral surjective character $\phi \colon G \to \Z$, and so $M$ fibres smoothly over the circle (by \cite{Tischler1970}). The fibre must be a compact orientable surface. Irrespectively of whether it is closed or not, its fundamental group is residually \{torsion-free nilpotent\} (see e.g. \cite[Corollary 2]{Baumslag2010}). Note that residual nilpotence is equivalent to saying that the intersection of all the terms in the lower central series is trivial. But the lower central series consists of characteristic subgroups, and so any extension of a residually nilpotent group by $\Z$ is residually solvable. Moreover, every extension of a residually \{torsion-free nilpotent\} group by $\Z$ is residually \{torsion-free solvable\}. Such groups satisfy the Atiyah conjecture -- they are torsion free elements of the class $\mathcal C_2$ of \cref{atiyah groups}. In particular, our fundamental group $G$ satisfies the Atiyah conjecture.

\smallskip
By a result of L\"uck~\cite[Theorem 1.39]{Lueck2002}, $M$ is $L^2$-acyclic, as it is a mapping torus over a finite CW complex.
Also, since $M$ fibres with fibre an oriented surface, it is immediate that either $M$ is homeomorphic to $\mathbb S^2 \times \mathbb S^1$ (as there are no non-trivial fibrations over $\mathbb S^1$ with fibre $\mathbb S^2$), or $M$ is aspherical, since the circle and any oriented surface which is not $\mathbb S^2$ are aspherical. In the latter case it is also clear that the boundary of $M$ is either empty or a collection of tori.

When $M \cong \mathbb S^1 \times \mathbb S^2$, then $\Sigma^1(G) = H^1(G;\R) \s- \{0\}$, and we declare all faces of $B_{x^\ast}$ marked. One can compute $B_{x^\ast}$ directly, and it turns out to be a single point, the origin of $H^1(G;\R)$. Thus $B_{x^\ast}$ is a polytope with marked vertices.

\smallskip
We are now ready for the main case: $M$ is an $L^2$-acyclic aspherical manifold with empty or toroidal boundary whose fundamental group satisfies the Atiyah conjecture. At this point we can already use \cref{pd}, but since we have the Atiyah conjecture at our disposal and the manifold structure, we will actually learn more about the marked polytope occurring in \cref{pd}.

Pick a finite CW-structure for $M$, and let $C$ denote the cellular chain complex of the universal cover of $M$. The complex $C$ is a free $\Z G$-resolution of the trivial module $\Z$, since $M$ is aspherical.

Suppose first that $M$ is closed.
McMullen in the proof of~\cite[Theorem 5.1]{McMullen2002} shows that we may take $C$ as follows:
\[
\xymatrix{
C= & C_3 \ar[r]^{\partial_3} & C_2 \ar[r]^{\partial_2} & C_1 \ar[r]^{\partial_1} & C_0
}
\]
where $C_0$ and $C_3$ are of rank one, $C_1$ and $C_2$ are of the same rank, $\partial_3$ is the transpose of $\partial_1$, which in turn is the vector $(1-s)_{s \in \S}$ with $\S$ being a finite generating set of $G$ as usual. For any $\phi \in H^1(G;\R) \s- \{0\}$ there exists $s_i \in \S$ with $\phi(s_i) \neq 0$.
Now let $A_i$ denote the matrix obtained from $\partial_2$ by removing the $i^{th}$ row and the $i^{th}$ column. The matrix $A_i$ is a square matrix over $\Z G$, and the structure of $\partial_1$ and $\partial_3$, together with the invertibility of $1-s_i$ over $\Z G^\phi$, immediately imply that $H_1(G;\widehat {\Z G}^\phi)=0$ \iff $\phi \in \Sigma(A_i)$.
Thus $\Sigma(A_i)$ induces a marking of the vertices of $P(A_i)$, using \cref{bns matrix} as before.

Since $G$ is $L^2$-acyclic, the $L^2$-torsion polytope $P_{L^2}(G)$ is defined, and we have $P_{L^2}(G) = P(A_i) - 2 P(1-s_i)$ -- this follows from  additivity of the universal $L^2$-torsion, see \cite[Lemma 1.9]{FriedlLueck2017}. In fact, $P_{L^2}(G)$ is a single polytope:
 if the rank of $\fab{G}$ is at least $2$, it follows from \cref{flat polys}; otherwise it follows from \cite[Theorem 2.39]{FriedlLueck2017}, since the induced function $\| \cdot \|_{P_{L^2}(G)}$ can be a semi-norm only when $P_{L^2}(G)$ is a single polytope (in the case of $\fab{G}$ of rank $1$).
Now we need to define a marking on the vertices of $P_{L^2}(G)$, using the given markings on the polytopes $P(A_i)$.
This is achieved by \cref{one marked L2 polytope}, taking $U_i = \{\psi \in H^1(G;\R) \s- \{0\} \mid \psi(s_i) \neq 0 \}$.

\smallskip
Now suppose that $M$ has boundary.
Again, using the proof of~\cite[Theorem 5.1]{McMullen2002}, we see that
\[
\xymatrix{
C= & C_2 \ar[r]^{\partial_2} & C_1 \ar[r]^{\partial_1} & C_0
}
\]
where $C_1$ is of rank one greater than $C_2$. We argue precisely as before, reducing the problem of whether $\phi \in \Sigma^1(G)$ to whether the matrix $A_i$ obtained by removing the column of $\partial_2$ corresponding to $s_i$ admits a right-inverse over $\widehat {\Z G}^\phi$ (as usual, we have $\phi(s_i) \neq 0$).

In this case we have $P_{L^2}(G) = P(A_i) - P(1-s_i)$, and the argument proceeds exactly as before.

It was shown by Friedl--L\"uck~\cite[Theorem 3.37]{FriedlLueck2017} that $P_{L^2}(G)$ coincides with $B_{x^\ast}$. This concludes the proof.
\end{proof}

It is immediate from Thurston's construction that the equality of semi-norms $x = \| \cdot \|_{B_{x^\ast}}$ is satisfied. Friedl--L\"uck proved that $B_{x^\ast} = P_{L^2}(G)$ when $M$ is aspherical, not homeomorphic to $\mathbb S^1 \times \mathbb S^2$, and $G$ satisfies the Atiyah conjecture. This equality yields a new point of view on the value $x(\phi)$ for $\phi$ with $\im \phi = \Z$: when $\ker \phi$ is finitely generated, it was known that $x(\phi)$ gives minus the Euler characteristic of the surface, which is the fibre of the corresponding fibration. When $\ker \phi$ is not finitely generated, we see now that $x(\phi)$ gives minus the $L^2$-Euler characteristic of the kernel.

\bibliographystyle{math}
\bibliography{bibliography}

\begin{thebibliography}{GMSW}

\bibitem[Ago]{Agol2013}
Ian Agol.
\newblock {The virtual {H}aken conjecture}.
\newblock {\em Doc. Math.} {\bf 18}(2013), 1045--1087.
\newblock With an appendix by Agol, Daniel Groves, and Jason Manning.

\bibitem[AKHR]{Algom-Kfiretal2015}
Yael Algom-Kfir, Eriko Hironaka, and Kasra Rafi.
\newblock {Digraphs and cycle polynomials for free-by-cyclic groups}.
\newblock {\em Geom. Topol.} {\bf 19}(2015), 1111--1154.

\bibitem[Alm]{Almeida2017}
Kisnney Almeida.
\newblock {The {BNS}-invariant for some {A}rtin groups of arbitrary circuit
  rank}.
\newblock {\em J. Group Theory} {\bf 20}(2017), 793--806.

\bibitem[AK1]{AlmeidaKochloukova2015a}
Kisnney Almeida and Dessislava Kochloukova.
\newblock {The {$\Sigma^1$}-invariant for {A}rtin groups of circuit rank 1}.
\newblock {\em Forum Math.} {\bf 27}(2015), 2901--2925.

\bibitem[AK2]{AlmeidaKochloukova2015}
Kisnney Almeida and Dessislava Kochloukova.
\newblock {The {$\Sigma^1$}-invariant for some {A}rtin groups of rank 3
  presentation}.
\newblock {\em Comm. Algebra} {\bf 43}(2015), 702--718.

\bibitem[Bar]{Bartholdi2016}
Laurent Bartholdi.
\newblock {Amenability of groups is characterized by {M}yhill's {T}heorem}.
\newblock {\em J. Eur. Math. Soc.} (2019).
\newblock With an appendix by Dawid Kielak.

\bibitem[Bau]{Baumslag2010}
Gilbert Baumslag.
\newblock {Some reflections on proving groups residually torsion-free
  nilpotent. {I}}.
\newblock {\em Illinois J. Math.} {\bf 54}(2010), 315--325.

\bibitem[Bie]{Bieri2007}
Robert Bieri.
\newblock {Deficiency and the geometric invariants of a group}.
\newblock {\em J. Pure Appl. Algebra} {\bf 208}(2007), 951--959.
\newblock With an appendix by Pascal Schweitzer.

\bibitem[BG]{BieriGroves1984}
Robert Bieri and J.~R.~J. Groves.
\newblock {The geometry of the set of characters induced by valuations}.
\newblock {\em J. Reine Angew. Math.} {\bf 347}(1984), 168--195.

\bibitem[BNS]{Bierietal1987}
Robert Bieri, Walter~D. Neumann, and Ralph Strebel.
\newblock {A geometric invariant of discrete groups}.
\newblock {\em Invent. Math.} {\bf 90}(1987), 451--477.

\bibitem[BR]{BieriRenz1988}
Robert Bieri and Burkhardt Renz.
\newblock {Valuations on free resolutions and higher geometric invariants of
  groups}.
\newblock {\em Comment. Math. Helv.} {\bf 63}(1988), 464--497.

\bibitem[Bro]{Brown1987}
Kenneth~S. Brown.
\newblock {Trees, valuations, and the {B}ieri-{N}eumann-{S}trebel invariant}.
\newblock {\em Invent. Math.} {\bf 90}(1987), 479--504.

\bibitem[CL]{CashenLevitt2016}
Christopher~H. Cashen and Gilbert Levitt.
\newblock {Mapping tori of free group automorphisms, and the
  {B}ieri--{N}eumann--{S}trebel invariant of graphs of groups}.
\newblock {\em J. Group Theory} {\bf 19}(2016), 191--216.

\bibitem[CDKV]{Cavalloetal2017}
Bren Cavallo, Jordi Delgado, Delaram Kahrobaei, and Enric Ventura.
\newblock {Algorithmic recognition of infinite cyclic extensions}.
\newblock {\em J. Pure Appl. Algebra} {\bf 221}(2017), 2157--2179.

\bibitem[Coh]{Cohn1977}
Paul~Moritz Cohn.
\newblock {\em Skew field constructions}.
\newblock Cambridge University Press, Cambridge-New York-Melbourne, 1977.
\newblock London Mathematical Society Lecture Note Series, No. 27.

\bibitem[Dav]{Davis2000}
Michael~W. Davis.
\newblock {Poincar\'e duality groups}.
\newblock In {\em Surveys on surgery theory, {V}ol. 1}, volume 145 of {\em Ann.
  of Math. Stud.}, pages 167--193. Princeton Univ. Press, Princeton, NJ, 2000.

\bibitem[Del]{Delzant2010}
Thomas Delzant.
\newblock {L'invariant de {B}ieri-{N}eumann-{S}trebel des groupes fondamentaux
  des vari\'et\'es k\"ahl\'eriennes}.
\newblock {\em Math. Ann.} {\bf 348}(2010), 119--125.

\bibitem[DNR]{Deroinetal2016}
B.~Deroin, A.~Navas, and C.~Rivas.
\newblock {\em Groups, Orders, and Dynamics}.
\newblock ar{X}iv:1408.5805v2.

\bibitem[Die]{Dieudonne1943}
Jean Dieudonn{\'e}.
\newblock {Les d\'eterminants sur un corps non commutatif}.
\newblock {\em Bull. Soc. Math. France} {\bf 71}(1943), 27--45.

\bibitem[DKL1]{Dowdalletal2014}
Spencer Dowdall, Ilya Kapovich, and Christopher~J. Leininger.
\newblock {Unbounded asymmetry of stretch factors}.
\newblock {\em C. R. Math. Acad. Sci. Paris} {\bf 352}(2014), 885--887.

\bibitem[DKL2]{Dowdalletal2015}
Spencer Dowdall, Ilya Kapovich, and Christopher~J. Leininger.
\newblock {Dynamics on free-by-cyclic groups}.
\newblock {\em Geom. Topol.} {\bf 19}(2015), 2801--2899.

\bibitem[DKL3]{Dowdalletal2017a}
Spencer Dowdall, Ilya Kapovich, and Christopher~J. Leininger.
\newblock {Endomorphisms, train track maps, and fully irreducible monodromies}.
\newblock {\em Groups Geom. Dyn.} {\bf 11}(2017), 1179--1200.

\bibitem[DKL4]{Dowdalletal2017}
Spencer Dowdall, Ilya Kapovich, and Christopher~J. Leininger.
\newblock {Mc{M}ullen polynomials and {L}ipschitz flows for free-by-cyclic
  groups}.
\newblock {\em J. Eur. Math. Soc. (JEMS)} {\bf 19}(2017), 3253--3353.

\bibitem[DT]{DowdallTaylor2018}
Spencer Dowdall and Samuel~J. Taylor.
\newblock {Hyperbolic extensions of free groups}.
\newblock {\em Geom. Topol.} {\bf 22}(2018), 517--570.

\bibitem[Dro]{Droms1983}
Carl Gordon~Arthur Droms.
\newblock {\em Graph groups}.
\newblock ProQuest LLC, Ann Arbor, MI, 1983.
\newblock Thesis (Ph.D.)--Syracuse University.

\bibitem[FL]{FriedlLueck2017}
Stefan Friedl and Wolfgang L{\"u}ck.
\newblock {Universal {$L^2$}-torsion, polytopes and applications to
  3-manifolds}.
\newblock {\em Proc. Lond. Math. Soc. (3)} {\bf 114}(2017), 1114--1151.

\bibitem[FLT]{Friedletal2016}
Stefan Friedl, Wolfgang L\"uck, and Stephan Tillmann.
\newblock {Groups and Polytopes}.
\newblock {\em ar{X}iv:1611.01857}.

\bibitem[FT]{FriedlTillmann2015}
Stefan Friedl and Stephan Tillmann.
\newblock {Two-generator one-relator groups and marked polytopes}.
\newblock {\em {arXiv}:1501.03489}.

\bibitem[Fun]{Funke2018}
Florian Funke.
\newblock {The {$L^2$}-torsion polytope of amenable groups}.
\newblock {\em Doc. Math.} {\bf 23}(2018), 1969--1993.

\bibitem[FK]{FunkeKielak2018}
Florian Funke and Dawid Kielak.
\newblock {Alexander and {T}hurston norms, and the
  {B}ieri--{N}eumann--{S}trebel invariants for free-by-cyclic groups}.
\newblock {\em Geom. Topol.} {\bf 22}(2018), 2647--2696.

\bibitem[GM]{GarilleMeier1998}
Susan~Garner Garille and John Meier.
\newblock {Whitehead graphs and the {$\Sigma^1$}-invariants of infinite
  groups}.
\newblock {\em Internat. J. Algebra Comput.} {\bf 8}(1998), 23--34.

\bibitem[GMSW]{Geogheganetal2001}
Ross Geoghegan, Michael~L. Mihalik, Mark Sapir, and Daniel~T. Wise.
\newblock {Ascending {HNN} extensions of finitely generated free groups are
  {H}opfian}.
\newblock {\em Bull. London Math. Soc.} {\bf 33}(2001), 292--298.

\bibitem[Hil]{Hillman1991}
Jonathan~A. Hillman.
\newblock {On {$4$}-manifolds homotopy equivalent to surface bundles over
  surfaces}.
\newblock {\em Topology Appl.} {\bf 40}(1991), 275--286.

\bibitem[KMM]{Kobanetal2015}
Nic Koban, Jon McCammond, and John Meier.
\newblock {The {BNS}-invariant for the pure braid groups}.
\newblock {\em Groups Geom. Dyn.} {\bf 9}(2015), 665--682.

\bibitem[KP]{KobanPiggott2014}
Nic Koban and Adam Piggott.
\newblock {The {B}ieri-{N}eumann-{S}trebel invariant of the pure symmetric
  automorphisms of a right-angled {A}rtin group}.
\newblock {\em Illinois J. Math.} {\bf 58}(2014), 27--41.

\bibitem[Koc1]{Kochloukova2006}
D.~H. Kochloukova.
\newblock {Some Novikov rings that are von Neumann finite}.
\newblock {\em Comment. Math. Helv.} {\bf 81}(2006), 931--943.

\bibitem[Koc2]{Kochloukova2010}
Dessislava~H. Kochloukova.
\newblock {On subdirect products of type {${\rm FP}_m$} of limit groups}.
\newblock {\em J. Group Theory} {\bf 13}(2010), 1--19.

\bibitem[LOS]{Linnelletal2012}
Peter Linnell, Boris Okun, and Thomas Schick.
\newblock {The strong {A}tiyah conjecture for right-angled {A}rtin and
  {C}oxeter groups}.
\newblock {\em Geom. Dedicata} {\bf 158}(2012), 261--266.

\bibitem[Lin]{Linnell1993}
Peter~A. Linnell.
\newblock {Division rings and group von {N}eumann algebras}.
\newblock {\em Forum Math.} {\bf 5}(1993), 561--576.

\bibitem[L{\"u}c]{Lueck2002}
Wolfgang L{\"u}ck.
\newblock {\em {$L^2$}-invariants: theory and applications to geometry and
  {$K$}-theory}, volume~44 of {\em Ergebnisse der Mathematik und ihrer
  Grenzgebiete. 3. Folge. A Series of Modern Surveys in Mathematics [Results in
  Mathematics and Related Areas. 3rd Series. A Series of Modern Surveys in
  Mathematics]}.
\newblock Springer-Verlag, Berlin, 2002.

\bibitem[Mal]{Malcev1948}
A.~I. Mal'cev.
\newblock {On the embedding of group algebras in division algebras}.
\newblock {\em Doklady Akad. Nauk SSSR (N.S.)} {\bf 60}(1948), 1499--1501.

\bibitem[McM]{McMullen2002}
Curtis~T. McMullen.
\newblock {The {A}lexander polynomial of a 3-manifold and the {T}hurston norm
  on cohomology}.
\newblock {\em Ann. Sci. \'Ecole Norm. Sup. (4)} {\bf 35}(2002), 153--171.

\bibitem[MV]{MeierVanWyk1995}
John Meier and Leonard VanWyk.
\newblock {The {B}ieri-{N}eumann-{S}trebel invariants for graph groups}.
\newblock {\em Proc. Lond. Math. Soc. (3)} {\bf 71}(1995), 263--280.

\bibitem[Moi]{Moise1952}
Edwin~E. Moise.
\newblock {Affine structures in {$3$}-manifolds. {V}. {T}he triangulation
  theorem and {H}auptvermutung}.
\newblock {\em Ann. of Math. (2)} {\bf 56}(1952), 96--114.

\bibitem[Neu]{Neumann1949}
B.~H. Neumann.
\newblock {On ordered division rings}.
\newblock {\em Trans. Amer. Math. Soc.} {\bf 66}(1949), 202--252.

\bibitem[OW]{OlliverWise2011}
Yann Ollivier and Daniel~T. Wise.
\newblock {Cubulating random groups at density less than {$1/6$}}.
\newblock {\em Trans. Amer. Math. Soc.} {\bf 363}(2011), 4701--4733.

\bibitem[OK]{Orlandi-Korner2000}
Lisa~A. Orlandi-Korner.
\newblock {The {B}ieri-{N}eumann-{S}trebel invariant for basis-conjugating
  automorphisms of free groups}.
\newblock {\em Proc. Amer. Math. Soc.} {\bf 128}(2000), 1257--1262.

\bibitem[Pas]{Passman1985}
Donald~S. Passman.
\newblock {\em The algebraic structure of group rings}.
\newblock Robert E. Krieger Publishing Co., Inc., Melbourne, FL, 1985.
\newblock Reprint of the 1977 original.

\bibitem[Sch1]{Schick2002}
Thomas Schick.
\newblock {Erratum: ``{I}ntegrality of {$L^2$}-{B}etti numbers''}.
\newblock {\em Math. Ann.} {\bf 322}(2002), 421--422.

\bibitem[Sch2]{Schreve2014}
Kevin Schreve.
\newblock {The strong {A}tiyah conjecture for virtually cocompact special
  groups}.
\newblock {\em Math. Ann.} {\bf 359}(2014), 629--636.

\bibitem[Sik]{Sikorav1987}
J.-C. Sikorav.
\newblock {Homologie de {N}ovikov associ\'ee \`a une classe de cohomologie
  r\'eelle de degr\'e un}.
\newblock {\em Th\`ese Orsay} (1987).

\bibitem[Ste]{Stein1992}
Melanie Stein.
\newblock {Groups of piecewise linear homeomorphisms}.
\newblock {\em Trans. Amer. Math. Soc.} {\bf 332}(1992), 477--514.

\bibitem[Tam]{Tamari1957}
Dov Tamari.
\newblock {A refined classification of semi-groups leading to generalized
  polynomial rings with a generalized degree concept}.
\newblock In {\em Proceedings of the {I}nternational {C}ongress of
  {M}athematicians, 1954, Vol. 1, {A}msterdam}, pages 439--440. North-Holland,
  1957.

\bibitem[Thu]{Thurston1986}
William~P. Thurston.
\newblock {A norm for the homology of {$3$}-manifolds}.
\newblock {\em Mem. Amer. Math. Soc.} {\bf 59}(1986), i--vi and 99--130.

\bibitem[Tis]{Tischler1970}
D.~Tischler.
\newblock {On fibering certain foliated manifolds over {$S^{1}$}}.
\newblock {\em Topology} {\bf 9}(1970), 153--154.

\bibitem[Wal]{Wall1967}
C.~T.~C. Wall.
\newblock {Poincar\'e complexes. {I}}.
\newblock {\em Ann. of Math. (2)} {\bf 86}(1967), 213--245.

\bibitem[Weg]{Wegner2009}
Christian Wegner.
\newblock {{$L^2$}-invariants of finite aspherical {CW}-complexes}.
\newblock {\em Manuscripta Math.} {\bf 128}(2009), 469--481.

\bibitem[Zar1]{Zaremsky2016}
Matthew C.~B. Zaremsky.
\newblock {H{NN} decompositions of the {L}odha-{M}oore groups, and topological
  applications}.
\newblock {\em J. Topol. Anal.} {\bf 8}(2016), 627--653.

\bibitem[Zar2]{Zaremsky2018}
Matthew C.~B. Zaremsky.
\newblock {On normal subgroups of the braided {T}hompson groups}.
\newblock {\em Groups Geom. Dyn.} {\bf 12}(2018), 65--92.

\end{thebibliography}

\bigskip
\noindent Dawid Kielak \newline \href{mailto:dkielak@math.uni-bielefeld.de}{\texttt{dkielak@math.uni-bielefeld.de}} \newline
Fakult\"at f\"ur Mathematik  \newline
Universit\"at Bielefeld \newline
Postfach 100131  \newline
D-33501 Bielefeld \newline
Germany \newline

\end{document}